\numberwithin{equation}{subsection}
\newtheorem{thm}{Theorem}[subsubsection]
\newtheorem*{thm*}{Theorem}
\newtheorem{cor}[thm]{Corollary}
\newtheorem*{cor*}{Corollary}
\newtheorem{lem}[thm]{Lemma}
\newtheorem{prop}[thm]{Proposition}
\newtheorem{prop-const}[thm]{Proposition-Construction}
\newtheorem*{conjecture*}{Conjecture}
\newtheorem*{princ*}{Principle}
\theoremstyle{remark}
\newtheorem{rem}[thm]{Remark}
\newtheorem{example}[thm]{Example}
\newtheorem{defin}[thm]{Definition}
\newtheorem{notation}[thm]{Notation}
\newtheorem{variant}[thm]{Variant}
\newtheorem{question}[thm]{Question}
\newcommand{\presup}[1]{\prescript{#1}{}}
\newcommand{\xar}[1]{\xrightarrow{#1}}
\newcommand{\rar}[1]{\xar{#1}}
\newcommand{\isom}{\rar{\simeq}}
\newcommand{\into}{\hookrightarrow}
\newcommand{\onto}{\twoheadrightarrow}
\newcommand{\bA}{{\mathbb A}}
\newcommand{\bB}{{\mathbb B}}
\newcommand{\bG}{{\mathbb G}}
\newcommand{\bR}{{\mathbb R}}
\newcommand{\bZ}{{\mathbb Z}}
\newcommand{\sA}{{\EuScript A}}
\newcommand{\sB}{{\EuScript B}}
\newcommand{\sC}{{\EuScript C}}
\newcommand{\sD}{{\EuScript D}}
\newcommand{\sE}{{\EuScript E}}
\newcommand{\sF}{{\EuScript F}}
\newcommand{\sG}{{\EuScript G}}
\newcommand{\sH}{{\EuScript H}}
\newcommand{\sK}{{\EuScript K}}
\newcommand{\sL}{{\EuScript L}}
\newcommand{\sM}{{\EuScript M}}
\newcommand{\sN}{{\EuScript N}}
\newcommand{\sO}{{\EuScript O}}
\newcommand{\sP}{{\EuScript P}}
\newcommand{\sS}{{\EuScript S}}
\newcommand{\sU}{{\EuScript U}}
\newcommand{\sX}{{\EuScript X}}
\newcommand{\sY}{{\EuScript Y}}
\newcommand{\sZ}{{\EuScript Z}}
\newcommand{\fL}{{\mathfrak L}}
\newcommand{\fX}{{\mathfrak X}}
\newcommand{\fg}{{\mathfrak g}}
\newcommand{\on}{\operatorname}
\newcommand{\ol}[1]{\overline{#1}{}}
\newcommand{\ul}{\underline}
\newcommand{\mathendash}{\text{\textendash}}
\newcommand{\ldotsplus}{\mathinner{\ldotp\ldotp\ldotp\ldotp}}
\newcommand{\Ker}{\on{Ker}}
\newcommand{\Coker}{\on{Coker}}
\newcommand{\End}{\on{End}}
\newcommand{\Hom}{\on{Hom}}
\newcommand{\Ext}{\on{Ext}}
\newcommand{\Spec}{\on{Spec}}
\newcommand{\Spf}{\on{Spf}}
\newcommand{\id}{\on{id}}
\newcommand{\ev}{\on{ev}}
\newcommand{\ind}{\on{ind}}
\newcommand{\cotr}{\on{cotr}}
\newcommand{\Rep}{\mathsf{Rep}}
\newcommand{\gr}{\on{gr}}
\newcommand{\act}{\on{act}}
\newcommand{\actson}{\curvearrowright}
\renewcommand{\dot}{\bullet}
\newcommand{\Tor}{\on{Tor}}
\newcommand{\vph}{\varphi}
\newcommand{\vareps}{\varepsilon}
\newcommand{\Vect}{\mathsf{Vect}}
\newcommand{\Res}{\on{Res}}
\newcommand{\Gr}{\on{Gr}}
\newcommand{\Sym}{\on{Sym}}
\newcommand{\Bun}{\on{Bun}}
\newcommand{\LocSys}{\on{LocSys}}
\renewcommand{\mod}{\mathendash\mathsf{mod}}
\newcommand{\colim}{\on{colim}}
\newcommand{\DGCat}{\mathsf{DGCat}}
\newcommand{\ShvCat}{\mathsf{ShvCat}}
\newcommand{\Gpd}{\mathsf{Gpd}}
\renewcommand{\lim}{\on{lim}}
\newcommand{\Ind}{\mathsf{Ind}}
\newcommand{\TwoHom}{\mathsf{Hom}}
\newcommand{\TwoEnd}{\mathsf{End}}
\newcommand{\heart}{\heartsuit}
\newcommand{\Oblv}{\on{Oblv}}
\newcommand{\Av}{\on{Av}}
\newcommand{\Nm}{\on{Nm}}
\newcommand{\Perf}{\mathsf{Perf}}
\newcommand{\Coh}{\mathsf{Coh}}
\newcommand{\IndCoh}{\mathsf{IndCoh}}
\newcommand{\QCoh}{\mathsf{QCoh}}
\newcommand{\Sch}{\mathsf{Sch}}
\newcommand{\AffSch}{\mathsf{AffSch}}
\newcommand{\PreStk}{\mathsf{PreStk}}
\newcommand{\IndSch}{\mathsf{IndSch}}
\newcommand{\Alg}{\mathsf{Alg}}
\newcommand{\ComAlg}{\mathsf{ComAlg}}
\newcommand{\Lie}{\on{Lie}}
\newcommand{\AJ}{\on{AJ}}
\newcommand{\Pol}{\on{Pol}}
\newcommand{\Maps}{\sM aps}
\renewcommand{\o}[1]{\mathring{#1}}
\renewcommand{\subset}{\subseteq}
\newcommand{\YM}{\mathsf{YM}}
\newcommand{\Dir}{\sD ir}
\newcommand{\Neu}{\sN eu}
\newcommand{\pair}[2]{\langle #1,#2 \rangle}
\newcommand{\biggg}{\bBigg@{4}}
\newcommand{\Biggg}{\bBigg@{5}}
\date{\today}
\begin{document}

\frenchspacing

\setlength{\epigraphwidth}{0.4\textwidth}
\renewcommand{\epigraphsize}{\footnotesize}

\begin{abstract}

We calculate the category of $D$-modules on the loop space of the affine line 
in coherent terms. Specifically, we find that this category is derived equivalent
to the category of 
ind-coherent sheaves on the moduli space of rank one de Rham local systems
with a flat section. Our result establishes a conjecture coming out of the 
$3d$ mirror symmetry program, which obtains new compatibilities
for the geometric Langlands program from rich dualities of QFTs that 
are themselves obtained from string theory conjectures.   

\end{abstract}

\title{Tate's thesis in the de Rham setting}

\author{Justin Hilburn}

\address{Perimeter Institute,
31 Caroline St N, Waterloo, ON N2L 2Y5, Canada}

\email{jhilburn@perimeterinstitute.ca}

\author{Sam Raskin}

\address{The University of Texas at Austin, 
Department of Mathematics, 
RLM 8.100, 2515 Speedway Stop C1200, 
Austin, TX 78712}

\email{sraskin@math.utexas.edu}

\maketitle

\setcounter{tocdepth}{1}
\tableofcontents

\section{Introduction}\label{s:intro}

\subsection{Statement of the main results}

We work over a field $k$ of characteristic zero. 

Let $\sY$ be the moduli space of rank $1$ de Rham local systems on 
the punctured disc equipped with a flat section, and 
let $\fL \bA^1$ denote the algebraic loop space of $\bA^1$.

Our main result asserts:

\begin{thm}[Thm. \ref{t:main-precise}]\label{t:main}

There is a canonical equivalence of DG categories:
\[
\Delta:D^!(\fL \bA^1) \simeq \IndCoh^*(\sY).
\]

Moreover, this equivalence is compatible with the local geometric 
class field theory of Beilinson-Drinfeld. 

\end{thm}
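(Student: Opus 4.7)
The plan is to construct $\Delta$ as a factorization functor via an explicit geometric kernel on $\fL \bA^1 \times \sY$ and then exhibit it as an equivalence by reducing, along a natural stratification, to Beilinson--Drinfeld's local class field theory, with the key remaining content concentrated at a boundary stratum. Concretely, I would build the kernel from the incidence locus where a loop $f \in \fL \bA^1$ and a triple $(L, \nabla, s) \in \sY$ are compatible --- after a choice of trivialization of $L$, the flat section $s$ reads as the loop $f$ --- and define $\Delta$ via the associated pull-push. Factorization compatibility is automatic from this geometric construction and is what will ultimately produce the compatibility with Beilinson--Drinfeld asserted in the theorem.

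The ``generic'' part of the equivalence should then reduce directly to local class field theory. Writing $j : \fL \bG_m \hookrightarrow \fL \bA^1$ for the inclusion of invertible loops and $p : \sY \to \LocSys^{dR}_{GL_1}(\mathring{D})$ for the forgetful map, Beilinson--Drinfeld's local class field theory provides an equivalence $D^!(\fL \bG_m) \simeq \IndCoh^*(\LocSys^{dR}_{GL_1}(\mathring{D}))$. I would show that $\Delta$ intertwines $j^*$ with an appropriate variant of $p^!$, recovering local CFT on the open piece by using that a nonzero flat section canonically identifies a local system with its character data. Once this is established, the compatibility with local CFT claimed in the theorem follows essentially by construction.

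The crux --- and the main obstacle --- is matching the contribution over the complementary boundary: the closed point $\{0\} \hookrightarrow \fL \bA^1$ on the D-module side, and the zero-section embedding $\LocSys^{dR}_{GL_1}(\mathring{D}) \hookrightarrow \sY$ given by $s = 0$ on the coherent side. This stratum is not controlled by CFT and requires genuinely new input. On the D-module side, the relevant piece is governed by nearby-cycles / singular-support analysis of $D$-modules on the ind-scheme $\fL \bA^1$ at the zero loop; on the coherent side, by the formal neighborhood of the zero section of $p$, whose deformation theory is computed by the de Rham complex of the universal local system. To identify these, I would extract from the factorization structure a pointwise Fourier--Mellin-type duality, reducing to a tractable finite-dimensional model at a single closed point, and then propagate the resulting identification via factorization. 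Combining this boundary match with the preceding step via standard recollement gluing then yields the full equivalence $\Delta$.
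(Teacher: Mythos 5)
Your proposal takes a genuinely different route from the paper: the authors do not build $\Delta$ via a correspondence kernel on $\fL \bA^1 \times \sY$, nor via recollement along the zero section. Instead they construct $\Delta$ as a colimit of explicit functors $\Delta_n : D(\fL_n^+\bA^1) \to \IndCoh^*(\sY)$, each produced by hand as an action of a Weyl algebra $W_n$ on a distinguished compact object $\sF_n$; fully faithfulness is then proved by induction on the pole order $n$ using the flatness package of \S \ref{ss:flatness}, and essential surjectivity is the generation statement of Proposition \ref{p:tilde-z-gens}. The difficulty with your decomposition is that it does not split off the hard part --- it concentrates it. The open complement in $\sY$ of the zero section $\zeta : \LocSys_{\bG_m} \hookrightarrow \sY$ is a single point: a flat section of a rank-one de Rham local system on $\o{\sD}$ that does not vanish identically is nowhere-vanishing, hence trivializes the local system, and after quotienting by gauge the datum $(\sL,\nabla,s)$ collapses to $\Spec(k)$. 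So the ``generic stratum'' on the coherent side is just $\Vect$; there is no open piece on which to recover class field theory via some variant of $p^!$. Correspondingly, the closed stratum --- objects of $\IndCoh^*(\sY)$ supported on the zero section --- is controlled by the formal neighborhood $\sY^{\wedge}_{\LocSys_{\bG_m}}$, and this is exactly where all of the genuinely infinite-type singular geometry of $\sY$ lives (compare Lemma \ref{l:z0-emb}, which already needs the flatness results to identify even the innermost completed chart). There is no ``tractable finite-dimensional model at a single closed point'' to transport via factorization: the entire difficulty is the infinite-dimensional structure at a single point of the curve, and factorization constrains multi-point behavior, not that.

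The second gap is the assertion that factorization, and hence compatibility with Beilinson--Drinfeld, is ``automatic from this geometric construction.'' It is not. Even pointwise compatibility with the equivalence $D^*(\fL\bG_m) \simeq \QCoh(\LocSys_{\bG_m})$ is a theorem proved in \S \ref{s:cft} by a Harish--Chandra datum argument together with a nontrivial residue computation (Lemma \ref{l:hc-calc}) matching the obstruction to strong $\fL_n^+\bG_m$-equivariance on $\sF_n$ against the Contou--Carr\`ere pairing; any kernel construction would need this checked, not assumed. Finally, as the introduction notes, $\Delta$ is \emph{not} $t$-exact, whereas a pull-push through a reasonable geometric correspondence would be $t$-exact up to a fixed shift --- so the shape of the desired equivalence is already not the one your kernel would produce without substantial modification.
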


Here the left hand side is the category of $D$-modules on $\fL \bA^1$,
as defined in \cite{dario-*/!}, \cite{dmod}. The right hand side
is the category of ind-coherent sheaves on $\sY$, which we construct
in \S \ref{s:indcoh} (following \cite{methods}). In both cases,
there are infinite-dimensional subtleties in the definitions; 
these are far more severe on the right hand side. 

\begin{rem}

The assertions of the theorem are made more precise in the body of the paper. 
For now, we content ourselves with the following remark on local class field theory.

Beilinson-Drinfeld construct an equivalence:
\begin{equation}\label{eq:intro-lcft}
D^*(\fL\bG_m) \simeq \QCoh(\LocSys_{\bG_m})
\end{equation}

\noindent for $\LocSys_{\bG_m}$ the moduli space of rank $1$ de Rham local systems on the
punctured disc. The left hand side here naturally acts on 
the left hand side of Theorem \ref{t:main}, while the right hand side
naturally acts on the right hand side of Theorem \ref{t:main}. The compatibility
asserts that the equivalence of Theorem \ref{t:main} is compatible with 
these actions.

\end{rem}

\begin{rem}

Both sides of the above equivalence have natural $t$-structures.
However, this equivalence is \emph{not} $t$-exact; it is necessarily
an equivalence of derived categories. This is in contrast to \eqref{eq:intro-lcft},
which largely amounts to an equivalence of abelian categories. 

\end{rem}

\begin{rem}

For $K$ a local field, Tate's thesis \cite{tate-thesis} 
(see also \cite{tate-thesis-weil}) 
considers the decomposition of the space
$\sD(K)$ of tempered distributions on $K$ 
as a representation for $K^{\times}$. We consider Theorem \ref{t:main} 
as an analogue of the arithmetic situation. We observe that
our result applies not only at the level of eigenspaces, 
but describes a categorical analogue of the whole space $\sD(K)$.

We plan to return to global aspects of the subject in future work. 


\end{rem}

\begin{rem}

As far as we are aware, our work is the first one in geometric representation
theory to prove a theorem about coherent sheaves on a 
space of the form $\Maps(\o{\sD}_{dR},Y)$ for $Y$ an Artin stack
that is not a scheme and is not a classifying stack (for us: $Y = \bA^1/\bG_m$). 
As we will see in \S \ref{s:spectral} and \ref{s:indcoh}, there are substantial
technical difficulties that arise in this setting.
Roughly speaking, the singularities of the space $\sY$ are genuinely of infinite type. 
We can only overcome these difficulties in our specific (abelian) setting.

\end{rem}

\begin{rem}

As discussed later in \S \ref{ss:intro-strategy}, the main piece of our
construction is a realization of $D$-modules on $\bA^n$ in spectral terms;
this is the content of \S \ref{s:weyl}.
Already in the $n = 1$ case, our realization of $D$-modules on 
$\bA^1$ as coherent sheaves on some space is novel; in this case, 
we highlight that the expression
is as coherent sheaves on the subspace $\sY^{\leq 1} \subset \sY$ 
discussed in detail in  \S \ref{ss:rs-sketch}.

\end{rem}

\begin{rem}

In the physicist's language, we are 
comparing line operators for the $A$-twist of a pure hypermultiplet
with the $B$-twist of a $U(1)$-gauged hypermultiplet. Moreover, we
consider these $3d$ $\sN = 4$ theories as boundary theories for 
(suitably twisted) pure $U(1)$-gauge theory (with electro-magnetic duality
exchanging its $A$ and $B$-twists).

\end{rem}

\subsection{Connections to $3d$ mirror symmetry}

The equivalence of Theorem \ref{t:main} is a first\footnote{Specifically, 
as an equivalence of categories of line operators, with both matter and
a non-trivial gauge group on the $B$-side.}
instance of what is expected to be a broad family of equivalences, 
which goes under the heading
\emph{$3d$ mirror symmetry}. 
This subject is closely tied to the geometric Langlands program.

As context for our results, we
provide an informal introduction to these ideas below.  
Our objective is to connect our equivalence as closely as possible with the 
mathematical physics literature, and to promote those ideas to mathematicians
interested in the circle of ideas around geometric Langlands.

We hope the reader will forgive the 
redundancy of our discussion given 
the numerous other great sources in the literature.

We emphasize that we do not claim originality for any of the ideas
appearing below. We withhold attributions and leave much contact
with the existing literature until \S \ref{ss:3d-refs}.

\begin{rem}

For another exposition of this subject also targeted at mathematicians,
see \cite{bf-notes}. For a recent physics-oriented exposition, we refer
to \cite{dggh} \S 2.

\end{rem}

\subsubsection{}

The physics of the last thirty years has highlighted the role of \emph{dualities}:
quantum field theories that are equivalent by non-obvious means.
We emphasize that
physicists regard these dualities as conjectural: they do not claim
to know how to match the QFTs, only (at best) parts of them.

Examples abound. For instance,
Montonen--Olive's (conjectural) $S$-duality for 
$4$-dimensional Yang-Mills theory with $\sN = 4$ supersymmetry for two 
Langlands dual gauge groups $G$ and $\check{G}$ is of keen interest. 

The dualities physicists consider fit into a sophisticated logical hierarchy.
But briefly, most are subsumed in the existence of Witten's (conjectural) $\sM$-theory,
which is supposed to recover other (known) QFTs by various constructions.

\subsubsection{}

A general problem is to extract mathematical structures
from quantum field theories.\footnote{Often, this is by a sort of analogy. Physicists
are drawn to differential geometry: their bread and butter are moduli spaces
of solutions to non-linear PDEs (namely, Euler-Lagrange equations).

But for a mathematician,
it may be preferable to consider algebraic varieties as analogous to K\"ahler
manifolds, or symplectic varieties as analogous to hyper-K\"ahler manifolds. 
Often, rich algebraic geometry results.} 
In this case, physical dualities relate to mathematical
conjectures.

For instance, in \cite{kapustin-witten}, 
Kapustin and Witten found such a relationship 
between Montonen--Olive's (conjectural) duality for 
$4$-dimensional Yang-Mills with $\sN = 4$ supersymmetry and geometric Langlands
conjectures in mathematics. Their perspective was developed in 
\cite{costello-ss-2-4} and 
\cite{elliott-yoo}, which exactly clarified some means of extracting
algebro-geometric invariants from Lagrangian field theories.

\subsubsection{$3d$ $\sigma$-models}

In one setting, so-called\footnote{We remind what these parameters
encode in the subsequent discussion.}
$3d$ $\sN = 4$ quantum field theories, there are 
rich relations with algebraic geometry.

First, for every algebraic stack $\sX$, we suppose
there is (in some algebraic sense) 
a $3d$ (Lagrangian) quantum field theory
$T_{\sX}$ with $\sN = 4$ supersymmetry; 
we spell out some more precise expectations below. 

\begin{rem}\label{r:no-qft}

At the quantum level, physicists agree our supposition is problematic: 
cf. the discussion at the
end of \cite{rozansky-witten} \S 2.3. (The 
classical field theory is fine, but may be unrenormalizable, and even if it is
renormalizable, there may not be a distinguished scheme.)
So it is better 
to regard $T_{\sX}$ as a heuristic:
its twists (see below) are all that is defined
(at the quantum level). 

\end{rem}

\begin{rem}\label{r:3d-t*}

In fact, physicists
say that $T_{\sX}$ depends only on the symplectic stack 
$T^* \sX$. At the classical level, this theory
is the $3d$ $\sigma$-model with target $T^*\sX$. 
The $\sN = 4$ supersymmetry comes from the 
symplectic structure on $T^*\sX$. 

I.e., more generally, there is a \emph{classical} $3d$ $\sN = 4$ theory
corresponding to a $\sigma$-model with target any symplectic stack $\sS$. 
One hopes for a quantization associated with a Lagrangian
foliation $\lambda:\sS \to \sX$; 
at the moment, this quantization (or rather, its $A/B$ twists) 
is only understood for $\sS$ a twisted cotangent bundle over some $\sX$
(and $\lambda$ the canonical projection).

This perspective is important, but sometimes 
inconvenient (especially at the quantum level), 
so we emphasize the role of $\sX$ e.g. in our notation.

\end{rem}


\subsubsection{Algebraic QFTs}

Next, we describe how we think about a $3d$ QFT $T$ in algebraic geometry.
We include this discussion to make precise the connection between the mathematical
physics we wish to discuss and the algebraic geometry we wish to study.

\subsubsection{}

Recall from \cite{lurie-tft} that an \emph{oriented $3$-dimensional fully-extended TFT} 
$T$ would 
attach a number $T(M)$ to a closed 3-manifold $M$, a vector space
$T(M)$ to a closed 2-manifold $M$, a DG category $T(M)$ to a closed
1-manifold $M$, and a DG 2-category (i.e., $\DGCat_{cont}$-module
category) to a 0-manifold $M$. Cobordisms define morphisms. Disjoint unions
go to tensor products.

A variant: given an ambient $3$-manifold $N$, a fully-extended TFT \emph{on $N$} should
assign such data to manifolds $M$ equipped with embeddings into $N$.
(We are not aware of a reference.)

\subsubsection{}\label{ss:alg-3d}

The algebraic situation is similar, but only some data is defined.
We heuristically describe the main structures, without giving formal definitions.

Fix a smooth, projective, geometrically connected algebraic curve $X$,
which we regard as analogous to a $2$-manifold. (The curve $X$ should not be confused
with the $\sX$ that sometimes occurs when considering a $\sigma$-model with 
target $T^*\sX$.)

A\footnote{As the input to the discussion is an algebraic curve $X$,
we sometimes refer to this formalism as \emph{algebraic QFT}.
We emphasize that we are \emph{not} referring to some kind of
``algebraic twist" (analogous to a ``holomorphic twist") 
of a supersymmetric Lagrangian field theory \textemdash{}  the
formalism is insensitive to such considerations. Rather, this
terminology refers to the fact that this formalism is defined without
analysis, and makes sense in the general setting of algebraic geometry (of
curves here, with evident adaptations in the terminology otherwise)
over fields (perhaps of characteristic zero).}
\emph{$3d$ field theory} $T$ on\footnote{It might be better
to say the theory lives on $X \times \bR$.} $X$ includes the
data of a vector space $T(X) \in \Vect$ and a category 
$T(\o{\sD}_x) \in \DGCat_{cont}$ for every point $x \in X$,
regarding $\o{\sD}_x$ as analogous to a circle. 

We regard the formal disc $\sD_x$ as a cobordism
$\emptyset \to \o{\sD}_x$. There is an associated functor:
\[
\Vect = T(\emptyset) \to T(\o{\sD}_x) \in \DGCat_{cont} 
\]

\noindent i.e., there is a preferred object of $T(\o{\sD}_x)$,
the so-called \emph{vacuum} (or \emph{unit}) object. 

We regard $X \setminus x$ as a cobordism
$\o{\sD}_x \to \emptyset$. There is an associated functor:
\[
T(\o{\sD}_x) \to T(\emptyset) = \Vect \in \DGCat_{cont}.
\]

\noindent This functional evaluated on the vacuum is
$T(X) \in \Vect$.

\begin{rem}

Unlike in the topological situation, the above is not symmetric.
I.e., we only allow cobordisms in the directions specified above.

\end{rem}

\begin{rem}

In this $3d$ setting, the category $T(\o{\sD}_x)$ is often
called the \emph{category of line operators} for the theory. (Physicists would 
draw the line $x \times \bR \subset X \times \bR$ passing through
the interior of our circle $\o{\sD}_x \times 0 \subset X \times \bR$.)

\end{rem}

\begin{rem}\label{r:fact-cats}

The assignment $x \rightsquigarrow T(\o{\sD}_x)$ should extend
to a \emph{factorization category on $X$} in the sense of
\cite{chiralcats}. The vacuum objects should correspond to a unital
structure on this factorization category. The functionals
above should extend to a well-defined functional on the
\emph{independent category} of this unital factorization category;
see \cite{indep-cat} for the definition.

\end{rem}

\begin{example}\label{e:3d-chiral}

In \cite{chiral}, Beilinson and Drinfeld defined such a structure
for any chiral algebra $\sA$ on $X$, i.e., they (in effect)
defined a $3d$ field theory $T_{\sA}$. 
The vector space $T_{\sA}(X)$ is the
space of \emph{conformal blocks} of $\sA$. The category
$T_{\sA}(\o{\sD}_x)$ is $\sA\mod_x$, the category of (unital) chiral
modules for $\sA$ supported at $x$. The vacuum object is the
vacuum representation of $\sA$, and the functional
$\sA\mod_x \to \Vect$ is the functor $C^{ch}(X,\sA,-)$ from
\emph{loc. cit}. \S 4.2.19.

(This theory is analogous to the $3d$ TFT associated
with a $\sE_2$-algebra $\sA$ in 
\cite{lurie-tft} Theorem 4.1.24.\footnote{We 
note that \emph{loc. cit}. regards this theory as a fully-extended
$2d$ theory with more categorical complexity than we outlined
before. We instead regard the theory as a not-fully-extended
$3d$ theory of the type outlined above. 
This compares to Remark 4.1.27 from \cite{lurie-tft}.})

\end{example}

\begin{rem}

Informally, it is good to think of $3d$ theories on $X$ as 
the natural home for the Morita theory of chiral algebras,
just as $\DGCat_{cont}$ is the natural home for Morita theory of
associative algebras.

\end{rem}

\begin{rem}\label{r:voa-3d}

The relationship between chiral algebras and $3d$ $\sN = 4$ (and
sometimes $\sN = 2$) theories is the starting point of the series of
works \cite{voa-3d}, \cite{ccg}, and \cite{cdg}, which aim to use
chiral algebras to 
study mirror dual theories in this way. To describe this work in more detail,
we use ideas reviewed later in this introduction. 

Specifically, a $3d$ $\sN = 4$
theory $T$ with a suitably deformable 
(cf. \cite{voa-3d} \S 2.3; there is an implicit
choice of $A$ or $B$ twists fixed in our discussion here) 
$\sN = (0,4)$ boundary condition $\sB$ should yield an 
algebraic $3d$ field theory $T^{alg}$ 
and a boundary condition $\sB^{alg}$ for it; here
we regard $\sB^{alg}$ as an interface $T^{alg} \to \on{triv}$ 
\emph{to} the trivial theory. 
This data yields a factorization algebra $\sA_{T,\sB}$ on $X$: its fiber at $x$
is by evaluating $\sB^{alg}(\o{\sD}_x):T(\o{\sD}_x) \to \on{triv}(\o{\sD}_x) = \Vect$
at the vacuum object for $T$. In this case, there is a canonical
interface $T^{alg} \to T_{\sA_{T,\sB}}$. Sometimes, it is even an equivalence,
meaning that one can study the full theory $T^{alg}$ via the factorization
algebra $\sA_{T,\sB}$. 

\end{rem}

\begin{rem}

As David Ben-Zvi emphasized to us, unital factorization categories
with a functional on their independent categories
do seem to provide a robust
definition of $[1,2]$-extended $3d$ algebraic QFT (while allowing for
some quite degenerate theories). He also informs us that
these ideas were developed in collaboration with Sakellaridis and Venkatesh,
and will be developed in their forthcoming work. 

\end{rem}

\begin{question}

Is there some sense in which the theories of Example \ref{e:3d-chiral}
are $[0,2]$-extended, as for their $E_2$-counterparts? 
And what general constructions of morphisms
(alias: \emph{interfaces}) between such theories exist?\footnote{This question
is of practical importance to us. Our
main theorem amounts to a construction of a non-trivial interface 
between two $3d$ theories. It is desirable to understand this construction on 
better conceptual grounds.}

\end{question}

\subsubsection{Supersymmetry}

We now recall the main practical 
application of supersymmetry: supersymmetric
QFTs (typically) come with canonical 
deformations,\footnote{Sometimes, we think of
deformations \emph{as} quantizations. For us, the difference
is as follows.

Briefly, in $0$-dimensions, $\sP_0$-algebras can be quantized
by $\sE_0$-algebras, 
i.e., pointed vector spaces. On the other hand, pointed vector
spaces $V$ may be further \emph{deformed} by 
finding a filtered vector space $V^{def}$ with $\gr_{\dot} V^{def} = V$.
(In homological settings, such a structure is equivalent to equipping $V$ with 
a differential in a suitable sense, or one might say, deforming
the differential on $V$.)

In higher dimensions, one can say essentially the same words, following
\cite{costello-gwilliam-i}. Lagrangian
densities give rise to factorization $\sP_0$-algebras, 
which govern the corresponding
classical field theory. These may be quantized by factorization
$\sE_0$-algebras, i.e., factorization algebras; this amounts to 
quantizing the classical field theory (or more specifically, constructing
the OPE for local operators). These factorization algebras
may be further deformed to other factorization algebras.}
called \emph{twists}.\footnote{For instance,
for a manifold $X$, its de Rham complex deforms
its complex of global differential forms (considered
as equipped with zero differential). Similarly,
for $X$ a smooth
algebraic variety, its de Rham complex 
deforms its Hodge cohomology (or rather, the cochain
analogue). These examples appear in supersymmetric
quantum mechanics.} Twisting preserves dimensions
but lowers the amount of supersymmetry; in our examples,
there is no residual supersymmetry.

\begin{rem}

See e.g. \cite{costello-ss-2-4} and \cite{costello-scheimbauer} 
for an introduction to the twisting procedure. See 
\cite{elliott-safronov} and \cite{esw}
for a detailed classification of the twists of supersymmetric QFTs. For the specific
twists considered in $3d$ mirror symmetry, see \cite{dggh} \S 2.2.

\end{rem}

\subsubsection{}

In the setting of $3d$ QFTs $T$ with $\sN = 4$ supersymmetry, 
there are two twists of interest for us: the $A$-twist $T_A$ and
the $B$-twist $T_B$. In practice, these are \emph{algebraic} theories.

For our theories $T_{\sX}$ of interest,
we let $T_{\sX,A}$ and $T_{\sX,B}$ denote the corresponding
$3d$ QFTs. 

\begin{example}

The category $T_{\sX,A}(\o{\sD}_x)$ of line operators for the $A$-twisted theory
should be the category\footnote{At this point,
we are operating heuristically and do not want to be overly prescriptive, so
we do not consider finer points such as $D^!$ vs. $D^*$. Similarly in 
Example \ref{e:b}.

With that said, in our example, $D^!$ is what appears.}
$D(\fL\sX)$ of $D$-modules on the algebraic loop space: 
\[
\fL \fX = \Maps(\o{\sD}_x,\sX)
\]

\noindent of $\sX$.

\end{example}

\begin{example}\label{e:b}

The category $T_{\sX,B}(\o{\sD}_x)$ of line operators for the $B$-twisted
theory should be:
\[
\IndCoh\big(\Maps(\o{\sD}_{x,dR},\sX)\big).
\]

\end{example}

\begin{rem}

If $\sX$ is smooth affine, each of the categories above essentially
come from chiral algebras, as in Example \ref{e:3d-chiral}.
Indeed, up to mild corrections, 
the $A$-twist is essentially governed by a CDO for $\sX$, while the
$B$-twist is governed by the commutative chiral algebra of functions 
on\footnote{As in \cite{chiral}, it is convenient to think of 
$\sX$ as $\Maps(\sD_{dR},\sX)$ here.} $\sX$.

\end{rem}

\begin{rem}

The algebraic QFTs appearing above are of a special nature. First, they
are defined functorially on every smooth curve $X$. Moreover, for 
$X = \bA^1$, the resulting sheaves of categories are strongly
$\bG_a$-equivariant (cf. \cite{butson-thesis-i} Chapter 2).
These observations are a sort of de Rham analogue of the
fact that the $A$ and $B$-twists are \emph{topological} twists. 

\end{rem}

\subsubsection{} Below, we speak of both algebraic and non-algebraic (or \emph{analytic})
theories. Typically, we speak of supersymmetry for the non-algebraic theories, 
and obtain algebraic theories by twisting. 

\subsubsection{$3d$ mirror symmetry (first pass)}

Given a $3d$ $\sN = 4$ theory $T$, there is another $3d$ $\sN = 4$ 
theory $T^{\star}$; this theory has the same underlying QFT as $T$, but the
\emph{embedding} of the supersymmetry algebra is modified by an automorphism
of that algebra. The salient property for us is that the $A$-twist 
$T_A^{\star}$ of $T^{\star}$ is the $B$-twist $T_B$ of the original theory.
Moreover, $(T^{\star})^{\star} = T$. We refer to $T^{\star}$ as the
\emph{abstract mirror dual} to the theory $T$.

\subsubsection{}

We now describe a simplified version of $3d$ mirror symmetry conjectures.

These conjectures state that for certain \emph{3d mirror dual pairs} 
$(\sX,\sX^{\star})$ of algebraic stacks, there is
an equivalence:
\[
T_{\sX}^{\star} \simeq T_{\sX^{\star}}
\]

\noindent of $3d$ field theories.

For instance, passing to $B$-twists and categories of line operators on both sides, 
such a conjecture predicts an equivalence:
\[
D(\fL\sX) \simeq \IndCoh\big(\Maps(\o{\sD}_{x,dR},\sX^{\star})\big).
\]

\begin{example}\label{e:mirror-tate-ungauged}

The pair $(\sX,\sX^{\star}) = (\bA^1,\bA^1/\bG_m)$ is supposed to be a $3d$ mirror
dual pair of stacks. In this case, Theorem \ref{t:main} amounts to 
an equivalence of line operators for the corresponding twisted\footnote{This
may seem incomplete, given that $3d$ mirror symmetry is stated more symmetrically 
in terms of untwisted theories. However, as in Remark \ref{r:no-qft}, 
the untwisted QFTs are on unsteady ground.}  
theories.

\end{example}

\subsubsection{$S$-duality}\label{sss:s}

The theory of $3d$ mirror symmetry as presented above admits a generalization
in which there is an auxiliary pair $(G,\check{G})$ of Langlands dual
reductive groups; the previous setting amounts to a pair of trivial groups.

The theory becomes much richer in this setting. There are connections
to $S$-duality in $4d$ gauge theory and the geometric Langlands correspondence.
Moreover, many of the examples in conventional (i.e., trivial group) $3d$ mirror
symmetry can be better understood as being built from more primitive 
examples in the general setting. 

The major cost is that even the formulation of the conjectures 
becomes conditional on forms of $S$-duality conjectures.  

We discuss these ideas in more detail below. 

\subsubsection{}

First, we need to discuss $4d$ algebraic QFTs, which we denote $\mathsf{T}$.

This is easy: the formalism is the same as in \S \ref{ss:alg-3d},
but one categorical level higher. For instance, there should 
be a DG category attached to $X$, and a DG 2-category $\mathsf{T}(X)$
(of \emph{surface defects}) attached to
$\o{\sD}_x$.

We can speak of \emph{interfaces} $\mathsf{T}_1 \to \mathsf{T}_2$ between 
theories, which are morphisms in the natural sense. An interface 
$\mathsf{triv} \to \mathsf{T}$ from
the trivial\footnote{This theory attaches $\Vect$ to $X$ and 
$\DGCat_{cont}$ to $\o{\sD}_x$.}
theory is a \emph{boundary condition} for $\mathsf{T}$.
This amounts to objects:
\[
\begin{gathered}
\sB_X \in \mathsf{T}(X) (\in \DGCat_{cont})  \\
\sB_x \in \mathsf{T}(\o{\sD}_x) (\in \mathsf{2DGCat}).
\end{gathered}
\] 

\begin{rem}\label{r:3d-as-bdry}

A boundary condition $\mathsf{triv} \to \mathsf{triv}$ is the same
as a $3d$ theory. 

\end{rem}

\begin{rem}

Note that theories can be tensored. The $4d$ theories we consider are naturally
self-dual. So we can (and do) consider interfaces and boundary conditions as
operating in both directions. In such a situation, given boundary conditions 
$\sB_1,\sB_2$ for $\mathsf{T}$, we let $\pair{\sB_1}{\sB_2}$ denote 
the resulting boundary condition $\mathsf{triv} \to \mathsf{triv}$
obtained by composing $\mathsf{triv} \xar{\sB_1} \mathsf{T} \xar{\sB_2} \mathsf{triv}$
and applying Remark \ref{r:3d-as-bdry}.

\end{rem}

\subsubsection{} 

We now discuss twists.

For $4d$ $\sN = 4$ (non-algebraic) theory $\mathsf{T}$, there are supposed
to be $A$ and $B$ twists $\mathsf{T}_A$ and $\mathsf{T}_B$.
Again, there is an involutive operation of 
\emph{abstract mirror dual} $\mathsf{T}^{\star}$, exchanging $A$ and $B$ twists.


\subsubsection{}

We now recall that in $4d$, for a reductive group $G$, 
there is Yang-Mills $\YM_G$ theory with $\sN = 4$ 
supersymmetry. Again, there are $A$ and $B$-twists with algebraic
meaning.

\begin{example}

The $A$-twisted theory $\YM_{G,A}$ attaches to $X$ the DG category 
$D(\Bun_G(X))$ of $D$-modules on $\Bun_G(X)$, and to $\o{\sD}_x$ 
the DG 2-category $\fL G\mod$ of DG categories with a (strong)
action of the loop group $\fL G$ to $\o{\sD}_x$.
For the latter, the vacuum object is $D(\Gr_G)$ and the (``chiral homology")
functional $\fL G\mod \to \Vect$ is given by tensoring over $D^*(\fL G)$ with 
$D^*(\Bun_G^{level,x})$.

\end{example}

\begin{example}

The $B$-twisted theory $\YM_{G,B}$ attaches to $X$ the DG category 
$\QCoh(\LocSys_G(X))$ of quasi-coherent sheaves on 
$\LocSys_G(X)$, and to $\o{\sD}_x$ attaches
the DG 2-category $\ShvCat_{/\LocSys_{G}(\o{\sD}_x)}$.

\end{example}

\subsubsection{}\label{sss:bx-ym}

Given a stack $\sX$ with a $G$-action, there is a corresponding
boundary condition $\sB_{\sX}$ for $\YM_G$.

For $\sX = G$, this boundary condition is called the \emph{Dirichlet}
boundary condition; we denote it by $\Dir_G$.

For $\sX = \Spec(k)$, this boundary condition is called the \emph{Neumann}
boundary condition; we denote it by $\Neu_G$.

For any boundary condition $\sB$ of $\YM_G$, we let $T_{\sB}$ denote
the $3d$ theory $\pair{\sB}{\Dir_G}$. By standard compatibility of $3d$ and $4d$ 
supersymmetry algebras, any such $T_{\sB}$ has $\sN = 4$ supersymmetry.

For a $3d$ $\sN = 4$ theory $T$, the data of \emph{$G$-flavor symmetry} 
is the data of a boundary condition $\sB$ for $\YM_G$ and an identification
$T \simeq T_{\sB}$. Similarly, \emph{$G$-gauge symmetry} for $T$
is the data of a boundary condition $\sB$ and an identification
$T \simeq \pair{\sB}{\Neu_G}$.

The above is compatible with twists; $\sB$ gives a boundary condition 
$\sB_A$ for the $A$-twist $\YM_{G,A}$, and similarly for $B$-twists. 
The resulting $3d$ (algebraic) theories are $T_{\sB,A}$ and $T_{\sB,B}$.

\begin{rem}

For $\sX$ as above, the boundary condition $\sB_{\sX,A}$ should define
the object $D(\fL \fX) \in \fL G\mod$, and the boundary condition 
$\sB_{\sX,B}$ should define $\IndCoh(\Maps(\o{\sD}_{x,dR},\sX/G)) \in 
\ShvCat_{/\LocSys_G(\o{\sD})}$. Globally, there should be objects
of $D(\Bun_G)$ and $\QCoh(\LocSys_G)$ as well. These objects 
are considered by Ben-Zvi, Sakellaridis, and Venkatesh, who 
term them \emph{period/$L$-sheaves}, interpreting 
them as sheaf-theoretic analogues of period integrals/$L$-function values
from harmonic analysis and number theory. They were also considered
(away from singular loci) in \cite{gaiotto-s} \S 9-10. 

\end{rem}

\begin{rem}

Parallel to Remark \ref{r:3d-t*}, at least at the classical (non-quantum) level,
the boundary condition $\sB_{\sX}$ can be defined more generally for
Hamiltonian $G$-spaces, with $\mu:T^*\sX \to \fg^{\vee}$ inducing
$\sB_{\sX}$ (and again, something weaker than 
a suitable Lagrangian foliation should be needed to quantize).

\end{rem}

\subsubsection{}

In the above setting, \emph{$S$-duality} is supposed to be an equivalence:
\[
\YM_G^{\star} \simeq \YM_G
\]

\noindent of $4d$ theories. Passing to $B$-twists, we obtain:
\[
\YM_{G,A} \simeq \YM_{\check{G},B}.
\]

\noindent Up to smearing\footnote{Throughout this exposition of the general ideas, 
we give ourselves the freedom of ignoring these subtleties. 
In the body of this paper (specifically, \S \ref{s:spectral} and \S \ref{s:indcoh}), 
we treat these technical points in detail in our context. The non-abelian future of the
subject will of course similarly need to confront these points more seriously.} 
away homological algebra subtleties 
(cf. \cite{arinkin-gaitsgory}), this is a form of the geometric Langlands
conjectures, encoding local and global theories and compatibilities
between them (cf. \cite{quantum-langlands-summary}). 

The subsequent discussion is predicated on the existence of $S$-duality, so 
we assume it in what follows. 

\subsubsection{}

Given a boundary condition $\sB$ for $\YM_G$, we let 
$\sB^{\star}$ denote the resulting \emph{abstract mirror dual}
boundary condition for $\YM_G^{\star}$, and then let
$\check{\sB}$ denote the resulting $S$-dual boundary condition for 
$\YM_{\check{G}}$.

\subsubsection{$3d$ mirror symmetry redux}

Now fix a reductive group $G$.

In general, a\footnote{Traditionally, \emph{3d mirror symmetry} refers to
the case where $G = \check{G} = \on{triv}$. We find the present terminology
convenient, although it departs somewhat from established conventions.}
\emph{$3d$ mirror dual pair (for $(G,\check{G})$)}
consists of a pair $(\sX,\sX^{\star})$ where $G \actson \sX$,
$\check{G} \actson \sX^{\star}$, and there is an equivalence:
\[
\check{\sB}_{\sX} \simeq \sB_{\sX^{\star}}
\]

\noindent of boundary conditions for $\YM_{\check{G}}$. I.e., the 
boundary condition $S$-dual to $\sB_{\sX}$ should be $\sB_{\sX^{\star}}$.
For $G$ trivial, this recovers our earlier notion of $3d$ mirror dual pairs.

\subsubsection{}

Suppose $(\sX,\sX^{\star})$ as above. Passing to $B$-twists, we find that the objects:
\[
D(\fL\sX) \in \fL G\mod \simeq 
\ShvCat_{/\LocSys_{\check{G}}(\o{\sD})} 
\ni 
\IndCoh(\Maps(\o{\sD}_{x,dR},\sX^{\star}/\check{G}))
\]

\noindent are supposed to match, where the middle isomorphism is local geometric
Langlands (considered modulo homological subtleties). 

We remark that this does not amount to an equivalence of categories
between the categories on the left and on the right here. Rather, this
statement is inherently conditional on local geometric Langlands. 
(And we intend it to be a little fuzzy regarding homological subtleties.) 

With that said, in the case that $G$ is \emph{abelian}, the above does
amount to an equivalence of categories compatible with certain symmetries. 
(In general, the Whittaker category for the LHS above should be close to the
RHS above.)

\subsubsection{Some examples of mirror dual pairs}

To illustrate the above, we briefly include some examples.
For many other examples, we refer to \cite{wang-table}, which is 
a table of examples maintained by Jonathan Wang. (The terminology in the
first two examples is taken from Ben-Zvi, Sakellaridis, and Venkatesh, 
who are appealing to the analogy with harmonic analysis.)

\begin{example}[Godement-Jacquet]\label{e:godement-jacquet}

For $G = \check{G} = GL(V) \times GL(V)$, the pair
$(GL(V) \times V,\End(V))$ should be $3d$ mirror dual.

In particular, the objects:
\[
D(\fL GL(V) \times \fL V) \in \fL GL(V) \times \fL GL(V) \mod 
\]

\noindent should correspond to:
\[
\IndCoh(\Maps(\o{\sD}_{dR},GL(V) \backslash\End(V)/GL(V)) \in 
\ShvCat_{/\LocSys_{GL(V)\times GL(V)}(\o{\sD})}
\]

\noindent under local geometric Langlands. 

\end{example}

\begin{example}[Tate]

For $V = \bA^1$, 
the above asserts that:
\[
D(\fL\bG_m \times \fL \bA^1) \in \fL \bG_m \times \fL \bG_m \mod 
\]

\noindent corresponds to:
\[
\IndCoh(\sY \times \LocSys_{\bG_m}).
\]

\noindent As our group is abelian, we expect an honest equivalence 
of categories. Passing to $\fL\bG_m$-invariants $\leftrightarrow$ 
$\{$fiber at $\on{triv} \in \LocSys_{\bG_m}\}$, we obtain a conjecture:
\[
D(\fL \bA^1) \simeq \IndCoh(\sY).
\]

\noindent This is our main result.  

\end{example}

\begin{example}[Gaiotto-Witten \cite{gaiotto-witten}, \S 3.3.1]

Let $G = \check{G} = GL(V)$ for $\dim(V) = n$. 
Let ${\sL au}$ be the (Laumon) moduli space
of data $(V_1,V_2,\ldots,V_{n-1},f_1,\ldots,f_{n-1})$ where
$V_i$ is a vector space of dimension $i$ and $f_i:V_i \to V_{i+1}$ is a 
morphism, where by definition, $V_n = V$.

Then the pair:
\[
(\sX,\sX^{\star}) = (GL(V),{\sL au})
\] 

\noindent is expected to be $3d$ mirror dual (with respect to our $(G,\check{G})$).

Unwinding, this in effect gives a sort of formula (or kernel) for 
geometric Langlands for $GL_n$; unfortunately, the necessary symmetries on the
resulting object are not apparent.  

\end{example}

\subsubsection{Where do mirror dual pairs come from?}

We briefly address the stated question. There are two styles of answer.

First, one can try to reverse engineer examples. If one believes a mirror
dual $\sX^{\star}$ to some $\sX$ should exist, 
one can sometimes calculate the ring of
functions on $T^*{\sX^{\star}}$ as the Coulomb
branch\footnote{I.e., total cohomology of endomorphisms of the unit object
in $T_{\sX^{\star},B} = \IndCoh(\Maps(\o{\sD}_{dR},\sX))$.} of $\sX^{\star}$,
which must correspond to the Higgs branch\footnote{I.e., total cohomology 
of endomorphisms of the unit object
in $T_{\sX,A} = D(\fL \sX)$.} of $\sX$. 

Alternatively, as in the work of Ben-Zvi, Sakellaridis, and Venkatesh,
one can reverse engineer examples by fitting phenomena from number theory
into the above framework via the analogy between automorphic forms
and automorphic $D$-modules.

But sometimes (always?) there is a better answer than either. 
Many $3d$ mirror symmetry examples can be derived from (super)string 
theory dualities and the existence of $\sM$-theory. (For example, see 
\cite{gaiotto-witten} \S 3.1.2 for the derivation in our example.)
We are not aware of a mathematical counterpart to this idea, i.e., a simple conjectural
framework that subsumes all examples in $3d$ mirror symmetry. Clearly this
would be highly desirable. 

\begin{rem}

Forthcoming work of the first author and Philsang Yoo will develop in detail the
mathematical derivation of mirror pairs from $S$-duality in string theory.

\end{rem}

\subsubsection{The role of this paper}

Our objective in writing this paper was to test the above ideas in the
simplest\footnote{Not covered by geometric Langlands/$S$-duality for $\YM$, 
and in which the full weight of the infinite dimensional geometry appears.} 
case of interest, which is the Tate case discussed above. 
Here $(G,\check{G}) = (\bG_m,\bG_m)$ and\footnote{The discrepancy here with the
ungauged version considered in Example \ref{e:mirror-tate-ungauged} is 
explained by the fact that $\Dir$ and $\Neu$ are $S$-dual for tori.} 
$(\sX,\sX^{\star}) = (\bA^1,\bA^1)$. 

As we consider abelian gauge groups,
for which geometric Langlands is unconditional, this example can be 
studied in complete detail. 
We perform this study at the level of categories of line operators. We will
return to global considerations (compatibility with chiral homology) in 
future work.

Given that we obtain complete positive results in this case, and
because many of the technical difficulties inherent in the $3d$ mirror symmetry
project occur in our example, we believe that our results provide strong
support for the general conjectures. 

\subsubsection{Some references}\label{ss:3d-refs}

Many of the ideas discussed above were developed in 
collaborative work, often unpublished, of a number of mathematical physicists. 
We are grateful to many people who have shared these ideas
with us over the years, and find their intellectual generosity inspiring.
The downside of this situation is that we find some difficulty in accurately
attributing priority for the ideas. We do our best here, but apologize in 
advance for any omissions or inaccuracies, which are not intended as slights.

The first instances of $3d$ mirror symmetry were considered in 
\cite{intriligator-seiberg}. The connection with string theory dualities
was made in \cite{hanany-witten}. These ideas were developed further
in \cite{gaiotto-witten}, which considered interactions between the
\cite{hanany-witten} constructions and $S$-duality for super Yang-Mills.
In turn, \cite{gaiotto-s} translated those ideas into conjectures regarding
geometric Langlands via the Kapustin-Witten dictionary \cite{kapustin-witten}
between Langlands and $S$-duality. The physics literature on BPS line operators 
is too vast to survey here, but we refer the reader to \cite{dggh} for a
review of the literature in the $3d$ $\sN = 4$ context. 

The algebro-geometric interpretation of the $A$-side discussed
above is suggested by work of Braverman, Finkelberg and Nakajima
in \cite{nakajima-coulomb}, \cite{bfn}, \cite{bfn-slices}, 
which emphasized the role of Coulomb branches.
The description of categories of line operators in twisted $3d$ $\sN = 4$
theories was given in unpublished work of Kevin Costello, 
Tudor Dimofte, Davide Gaiotto, Philsang Yoo, and the first 
author.\footnote{These ideas were recorded in \S 1.1 of 
\cite{dggh} and in \cite{bf-notes} \S 7.} 
The earliest derivation
was based on applying the method of Elliott-Yoo \cite{elliott-yoo} 
in the $3d$ $\sN = 4$ context, 
i.e., calculating (derived) moduli spaces of solutions to Euler-Lagrange equations and 
quantizing (in the shifted symplectic sense).\footnote{Because 
we are not aware of a publicly available
account of this derivation, we direct the reader to \cite{costello-x-talks},
which is a series of talks Kevin Costello gave in 2014 that discuss some of 
the main ingredients. It is the earliest talk we know of that contains
these components.} 
By applying the yoga of $3d$ mirror symmetry, this description of 
categories of line operators also 
led to a conjectural form of Theorem \ref{t:main} 
(and other related examples).\footnote{For a physics-oriented discussion of this
work, see \cite{tudor-talk}.}
Philsang Yoo and the first author considered these line operator categories
in the framework of local geometric Langlands as a mathematical interpretation 
of \cite{gaiotto-witten} (inspired in part by \cite{bfn-ring}).\footnote{See
\cite{phil-talk} and \cite{jh-talk} for discussions of this work.}
On the $A$-side, an alternative derivation of the category of
line operators is suggested by 
\cite{voa-3d} \S 4; here the method is as described in Remark \ref{r:voa-3d}. 

Connections between $3d$ mirror symmetry 
and number theory (e.g., duality for spherical varieties and period
integrals \cite{sakellaridis-venkatesh}) were developed by Ben-Zvi, Sakellaridis, 
and Venkatesh, in forthcoming\footnote{The work is also highly publicized -- 
there are recorded lectures about the work readily available online 
(see e.g. \cite{bz-talk}).}
work, which has led to profound new insights and many new examples of 
dual pairs.

Finally, in addition to the above works, 
we have drawn inspiration particularly from \cite{bf-notes},
 \cite{blpw-hypertoric}, \cite{blpw}
\cite{bpw}, \cite{loop-spaces-and-connections},
\cite{char-field-thry}, 
\cite{krs}, \cite{ksv}, and \cite{teleman-icm} in our thinking about
mirror symmetry. 

\subsubsection{Some related works}

We also highlight some works that are closely related to ours.

First, \cite{ccg} studies a version of our problem
near the formal completion of $0 \in \sY$ (i.e., Theorem \ref{t:main} 
\emph{in perturbation theory}) using VOAs; in their picture (based on the
method of Remark \ref{r:voa-3d}), the
Lie algebra controlling the deformation theory is a 
$\mathfrak{psu}(1|1)$ Kac-Moody algebra. We understand that these ideas 
are currently being developed further by Andrew Ballin, Thomas Creutzig,
Tudor Dimofte, and Wenjun Niu.

Second, the works \cite{bfgt} and \cite{bft} 
of Braverman-Finkelberg-Ginzburg-Travkin also obtained geometric results from 
mirror symmetry conjectures, but in the non-abelian setting. 
There it is not possible at present
to prove that boundary conditions match under $S$-duality, since
local geometric Langlands is only conjectural for non-abelian $G$.
Therefore, the cited authors instead consider the categories obtained by 
pairing the vacuum boundary condition(s) for
$\YM_G$/$\YM_{\check{G}}$, and
establish the resulting conjectures for certain examples of 
$3d$ mirror dual pairs $(G,\check{G})$ and $(\sX,\sX^{\star})$. 

\subsection{Outline of the paper}

\subsubsection{} We now describe the contents of the present work.

\subsubsection{}\label{ss:intro-strategy}

First, we describe the general strategy. 

By definition, there are fully faithful functors:
\[
D(\fL_n^+\bA^1) \into D^!(\fL^+ \bA^1) \into D^!(\fL \bA^1).
\]

\noindent Here $\fL_n^+\bA^1$ is the (scheme corresponding to) 
the vector space $k[[t]]/t^nk[[t]]$; $\fL^+ \bA^1$ is the
(scheme corresponding to the pro-finite dimensional) vector 
space $k[[t]]$, and $\fL\bA^1$ is the (indscheme corresponding to
the Tate) vector space $k((t))$. The first functor is a $!$-pullback
and the second functor is a (suitably normalized) $*$-pushforward functor.

The main step in our construction is the construction of corresponding
functors:
\[
\Delta_n:D(\fL_n^+\bA^1) \to \IndCoh^*(\sY).
\]

\noindent As the left hand side is modules over a Weyl algebra, this amounts
to constructing an object of the right hand side with an action of a Weyl
algebra. We construct this object explicitly, via generators and relations. 

From there, we bootstrap up to construct a functor
$\Delta$ as in Theorem \ref{t:main}. We show it is an equivalence using 
the geometry of $\sY$.

\begin{rem}

There is an adage in geometric Langlands that most of the work occurs on the
geometric (i.e., automorphic) side. In our work it is the opposite: the 
geometric side is easy, and most of our work is on the spectral side. 

\end{rem}

\subsubsection{}

We introduce $\sY$ (and related spaces) in \S \ref{s:spectral}. Here we
give the main geometric tools in our study of $\sY$. We study
ind-coherent sheaves on $\sY$ in \S \ref{s:indcoh}, after some preliminary 
remarks about Abel-Jacobi maps in \S \ref{s:aj}. We construct the
functors $\Delta_n$ in \S \ref{s:weyl}; as indicated above, this is our main
construction. We check the corresponding compatibility of this construction 
with class field theory
in \S \ref{s:cft}. We then show fully faithfulness of $\Delta_n$ in 
\S \ref{s:ff}. We then prove Theorem \ref{t:main} in \S \ref{s:main}.

\subsection{Categorical conventions}

At various points, we use homotopical algebra and derived
algebraic geometry in Lurie's higher categorical form,
cf. \cite{htt}, \cite{higheralgebra}, \cite{sag}, though our notation more
closely follows the conventions of \cite{grbook}.

In general, our terminology should be assumed to be derived.
We refer to $(\infty,1)$-categories as \emph{categories} and 
$\infty$-groupoids as \emph{groupoids}. We let
$\Gpd$ denote the category of groupoids.

We let $\DGCat_{cont}$ denote the category of cocomplete\footnote{Rather, presentable.}
DG categories (over $k$) and continuous DG functors, cf. \cite{grbook} \S I.1.10.
We let $\otimes$ denote its standard (\emph{Lurie}) tensor product. 
We let $\Vect \in \DGCat_{cont}$ denote the DG category of (chain complexes
of) vector spaces. For a DG algebra $A/k$, we let $A\mod$ denote the
corresponding DG category. 
For a DG category $\sC$ and $\sF,\sG \in \sC$, we let
$\ul{\Hom}_{\sC}(\sF,\sG) \in \Vect$ denote the complex of
maps from $\sF$ to $\sG$. 

For a DG category $\sC$ with a $t$-structure, we use cohomological
indexing conventions and let $\sC^{\leq 0}$ denote the connective
objects, $\sC^{\geq 0}$ denote the coconnective objects, and
let $\sC^{\heart} \coloneqq \sC^{\leq 0} \cap \sC^{\geq 0}$ denote
the heart of the $t$-structure. Similarly, we let 
$\sC^+$ denote the eventually coconnective objects.

By default, \emph{schemes} are DG\footnote{It will turn out a posteriori 
that the primary objects
we consider are classical schemes (or stacks). But because we are studying
derived categories of coherent sheaves, the machinery and perspective of derived
algebraic geometry is quite convenient.}  
 schemes over $k$. We let
$\AffSch$ denote the category of affine schemes (over $k$),
and $\Sch$ the category of schemes (over $k$).

When we wish to refer to consider various non-derived objects as 
derived objects, we use the
term \emph{classical}. So we may speak of classical schemes, classical rings,
classical vector spaces, classical $A$-modules, etc., all of which 
are full subcategories of the corresponding (suitably) derived categories.

Finally, we sometimes use standard ideas and notation from the theory of
categories with $G$-actions. We refer to \cite{dario-*/!} for an introduction
to these ideas.  

\subsection{Acknowledgements}

We thank Sasha Beilinson, David Ben-Zvi, Dario Beraldo, 
Mat Bullimore, Dylan Butson, Sasha Braverman,
Justin Campbell, Kevin Costello, Gurbir Dhillon, Tudor Dimofte, Chris Elliott,
Tony Feng, Davide Gaiotto, Nik Garner, Dennis Gaitsgory, Sam Gunningham, 
Joel Kamnitzer, Ivan Mirkovic, Tom Nevins, Wenjun Niu, Pavel Safronov, 
Yiannis Sakellaridis, Ben Webster, Jonathan Wang, Alex Weekes, and
Philsang Yoo for many helpful and inspiring conversations related
to this work. The first author particularly wishes to acknowledge and thank 
Philsang Yoo for joint work conjecturing Theorem \ref{t:main}. 

J.H. is part of the Simons Collaboration on Homological Mirror Symmetry supported by 
Simons Grant 390287. S.R. was supported by NSF grant DMS-2101984.
This research was supported in part by Perimeter Institute for Theoretical Physics. 
Research at Perimeter Institute is supported by the Government of Canada through the 
Department of Innovation, Science and Economic Development Canada and by the Province of 
Ontario through the Ministry of Research, Innovation and Science.

\section{Geometry of the spectral side}\label{s:spectral}

\subsection{Overview}

In this section, we define $\sY$ and establish our main algebro-geometric tools
for studying its coherent sheaves.

Here is a more detailed overview of this section. 

In \S \ref{ss:jet-notation}, we give background on jet and loop spaces,
essentially setting up notation for later use.
In \S \ref{ss:rk1-ls}, we give background on $\LocSys_{\bG_m}$.
In \S \ref{ss:y-defin}, we define $\sY$. In \S \ref{ss:y-trun},
we introduce variants of $\sY$ that are more traditional objects
of algebraic geometry. In \S \ref{ss:flatness}, we prove a series 
of flatness results; these are the key tools for studying $\sY$ and
its coherent sheaves. 

The remainder of the section consists of a series of codas, essentially 
developing material and notation for later reference. These may be skipped
at first pass and referred to as needed. In \S \ref{ss:y-not}, we introduce
additional notation for later reference. In \S \ref{ss:fund}, we deduce
some exact sequences from our flatness results; these will be used
later in some inductive arguments. In \S \ref{ss:un}, we describe
certain well-behaved opens in $\sZ^{\leq n}$. 
In \S \ref{ss:coords}, we explicitly describe $\sY$ using
generators and relations. Finally, in \S \ref{ss:rs-sketch}, we draw pictures
explicitly describing the geometry in the regular singular situation.

\subsection{Notation for jet and loop spaces}\label{ss:jet-notation}

\subsubsection{}

For a commutative DG algebra $A \in \ComAlg$, 
$A[[t]] \in \ComAlg$ is defined as 
$(\lim_n A \otimes k[[t]]/t^n)$ and $A((t))$ is defined
as $A[[t]] \otimes_{k[[t]]} k((t))$. We also let
$A[[t]]/t^n \coloneqq A \otimes_k k[[t]]/t^n$.

\subsubsection{}

For a prestack $Z$, 
$\fL Z \in \PreStk \coloneqq \TwoHom(\AffSch^{op},\Gpd)$ denote the functor:
\[
\Spec(A) \mapsto \Hom_{\AffSch}(\Spec(A((t))),Z).
\]

\noindent Similarly, we let $\fL^+ Z$ denote the functor:
\[
\Spec(A) \mapsto \Hom_{\AffSch}(\Spec(A[[t]]),Z).
\]

For $n \geq 0$, we let $\fL_n^+$ denote the functor:
\[
\Spec(A) \mapsto \Hom_{\AffSch}(\Spec(A[[t]]/t^n),Z).
\]

There is an evident comparison map:
\[
\vareps_Z:\fL^+ Z \to \underset{n}{\lim} \, \fL_n^+ Z.
\]

We often use the following lemma without mention.

\begin{lem}

For $Z$ an algebraic stack of 
the form $Y/G$ with $Y$ affine and $G$ an affine 
algebraic group, the map $\kappa_Z$ is an isomorphism.

\end{lem}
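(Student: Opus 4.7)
The plan is to reduce the claim to two independent inputs: the formal statement for affine targets, and the convergence of $BG$ along the $t$-adic tower.

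First, I would handle the affine case $Z = Y$. Here the lemma is formal: by the very definition $A[[t]] = \lim_n A[[t]]/t^n$ in $\ComAlg$, and since $\Hom_{\ComAlg}(\cO(Y),-)$ preserves limits in the target variable, one immediately obtains
\[
\fL^+ Y(\Spec A) \;=\; \Hom_{\ComAlg}(\cO(Y), A[[t]]) \;\simeq\; \lim_n \Hom_{\ComAlg}(\cO(Y), A[[t]]/t^n) \;=\; \lim_n \fL_n^+ Y(\Spec A).
\]

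Next, for $Z = Y/G$, I would unwind: a point of $Y/G$ over $\Spec B$ is the data of a $G$-torsor $P \to \Spec B$ together with a $G$-equivariant map $P \to Y$. So $\vareps_{Y/G}$ factors into two pieces to be handled separately. Step 2a: the map $BG(A[[t]]) \to \lim_n BG(A[[t]]/t^n)$ is an equivalence. The key input is that $G$ is smooth (automatic for algebraic groups in characteristic zero), so in particular formally smooth, so $G$-torsors on $\Spec A[[t]]/t^{n+1}$ lift essentially uniquely from $\Spec A[[t]]/t^n$ (the kernels $t^n A[[t]]/t^{n+1}$ being square-zero for $n \geq 1$, with the $n=0$ stage trivial). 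Derived deformation theory then gives both essential surjectivity and full faithfulness at each stage, and in the limit yields the claimed equivalence. Step 2b: given a compatible system $\{P_n \to \Spec A[[t]]/t^n\}$ of $G$-torsors and its limit $P \to \Spec A[[t]]$ from Step 2a, each $P_n$ is affine (as a fppf-locally trivial $G$-torsor over an affine base with $G$ affine), and similarly $P$ is affine with $\cO(P) = \lim_n \cO(P_n)$. Applying Step 1 with target $Y$, and noting that $G$-equivariance is a condition captured level-by-level in the tower and hence preserved under the limit, shows that $G$-equivariant maps $P \to Y$ form the limit of $G$-equivariant maps $P_n \to Y$. Combining Steps 2a and 2b gives the isomorphism $\vareps_{Y/G}$.

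The substantive content is Step 2a, the convergence of $BG$ along the $t$-adic tower. The leverage is the smoothness of $G$; the only thing to verify carefully is that the deformation-theoretic lifting statement works in the derived rather than classical sense, so that one controls not just the set of isomorphism classes of $G$-torsors but the full groupoid (including automorphisms). I do not expect any serious obstacle here beyond bookkeeping, since the tower consists of square-zero extensions and $G$ smooth makes the relevant cotangent complex of $BG$ perfect and concentrated in the correct degree.
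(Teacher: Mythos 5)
Your overall architecture---reduce to the affine case plus the $\bB G$ case, glued via the description of a point of $Y/G$ as a $G$-torsor together with a $G$-equivariant map to $Y$---matches the paper's, which disposes of the affine case as obvious and defers $\bB G$ to a citation. Step~1 is correct.

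The gap is in Step~2a. The deformation theory you invoke (smoothness of $G$, square-zero extensions $A[[t]]/t^{n+1} \to A[[t]]/t^n$) controls the transition maps $\bB G(A[[t]]/t^{n+1}) \to \bB G(A[[t]]/t^n)$ \emph{within} the tower; by itself it says nothing about how $\bB G$ compares the affine scheme $\Spec A[[t]]$ with the indscheme $\Spf A[[t]] = \colim_n \Spec A[[t]]/t^n$. The map you must show is an equivalence,
\[
\bB G(\Spec A[[t]]) \longrightarrow \lim_n \bB G(\Spec A[[t]]/t^n),
\]
compares two genuinely different objects, and its essential surjectivity is an \emph{algebraization} statement: a compatible system of $G$-torsors over the $\Spec A[[t]]/t^n$ must arise by restriction from an actual $G$-torsor over $\Spec A[[t]]$. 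That is a Grothendieck-existence / formal-GAGA input (e.g.\ reduce to vector bundles via an embedding $G \into GL_N$ and invoke effectivity of compatible systems of finite projective modules over the $t$-adically complete ring $A[[t]]$), not a consequence of lifting along square-zero extensions. Your sentence ``in the limit yields the claimed equivalence'' is exactly where the argument breaks: lifting lets you \emph{build} compatible systems, not algebraize them. Full faithfulness, by contrast, does follow from Step~1, since for two torsors $P,Q$ over $\Spec A[[t]]$ the Isom-scheme is affine over the base when $G$ is affine.

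A smaller issue in Step~2b: the assertion $\cO(P) = \lim_n \cO(P_n)$ is generally false. $\cO(P)$ is flat and finitely presented over $A[[t]]$ but need not be $t$-adically complete; already $\cO(G) \otimes_k A[[t]]$ fails this when $\cO(G)$ is a polynomial algebra. The repair is to identify $G$-equivariant maps $P \to Y$ with sections of the affine morphism $P \times^G Y \to \Spec A[[t]]$, i.e.\ with $A[[t]]$-algebra maps $C \to A[[t]]$ for $C = \cO(P\times^G Y)$, and then use that $\Hom(C,-)$ preserves the limit $A[[t]] = \lim_n A[[t]]/t^n$ in the \emph{target} variable rather than any completeness of $\cO(P)$. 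But even this repaired Step~2b presupposes the algebraized torsor $P$, i.e.\ the unproved essential surjectivity from Step~2a.
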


\begin{proof}

The result is obvious when $Z = Y$ is affine.
For $Z = \bB G$, we refer e.g. to \cite{cpsii} Lemma 2.12.1.
The argument from \emph{loc. cit}. immediately
extends to the general form of $Y$ considered here.

\end{proof}

\begin{rem}

In fact, this result is true much more 
generally for Noetherian algebraic
stacks with affine diagonals: see \cite{bhatt-hl} Corollary 1.5.

\end{rem}

\begin{notation}

We often let $\ev:\fL^+ Z \to \fL_1^+ Z = Z$ 
denote the evaluation map.

\end{notation}

\subsubsection{}

We have the following basic representability result.

\begin{prop}\label{p:loop-rep}

Suppose $Z$ is an affine scheme almost of finite type.
Then $\fL^+ Z$ is an affine scheme and $\fL Z$ is an 
indscheme.

\end{prop}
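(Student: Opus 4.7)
The plan is to handle the two assertions in turn, using the lemma preceding the proposition (which identifies $\fL^+ Z$ with $\lim_n \fL^+_n Z$ in our setting) plus a direct ind-presentation for $\fL Z$.

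For the assertion that $\fL^+Z$ is an affine scheme, I would first show each $\fL^+_n Z$ is affine. Writing $Z = \Spec(B)$ with $B$ a connective commutative DG $k$-algebra almost of finite type, a point of $\fL^+_n Z$ valued in $\Spec(A)$ is a DG algebra map $B \to A[[t]]/t^n$. Choose a presentation of $B$ by a free commutative DG algebra on generators $x_i$ (finitely many in each degree, bounded below) modulo relations. Evaluating the generators into $A[[t]]/t^n$ amounts to specifying coefficients $x_{i,0},\ldots,x_{i,n-1}$ in $A$ of the appropriate degree; the relations cut out a closed DG subscheme. Thus $\fL^+_n Z$ is represented by an affine DG scheme (the Weil restriction of $Z$ along $k \to k[[t]]/t^n$), and it is almost of finite type whenever $Z$ is. By the preceding lemma, $\fL^+ Z = \lim_n \fL^+_n Z$ in $\PreStk$, and a (cofiltered) limit of affine schemes along affine morphisms is an affine scheme, giving the first claim.

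For the assertion that $\fL Z$ is an indscheme, I would begin with the case $Z = \bA^1$. The key observation is that one can filter $A((t))$ by the subspaces $t^{-m} A[[t]]$, and these assemble into closed subfunctors $t^{-m}\fL^+\bA^1 \into \fL\bA^1$ with $\fL\bA^1 = \colim_m t^{-m}\fL^+\bA^1$. Multiplication by $t^m$ identifies $t^{-m}\fL^+\bA^1 \simeq \fL^+\bA^1$, which is affine by the first part, and the transition maps are closed embeddings; so $\fL\bA^1$ is an ind-affine indscheme. Taking products handles $\fL\bA^N$ (or, in the DG setting, a countable affine space built from generators in each degree).

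For general $Z$, I would pick a closed embedding $Z \into \bA^N$ coming from a choice of generators of $B$, which induces a closed subfunctor $\fL Z \into \fL\bA^N$. I would then define the ind-structure on $\fL Z$ by intersection, setting $\fL^{(m)} Z \coloneqq \fL Z \times_{\fL\bA^N} t^{-m}\fL^+\bA^N$; each $\fL^{(m)} Z$ is cut out of an affine scheme by the relations of $B$ evaluated on sections with pole order $\leq m$, hence is a closed affine subscheme, and the transition maps remain closed embeddings. Since every test $\Spec(A)$ has only finitely many pole orders appearing (sections of $\fL Z$ factor through some $\fL^{(m)} Z$ because $A((t)) = \colim_m t^{-m} A[[t]]$), we get $\fL Z = \colim_m \fL^{(m)} Z$, exhibiting it as an ind-affine indscheme.

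The only delicate point is the DG one: verifying that the Weil restriction of $Z$ along $k \to k[[t]]/t^n$ preserves the almost-of-finite-type hypothesis, and that presenting $B$ by finitely many generators in each degree interacts correctly with the ind-structure (so that, in particular, the closed subfunctor $\fL Z \into \fL\bA^N$ is reasonably behaved at each layer). Everything else is a formal unfolding of the definitions.
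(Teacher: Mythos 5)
The paper does not prove this proposition; it is recorded as a standard representability fact (cf.\ Beilinson--Drinfeld, \emph{Chiral Algebras}, \S 2.4; Gaitsgory--Rozenblyum), so there is no in-paper argument to compare against. Evaluated on its own terms, your proof is the standard one and is correct when $Z$ is a classical affine scheme of finite type: $\fL_n^+ Z$ is the Weil restriction of $Z$ along $k \to k[[t]]/t^n$, hence affine; $\fL^+ Z$ is the cofiltered limit $\lim_n \fL_n^+ Z$ of affine schemes (the preceding lemma supplies the identification, and the affine case of that lemma is the easy one); and, choosing a finite presentation $Z \into \bA^N$, the pole-order filtration $\fL^{(m)}Z \coloneqq \fL Z \times_{\fL\bA^N} t^{-m}\fL^+\bA^N$ gives affine schemes along closed, almost finitely presented transition maps (pulled back from $t^{-m}\fL^+\bA^N \into t^{-m-1}\fL^+\bA^N$, which cuts out $N$ coordinates), with $\colim_m \fL^{(m)}Z$ exhausting $\fL Z$ since a finite tuple of Laurent series has bounded pole order.

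The ``delicate point'' you flag for genuinely derived $Z$ deserves more than a remark, because it is exactly where your argument does not close. If $B$ needs countably many semi-free generators (finitely many per cohomological degree), then a single map $B \to A((t))$ can send the generators to Laurent series of unbounded pole order, so the naive colimit $\colim_m \fL^{(m)}Z$ along a chosen embedding $Z \into \bA^\infty$ need not exhaust $\fL Z$ on the nose; moreover each transition $\fL^{(m)}Z \into \fL^{(m+1)}Z$ then cuts out infinitely many coordinates, so it is a closed embedding but no longer almost finitely presented, which weakens the resulting indscheme structure. One way to repair this is via convergence: $\fL Z$ is a convergent prestack, and on $n$-coconnective test rings only the generators in a bounded range of cohomological degrees (hence finitely many) contribute, so one can bound pole orders and argue as in the classical case, degree by degree. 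Since the paper invokes the proposition only for $Z = \bA^1$ and $Z = \bG_m$ (both classical, finite type), the subtlety never actually arises in its applications --- but if you wanted the statement in the stated DG generality, this is the step that needs to be nailed down.
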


\subsubsection{}

We have the following variant of the above.

Suppose $Z$ is equipped with a $\bG_m$-action. 
We let $\fL Zdt \in \PreStk$ denote the functor:
\[
\fL Z dt \coloneqq \fL(Z/\bG_m) \underset{\fL \bB \bG_m}{\times} \Spec(k)
\]

\noindent where $\Spec(k) \to \fL \bB \bG_m$ corresponds to the
(continuous) tangent sheaf $T_{\o{\sD}}$ on $\o{\sD}$.
We define $\fL^+Z dt$ similarly. A choice of trivialization
of $T_{\o{\sD}}$ defines an isomorphism $\fL Z dt \simeq \fL Z$
identifying $\fL^+ Z dt$ and $\fL^+ Z$; in particular,
Proposition \ref{p:loop-rep} applies in this setting.

Informally, $\fL Z dt$ parametrizes sections of
the fiber bundle $\o{\Theta}(\Omega_{\o{\sD}}^1) \overset{\bG_m}{\times} Z \to 
\o{\sD}$, where $\Omega_{\o{\sD}}^1$ is the line bundle 
of (continuous) 1-forms on $\o{\sD}$ and $\Theta(-)$ 
(resp. $\o{\Theta}$) denotes the
(resp. punctured) total space of a line bundle. 

\begin{example}

$\fL \bA^1$ is the algebro-geometric version of the space of Laurent
series, while $\fL \bA^1 dt$ is the algebro-geometric verison of the
space of 1-forms on the punctured disc (for the standard $\bG_m$-action 
on $\bA^1$ by homotheties). Similarly, $\fL \bG_m$ is the algebro-geometric
version of the space of invertible Laurent series.
One easily finds that these indschemes are formally smooth and classical.

\end{example}

\subsection{Rank $1$ local systems}\label{ss:rk1-ls}

We now give a detailed study of $\LocSys_{\bG_m}$, the moduli space of 
rank $1$ local systems on the punctured disc. This material is well-known,
but it is convenient to review to introduce notation and constructions
we will need in the more complicated setting of our space $\sY$.

\subsubsection{}

We define $\LocSys_{\bG_m}$ as follows.

We have a map $\fL \bG_m \to \fL \bA^1 dt$ sending
$f \in \fL \bG_m$ to $d\log(f)$, cf. \cite{locsys} \S 1.12.
This map is a map of (classical) group indschemes, where
$\fL \bA^1 dt$ is given its natural additive structure.

We define $\LocSys_{\bG_m}$ as the stack\footnote{I.e., we sheafify
for the fppf topology. It is equivalent to sheafify for the 
Zariski topology as $\Ker(\fL^+ \bG_m \to \bG_m)$ is pro-unipotent.
For the same reason, the resulting prestack is a sheaf for the 
fpqc topology. In other words, there is no room for ambiguity in 
sheafifying.}
quotient $\fL \bA^1 dt/\fL \bG_m$. 

\begin{rem}

For normalization purposes, we remark that for 
a point $\omega \in \fL \bA^1 dt$, we consider 
$\nabla = d - \omega$ as the corresponding rank $1$ connection
(on the trivial line bundle).

\end{rem}

\subsubsection{}

We need variants of $\LocSys_{\bG_m}$ as well.

Define:
\[
\LocSys_{\bG_m,\log} \coloneqq 
\LocSys_{\bG_m} \underset{\fL \bB\bG_m}{\times} \fL^+ \bB \bG_m = 
\fL \bA^1 dt/\fL^+ \bG_m.
\]

\noindent This is the moduli space of line bundles on the
disc with a connection on the punctured disc. 

There is an
evident action of $\Gr_{\bG_m} \coloneqq \fL \bG_m/\fL^+ \bG_m$ 
on $\LocSys_{\bG_m,\log}$ such that 
$\LocSys_{\bG_m} = \LocSys_{\bG_m,\log}/\Gr_{\bG_m}$.

\subsubsection{}

Define $\fL^{pol} \bA^1 dt$ as the quotient
$\fL \bA^1 dt / \fL^+\bA^1 dt$, the quotient being with respect
to the additive structure. In other words, $\fL^{pol} \bA^1 dt$
parametrizes polar parts of differential forms on the disc.
Note that $\fL^{pol} \bA^1 dt$ is an indscheme of ind-finite type.

As $d\log$ maps $\fL^+ \bG_m$ into $\fL^+ \bA^1 dt$, the
projection map:
\[
\fL \bA^1 dt \to \fL^{pol} \bA^1 dt
\]

\noindent intertwines the gauge action of $\fL^+ \bG_m$ on 
the left hand side with its trivial action on the right hand
side. Therefore, we obtain a canonical map:
\[
\Pol: \LocSys_{\bG_m,\log} \to \fL^{pol} \bA^1 dt.
\]

\noindent This map takes the polar part of a connection.

\subsubsection{}

By construction, there is a map:
\[
\LocSys_{\bG_m,\log} \to \bB \fL^+ \bG_m \xar{\ev} \bB \bG_m.
\]

\noindent This map sends a pair $(\sL,\nabla)$ (with $\sL$ a line
bundle on the disc and $\nabla$ its connection on the punctured
disc) to the fiber of $\sL$ at the origin.

The following result is well-known.

\begin{prop}\label{p:locsys-log}

The map:
\[
\LocSys_{\bG_m,\log} \to \fL^{pol} \bA^1 dt \times \bB \bG_m
\]

\noindent is an isomorphism.

\end{prop}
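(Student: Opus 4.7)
My plan is to analyze the short exact sequence of (classical) group prestacks
\[
1 \to \bG_m \to \fL^+\bG_m \xar{d\log} \fL^+\bA^1 dt \to 0,
\]
in which $\bG_m \hookrightarrow \fL^+\bG_m$ is the inclusion of constants, split by the evaluation $\ev\colon \fL^+\bG_m \to \bG_m$. The kernel identification is immediate: $d\log(f) = f'/f = 0$ forces $f' = 0$, hence $f$ is constant. For surjectivity on $A$-points, given $\omega \in A[[t]]\, dt$ and any $f_0 \in A^\times$, the first-order ODE $f' = f\omega$ admits a unique solution $f \in A[[t]]$ with $f(0) = f_0$, constructed recursively coefficient-by-coefficient. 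This in fact yields a (non-canonical) splitting of the sequence as Zariski sheaves.

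Next I would use this sequence to compute $\LocSys_{\bG_m,\log} = \fL\bA^1 dt / \fL^+\bG_m$ in two stages. The gauge action $\omega \mapsto \omega + d\log(g)$ kills the central $\bG_m$, so it factors through $\fL^+\bG_m/\bG_m \cong \fL^+\bA^1 dt$, which acts on $\fL\bA^1 dt$ by ordinary translation. The first-stage quotient is therefore
\[
\fL\bA^1 dt / \fL^+\bA^1 dt = \fL^{pol}\bA^1 dt
\]
by the very definition of $\fL^{pol}\bA^1 dt$; the residual $\bG_m$ then acts trivially, contributing the expected $\bB\bG_m$ factor. Under the chosen splitting of the short exact sequence, this $\bB\bG_m$ is canonically identified with the fiber of $\sL$ at the origin via the composition $\LocSys_{\bG_m,\log} \to \bB\fL^+\bG_m \xar{\ev} \bB\bG_m$, so the map in the statement is visibly recovered, its first coordinate being $\Pol$ by construction.

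The only substantive input is the ODE argument establishing surjectivity of $d\log$ on $A$-points; everything else is a routine manipulation of quotient stacks, made painless by the classicality and formal smoothness of all objects involved. I do not anticipate a serious obstacle here, but some small care will be needed to verify that the quotient is computed correctly as an fppf stack (equivalently a Zariski stack, since the relevant pro-unipotent piece has no higher cohomology).
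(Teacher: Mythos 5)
Your proposal is correct and takes essentially the same route as the paper: the paper simply asserts that $(d\log,\ev)\colon \fL^+\bG_m \to \fL^+\bA^1 dt \times \bG_m$ is an isomorphism and deduces the result; your short exact sequence, the recursive ODE solution establishing surjectivity of $d\log$, and the $\ev$-splitting of the constants are exactly the content of that "easily seen" assertion, spelled out.
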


\begin{proof}

The map $\fL^+ \bG_m \xar{d\log,\ev} \fL^+ \bA^1 dt \times \bG_m$
is easily seen to be an isomorphism. The result is immediate
from here.

\end{proof}

\subsubsection{}

We now deduce a similar description of $\LocSys_{\bG_m}$
from Proposition \ref{p:locsys-log}.

There is a canonical residue map 
$\Res:\fL^{pol} \bA^1 dt \to \bA^1$. There is a canonical projection
$\fL^{pol} \bA^1 dt \to \Ker(\Res)$ as the latter identifies
with $\fL \bA^1 dt/t^{-1} \cdot \fL^+ \bA^1 dt$,
so we obtain a product decomposition:
\[
\fL^{pol} \bA^1 dt = \bA^1 \times \Ker(\Res).
\]

\begin{prop}\label{p:locsys}

There is an isomorphism $\LocSys_{\bG_m} \simeq 
\bA^1/\bZ \times \Ker(\Res)_{dR} \times \bB \bG_m$ fitting
into a commutative diagram:
\[
\xymatrix{
\LocSys_{\bG_m,\log} \ar[rr]^{\text{Prop. \ref{p:locsys-log}}} 
\ar[d] & & \fL^{pol} \bA^1 dt \times \bB \bG_m \ar[d] \\
\LocSys_{\bG_m} \ar[rr]^{\simeq}
& &
\bA^1/\bZ \times \Ker(\Res)_{dR} \times \bB \bG_m
}
\]

\end{prop}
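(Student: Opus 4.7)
The plan is to deduce the isomorphism by taking the further quotient $\LocSys_{\bG_m} = \LocSys_{\bG_m,\log}/\Gr_{\bG_m}$. Combining Proposition \ref{p:locsys-log} with the residue decomposition $\fL^{pol}\bA^1 dt = \bA^1 \times \Ker(\Res)$ reduces the problem to computing the $\Gr_{\bG_m}$-action on $\bA^1 \times \Ker(\Res) \times \bB\bG_m$ and taking the quotient. The $\bB\bG_m$-factor is acted on trivially: the generator $t^n \in \fL\bG_m$ twists by $\sO(n\cdot 0)$, whose fiber at $0$ is canonically trivialized via the tangent direction of the formal disc. So the content lies in the action on $\fL^{pol}\bA^1 dt$, inherited from the $d\log$-translation action of $\fL\bG_m$ on $\fL\bA^1 dt$.

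The crucial observation is that $\Gr_{\bG_m}$ is not merely the discrete ind-scheme $\bZ$: as a prestack quotient, it carries nontrivial formal neighborhoods of each $\bZ$-point with tangent space $\Lie(\Gr_{\bG_m}) = k((t))/k[[t]]$. The induced action on $\fL^{pol}\bA^1 dt$ splits accordingly. On the reduced level, $1 \in \bZ$ translates by $d\log(t) = dt/t$, which lies purely in the residue factor $\bA^1$, producing the quotient $\bA^1/\bZ$ while leaving $\Ker(\Res)$ untouched. On the Lie-algebra level, the action is the de Rham differential $d: k((t))/k[[t]] \to k((t))dt/k[[t]]dt = \fL^{pol}\bA^1 dt$; an elementary check shows this is injective with image exactly $\Ker(\Res)$ (every polar form with vanishing residue is exact, and a Laurent series whose derivative is regular is itself regular). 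Thus the formal part of $\Gr_{\bG_m}$ acts freely on $\Ker(\Res)$, sweeping out its entire tangent bundle, and the quotient by this formal action is precisely the de Rham space $\Ker(\Res)_{dR}$. Assembling the three contributions produces the stated isomorphism, and commutativity of the diagram is immediate from the construction, since both horizontal equivalences are induced by the same $d\log$-based identifications.

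The main obstacle is the rigorous bookkeeping of this infinite-dimensional formal geometry, specifically verifying that the formal neighborhoods of $\bZ$-points inside $\Gr_{\bG_m}$ genuinely de-Rhamify the $\Ker(\Res)$-factor (not merely act transitively on it infinitesimally). I would confirm this via a direct $R$-points computation: on any local Artinian test ring $R$ with maximal ideal $\fm$, any infinitesimal unit $u \in 1 + \fm \cdot R((t))$ satisfies $d\log(u) = d(\log u)$ (with $\log u$ well-defined by nilpotency of $\fm$), and as $u$ varies this sweeps out every residue-zero form in $\fm \cdot R((t))dt$. It follows that the image of $d\log: \fL\bG_m(R) \to \fL\bA^1 dt(R)$ equals $\bZ \cdot dt/t + \fL^+\bA^1 dt(R)$ on the reduced part, together with all residue-zero forms in each nilpotent direction, so the resulting quotient matches the functor-of-points expression for $\bA^1/\bZ(R) \times \Ker(\Res)_{dR}(R) \times \bB\bG_m(R) = R/\bZ \times \Ker(\Res)(R_{\on{red}}) \times \bB\bG_m(R)$.
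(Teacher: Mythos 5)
Your argument is correct and follows essentially the same route as the paper: split $\Gr_{\bG_m}$ into its $\bZ$-reduction (which shifts the residue factor, producing $\bA^1/\bZ$) and the formal neighborhood of the identity, then check via $d\log$ that this formal group is precisely the formal completion of $\Ker(\Res)$ at the origin, so that the quotient is the de Rham space. The one small imprecision is the claim that the induced action on the $\bB\bG_m$-factor is \emph{canonically} trivial: the fiber of $\sO(n\cdot 0)$ at $0$ is $(T_0\sD)^{\otimes n}$, a canonical but not canonically trivialized line, so this trivialization (and hence the stated isomorphism) depends on a choice of uniformizer $t$ — exactly the dependence the paper flags in the remark following the proposition.
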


\begin{proof}

There is a canonical homomorphism
$\Gr_{\bG_m} \to \bZ$ given by minus\footnote{This sign is included to match
with standard normalizations.} the valuation map. This map splits 
canonically as well: $\bZ = \Gr_{\bG_m}^{red}$.
Therefore, we obtain a canonical product decomposition:
\[
\Gr_{\bG_m}^{red} = \bZ \times \Gr_{\bG_m}^{\circ}
\]

\noindent for $\Gr_{\bG_m}^{\circ}$ the connected component
of the identity. Here we consider $\bZ$ as a discrete indscheme
over $k$ in the natural way, i.e., as $\coprod_{n \in \bZ} \Spec(k)$.

We have a commutative diagram:
\[
\xymatrix{
\Gr_{\bG_m} \ar[r]^{d\log} \ar@{=}[d] & 
\fL^{pol} \bA^1 dt \ar@{=}[d] \\
\bZ \times \Gr_{\bG_m}^{\circ} \ar[r] 
& \bA^1 \times \Ker(\Res) 
}
\]

\noindent where the bottom map is obtained as the product
of the homomorphisms $\bZ \into \bA^1$ and 
$d\log:\Gr_{\bG_m}^{\circ} \to \Ker(\Res)$. The latter
is easily seen to induce an isomorphism between
$\Gr_{\bG_m}^{\circ}$ and the formal group 
$\Ker(\Res)_0^{\wedge}$ of $\Ker(\Res)$. This gives the claim.

\end{proof}

\begin{rem}

This isomorphism actually depends mildly on the choice of uniformizer $t$;
this is needed to trivialize the action of 
$\Gr_{\bG_m}$ on the $\bB \bG_m$-factor. 

\end{rem}

\subsubsection{}

We will also use \emph{truncated} versions of the above spaces in 
which we bound the irregularity of our local systems.

We fix $n \in \bZ^{\geq 0}$ in what follows.

\subsubsection{}

Define $\fL^{\leq n} \bA^1 dt \subset \fL \bA^1 dt$ as the
classical closed subscheme whose points are differential 
forms with poles of order at most $n$. Therefore, 
$\fL^{\leq n} \bA^1 dt$ is the image of $\fL^+ \bA^1 dt$ under the
map $t^{-n} \cdot -:\fL \bA^1 dt \to \fL \bA^1 dt$.

We similarly define $\fL^{pol,\leq n} \bA^1 dt$ as 
$\fL^{\leq n} \bA^1 dt/\fL^+ \bA^1 dt$. For $n > 0$, we define
$\Ker(\Res)^{\leq n}$ as $\Ker(\fL^{\leq n} \bA^1 dt \to \bA^1)$.

\subsubsection{}\label{ss:loop-gr-trun}

For $n>0$, define $\fL^{\leq n} \bG_m$ as the fiber product:
\[
\fL^{\leq n} \bG_m \underset{\fL \bA^1 dt}{\times} \fL^{\leq n} \bA^1 dt
\]

\noindent where we are using $d\log:\fL \bG_m \to \fL \bA^1 dt$.

For\footnote{We separate the cases to be derivedly correct.} 
$n = 0$, define $\fL^{\leq 0} \bG_m$ as $\fL^+ \bG_m$.

Finally, define $\Gr_{\bG_m}^{\leq n}$ as $\fL^{\leq n}\bG_m/\fL^+\bG_m$.

\begin{lem}

$\fL^{\leq n} \bG_m$ is a formally smooth classical indscheme.

\end{lem}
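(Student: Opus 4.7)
The proof naturally divides into the two cases of the definition.

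For $n = 0$: by definition $\fL^{\leq 0}\bG_m = \fL^+\bG_m$, the arc space of the smooth affine group $\bG_m$. Expanding $f \in R[[t]]^\times$ into its coefficients gives the identification $\fL^+\bG_m \simeq \bG_m \times \prod_{i \geq 1}\bA^1$, which is a classical pro-smooth (in particular, formally smooth) affine scheme.

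For $n \geq 1$: I first observe that $\fL^{\leq n}\bG_m \subseteq \fL\bG_m$ is a subgroup, since $d\log$ is a homomorphism to the additive group $\fL\bA^1 dt$ and $\fL^{\leq n}\bA^1 dt$ is closed under addition. Consequently, formal smoothness of $\fL^{\leq n}\bG_m$ can be checked at the identity. A direct calculation identifies the tangent space at the identity with $\{\xi \in \fL\bA^1 : d\xi \in t^{-n}R[[t]]\,dt\}$: for $f = 1 + \varepsilon\xi$ (with $\varepsilon^2 = 0$) we have $d\log(f) = \varepsilon\, d\xi$, and the condition $d\xi \in t^{-n}R[[t]]\,dt$ forces $\xi_k = 0$ for $k \leq -n$ in characteristic zero. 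So the tangent space is $t^{-(n-1)}R[[t]]$, a pro-finite-free $R$-module.

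For the lifting criterion: given a square-zero extension $R' \twoheadrightarrow R$ with kernel $I$ and $f \in \fL^{\leq n}\bG_m(R)$, I would use formal smoothness of $\fL\bG_m$ to pick an arbitrary lift $\tilde f \in \fL\bG_m(R')$. Modifying $\tilde f$ to $(1 + h)\tilde f$ for $h \in I\cdot\fL\bA^1$ changes $d\log(\tilde f)$ by $dh$ (mod $I^2 = 0$), and in characteristic zero any monomial $c\, t^{-k}\,dt$ with $k \geq 2$ admits an explicit antiderivative $-\tfrac{c}{k-1}\,t^{-(k-1)}$. Hence one can choose $h$ to cancel any $I$-valued polar tail of $d\log(\tilde f)$ lying outside $\fL^{\leq n}\bA^1 dt$, producing a lift in $\fL^{\leq n}\bG_m(R')$. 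The same computation confirms that $\fL^{\leq n}\bG_m$ is classical (no higher Tor contributions from the defining fiber product), and the indscheme structure follows by pulling back the standard presentation $\fL\bG_m = \colim_M\{f : t^M f,\, t^M f^{-1} \in R[[t]]\}$ and intersecting each piece with the defining condition. The main technical obstacle is the simultaneous verification of classicality and formal smoothness; both are handled by the explicit lifting above, whose key input is the availability of primitives for polar monomials in characteristic zero.
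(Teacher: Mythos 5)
Your approach is genuinely different from the paper's, and is worth comparing. The paper first reduces to $\Gr_{\bG_m}^{\leq n}$ (using that $\fL^{\leq n}\bG_m$ is an $\fL^+\bG_m$-torsor over it, with $\fL^+\bG_m$ formally smooth and classical), then identifies $\Gr_{\bG_m}^{\leq n} \simeq \bZ \times (\Ker(\Res)^{\leq n})_0^\wedge$ via $d\log$, exactly as in the proof of Proposition \ref{p:locsys}. A formal completion of an affine space along the origin is manifestly both formally smooth and classical, so both claims drop out at once. You instead work directly with $\fL^{\leq n}\bG_m$ via the infinitesimal lifting criterion. Your lifting argument for formal smoothness is essentially correct: given $\tilde f$ an arbitrary lift in $\fL\bG_m(R')$, one can twist by $1+h$ with $h$ a primitive of the offending $I$-valued polar tail of $d\log(\tilde f)$, and since that tail only involves $t^{-k}\,dt$ with $k \geq 2$ (as $n\geq 1$), primitives exist in characteristic zero. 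This is the same mechanism underlying the paper's isomorphism $d\log\colon\Gr_{\bG_m}^{\circ} \isom \Ker(\Res)_0^\wedge$, just deployed point-by-point. Your tangent space computation $t^{-(n-1)}R[[t]]$ is also correct.

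However, there is a genuine gap in the classicality claim. You assert ``the same computation confirms that $\fL^{\leq n}\bG_m$ is classical (no higher Tor contributions from the defining fiber product),'' but the lifting criterion over classical square-zero extensions does not, by itself, control derived structure: classicality of the derived fiber product $\fL\bG_m \times_{\fL\bA^1 dt} \fL^{\leq n}\bA^1 dt$ is a Tor-vanishing statement, and surjectivity of the map on classical tangent spaces is necessary but not sufficient for it. To close this, you need a flatness input somewhere — for instance, note that $d\log$ factors as $\fL\bG_m \to \Gr_{\bG_m} \xar{\Pol\circ d\log} \fL^{pol}\bA^1 dt$, so that
\[
\fL^{\leq n}\bG_m \;=\; \fL\bG_m \underset{\Gr_{\bG_m}}{\times} \Bigl(\Gr_{\bG_m} \underset{\fL^{pol}\bA^1 dt}{\times} \fL^{pol,\leq n}\bA^1 dt\Bigr),
\]
the outer fiber product is along a flat (torsor) map, and the inner one is the intersection of a formal completion with a linear subspace inside an affine space — classical because completion along a closed point of a Noetherian polynomial ring is flat. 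This is in effect what the paper's reduction to $\Gr_{\bG_m}^{\leq n}$ accomplishes in one line. Without something of this sort, the classicality step in your proposal is unsupported.
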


\begin{proof}

It suffices to show the same for $\Gr_{\bG_m}^{\leq n}$.

As in the proof of Proposition \ref{p:locsys}, the map
$\Gr_{\bG_m} \xar{d\log} \fL \bA^1 dt \to \Ker(\Res)$ induces an
isomorphism:
\[
\Gr_{\bG_m}^{\leq n} \isom \bZ \times (\Ker(\Res)^{\leq n})_0^{\wedge}.
\]

\noindent Here $\Ker(\Res)^{\leq n}$
is defined as the kernel of the
residue on polar forms with poles of order $\leq n$. 
As the latter is an affine space, its formal completion at the origin
is formally smooth and classical. This gives the claim.

\end{proof}

\subsubsection{}

Next, define $\LocSys_{\bG_m}^{\leq n}$ as the quotient
$\fL^{\leq n} \bA^1 dt/\fL^{\leq n} \bG_m$ under the gauge action.
We similarly define $\LocSys_{\bG_m,\log}^{\leq n}$ as 
$\fL^{\leq n} \bA^1 dt/\fL^+ \bG_m$. By definition, 
we have a Cartesian diagram:

\begin{prop}\label{p:locsys-trun}

The isomorphisms from Propositions \ref{p:locsys-log} and \ref{p:locsys}
induce further isomorphisms:
\[
\xymatrix{
\LocSys_{\bG_m,\log}^{\leq n} \ar[rr]^{\simeq}
\ar[d] & & \fL^{pol,\leq n} \bA^1 dt \times \bB \bG_m \ar[d] \\
\LocSys_{\bG_m,\log} \ar[rr]^{\simeq}
 & & \fL^{pol} \bA^1 dt \times \bB \bG_m
}
\]

\noindent and:
\[
\xymatrix{
\LocSys_{\bG_m}^{\leq n} \ar[rr]^{\simeq} \ar[d] 
& &
\bA^1/\bZ \times \Ker(\Res)_{dR}^{\leq n} \times \bB \bG_m \ar[d] \\
\LocSys_{\bG_m} \ar[rr]^{\simeq}
& &
\bA^1/\bZ \times \Ker(\Res)_{dR} \times \bB \bG_m.
}
\]

\end{prop}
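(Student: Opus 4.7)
The proposition follows by restricting the proofs of Propositions \ref{p:locsys-log} and \ref{p:locsys} to the truncated setting. The key observation throughout is that the closed subscheme $\fL^{\leq n} \bA^1 dt \subset \fL \bA^1 dt$ is preserved by the relevant gauge actions, so each step in the proof of the untruncated statement admits a direct truncated analogue.

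For the log diagram, recall from the proof of Proposition \ref{p:locsys-log} that the map $(d\log, \ev): \fL^+ \bG_m \isom \fL^+ \bA^1 dt \times \bG_m$ is an isomorphism of group schemes. Under this decomposition, the gauge action of $\fL^+ \bG_m$ on $\fL^{\leq n} \bA^1 dt$ has its $\fL^+ \bA^1 dt$-factor acting by additive translation (which clearly preserves $\fL^{\leq n} \bA^1 dt$, as this is a closed subscheme stable under the additive $\fL^+ \bA^1 dt$-action) and its $\bG_m$-factor acting trivially. Taking the quotient gives $\fL^{\leq n} \bA^1 dt / \fL^+ \bA^1 dt \times \bB \bG_m = \fL^{pol, \leq n} \bA^1 dt \times \bB \bG_m$, as desired. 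Compatibility with the untruncated isomorphism via the vertical maps is immediate from the naturality of the construction.

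For the non-log diagram, we further quotient by $\Gr_{\bG_m}^{\leq n}$. By the analysis in \S \ref{ss:loop-gr-trun}, we have $\Gr_{\bG_m}^{\leq n} \simeq \bZ \times (\Ker(\Res)^{\leq n})_0^{\wedge}$, and this decomposition is the restriction of the corresponding decomposition of $\Gr_{\bG_m}$ used in the proof of Proposition \ref{p:locsys}. The $\bZ$-factor acts on $\fL^{pol, \leq n} \bA^1 dt = \bA^1 \times \Ker(\Res)^{\leq n}$ via the embedding $\bZ \into \bA^1$ coming from $d\log(t^n) = n \cdot dt/t$, yielding the $\bA^1/\bZ$ quotient on the residue factor. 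The $(\Ker(\Res)^{\leq n})_0^{\wedge}$-factor acts by translation on $\Ker(\Res)^{\leq n}$, and the quotient of a smooth (ind-)affine space by the translation action of its formal completion at the origin is by definition its de Rham space, giving $\Ker(\Res)_{dR}^{\leq n}$. The $\bB \bG_m$-factor is unaffected up to the standard choice of uniformizer.

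The only subtle point is the identification of the quotient of $\Ker(\Res)^{\leq n}$ by the translation action of its formal completion at the origin with its de Rham space; this is a general fact about (ind-)schemes with a group structure, following from the description of the de Rham space as the quotient by the formal neighborhood of the diagonal, which in the translation-invariant setting simplifies to the formal completion at the identity. The diagrams then commute by functoriality of all of the steps in $n$.
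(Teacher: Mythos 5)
Your proposal is correct and takes the same route the paper implicitly does: the paper's own proof is the single sentence ``This result is clear from our earlier analysis,'' and you are simply spelling out, step by step, why restricting the isomorphism $(d\log,\ev):\fL^+\bG_m \isom \fL^+\bA^1 dt \times \bG_m$ and the splitting $\Gr_{\bG_m}\simeq \bZ\times\Gr_{\bG_m}^\circ$ to the $\leq n$ truncations gives the truncated diagrams. The only point worth flagging is that the sign in your $d\log(t^n)$ computation should be tracked against the convention of Proposition \ref{p:locsys} (the degree map is \emph{minus} the valuation), but this does not affect the argument.
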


This result is clear from our earlier analysis.

\subsection{Definition of $\sY$}\label{ss:y-defin}

\subsubsection{}

We define $\sY$, the moduli of rank $1$ de Rham local systems on $\o{\sD}$
with a flat section, as follows.

First, observe that we have a map $d:\fL \bA^1 \to \fL \bA^1 dt$
defined by the exterior derivative on $\o{\sD}$.
We also have a product map $\fL \bA^1 \times \fL \bA^1 dt \to \fL \bA^1 dt$
coming from the product $\bA^1 \times \bA^1 \to \bA^1$.

Let $\sY^{\prime}$ denote the equalizer:
\[
\sY^{\prime} \coloneqq \on{Eq}
\big(\fL \bA^1 \times \fL \bA^1 dt 
\underset{(g,\omega) \mapsto g\omega}{\overset{d\circ \pi_1}{\rightrightarrows}}
\fL \bA^1 dt\big) \in \PreStk
\]

\noindent We emphasize that this is a \emph{derived} equalizer;
although the terms appearing are classical indschemes, this
equalizer is a priori a DG indscheme.

There is a canonical $\fL \bG_m$-action on $\sY^{\prime}$,
heuristically given by the formula:
\[
(f \in \fL \bG_m,(g,\omega) \in \sY^{\prime}) \mapsto 
(fg,\omega+d\log(f)).
\]

\noindent We will define this action more rigorously below,
but in the meantime, we define:
\[
\sY \coloneqq \sY^{\prime}/\fL \bG_m.
\]

\begin{rem}

Let us explain the above formulae.

Note that $\sY^{\prime}$ by definition parametrizes pairs
$(g \in \fL\bA^1, \omega \in \fL \bA^1 dt) $ with
$dg = g\omega$, which we can rewrite as 
$\nabla g = 0$ for $\nabla \coloneqq d - \omega$. 
This data amounts to a connection on the trivial bundle on $\o{\sD}$ 
(corresponding to $\omega$) and a flat section (corresponding to $g$). 

Quotienting by $\fL \bG_m$ amounts to modding out by gauge transformation,
i.e., not fixing a trivialization on our line bundle on $\o{\sD}$.

\end{rem}

\subsubsection{}\label{ss:action-rigorous}

Above, we did not define the action of $\fL \bG_m$ on $\sY^{\prime}$
completely rigorously: as $\sY^{\prime}$ is a priori DG, such
formulae are not sufficient. (In fact, using Theorem \ref{t:cl}, $\sY^{\prime}$
is classical, and the implicit anxiety here is not needed.)

Here are two approaches. 

First, in \cite{dennis-punctured-disc}, 
Gaitsgory defines a prestack $\Maps(\o{\sD}_{dR},Y)$ for any 
prestack $Y$. Taking $Y = \bA^1/\bG_m$ then immediately gives the definition
of $\sY$ as $\Maps(\o{\sD}_{dR},\bA^1/\bG_m)$. (One readily
finds $\Maps(\o{\sD}_{dR},\bB \bG_m) = \LocSys_{\bG_m}$ and
$\sY^{\prime} = \Maps(\o{\sD}_{dR},\bA^1/\bG_m) 
\times_{\Maps(\o{\sD}_{dR},\bB \bG_m)} \Omega_{\bG_m}^1$, consistent
with our constructions.)

We prefer an alternative approach that we find more explicit, and that we
describe in full detail here. The reader who is not concerned about
homotopical details (which are not serious anyway by Theorem \ref{t:cl})
may skip this material.

Below, we make various constructions with $\fL \bA^1 \times \fL \bA^1 dt$. Here we may
just work with formulae as this indscheme is classical.

We consider two monoid structures on $\fL \bA^1 \times \fL \bA^1 dt$. 
The first has product:
\[
(g,\omega) \dot (\tilde{g},\tilde{\omega}) \coloneqq 
(g\tilde{g},\omega+\tilde{\omega})
\]

\noindent while the second has product:
\[
(g,\omega) \star (\tilde{g},\tilde{\omega}) \coloneqq 
(g\tilde{g},g\tilde{\omega}+\tilde{g}\omega).
\]

\noindent Moreover, the map:
\[
\begin{gathered}
\mu:\fL \bA^1 \times \fL \bA^1 dt \to \fL \bA^1 \times \fL \bA^1 dt \\
(g,\omega) \mapsto (g,dg-g\omega)
\end{gathered}
\]

\noindent is a map of monoids $(\fL \bA^1 \times \fL \bA^1 dt,\dot) \to 
(\fL \bA^1 \times \fL \bA^1 dt,\star)$.

We obtain a commutative diagram of maps of monoids:
\[
\xymatrix{
\fL \bG_m \ar[rrr]^(.4){f \mapsto (f,d\log(f))} 
\ar[d] &&& (\fL \bA^1 \times \fL \bA^1 dt,\dot) \ar[d]^{\mu} \\
\on{*} \ar[rrr] &&& (\fL \bA^1 \times \fL \bA^1 dt,\star)
}
\]

Therefore, we obtain a canonical map of monoids:
\[
\fL \bG_m \to (\fL \bA^1 \times \fL \bA^1 dt,\dot) 
\underset{(\fL \bA^1 \times \fL \bA^1 dt,\star)}{\times}
\fL \bA^1
\]

\noindent where the previously unconsidered map is this
fiber product is $\fL \bA^1 \xar{(\id,0)} \fL \bA^1 \times \fL \bA^1 dt$; the source $\fL \bA^1$ is given its natural
product structure. 

The right hand side above evidently
identifies with $\sY^{\prime}$, so we obtain a monoid
structure on $\sY^{\prime}$ and a map of monoids
$\fL \bG_m \to \sY^{\prime}$. In particular, we obtain
an action of $\fL \bG_m$ on $\sY^{\prime}$; this is
our desired action. 

\subsubsection{}

We now formulate the following result, whose proof will
be given in \S \ref{ss:z-refinement}.

\begin{thm}\label{t:cl}

$\sY$ is a classical prestack.

\end{thm}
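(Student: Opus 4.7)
My plan is to reduce to classicality of the truncations $\sY^{\leq n}$ introduced in \S\ref{ss:y-trun}, which bound the pole order of the underlying connection. These form a filtered exhaustion $\sY = \colim_n \sY^{\leq n}$ by closed subprestacks, so classicality of each $\sY^{\leq n}$ yields classicality of $\sY$. This reduction is important because the full $\sY$ involves the indscheme $\fL\bA^1 dt$ of \emph{all} polar parts, while $\sY^{\leq n}$ lives over the scheme $\fL^{\leq n}\bA^1 dt$ and is therefore susceptible to more concrete geometric arguments, for instance via the product decomposition of $\LocSys_{\bG_m}^{\leq n}$ obtained in Proposition~\ref{p:locsys-trun}.

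For fixed $n$, I would exhibit $\sY^{\leq n}$ as a derived fiber product, where the relevant map
\[
\fL\bA^1 \times \fL^{\leq n}\bA^1 dt \;\xrightarrow{(g,\omega) \mapsto dg - g\omega}\; \fL^{\leq n}\bA^1 dt
\]
(together with the $\fL\bG_m$-action) must be shown to be ``flat enough'' that the derived fiber at $0$ is classical after quotient. The natural way to establish this is to present $\sY^{\leq n}$, or an intermediate auxiliary scheme $\sZ^{\leq n}$, via the explicit coordinates of \S\ref{ss:coords} as cut out of a smooth ambient (ind-)scheme by an explicit list of equations, and then to verify the expected-codimension/regular-sequence property at each finite level. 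This is the content of the flatness results of \S\ref{ss:flatness}, which provide the main technical input. The passage from classicality of $\sZ^{\leq n}$ to classicality of $\sY^{\leq n}$ should then be routine, using that $\fL\bG_m$ (or rather its relevant truncated subgroups $\fL^{\leq n}\bG_m$ of \S\ref{ss:loop-gr-trun}) is formally smooth and classical.

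The main obstacle is the genuine degeneration of the equation $dg = g\omega$ at the locus $\{g = 0\}$: classically, any $\omega \in \fL^{\leq n}\bA^1 dt$ satisfies this equation when $g = 0$, so the $g = 0$ component of the classical fiber is enormous compared to the generic fiber over invertible $g$ (where $\omega = d\log g$ is uniquely determined). Naively this non-transversality should produce nontrivial higher Tor, since the linearization $(\delta g, \delta \omega) \mapsto d(\delta g) - \omega\, \delta g - g\, \delta \omega$ is not surjective at $g = 0$ (it misses the residue direction). The proof must show that these apparent obstructions vanish once one simultaneously accounts for the $\fL\bG_m$-quotient and the pole-order truncation: passing to $\sY/\fL\bG_m$ enlarges the orbit of the degenerate point, while bounding the pole of $\omega$ restricts which directions are available, and it is the interaction of these two operations that should force the derived structure to collapse. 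Overcoming this cleanly — likely by stratifying $\sY^{\leq n}$ according to the order of vanishing of $g$ and checking compatibility of the strata with the quotient — is where I expect the real work to lie.
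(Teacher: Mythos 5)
Your overall plan — reduce to truncations, pass to an auxiliary $\sZ^{\leq n}$, establish classicality via the flatness results, then quotient — does track the paper's route at the level of the section headings. But you misdiagnose where and how the derived structure collapses, and this is not cosmetic: if you actually executed the ``stratify and invoke the quotient'' program you describe, you would be attacking a phantom difficulty while omitting the argument that actually works.

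First, a technical point: the map you write, $\fL\bA^1 \times \fL^{\leq n}\bA^1 dt \to \fL^{\leq n}\bA^1 dt$, $(g,\omega)\mapsto dg - g\omega$, does not land in $\fL^{\leq n}\bA^1 dt$ when $g$ is allowed poles, and $\fL\bA^1$ is an indscheme. The essential maneuver, which you gesture at but treat as an optional alternative, is to work with $\sZ^{\leq n}$: replace $\fL\bA^1$ by $\fL^+\bA^1$, where the map $\mu:\fL^+\bA^1 \times \fL^{\leq n}\bA^1 dt \to \fL^{\leq n}\bA^1 dt$, $(g,\omega)\mapsto g\omega - dg$, is an honest map of pro-finite-dimensional affine schemes. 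One then recovers $\sY_{\log}^{\leq n}$ as a colimit of $\sZ^{\leq n}$ along the shift $\iota$, and $\sY^{\leq n}$ as the $\Gr_{\bG_m}^{\leq n}$-quotient.

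Second, and more seriously, your central worry — that ``the $g=0$ component of the classical fiber is enormous compared to the generic fiber,'' so that the non-transversality must produce higher Tor that only the $\fL\bG_m$-quotient can kill — is simply false, and the paper's Lemma~\ref{l:flat-mult} shows why. In the finite-dimensional models $\mu_n:\bA^{2n}\to\bA^n$, every irreducible component of $\mu_n^{-1}(0)$, including the locus $Z_n=\{a_0=\cdots=a_{n-1}=0\}\cong\bA^n$ that you worry about, has dimension exactly $n=2n-n$. There is no anomalous jump: the fiber over $0$ is equidimensional of the expected dimension, and since $\mu_n$ is given by homogeneous polynomials, this already forces flatness. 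Consequently $\mu^{-1}(0)$ is classical before any quotient is taken. The $\fL^+\bG_m$- and $\Gr_{\bG_m}^{\leq n}$-quotients that follow are by formally smooth classical groups and contribute nothing to the derived collapse; they are not where the work lies. Your remark that the linearization at $g=0$ ``misses the residue direction'' is a statement about submersiveness, which is irrelevant to flatness — flatness is governed by fiber dimensions, and those behave. In short: the real content is the homogeneity-plus-dimension-count of \S\ref{ss:flatness} applied at the level of the ambient affine scheme, not an orbit-enlargement or stratification argument at the quotient level.
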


In particular, $\sY$ is completely determined by its values
on usual commutative rings, i.e., not commutative DG rings.
(We formally deduce the same for $\sY^{\prime}$.)

\subsection{Intermediate spaces}\label{ss:y-trun}

As for $\LocSys_{\bG_m}$, there are several variants of 
$\sY$ that will be crucial to our
study.

\subsubsection{}

First, we define: 
\[
\sY_{\log} = \sY \times_{\fL \bB\bG_m} \fL^+ \bB \bG_m = 
\sY^{\prime}/\fL^+\bG_m.
\]

Note that we have a canonical action of $\Gr_{\bG_m}$ on
$\sY_{\log}$ with $\sY_{\log}/\Gr_{\bG_m} = \sY$.

\subsubsection{}

By construction, $\sY_{\log}$ parametrizes the data
$(\sL,\nabla,s)$ where $\sL$ is a line bundle
on the disc $\sD$, 
a connection $\nabla$ on $\sL|_{\o{\sD}}$, and
$s \in \Gamma(\o{\sD},\sL)$ a flat section. 

We have a natural ind-closed $\sZ \subset \sY_{\log}$ parametrizing
similar data, but with $s \in \Gamma(\sD,\sL)$ flat as a
section on the punctured disc.

Formally, we define: 
\[
\sZ \coloneqq \on{Eq}
\big(\fL^+ \bA^1 \times \fL \bA^1 dt 
\underset{(g,\omega) \mapsto g\omega}{\overset{d\circ \pi_1}{\rightrightarrows}}
\fL \bA^1 dt\big)/\fL^+ \bG_m \in \PreStk.
\]

\noindent Here the $\fL^+ \bG_m$-action is constructed as in
\S \ref{ss:action-rigorous}.

\begin{rem}

The subspace $\sZ \subset \sY_{\log}$ plays a key role in our
main construction in \S \ref{s:weyl}.

\end{rem}

\begin{rem}

In formulae, we have:
\[
\sZ = \Maps(\o{\sD}_{dR},\bA^1/\bG_m) 
\underset{\Maps(\o{\sD},\bA^1/\bG_m)}{\times}
\Maps(\sD,\bA^1/\bG_m)
\] 

\noindent for suitable meaning of $\o{\sD}_{dR}$
(cf. \cite{}).

\end{rem}

\begin{rem}

A little informally, $\sZ$ is the moduli of $(\sL,\nabla,s)$ with $\sL$ 
a line bundle on $\sD$, $\nabla$ a connection on the punctured disc,
and $s \in \Gamma(\sD,\sL)$ with $\nabla(s) = 0 \in 
\Gamma(\o{\sD},\sL \otimes \Omega^1)$.

\end{rem}

\subsubsection{} 

Next, we define truncated versions of the above spaces. 
Fix $n \geq 0$.
 
We then define:
\[
\sY_{\log}^{\leq n} \coloneqq \sY_{\log} 
\underset{\LocSys_{\bG_m,\log}}{\times} 
\LocSys_{\bG_m,\log}^{\leq n}.
\]

We define:
\[
\sZ^{\leq n} \coloneqq \on{Eq}
\big(\fL^+ \bA^1 \times \fL^{\leq n} \bA^1 dt 
\underset{(g,\omega) \mapsto g\omega}{\overset{d\circ \pi_1}{\rightrightarrows}}
\fL^{\leq n} \bA^1 dt\big)/\fL^+ \bG_m 
\]

\noindent similarly to $\sZ$; again, the construction
of \S \ref{ss:action-rigorous} applies and provides rigorous
meaning to the $\fL^+ \bG_m$-action in this formula.

\begin{prop}\label{p:zn-algebraic}

The prestack $\sZ^{\leq n}$ is a (DG) algebraic\footnote{We refer to
\cite{dennis-dag} for the definition.} stack.

\end{prop}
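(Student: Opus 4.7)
The plan is to realize $\sZ^{\leq n}$ as a quotient $W^{\leq n}/\bG_m$ with $W^{\leq n}$ a (DG) affine scheme, from which algebraicity follows automatically since $\bG_m$ is a smooth affine algebraic group of finite type.

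Using the embedding of constants, I split $\fL^+\bG_m = \fL^{+,1}\bG_m \times \bG_m$, where $\fL^{+,1}\bG_m \coloneqq \Ker(\ev:\fL^+\bG_m \to \bG_m)$. The main step is to analyze the $\fL^{+,1}\bG_m$-quotient. In characteristic zero, a direct calculation on coefficients shows that $d\log$ yields an isomorphism $\fL^{+,1}\bG_m \isom \fL^+\bA^1 dt$ of (pro-algebraic) group schemes. Combining this with the canonical vector-space decomposition $\fL^{\leq n}\bA^1 dt = \fL^{pol,\leq n}\bA^1 dt \oplus \fL^+\bA^1 dt$ (into polar and regular parts of a form), one sees that $\fL^{\leq n}\bA^1 dt$ is a \emph{trivial} $\fL^{+,1}\bG_m$-torsor over $\fL^{pol,\leq n}\bA^1 dt$, with $\fL^{+,1}\bG_m$ acting by translation via $d\log$.

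I next transport this picture to the (derived) equalizer $E^{\leq n}$ appearing in the definition of $\sZ^{\leq n}$. Set
\[ W^{\leq n} \coloneqq \{(g,\omega_{pol}) \in \fL^+\bA^1 \times \fL^{pol,\leq n}\bA^1 dt : dg = g\omega_{pol}\}, \]
regarded as a derived closed subscheme of the affine scheme $\fL^+\bA^1 \times \fL^{pol,\leq n}\bA^1 dt$. Writing any $\omega \in \fL^{\leq n}\bA^1 dt$ uniquely as $\omega_{pol}+\omega_{reg}$ and using the identity $d(fg) = f(d\log f)g + f\cdot dg$, one checks that for $(g,\omega) \in E^{\leq n}$ and $f \in \fL^{+,1}\bG_m$ the unique element with $d\log f = \omega_{reg}$, the pair $(f^{-1}g,\omega_{pol})$ satisfies the defining equation of $W^{\leq n}$. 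This produces a $\fL^{+,1}\bG_m$-equivariant isomorphism $E^{\leq n} \simeq W^{\leq n} \times \fL^{+,1}\bG_m$, so that $E^{\leq n}/\fL^{+,1}\bG_m \simeq W^{\leq n}$.

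Passing to the residual $\bG_m$-quotient yields $\sZ^{\leq n} \simeq W^{\leq n}/\bG_m$, where $\bG_m$ scales $g$ and acts trivially on $\omega_{pol}$. Since $W^{\leq n}$ is a DG affine scheme and $\bG_m$ is a smooth affine algebraic group of finite type, this quotient is a DG algebraic stack. The main delicate point is verifying the torsor structure in the derived setting, but since the trivializing section is explicit (and the $d\log$ isomorphism transparent in characteristic zero), I expect no serious obstacle.
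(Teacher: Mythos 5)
Your argument is correct. The underlying idea is the same as the paper's: in characteristic zero, $d\log$ and evaluation give an isomorphism $\fL^+\bG_m \simeq \fL^+\bA^1 dt \times \bG_m$ (this is the content of the proof of Proposition~\ref{p:locsys-log}), and this reduces the $\fL^+\bG_m$-quotient to a $\bG_m$-quotient. The paper packages this by observing that $\sZ^{\leq n} \to \LocSys_{\bG_m,\log}^{\leq n}$ is affine (after the equalizer is recognized as the base change of $\sZ^{\leq n}$ along the atlas $\fL^{\leq n}\bA^1 dt \to \LocSys_{\bG_m,\log}^{\leq n}$) and citing Proposition~\ref{p:locsys-trun} for algebraicity of the base. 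You instead unfold that lemma into the argument, building the affine atlas $W^{\leq n}$ by hand. The payoff of your version is a completely explicit presentation $\sZ^{\leq n} \simeq W^{\leq n}/\bG_m$ without appeal to the structure theory of $\LocSys_{\bG_m,\log}^{\leq n}$; the cost is carrying the calculation $dg - g\omega = f\,(d(f^{-1}g) - (f^{-1}g)\omega_{pol})$ rather than quoting an existing result.

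On the derived subtlety you flag: it is indeed harmless, and for a cleaner reason than ``the section is explicit.'' Your change of variables $(g,\omega)\mapsto(f^{-1}g,\omega_{pol},f)$ with $f = d\log^{-1}(\omega_{reg})$ is an isomorphism of the ambient \emph{classical} affine schemes $\fL^+\bA^1\times\fL^{\leq n}\bA^1 dt \isom \fL^+\bA^1\times\fL^{pol,\leq n}\bA^1 dt\times\fL^{+,1}\bG_m$, and the identity above shows it carries the defining function of the equalizer to $f\cdot\big(\text{defining function of }W^{\leq n}\boxtimes\text{pt}\big)$. Since multiplication by the unit $f$ is an automorphism of the target vector bundle over the source, it does not change the derived zero locus; hence the isomorphism $E^{\leq n}\simeq W^{\leq n}\times\fL^{+,1}\bG_m$ holds on the nose as DG schemes, and equivariance is likewise checked on the classical ambient schemes. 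So your proof is complete once this is spelled out.
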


\begin{proof}

By definition, we have:
\[
\on{Eq}\big(\fL^+ \bA^1 \times \fL^{\leq n} \bA^1 dt 
\underset{(g,\omega) \mapsto g\omega}{\overset{d\circ \pi_1}{\rightrightarrows}}
\fL^{\leq n} \bA^1 dt\big) = 
\fL^{\leq n} \bA^1 dt \underset{\LocSys_{\bG_m,\log}^{\leq n}}{\times} 
\sZ^{\leq n}.
\]

\noindent Therefore, $\sZ^{\leq n} \to \LocSys_{\bG_m,\log}^{\leq n}$ is
an affine morphism. As $\LocSys_{\bG_m,\log}^{\leq n}$ is an algebraic stack
by Proposition \ref{p:locsys-trun}, we obtain the result.

\end{proof}

\begin{rem}

We remind that $\bB \fL^+ \bG_m$ is not an algebraic stack
while $\bB \bG_m$ is: by definition, algebraic stacks are required
to admit fppf covers, not merely fpqc covers. 

\end{rem}

\subsection{Flatness results}\label{ss:flatness}

We now establish some technical results, showing that certain morphisms
are flat. Ultimately, these results are the technical backbone of
our study of $\sY$ and its coherent sheaves.

\subsubsection{}

We begin with the following result.

\begin{lem}\label{l:flat-mult}

The map:
\[
\begin{gathered}
\mu_n:\bA^n \times \bA^n = \bA^{2n} \to \bA^n \\
(a_0,\ldots,a_{n-1},b_0,\ldots,b_{n-1}) \mapsto 
(a_0b_0,a_0b_1+a_1b_0,\ldots,\sum_{i = 0}^{n-1} a_ib_{n-1-i})
\end{gathered}
\]

\noindent is flat.

\end{lem}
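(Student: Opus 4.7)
The plan is to apply Hironaka's miracle flatness criterion: since $\bA^{2n}$ is Cohen--Macaulay (indeed smooth) and $\bA^n$ is regular, $\mu_n$ is flat if and only if every geometric fiber is equidimensional of the expected dimension $n$.

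To verify this, I would compute the fibers directly. Identifying the $k$-points of $\bA^n$ with the ring $R \coloneqq k[t]/t^n$ (so that $(a_0,\ldots,a_{n-1})$ corresponds to $a = \sum_i a_i t^i$), the fiber $\mu_n^{-1}(c)$ consists of pairs $(a,b) \in R \times R$ with $ab = c$ in $R$. I would stratify this fiber by $i = v(a) \in \{0,1,\ldots,n\}$, the $t$-adic valuation of $a$, with the convention $v(0) = n$. The locus $\{a \in R : v(a) = i\}$ is locally closed of dimension $n-i$ in $\bA^n$ (cut out by the closed conditions $a_0 = \cdots = a_{i-1} = 0$ and the open condition $a_i \neq 0$, interpreted appropriately at $i = n$). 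For a fixed $a$ with $v(a) = i$, multiplication by $a$ on $R$ has image $t^i R$ and kernel $t^{n-i}R$; hence $ab = c$ is solvable for $b$ exactly when $v(c) \geq i$, in which case the solution set is a torsor over the $i$-dimensional subspace $t^{n-i}R$. Consequently, each nonempty stratum of $\mu_n^{-1}(c)$ has dimension $(n-i) + i = n$.

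The only real point of care (and, such as it is, the only obstacle) is to ensure equidimensionality, not merely an upper bound of $n$ on the dimension: for $c = 0$ all strata $i = 0,\ldots, n$ appear, while for $v(c) = k < n$ only strata $i \leq k$ appear, and in both cases every nonempty stratum has dimension exactly $n$. With this uniform equidimensionality established, miracle flatness yields the flatness of $\mu_n$.
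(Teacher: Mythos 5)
Your argument is correct and follows essentially the same route as the paper: you apply miracle flatness and verify equidimensionality of the fibers by stratifying according to the $t$-adic valuation of $a$ (the order of vanishing of the first block of coordinates), which is exactly the paper's stratification $Z_m = \{a_0=\cdots=a_{m-1}=0,\ a_m\neq 0\}$. The one small difference is that the paper only computes the fiber over $0$ directly and then uses the homogeneity of the defining polynomials (together with the $\bG_m$-stability of the flat locus) to reduce the general fiber to the central one, whereas you compute every fiber directly, observing that for $v(c)=k$ exactly the strata $i\le k$ are nonempty and each still has dimension $n$. Both routes are correct; yours trades the homogeneity reduction for a slightly longer but arguably more self-contained fiber-by-fiber computation, and has the mild bonus of exhibiting the affine-bundle structure of each stratum (which makes irreducibility and the dimension count transparent).
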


\begin{proof}

This map is defined by a family of homogeneous polynomials
(of degree $2$). Therefore,\footnote{As is standard: there exists
an open $U \subset \bA^n$ such that geometric fibers over
points in $U$ are equidimensional with 
the expected dimension. By homogeneity, $U$
is closed under the $\bG_m$-action. As $U$ contains $0$,
it must be all of $\bA^n$. Then recall that a morphism
of smooth schemes is flat if and only if its geometric fibers are
equidimensional of the expected dimension.}
it suffices to show that the
fiber $Z \coloneqq \mu_n^{-1}(0)$ 
over $0$ is equidimensional with the expected
dimension $2n - n = n$. 

First, for $0 \leq m \leq n$, let $Z_m \subset Z$ be the locally
closed subscheme where $a_0 = a_1 = \ldots = a_{m-1} = 0$ 
and $a_m \neq 0$, the last condition being considered as vacuous
for $m = n$. We claim that $\dim(Z_m) = n$ for all $m$. (We
will also see that $Z_m$ is smooth and connected, so we deduce
that $Z$ has the $n+1$ irreducible components $\overline{Z_m}$.)

Clearly $Z_n = \bA^n$, so 
suppose $m \neq n$. Then:
\[
Z_m \subset (\bA^1 \setminus 0) \times \bA^{n-m-1} \times \bA^n
\]

\noindent is closed, where the coordinates on the latter affine
space are $a_m,a_{m+1},\ldots,a_{n-1},b_0,\ldots,b_{n-1}$;
the equations defining $Z_m$ here are:
\[
\sum_{i=0}^j a_{m+i} b_{j-i} = 0, \ldots j = 0,\ldots,n-m-1.
\]

\noindent In particular, 
$b_j = -\frac{\sum_{i=1}^j a_{m+i} b_{j-i}}{a_0}$ for
$j = 0,\ldots,n-m-1$. It follows
that the morphism:
\[
\begin{gathered}
Z_m \subset 
(\bA^1 \setminus 0) \times \bA^{n-m-1} \times  \bA^{m} \\
(a_m,\ldots,a_{n-1},b_0,\ldots,b_{n-1}) \mapsto 
(a_m,\ldots,a_{n-1},b_{n-m},b_{n-m+1},\ldots,b_{n-1})
\end{gathered}
\]

\noindent is an isomorphism, giving the claim.

\end{proof}

\begin{rem}

The above lemma is standard. See for example 
\cite{goward-smith} Theorem 2.2, especially the remarks 
following its proof.

\end{rem}

We now have the following variant.

\begin{cor}\label{c:flat-d}

Fix a linear map $T:\bA^{2n} \to \bA^n$.
Then $\mu_n+T:\bA^{2n} \to \bA^n$ is flat.

\end{cor}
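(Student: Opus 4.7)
The plan is to apply miracle flatness. Since both $\bA^{2n}$ and $\bA^n$ are smooth (in particular, $\bA^{2n}$ is Cohen-Macaulay), it suffices to show that every non-empty fiber of $f \coloneqq \mu_n + T$ is equidimensional of dimension exactly $n$.

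For the upper bound on fiber dimension, I would run a degree-filtration (Gr\"obner) argument. Fix $y \in \bA^n$ and let $I_y \subset R \coloneqq k[a_0, \ldots, a_{n-1}, b_0, \ldots, b_{n-1}]$ be the ideal generated by the components $\mu_n^i + T^i - y^i$ of $f - y$; each generator has degree at most $2$, with top-degree-$2$ part exactly $\mu_n^i$ (the linear term $T^i$ and the constant $-y^i$ live in lower degree). Passing to the associated graded with respect to the total-degree filtration on $R$, we get $\on{gr}(I_y) \supseteq (\mu_n^0, \ldots, \mu_n^{n-1})$, so $R/\on{gr}(I_y)$ is a quotient of the coordinate ring of $\mu_n^{-1}(0)$, which has Krull dimension $n$ by (the proof of) Lemma \ref{l:flat-mult}. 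Since $R/I_y$ and $R/\on{gr}(I_y)$ share the same Hilbert function, they have the same Krull dimension, and $\dim f^{-1}(y) \leq n$.

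For the matching lower bound, observe that $f(0,0) = 0$, so $f$ has non-empty fibers; the uniform upper bound $\dim f^{-1}(y) \leq n$ then forces $\dim \overline{f(\bA^{2n})} \geq 2n - n = n$, so $f$ is dominant. The dimension-of-fibers theorem gives that every irreducible component of every non-empty fiber has dimension $\geq n$, and combined with the upper bound, exactly $n$. Miracle flatness concludes. The only real subtlety I foresee is verifying that passage to associated graded with respect to the degree filtration preserves Krull dimension; this is standard (the Poincar\'e series of the filtered algebra and its associated graded coincide, and their pole order at $t = 1$ computes the Krull dimension), but worth articulating rather than sweeping under the rug.
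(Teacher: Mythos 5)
Your proof is correct, but it takes a genuinely different route from the paper's, which is much shorter. The paper introduces an auxiliary parameter $\lambda$ and considers the map $(\mu_n+\lambda T,\lambda):\bA^{2n}\times\bA^1_\lambda\to\bA^n\times\bA^1_\lambda$. All components of this map are homogeneous (the first $n$ of degree $2$, the last of degree $1$), and its fiber over the origin is $\mu_n^{-1}(0)\times\{0\}$, which has dimension $n$; hence it is flat by the same homogeneity argument used in Lemma~\ref{l:flat-mult}, and restricting to $\lambda=1$ gives the corollary. Your argument instead controls fibers directly: the degree-filtration degeneration bounds $\dim f^{-1}(y)$ from above by $\dim\mu_n^{-1}(0)=n$, and dominance (itself a consequence of the upper bound plus $f(0)=0$) together with the dimension-of-fibers theorem bounds every component from below, so miracle flatness applies. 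The two approaches are closely related at the level of ideas\textemdash{}both degenerate to $\mu_n^{-1}(0)$\textemdash{}but the paper packages the degeneration as a single homogeneous map so that the Lemma~\ref{l:flat-mult} machinery applies verbatim, whereas you keep the degeneration at the level of ideals, which costs you a few extra invocations (Hilbert function of a degree filtration, dimension of fibers, miracle flatness) but stays inside very standard commutative algebra. Both are valid; the paper's is more economical given that Lemma~\ref{l:flat-mult} is already in hand. Your flag about verifying $\dim R/I_y=\dim R/\on{gr}(I_y)$ is well placed: the cleanest way to justify it is via the Rees algebra of the degree filtration, which is flat over $k[t]$ with general fiber $R/I_y$ and special fiber $R/\on{gr}(I_y)$\textemdash{}at which point one is essentially reproving the $\lambda$-degeneration that the paper uses directly.
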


\begin{proof}

Introduce an auxiliary parameter $\lambda$ and
consider the map $\bA^{2n} \times \bA_{\lambda}^1 \to 
\bA^n \times \bA_{\lambda}^1$ given by
$(\mu_n+\lambda T,\lambda)$. This map is defined by 
homogeneous polynomials (all but one are degree $2$). 
Its fiber over $0$ coincides with $\mu_n^{-1}(0)$, so this
map is flat. 
Restricting to 
$\bA^n \overset{x\mapsto (x,1)}{\subset} \bA^n \times \bA_{\lambda}^1$
gives the claim.

\end{proof}

\subsubsection{}

We now consider variants of the above in which
we pass to a limit.

Let $\bA^{\infty} \in \AffSch$ denote the affine scheme
$\Spec(\Sym(k^{\oplus \bZ^{\geq 0}}))$.  

\begin{cor}\label{c:flat-mult-infty}

The map:
\[
\begin{gathered}
\mu_{\infty}:\bA^{\infty} \times \bA^{\infty} \to \bA^{\infty} 
\\
\big((a_0,a_1,\ldots),(b_0,b_1,\ldots)\big) \mapsto 
(a_0b_0,a_0b_1+a_1b_0,\ldots,\sum_{i = 0}^{j} a_ib_{j-i},\ldots)
\end{gathered}
\]

\noindent is flat.

More generally, suppose we are given linear maps
$T_n:\bA^{2n} \to \bA^n$ fitting into commutative diagrams:
\[
\xymatrix{
\bA^{2n+2} = \bA^{n+1} \times \bA^{n+1} \ar[d] \ar[rr]^(.65){T_{n+1}} & &
\bA^{n+1} \ar[d] \\
\bA^{2n} \ar[rr]^{T_n} & & \bA^n
}
\]

\noindent with vertical maps induced by the projection
$\bA^{n+1} \xar{(a_0,\ldots,a_n) \mapsto (a_0,\ldots,a_{n-1})} \bA^n$.
Let $T$ denote the induced map 
$\bA^{\infty} \times \bA^{\infty} \to \bA^{\infty}$.

Then $\mu_{\infty}+T$ is flat.

\end{cor}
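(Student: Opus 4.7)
The plan is to reduce to the finite-dimensional case of Corollary \ref{c:flat-d} via a filtered-colimit argument that exploits $\bA^\infty = \Spec \colim_n \cO(\bA^n)$. Set $A_n = \cO(\bA^n)$, $B_n = \cO(\bA^{2n})$, $A = \cO(\bA^\infty)$, $B = \cO(\bA^\infty \times \bA^\infty)$; so $A = \colim_n A_n$ and $B = \colim_n B_n$. Let $\phi \colon A \to B$ be the ring homomorphism associated to $\mu_\infty + T$. We must show $\phi$ makes $B$ flat over $A$.

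First I would show that $B$ is flat over each $A_n$ via the map $A_n \hookrightarrow A \xrightarrow{\phi} B$. The compatibility hypothesis on the $T_n$ says exactly that the square
\[
\xymatrix{
\bA^\infty \times \bA^\infty \ar[rr]^{\mu_\infty+T} \ar[d]_{\pi_n \times \pi_n} & & \bA^\infty \ar[d]^{\pi_n} \\
\bA^n \times \bA^n \ar[rr]^{\mu_n+T_n} & & \bA^n
}
\]
commutes, so the composite $\pi_n \circ (\mu_\infty + T)$ factors as the projection $\pi_n \times \pi_n$, which is flat (being a projection from a polynomial ring), followed by $\mu_n + T_n$, which is flat by Corollary \ref{c:flat-d}. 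Hence $B$ is $A_n$-flat.

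To promote this to flatness over $A$, I use that each inclusion $A_n \hookrightarrow A$ is a (free) polynomial extension and hence flat. Any finitely presented $A$-module $N$ admits a presentation involving only finitely many elements of $A$, so $N \simeq A \otimes_{A_n} N_n$ for some finitely presented $A_n$-module $N_n$ (with $n$ large enough). The change-of-rings spectral sequence together with flatness of $A_n \to A$ gives
\[
\Tor_p^A(B, A \otimes_{A_n} N_n) \simeq \Tor_p^{A_n}(B, N_n),
\]
which vanishes for $p \geq 1$ by the previous step. Since every $A$-module is a filtered colimit of finitely presented $A$-modules and $\Tor$ commutes with filtered colimits, this yields $\Tor_1^A(B, -) = 0$, i.e., $B$ is $A$-flat.

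The essential content is already in Corollary \ref{c:flat-d}; the only subtlety is the non-Noetherianity of $A$, which is handled by the observation that finitely presented $A$-modules always descend to some $A_n$. I do not anticipate any serious obstacle.
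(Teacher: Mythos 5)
Your proof is correct and follows the same approach as the paper; the paper's one-line proof ("pass to the inverse limit in $n$ and invoke the finite-dimensional case") is just a compressed version of the filtered-colimit argument you give. One small note: since $A_n \to A$ is flat, the base-change isomorphism $\Tor_p^A(B, A \otimes_{A_n} N_n) \simeq \Tor_p^{A_n}(B, N_n)$ is elementary (a flat ring extension sends a projective $A_n$-resolution to a projective $A$-resolution) and does not require a spectral sequence.
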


\begin{proof}

As $\mu_{\infty}+T$ is obtained from the morphisms
$\mu_n+T_n$ by passing to the inverse limit in $n$, the
result follows from Lemma \ref{l:flat-mult}.

\end{proof}

\subsubsection{}\label{ss:refinement}

We also need a refinement of the above. The reader may skip this
material and return to it as needed.

Let $C = \Spec(k[x,y]/xy)$. Geometrically, $C$ is a the union of the
$x$ and $y$-axes in the plane. For a scheme $S$ and a morphism 
$\vph:S \to C$, we say that $\vph$ is \emph{flat along the $x$-axis}
if the derived fiber product $S \times_C \bA_x^1$ is a classical
scheme. I.e., for $S$ classical, this means no Tors are formed in
forming this fiber product.

We say $\vph$ is \emph{flat along the $y$-axis} if 
the parallel condition holds for the $y$-axis.
We say $\vph$ is \emph{flat along the axes} if it
is flat along both the $x$ and $y$-axis.

Fix $n>0$ and let $Z = \mu_n^{-1}(0) \subset \bA^{2n}$ as in
the proof of Lemma \ref{l:flat-mult}. 
There
is an evident map:
\[
Z \xar{(a_0,\ldots,a_{n-1},b_0,\ldots,b_{n-1}) \mapsto (a_0,b_0)} C 
\subset \bA^2.
\]

\begin{prop}\label{p:flat-axis}

The above map is flat along the axes.
Moreover, there is a canonical isomorphism:
\[
\mu_{n-1}^{-1}(0) \times \bA^1 \simeq Z \underset{C}{\times} \bA_x^1
\]

\noindent given by:

\[
\big((a_0,a_1,\ldots,a_{n-2},b_0,\ldots,b_{n-2}),\lambda\big) \in 
\mu_{n-1}^{-1}(0) \times \bA^1 \mapsto 
(a_0,a_1,\ldots,a_{n-2},\lambda,0,b_0,b_1,\ldots,b_{n-2}) \in Z.
\]

\end{prop}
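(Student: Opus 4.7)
I would treat the two assertions in turn: first the explicit scheme-theoretic isomorphism, then the flatness along the $x$-axis (the $y$-axis being symmetric).

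For the isomorphism, substitute the formula into the defining equations $e_k = \sum_{i=0}^k a_i b_{k-i} = 0$ of $Z$. With new $b_0 = 0$, new $b_j = b_{j-1}^{\mathrm{old}}$ for $j \geq 1$, new $a_i = a_i^{\mathrm{old}}$ for $i \leq n-2$, and new $a_{n-1} = \lambda$, the $k = 0$ equation is automatic while for $k \geq 1$ it collapses to $\sum_{i=0}^{k-1} a_i^{\mathrm{old}} b_{(k-1)-i}^{\mathrm{old}} = 0$, i.e., the $(k-1)$-th defining equation of $\mu_{n-1}^{-1}(0)$, and involves no $\lambda$. Inverting coordinate-by-coordinate then yields a scheme isomorphism from $\mu_{n-1}^{-1}(0) \times \bA^1$ onto the classical closed subscheme $Z \cap V(b_0)$, which agrees with the classical fiber product $Z \times_C^{\mathrm{cl}} \bA_x^1$ because $\bA_x^1 = V(y) \subset \bA^2$ cuts out $V(b_0)$ on $Z$.

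For flatness along the $x$-axis I would use the periodic free resolution of $\cO_{\bA_x^1}$ as an $\cO_C$-module arising from $\cO_C = k[x,y]/(xy)$: since multiplication by $y$ on $\cO_C$ has kernel $x \cdot \cO_C$ and vice versa, one has $\ldots \to \cO_C \xrightarrow{x} \cO_C \xrightarrow{y} \cO_C \to \cO_{\bA_x^1} \to 0$. Tensoring with $\cO_Z$ (where $x, y$ act via $a_0, b_0$) gives the complex $\ldots \to \cO_Z \xrightarrow{a_0} \cO_Z \xrightarrow{b_0} \cO_Z$, whose exactness in negative degrees is equivalent to the pair of equalities $\Ann_{\cO_Z}(b_0) = a_0 \cO_Z$ and $\Ann_{\cO_Z}(a_0) = b_0 \cO_Z$. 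By the $a \leftrightarrow b$ symmetry of the defining equations of $Z$, these are interchangeable, and flatness along the $y$-axis is symmetric.

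The main obstacle is the containment $\Ann_{\cO_Z}(b_0) \subseteq a_0 \cO_Z$; equivalently, the natural surjection $\cO_Z/(a_0) \twoheadrightarrow b_0 \cO_Z$ by multiplication by $b_0$ (well-defined since $a_0 b_0 = 0$) must be injective. I would establish this by a bigraded Hilbert series comparison, with $a_i$ assigned bi-degree $(1,0)$ and $b_j$ bi-degree $(0,1)$. The Koszul resolution of $\cO_Z$ over $k[a, b]$ for the regular sequence $(e_0, \ldots, e_{n-1})$ of bi-degree $(1,1)$ (regularity provided by Lemma \ref{l:flat-mult}) gives $H_{\cO_Z}(s, t) = (1-st)^n/\bigl((1-s)(1-t)\bigr)^n$. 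The first-part isomorphism, applied in both directions using $a \leftrightarrow b$ symmetry, identifies $\cO_Z/(b_0)$ and $\cO_Z/(a_0)$ with $\cO_{\mu_{n-1}^{-1}(0)}[a_{n-1}]$ and $\cO_{\mu_{n-1}^{-1}(0)}[b_{n-1}]$ respectively; inductively their Hilbert series are $(1-st)^{n-1}/\bigl((1-s)^n(1-t)^{n-1}\bigr)$ and its $s \leftrightarrow t$ transpose. A direct manipulation then shows $H_{b_0 \cO_Z} = H_{\cO_Z} - H_{\cO_Z/(b_0)}$ matches $t \cdot H_{\cO_Z/(a_0)}$ (the factor $t$ being the bi-degree shift of $b_0$), and a surjection of bi-graded modules with matching bi-graded dimensions must be an isomorphism, yielding the annihilator equality and completing the proof.
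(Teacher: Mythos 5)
Your argument is correct, and it takes a genuinely different route from the paper's. The paper's proof introduces the auxiliary scheme $T \subset \bA^{2n}$ cut out by only $e_1, \ldots, e_{n-1}$ (omitting $e_0 = a_0 b_0$), shows that $b_0 : T \to \bA^1$ is flat via the modified map $\mu_n'$ of Lemma \ref{l:pre-axes} (itself proven by the same homogeneity-and-dimension-count device as Lemma \ref{l:flat-mult}), and then invokes the tautological Lemma \ref{l:genl-flat-axes}; the identification of the fiber product is left as evident. You instead tensor the two-periodic free resolution of $\cO_{\bA_x^1}$ over $\cO_C$ with $\cO_Z$ and read off that vanishing of higher Tor is exactly the pair of annihilator identities $\Ann_{\cO_Z}(b_0) = a_0\cO_Z$ and $\Ann_{\cO_Z}(a_0) = b_0\cO_Z$, which you establish by comparing bigraded Hilbert series via the Koszul resolution of $\cO_Z$ — the regularity of $(e_0,\ldots,e_{n-1})$ being supplied by Lemma \ref{l:flat-mult} (expected codimension in a Cohen--Macaulay ambient). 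Both routes ultimately drink from Lemma \ref{l:flat-mult}, but through different intermediaries: the paper through a second flat map $\mu_n'$ and a formal base-change observation, you through Koszul-regularity and an explicit Tor computation. The paper's proof is shorter; yours is more self-contained (no need for $\mu_n'$ or Lemmas \ref{l:pre-axes}, \ref{l:genl-flat-axes}) and makes the obstruction to flatness completely concrete, which is clarifying even if more computational. One small wording correction: you describe the Hilbert series of $\cO_{\mu_{n-1}^{-1}(0)}$ as obtained ``inductively,'' but it follows directly from the Koszul resolution at stage $n-1$; no genuine recursion on $n$ is needed, only Lemma \ref{l:flat-mult} applied to $\mu_{n-1}$.
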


\begin{rem}

Recall that in the proof Lemma \ref{l:flat-mult} that we calculated 
the irreducible components of $Z$. In \cite{yuen}, the multiplicities
of these components were also calculated.\footnote{In the notation from 
the proof of Lemma \ref{l:flat-mult}, the multiplicity of
$Z_m$ is $\binom{n}{m}$.}
This latter result follows from Proposition \ref{p:flat-axis}:
given $S \xar{(f,g)} C$ flat along axes, for an irreducible 
component $S_m$ of $S$, $\on{mult}_S(S_m) = 
\on{mult}_{\{f = 0\}}(S_m \cap \{f = 0\}) + \on{mult}_{\{g = 0\}}(S_m \cap \{g = 0\})$;
one can then calculate the multiplicities by induction on $n$. 
We remark that this argument is quite similar to the one given in \emph{loc. cit}.

\end{rem}

We will deduce the above result from the following general lemma.

\begin{lem}\label{l:genl-flat-axes}

Suppose $T$ is a scheme equipped with a map $(f,g):T \to \bA^2$.
Let $S = T \times_{\bA^2} C = \{fg = 0\} \subset T$, where this fiber
product is understood as a derived fiber product.

If $g:T \to \bA^1$ is flat, then $S \to C$ is flat along the $x$-axis.

\end{lem}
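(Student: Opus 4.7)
The plan is to reduce the statement to an algebraic identity about derived tensor products. Since flatness is local, we may assume $T = \Spec(A)$ is affine, and then $S = \Spec\big(A \otimes^L_{k[x,y]} k[x,y]/xy\big)$ where the map $k[x,y] \to A$ sends $x \mapsto f$ and $y \mapsto g$. We want to show that the derived fiber product $S \times_C \bA_x^1$ is classical.

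The first step is to contract the two derived tensor products. We have
\[
S \times_C \bA_x^1 = \Spec\Big(A \otimes^L_{k[x,y]} k[x,y]/xy \otimes^L_{k[x,y]/xy} k[x]\Big),
\]
where the inclusion $\bA_x^1 \hookrightarrow C$ corresponds to the surjection $k[x,y]/xy \twoheadrightarrow k[x]$ sending $y \mapsto 0$. Since $k[x] = k[x,y]/(y)$ as a $k[x,y]$-module, and the derived tensor product is associative, the middle factor cancels: the expression collapses to
\[
A \otimes^L_{k[x,y]} k[x] \;\simeq\; A \otimes^L_{k[y]} k,
\]
where the map $k[y] \to A$ corresponds to $g$.

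The second step is immediate: by hypothesis $g$ is flat, so $A$ is flat over $k[y]$, hence the derived tensor product $A \otimes^L_{k[y]} k$ coincides with the ordinary quotient $A/gA$, which is a classical ring. This shows $S \times_C \bA_x^1$ is classical, i.e., $S \to C$ is flat along the $x$-axis.

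There is no real obstacle here; the content is essentially the observation that passing to the derived intersection with one axis inside $C$ is the same as derivedly killing the equation of the \emph{other} axis in $\bA^2$, so the node $\{xy = 0\}$ plays no role in the computation. The only mild point to check is that the cancellation $k[x,y]/xy \otimes^L_{k[x,y]/xy} k[x] \simeq k[x]$ is correct, which is formal from associativity of the derived tensor product.
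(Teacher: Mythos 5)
Your proof is correct and is essentially the same argument as the paper's: the paper writes the chain of derived fiber products $S \times_C \bA_x^1 = T \times_{\bA^2} C \times_C \bA_x^1 = T \times_{\bA^2} \bA_x^1 = T \times_{\bA_y^1} 0$ and invokes flatness of $g$, which is exactly your chain of derived tensor products read in scheme language. The only difference is notational (rings vs.\ schemes), and your version makes the associativity step slightly more explicit.
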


\begin{proof}

This is tautological:
\[
S \underset{C}{\times} \bA_x^1 = 
T \underset{\bA^2}{\times} C \underset{C}{\times} \bA_x^1 = 
T \underset{\bA^2}{\times} \bA_x^1 = 
T \underset{\bA_y^1}{\times} 0.
\]

\noindent As $g$ is assumed flat, 
the latter scheme is classical by definition.

\end{proof}

\begin{lem}\label{l:pre-axes}

The morphism:
\[
\begin{gathered}
\mu_n^{\prime}:\bA^n \times \bA^n = \bA^{2n} \to \bA^n \\
(a_0,\ldots,a_{n-1},b_0,\ldots,b_{n-1}) \mapsto 
(b_0,a_0b_1+a_1b_0,\ldots,\sum_{i = 0}^{n-1} a_ib_{n-1-i})
\end{gathered}
\]

\noindent is flat. 

\end{lem}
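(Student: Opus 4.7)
The plan is to prove flatness of $\mu_n^{\prime}$ by exactly the same strategy used in Lemma \ref{l:flat-mult}: verify a quasi-homogeneity and check that the central fiber has the expected dimension.

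First, $\mu_n^{\prime}$ is equivariant for the following $\bG_m$-action: on the source $\bA^{2n}$, the $a_i$'s have weight $0$ and the $b_j$'s have weight $1$; on the target $\bA^n$, every coordinate has weight $1$. Indeed, the first component $b_0$ has weight $1$, and the $k$-th component $\sum_{i=0}^{k} a_i b_{k-i}$ is a sum of monomials each containing exactly one $b$, and so has weight $1$.

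Next, I compute the fiber over the origin. Setting $b_0 = 0$, the $k$-th component for $1 \leq k \leq n-1$ reduces to $\sum_{i=0}^{k-1} a_i b_{k-i}$; in particular, the top equation $\sum_{i=0}^{n-2} a_i b_{n-1-i} = 0$ no longer involves $a_{n-1}$ at all. After the relabeling $b_{j+1} \rightsquigarrow c_j$, the remaining equations on $(a_0, \ldots, a_{n-2}, c_0, \ldots, c_{n-2})$ coincide exactly with the defining equations of $\mu_{n-1}^{-1}(0)$. Hence there is an isomorphism
\[
(\mu_n^{\prime})^{-1}(0) \cong \mu_{n-1}^{-1}(0) \times \bA^1_{a_{n-1}},
\]
which by Lemma \ref{l:flat-mult} has dimension $(n-1) + 1 = n$, the expected relative dimension.

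Finally, I conclude by the same trick as in the proof of Lemma \ref{l:flat-mult}: the flat locus $U \subset \bA^n$ is open, and by $\bG_m$-equivariance it is invariant under the target $\bG_m$-action. Since this action has all positive weights, any $\bG_m$-invariant open containing $0$ must be all of $\bA^n$; and $0 \in U$ by the dimension computation above. Hence $\mu_n^{\prime}$ is flat. The only real content is the bookkeeping in the central fiber: the crucial observation is that setting $b_0 = 0$ eliminates the unique occurrence of $a_{n-1}$ (namely the term $a_{n-1} b_0$ in the top equation) and reduces the remaining system exactly to that of $\mu_{n-1}$.
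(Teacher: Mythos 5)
Your proof is correct and takes the same route as the paper: reduce to checking the central fiber has the expected dimension $n$ by a quasi-homogeneity argument, then identify $(\mu_n^\prime)^{-1}(0)$ with $\mu_{n-1}^{-1}(0) \times \bA^1$ by setting $b_0 = 0$ and reindexing. You supply two details the paper leaves implicit — the explicit $\bG_m$-weights ($a_i \mapsto 0$, $b_j \mapsto 1$) making $\mu_n^\prime$ equivariant with positive target weights, and the observation that the free coordinate on the fiber is $a_{n-1}$ (which matches the variable $\lambda$ in the map from Proposition \ref{p:flat-axis}; the paper's label $\bA^1_{b_{n-1}}$ there looks like a slip for $\bA^1_{a_{n-1}}$).
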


\begin{rem}

We highlight the (only) difference between $\mu_n^{\prime}$ and
$\mu_n$:
in the former, the first coordinate entry is $b_0$, not $a_0b_0$. 

\end{rem}

\begin{proof}[Proof of Lemma \ref{l:pre-axes}]

As in the proof of Lemma \ref{l:flat-mult}, 
as each coordinate of $\mu_n^{\prime}$ is homogeneous, we
reduce to showing that $(\mu_n^{\prime})^{-1}(0)$ has the expected
dimension $n$. But at the classical level, 
this fiber clearly identifies with 
$\mu_{n-1}^{-1}(0) \times \bA_{b_{n-1}}^1$ (via the map in the statement
of Proposition \ref{p:flat-axis}),
which has dimension $n$ by Lemma \ref{l:flat-mult}.

\end{proof}

\begin{proof}[Proof of Proposition \ref{p:flat-axis}]

Consider the map $\bA^{2n} \xar{\mu_n} \bA^n \to \bA^{n-1}$ where
the last map projects onto the last $n-1$-coordinates.
Let $T$ denote the inverse image of $0$ under this map.

By Lemma \ref{l:pre-axes}, the map $b_0:T \to \bA^1$ is flat.
Therefore, Lemma \ref{l:genl-flat-axes} gives the flatness along
the $x$-axis. Flatness along the $y$-axis obviously
follows by symmetry. 
The resulting description of the fiber product is 
evident. 

\end{proof}

\subsubsection{}

We now have the following generalizations.

\begin{cor}\label{c:flat-axis-d}

Suppose:
\[
T:\bA^{2n} = \bA^{n} \times \bA^{n} \to 0 \times \bA^{n-1} \subset \bA^n
\]

\noindent is a linear map.

Let $Z^q = (\mu_n+T)^{-1}(0)$. Then the natural map 
$Z^q \xar{(a_0,b_0)} C$ is flat along the axes.

Moreover, in the notation of Corollary \ref{c:flat-mult-infty}, we may take 
$n = \infty$ in this result.

\end{cor}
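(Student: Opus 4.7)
The plan is to mimic the proof of Proposition \ref{p:flat-axis}, with the perturbation $T$ handled by a homogenization argument as in Corollary \ref{c:flat-d}. First, observe that the hypothesis on $T$ forces the first coordinate of $\mu_n+T$ to be $a_0 b_0$, so the map $Z^q \to C \subset \bA^2$, $(a,b) \mapsto (a_0,b_0)$, is indeed well defined. As in the proof of Proposition \ref{p:flat-axis}, I will apply Lemma \ref{l:genl-flat-axes} with the ambient scheme $W \subset \bA^{2n}$ cut out by the last $n-1$ coordinates of $\mu_n+T$, so that $Z^q = W \cap \{a_0 b_0 = 0\}$; it then suffices to show that $b_0: W \to \bA^1$ is flat (flatness along the other axis follows by symmetry, since swapping the $a$- and $b$-variables converts $T$ to another linear map $T \circ \sigma$ landing in $0 \times \bA^{n-1}$, to which the same argument applies).

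To prove flatness of $b_0:W \to \bA^1$, I would first consider the perturbed analogue of $\mu_n^{\prime}$ from Lemma \ref{l:pre-axes}, namely
\[
G \coloneqq (b_0,\, \text{last } n-1 \text{ coordinates of } \mu_n+T):\bA^{2n} \to \bA^n,
\]
and prove that $G$ is flat; flatness of $b_0:W \to \bA^1$ then follows because $W \to \bA^{n-1}$ (given by the last $n-1$ components of $G$) is flat as a composition of the flat map $\mu_n+T$ of Corollary \ref{c:flat-d} with the coordinate projection $\bA^n \to \bA^{n-1}$, and in this situation the remaining coordinate $b_0$ of $G$ is automatically flat on each fiber. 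To show $G$ itself is flat, I introduce an auxiliary parameter $\lambda$ and consider
\[
G_\lambda \coloneqq (b_0,\, \text{last } n-1 \text{ coordinates of } \mu_n+\lambda T, \lambda):\bA^{2n}\times \bA^1_\lambda \to \bA^n \times \bA^1_\lambda.
\]
Assigning $a_i,b_i,\lambda$ all weight $1$ makes each component of $G_\lambda$ weighted-homogeneous, so as in the proofs of Lemmas \ref{l:flat-mult} and \ref{l:pre-axes} it suffices to check that the fiber over $0$ has the expected dimension $n+1$; but at $\lambda=0$ this fiber is $(\mu_n^{\prime})^{-1}(0) \times 0$, which has dimension $n$ by Lemma \ref{l:pre-axes}, and the homogeneous family structure forces all fibers to have the same dimension. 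Specializing at $\lambda = 1$ yields flatness of $G$. Combining with Lemma \ref{l:genl-flat-axes} and the symmetry argument gives flatness along both axes in the finite $n$ case.

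Finally, for the case $n = \infty$, all constructions pass to the inverse limit over the compatible system indexed by $n$ (the compatibility of the $T_n$ being exactly the hypothesis in Corollary \ref{c:flat-mult-infty}). The key observation is that the derived fiber product $Z^q_\infty \times_C \bA^1_x$ is the inverse limit of the $Z^q_n \times_C \bA^1_x$ along affine transition maps, and since each term is a classical scheme by the finite-$n$ result, the limit is classical as well; the same applies along the $y$-axis. The main obstacle in the above is verifying the dimension count for the fiber of $G_\lambda$ at $\lambda = 0$ and ensuring the homogeneity-based flatness criterion genuinely applies with the mixed weights appearing here; once that is in hand, the rest is a routine adaptation of the proofs already given.
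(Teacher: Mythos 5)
Your argument is essentially the paper's: after observing that the hypothesis on $T$ forces the first coordinate of $\mu_n + T$ to be $a_0 b_0$, you reduce via Lemma \ref{l:genl-flat-axes} to flatness of the map $G = \mu_n' + T$ (and you obtain $G$'s flatness by the homogenization trick of Corollary \ref{c:flat-d} applied to the argument of Lemma \ref{l:pre-axes}), then handle the $y$-axis by the $a \leftrightarrow b$ swap and pass to the limit for $n = \infty$ exactly as in Corollary \ref{c:flat-mult-infty}. Two small slips in the writing: the expected dimension of the fiber of $G_\lambda$ over $0$ is $2n+1-(n+1) = n$, not $n+1$ (your actual computation of the fiber dimension is correct, so this is harmless); and the intermediate sentence explaining why ``$G$ flat $\Rightarrow$ $b_0 : W \to \bA^1$ flat'' is garbled --- the correct and simpler reason is just that $b_0|_W$ is the base change of $G$ along $\bA^1 \times 0 \hookrightarrow \bA^n$, so flatness is inherited by pullback. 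Your explicit treatment of the $y$-axis symmetry (noting that $T \circ \sigma$ still lands in $0 \times \bA^{n-1}$) is a useful elaboration that the paper leaves implicit.
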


\begin{proof}

For $1\leq n<\infty$, the map:
\[
\mu_n^{\prime}+T:\bA^{2n} \to \bA^n
\]

\noindent is flat by the argument for Lemma \ref{l:pre-axes},
using Corollary \ref{c:flat-d} instead of Lemma \ref{l:flat-mult}.
The proof of Proposition \ref{p:flat-axis} then applies to see that
$Z^q \to C$ is flat along the $x$-axis.

The proof of Corollary \ref{c:flat-mult-infty} allows us to deduce the
$n = \infty$ case.

\end{proof}

\subsubsection{}

We now apply the above results to $\sZ$ and $\sY$.

\begin{prop}\label{p:zn-cl}

For every $n \geq 0$, $\sZ^{\leq n}$ is a
classical algebraic stack.

\end{prop}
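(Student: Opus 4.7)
My plan is to combine Proposition \ref{p:zn-algebraic} (which already gives that $\sZ^{\leq n}$ is a DG algebraic stack) with a flatness statement establishing classicality. Since $\fL^+\bG_m$ is a classical affine group scheme, it suffices to show that the equalizer
\[
E^{\leq n} \coloneqq \on{Eq}\bigl(\fL^+\bA^1 \times \fL^{\leq n} \bA^1 dt
\underset{(g,\omega) \mapsto g\omega}{\overset{d\circ\pi_1}{\rightrightarrows}}
\fL^{\leq n} \bA^1 dt\bigr)
\]
is a classical affine scheme. By construction, $E^{\leq n}$ is the derived fiber over $0$ of
\[
\psi : \fL^+\bA^1 \times \fL^{\leq n} \bA^1 dt \longrightarrow \fL^{\leq n} \bA^1 dt, \qquad (g,\omega) \mapsto dg - g\omega,
\]
so my plan is to show that $\psi$ is flat.

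To do this, I would pass to coordinates $g = \sum_{i \geq 0} a_i t^i$ and $\omega = \sum_{j \geq -n} b_j t^j\, dt$, writing $c_k$ for the coefficient of $t^k\, dt$ on the target (for $k \geq -n$). A direct calculation yields
\[
c_k \;=\; \mathds{1}[k \geq 0] \cdot (k+1)\, a_{k+1} \;-\; \sum_{\substack{i+j=k \\ i \geq 0,\, j \geq -n}} a_i b_j.
\]
Shifting the $b$- and target-indices by $n$ identifies source and target with $\bA^\infty \times \bA^\infty$ and $\bA^\infty$, and places $\psi$ in the form $-\mu_\infty + T$, where $\mu_\infty$ is the multiplication map of Corollary \ref{c:flat-mult-infty} and $T$ is a linear perturbation encoding the differential. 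For $n \geq 1$, the $k'$-th target coordinate of $T$ depends only on $a_{k'-n+1}$, and whenever $k' < m$ we have $k' - n + 1 \leq m - 1$; thus $T$ is the inverse limit of compatible truncations $T_m \colon \bA^{2m} \to \bA^m$ in the sense required by the corollary, which then supplies the flatness of $\psi$.

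The case $n = 0$ requires a small detour, since there the linear term involves $a_{k'+1}$, which lies outside the first $k'+1$ source coordinates and so does not literally fit the compatibility hypothesis of Corollary \ref{c:flat-mult-infty}. Here I would argue directly: the morphism
\[
\fL^+\bA^1 \times \fL^+\bA^1 dt \xrightarrow{\;\sim\;} \fL^+\bA^1 dt \times \fL^+\bA^1 dt \times \bA^1, \qquad (g,\omega) \mapsto \bigl(\omega,\, \psi_0(g,\omega),\, g(0)\bigr)
\]
is an isomorphism (invertible by Picard iteration on the ODE $dg = g\omega + \eta$ with prescribed $g(0)$), under which $\psi_0$ becomes the projection to the middle factor and is in particular smooth. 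The main technical obstacle is the coordinate bookkeeping needed to recognize $\psi$ as within the reach of Corollary \ref{c:flat-mult-infty}; once this identification is in hand, the rest of the argument is a routine verification.
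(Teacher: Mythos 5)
Your proposal is correct and follows the same strategy as the paper's proof: reduce to showing the universal $\fL^+\bG_m$-torsor (your $E^{\leq n}$) is a classical affine scheme, write the defining map $\psi = dg - g\omega$ in coordinates, and recognize it as a quadratic multiplication-type morphism plus a linear perturbation so that the flatness results of \S\ref{ss:flatness} give that the derived fiber over $0$ is classical. Your coordinate computation of $c_k$ is correct, and the shift by $n$ to put $\psi$ in the form $-\mu_\infty + T$ matches the paper's (up to a typo in the paper's displayed formula, which writes $ib_i$ where it should involve $a_i$'s).

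What you add is a genuine refinement: you correctly observe that for $n = 0$ the linear term $T(a,b')_{k'} = (k'+1)a_{k'+1}$ fails the inverse-limit compatibility hypothesis of Corollary \ref{c:flat-mult-infty}, since $a_{k'+1}$ lies outside the first $k'+1$ source coordinates. The paper's proof (which cites Corollary \ref{c:flat-d}, evidently meaning its infinite-dimensional extension) elides this, and as written does not literally cover $n = 0$. Your direct fix via the formal ODE --- showing $(g,\omega)\mapsto(\omega,\,dg-g\omega,\,g(0))$ is an isomorphism, under which $\psi_0$ becomes a projection --- is valid; the inverse is constructed coordinate-by-coordinate with a nondegenerate triangular linear part, so it is polynomial. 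An alternative in the same spirit as the corollary would be to take source truncations of the asymmetric shape $\bA^{m+1}_a\times\bA^m_b \to \bA^m_c$, where each truncation is already smooth by exactly the same triangularity, and pass to the limit. Either way, your instinct that $n=0$ needs separate care is correct, and this is the one place your argument is more careful than the paper's.
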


\begin{proof}

In coordinates, the morphism:
\[
\mu:\fL^+ \bA^1 \times \fL^{\leq n} \bA^1 dt 
\xar{(g,\omega) \mapsto g\omega - dg}
\fL^{\leq n} \bA^1 dt
\]

\noindent is given by:\footnote{As always, the meaning of this
formula is on $A$-points for any commutative ring $A$. That is,  
the $a_i$ and $b_i$'s are regarded as elements of $A$.} 
\[
\begin{gathered}
(g = \sum_{i=0}^{\infty} a_i t^i, 
\omega = \sum_{i = 0}^{\infty} b_i t^{i-n} dt) \mapsto 
g \omega - dg = 
\sum_{i \geq 0} 
\big(\sum_{j = 0}^i a_j b_{i-j}\big) t^{-n+i} dt 
+ \sum_{i \geq 0} i b_i t^{i-1} dt. 
\end{gathered}
\]

By Corollary \ref{c:flat-d}, this is a flat morphism of affine schemes.
Therefore, the inverse image $\mu^{-1}(0)$ is a classical affine scheme.
As this inverse image is the universal $\fL^+ \bG_m$-torsor
over $\sZ^{\leq n}$, it follows that $\sZ^{\leq n}$ is classical as well.
We now obtain the result from Proposition \ref{p:zn-algebraic}.

\end{proof}

\subsubsection{}\label{ss:z-refinement}

Let $C = \Spec(k[x,y]/xy)$ as in \S \ref{ss:refinement}.
Consider the $\bG_m$-action on $C$ of horizontal 
homothety. I.e., for the corresponding grading on
$k[x,y]/xy$, $\deg(x) = 1$ and $\deg(y) = 0$. 

Fix a coordinate $t$ on the formal disc.
We have a corresponding 
map $\sZ^{\leq n} \to C/\bG_m$ defined as follows.

First, we we have a natural map 
$\sZ \to \fL^+\bA^1/\bG_m \xar{\ev} \bA^1/\bG_m$.
This map takes $(\sL,\nabla,s) \in \sZ$ to $s|_0 \in \sL|_0$ 
for $0 \in \sD$ the base-point.

Next, we have a map:
\[
\sZ^{\leq n} \to \LocSys_{\bG_m,\log}^{\leq n}
\xar{\Pol} \fL^{pol,\leq n} \bA^1 dt
\simeq \prod_{i=1}^n \bA^1 \frac{dt}{t^i} 
\to \bA^1 \frac{dt}{t^n} = \bA^1.
\]

\noindent This map records the leading (i.e.,
degree $-n$) coefficient of the connection. 

The corresponding map 
$\sZ^{\leq n} \to \bA^1 \times \bA^1/\bG_m$ evidently
maps into $C/\bG_m \subset \bA^1 \times \bA^1/\bG_m$.

\begin{rem}

The above is somewhat non-canonical as the second
map above depends
on the choice of coordinate $t$. 
The more canonical statement would be 
to replace the $\bA^1$ by the (scheme corresponding
to the) line $t^{-n} k[[t]]dt/t^{-n+1}k[[t]] dt$.

\end{rem}

The following result plays a key technical role in our work.

\begin{prop}\label{p:z-flat-axes}

Suppose $n>0$.

\begin{enumerate}

\item \label{i:z-axes-1}

The map $\sZ^{\leq n} \to C/\bG_m$ is flat
along the axes.\footnote{By this, we mean that the 
corresponding map $\sZ^{\leq n} \underset{\bB\bG_m}\times \Spec(k) \to C$ is flat along the axes in the
sense of \S \ref{ss:refinement}.}

\item \label{i:z-axes-2}

We have a canonical isomorphism:
\[
\sZ^{\leq n} \underset{C/\bG_m}{\times} \bA^1/\bG_m
\simeq \sZ^{\leq n-1}.
\]

\item \label{i:z-axes-3}

Let $\iota:\sZ^{\leq n} \to \sZ^{\leq n}$ 
denote the map:
\[
(\sL,\nabla,s) \mapsto (\sL(1),\nabla,s)
\]

\noindent 
where $\sL(1)$ is the line bundle on the disc 
whose sections on the disc are allowed
to have a pole of order $1$ at the base-point $0 \in \sD$.\footnote{Another way to say this: 
$\Gr_{\bG_m}$ acts on $\sY_{\log}$, and the corresponding
action of $1 \in \bZ = \Gr_{\bG_m}(k)$ preserves
$\sZ^{\leq n}$ for all $n$; the induced map
$\sZ^{\leq n} \to \sZ^{\leq n}$ is our $\iota$.}

Then $\iota$ fits into a (derived) Cartesian diagram:
\[
\xymatrix{
\sZ^{\leq n} \ar[r]^{\iota} \ar[d] & \sZ^{\leq n} \ar[d] \\
\bA^1 \times \bB \bG_m \ar[r]^{(\id,0)} & C/\bG_m.
}
\]

\end{enumerate}

\end{prop}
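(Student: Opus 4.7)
The plan is to reduce each part to explicit coordinate calculations on the $\fL^+\bG_m$-torsor $\mu^{-1}(0) \subset \fL^+\bA^1 \times \fL^{\leq n} \bA^1 dt$ over $\sZ^{\leq n}$ from the proof of Proposition \ref{p:zn-cl}. Writing $g = \sum_i a_i t^i$ and $\omega = \sum_i b_i t^{i-n} dt$, the defining equations are of the form $\mu_\infty + T = 0$ in the notation of Corollary \ref{c:flat-mult-infty}, where $T$ is the linear map whose $i$-th component is $-(i-n+1)\,a_{i-n+1}$ for $i \geq n$ and zero otherwise; the map $\mu^{-1}(0) \to C$ is simply $(a_0,b_0)$.

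For (\ref{i:z-axes-1}), the hypothesis $n > 0$ guarantees that $T$ vanishes in position zero, so Corollary \ref{c:flat-axis-d} in its $n = \infty$ form applies and $\mu^{-1}(0) \to C$ is flat along the axes. Since $\mu^{-1}(0) \to \sZ^{\leq n}$ is an $\fL^+\bG_m$-torsor and the map to $C$ is equivariant for the induced $\bG_m$-action on $C$ (weight one on $x$, weight zero on $y$), this flatness descends to $\sZ^{\leq n} \to C/\bG_m$.

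For (\ref{i:z-axes-2}), the bottom map $\bA^1/\bG_m \to C/\bG_m$ is the inclusion of the $x$-axis $\{y=0\}$, corresponding to $b_0 = 0$. The explicit description of the fiber product provided by Proposition \ref{p:flat-axis} (in its $n = \infty$ form), together with the reindexing $\tilde b_j := b_{j+1}$, identifies the pullback at the torsor level with $\mu^{-1}(0)$ at level $n-1$; the minor point to verify is that the linear correction $T$ reindexes correctly, which follows from the identity $-(i-n+1)\,a_{i-n+1} = -(\tilde i - (n-1)+1)\,a_{\tilde i - (n-1)+1}$ for $\tilde i = i-1$. Descending along $\fL^+\bG_m$ gives the claimed isomorphism with $\sZ^{\leq n-1}$.

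For (\ref{i:z-axes-3}), I would lift $\iota$ to the torsor as $(g,\omega) \mapsto (tg,\omega + t^{-1} dt)$. The identity
\[
(tg)(\omega + t^{-1} dt) - d(tg) = t\bigl(g\omega - dg\bigr),
\]
together with the injectivity of multiplication by $t$, shows that this defines a bijection between $\mu^{-1}(0)$ and $\mu^{-1}(0) \cap \{a_0 = 0\}$, with inverse $(\tilde g, \tilde\omega) \mapsto (\tilde g/t, \tilde\omega - t^{-1} dt)$ well-defined because $\tilde a_0 = 0$ forces $\tilde g \in t \fL^+\bA^1$. A short check confirms this lift is $\fL^+\bG_m$-equivariant (both sides are multiplied by $f$ and translated by $d\log f$ in the expected way). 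Descending then yields a classical isomorphism $\sZ^{\leq n} \simeq \sZ^{\leq n} \times_{C/\bG_m} (\bA^1 \times \bB\bG_m)$, and the derived Cartesian statement follows from (\ref{i:z-axes-1}) applied to the $y$-axis. The main obstacle throughout is one of careful bookkeeping, most delicately in (\ref{i:z-axes-2}), where one must verify that the linear perturbation $T$ propagates correctly under the shift in indices.
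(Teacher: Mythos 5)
Your proof is correct and follows the paper's own strategy: pass to the $\fL^+\bG_m$-torsor $\mu^{-1}(0)$ using the coordinates from the proof of Proposition \ref{p:zn-cl}, identify the derivative term $-dg$ as a linear perturbation $T$ whose zeroth component vanishes (because $n>0$), and invoke Corollary \ref{c:flat-axis-d}; parts (2) and (3) then follow by reindexing and by the explicit gauge-theoretic lift of $\iota$, with the derived Cartesianness of (3) deduced from (1). The paper's own proof is terser, dismissing (2) and (3) as "immediate from the constructions," so the explicit verification of the lift $(g,\omega)\mapsto(tg,\omega+t^{-1}dt)$ and the index shift $\tilde b_j = b_{j+1}$ that you supply is a welcome expansion rather than a different route.
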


\begin{proof}

For \eqref{i:z-axes-1}, it suffices to check
flatness along axes after passing to the fpqc cover
$\sZ^{\leq n} \times_{\bB \fL^+ \bG_m} \Spec(k)$.
The corresponding map to $C/\bG_m$ evidently lifts
to $C$, and it suffices to check that the corresponding
map to $C$ is flat along axes. Using coordinates
as in the proof of 
Proposition \ref{p:zn-cl}, we deduce the claim
from Corollary \ref{c:flat-axis-d}.

The other assertions are immediate from the constructions.
For instance, the diagram in 
\eqref{i:z-axes-3} is obviously \emph{classically}
Cartesian, so derived Cartesian by \eqref{i:z-axes-1}. 

\end{proof}

\begin{cor}\label{c:iota-afp}

The above morphism
$\iota:\sZ^{\leq n} \into \sZ^{\leq n}$ is an
almost finitely presented\footnote{See 
\cite{higheralgebra} Definition 7.2.4.26
for the definition in the affine case. 
In the following remark in \emph{loc. cit}., it is shown
that this notion is preserved under base-change.
By \cite{sag} Proposition 4.1.4.3, this condition
can be checked flat locally. Therefore, there
is an evident notion of a representable
morphism of prestacks being locally almost of finite
presentation, and we are using the term in this
sense.

We emphasize that in this setting,
passing to an affine cover of $\sZ^{\leq n}$,
the corresponding map of classical affine
schemes is a finitely presented morphism in the sense
of classical algebraic geometry,
but being almost finitely presented is a 
\emph{stronger} notion (in spite of the terminology):
in the more classical reference \cite{sga-6} 
Expos\'e III D\'efinition 1.2, this property
of a morphism is called \emph{pseudo-coherence}.}
closed embedding.

\end{cor}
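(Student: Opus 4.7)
The plan is to leverage the Cartesian diagram in Proposition \ref{p:z-flat-axes}\eqref{i:z-axes-3} to reduce the claim to a concrete finite-dimensional assertion. That diagram exhibits $\iota$ as the base change of the map
\[
(\id,0): \bA^1\times \bB\bG_m \to C/\bG_m
\]
along the structure map $\sZ^{\leq n}\to C/\bG_m$. Since the class of almost finitely presented closed embeddings is stable under arbitrary base change (as recorded in the footnote of the statement), it suffices to show that $(\id,0)$ is an almost finitely presented closed embedding.

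Next, I would check the property flat-locally on the target. Applying the fpqc cover $\Spec(k)\to \bB\bG_m$ and again invoking stability under base change (both for closed embeddings and for almost finite presentation, cf.\ the footnote), the problem reduces to verifying that the closed embedding
\[
\bA^1 \hookrightarrow C = \Spec\bigl(k[x,y]/(xy)\bigr)
\]
of the $y$-axis (the locus $\{x=0\}$, reflecting that $\iota$ forces $s|_0=0$ after the twist $\sL\rightsquigarrow \sL(1)$) is an almost finitely presented closed embedding.

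At this point there is essentially nothing left: this is the quotient map of classical commutative rings
\[
k[x,y]/(xy)\ \twoheadrightarrow\ k[y],
\]
whose kernel is the principal ideal $(x)$, hence finitely generated. Since the source ring is Noetherian and $k[y]$ is a finitely presented $k[x,y]/(xy)$-algebra in the classical sense, the corresponding morphism of connective $E_\infty$-rings is almost of finite presentation by \cite{higheralgebra} Proposition 7.2.4.31. Combined with the fact that the map is manifestly a (classical) closed embedding, this finishes the proof.

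There is no serious obstacle here; the real content of the corollary is packaged into Proposition \ref{p:z-flat-axes}\eqref{i:z-axes-3}, whose proof already invoked the flatness-along-axes results of \S\ref{ss:flatness} to guarantee that the derived Cartesian square coincides with the classical one. Once that derived structure is under control, the remaining check is a standard Noetherian finite-presentation statement.
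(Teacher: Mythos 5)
Your proof is correct and follows essentially the same route the paper takes: both arguments rest entirely on the derived Cartesian square from Proposition \ref{p:z-flat-axes}\eqref{i:z-axes-3} together with stability of almost finite presentation under base change, reducing the problem to checking that $(\id,0):\bA^1\times\bB\bG_m\to C/\bG_m$ is an almost finitely presented closed embedding. The paper's proof simply asserts this last point as an instance of the general fact that morphisms between finite-type (equivalently, Noetherian, locally almost finitely presented over $k$) schemes are automatically almost finitely presented; you instead descend along $\Spec(k)\to\bB\bG_m$ and verify the ring-level statement $k[x,y]/(xy)\twoheadrightarrow k[y]$ directly via Noetherianity. That extra unwinding is harmless and arguably makes the point more transparent — in particular it makes explicit why the warning in the footnote (that classical finite presentation is strictly weaker than almost finite presentation for non-Noetherian sources) is not an issue here, namely because the base-change reduces everything to a genuinely Noetherian situation.
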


\begin{proof}

Almost finite presentation is preserved under
(derived) base-change, and any morphism of 
schemes almost of finite presentation 
is itself almost of finite presentation.
So the claim follows from 
Proposition \ref{p:z-flat-axes} \eqref{i:z-axes-3}.

\end{proof}

\begin{cor}\label{c:yn-log-cl}

$\sY_{\log}^{\leq n}$ is a classical ind-algebraic stack.
More precisely, the total space of the canonical $\bG_m$-
torsor on 
$\sY_{\log}^{\leq n}$ is classical and a reasonable indscheme 
in the sense of \cite{methods} \S 6.8.

\end{cor}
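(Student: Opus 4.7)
The plan is to exhibit $\sY_{\log}^{\leq n}$ as a filtered colimit of copies of $\sZ^{\leq n}$ glued along the endomorphism $\iota$ of Proposition \ref{p:z-flat-axes} \eqref{i:z-axes-3}, and then to base-change along the evaluation-at-$0$ morphism $\sY_{\log}^{\leq n} \to \bB\bG_m$ to descend to an honest indscheme.

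For each integer $m\geq 0$, I would define
\[
\phi_m : \sZ^{\leq n} \to \sY_{\log}^{\leq n}, \qquad (\sL,\nabla,s)\mapsto (\sL(-m),\nabla,s),
\]
reinterpreting $s\in \Gamma(\sD,\sL)$ as a flat section of $\sL(-m)|_{\o{\sD}}$ having pole of order at most $m$ at the base point $0 \in \sD$. The identity $\sL(1)(-m-1)=\sL(-m)$ yields $\phi_{m+1}\circ \iota = \phi_m$ on the nose, so the $\phi_m$'s assemble into a compatible cone. Since every flat section on $\o{\sD}$ has finite order of pole at $0$, this cone induces an equivalence $\sY_{\log}^{\leq n}\simeq \colim_m \sZ^{\leq n}$, with transition maps $\iota$. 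Granted this, the claim that $\sY_{\log}^{\leq n}$ is a classical ind-algebraic stack is immediate from Proposition \ref{p:zn-cl} (each $\sZ^{\leq n}$ is a classical algebraic stack) together with Corollary \ref{c:iota-afp} (each $\iota$ is an almost finitely presented closed embedding).

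For the refined assertion, I base-change the colimit along $\Spec(k)\to\bB\bG_m$, where the target map is the composition of $\sY_{\log}^{\leq n}\to\LocSys_{\bG_m,\log}^{\leq n}$ with the projection onto the $\bB\bG_m$-factor of Proposition \ref{p:locsys-trun}. By that same proposition, $\LocSys_{\bG_m,\log}^{\leq n}\times_{\bB\bG_m}\Spec(k)\simeq \fL^{pol,\leq n}\bA^1 dt$, which is a finite-dimensional affine scheme. Combined with the affineness of $\sZ^{\leq n}\to\LocSys_{\bG_m,\log}^{\leq n}$ established in the proof of Proposition \ref{p:zn-algebraic}, this shows that $\sZ^{\leq n}\times_{\bB\bG_m}\Spec(k)$ is a classical affine scheme. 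The pulled-back transitions remain almost finitely presented closed embeddings, where the induced trivialization of $\sL(1)|_0$ is obtained from the given trivialization of $\sL|_0$ by tensoring with the canonical trivialization of $\sO(1)|_0$ afforded by the fixed uniformizer $t$. A filtered colimit of classical affine schemes along almost finitely presented closed embeddings is by definition a classical reasonable indscheme in the sense of \cite{methods} \S 6.8.

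The main point requiring care is cofinality at the derived level: $\sY_{\log}^{\leq n}$ is a priori defined as a derived quotient of a derived fiber product, whereas the colimit presentation is manifestly classical. The reconciliation is afforded by Proposition \ref{p:zn-cl}, which establishes that $\sZ^{\leq n}$ carries no derived structure, and by Proposition \ref{p:z-flat-axes} \eqref{i:z-axes-3}, which provides the (derived) Cartesian description of $\iota$ needed to see that the derived and classical versions of the relevant intersections agree.
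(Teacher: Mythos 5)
Your overall strategy is the same as the paper's: present $\sY_{\log}^{\leq n}$ as the filtered colimit $\colim_m(\sZ^{\leq n}\xar{\iota}\sZ^{\leq n}\xar{\iota}\cdots)$, base-change along $\Spec(k)\to\bB\bG_m$, and then read off the claim from Proposition \ref{p:zn-cl} and Corollary \ref{c:iota-afp} via the definition of reasonable indscheme. Your spelling-out of the cocone maps $\phi_m$, the compatibility $\phi_{m+1}\circ\iota=\phi_m$, and the induced trivialization after base-change are all correct elaborations of steps the paper treats tersely.

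The place I would push back is the final paragraph. You correctly identify that the colimit identification must be established for \emph{derived} prestacks, not merely on classical points; the argument via ``every flat section on $\o{\sD}$ has finite order of pole at $0$'' by itself only shows the cocone induces an isomorphism on $k$-points. But the proposed reconciliation does not close this gap. Proposition \ref{p:zn-cl} tells you $\sZ^{\leq n}$ is classical, and hence that the \emph{source} of the cocone map $\colim_m\sZ^{\leq n}\to\sY_{\log}^{\leq n}$ is classical; but it cannot tell you this map is an equivalence, since a priori $\sY_{\log}^{\leq n}$ could carry derived structure not seen on the colimit. Proposition \ref{p:z-flat-axes}\eqref{i:z-axes-3} gives a derived-Cartesian description of $\iota$, which is exactly what is needed for Corollary \ref{c:iota-afp}, but it says nothing about whether the colimit exhausts $\sY_{\log}^{\leq n}$ derivedly. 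In short, you are trying to deduce ``the colimit equals $\sY_{\log}^{\leq n}$'' from ``$\sY_{\log}^{\leq n}$ is classical,'' which is what you are in the middle of proving.

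The clean justification of the colimit formula is different: pass to the $\fL^+\bG_m$-torsor, where $\sY_{\log}^{\leq n}$ becomes the derived equalizer
$\on{Eq}\big(\fL\bA^1\times\fL^{\leq n}\bA^1 dt\rightrightarrows\fL\bA^1 dt\big)$. Writing $\fL\bA^1=\colim_m t^{-m}\fL^{+}\bA^1$ (a filtered colimit of classical affine schemes under closed embeddings, computed pointwise in $\PreStk$), one uses that filtered colimits of $\infty$-groupoids commute with finite limits to pull the colimit out of the equalizer; the $m$-th term is identified with $\widetilde{\sZ}^{\leq n}$ under the shift $g\mapsto t^m g$, and the transition maps are $\iota$. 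Classicality of $\sY_{\log}^{\leq n}$ is then a \emph{consequence}, since each term of the colimit is classical by Proposition \ref{p:zn-cl} and the transitions are closed embeddings. If you replace your final paragraph with this argument, the proof is complete and tracks the one in the paper.
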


\begin{proof}

We clearly have:
\begin{equation}\label{eq:y-log-colim-iota}
\colim \big(\sZ^{\leq n} \xar{\iota} \sZ^{\leq n} \xar{\iota } \ldots \big)
= \sY_{\log}^{\leq n}. 
\end{equation}

\noindent Each of these morphisms is an
almost finitely presented closed
embedding. 
Therefore, $\sY^{\leq n} \times_{\bB \bG_m} \Spec(k)$
is a filtered colimit of the classical affine schemes 
$\sZ^{\leq n} \times_{\bB \bG_m} \Spec(k)$ under 
almost finitely presented closed
embeddings. 

We now remind the general definition of reasonable
indscheme from \cite{methods}: 
it is a (DG) indscheme that can be written
as a colimit of eventually coconnective
quasi-compact quasi-separated schemes
under closed embeddings almost of finite presentation.
So clearly this property is verified here.

\end{proof}

\begin{rem}

One clearly obtains similar results for the Higgs analogue
of $\sY$. A posteriori, one sees that 
$\Maps(\o{\sD},C)$ is a reasonable ind-affine indscheme.
For a weaker (classical) notion of reasonable indscheme, a similar
result with $C$ replaced by any finite type affine scheme
is well-known: see \cite{chiral} Lemma 2.4.8. It is natural to ask: in what
generality is such a result true for the stronger
(derived) notion used here?

\end{rem}

\begin{proof}[Proof of Theorem \ref{t:cl}]

By Corollary \ref{c:yn-log-cl}, 
$\sY^{\leq n} = \sY_{\log}^{\leq n}/\Gr_{\bG_m}^{\leq n}$
is a classical (non-algebraic) prestack. 
We deduce the same for $\sY = \colim_n \sY^{\leq n}$.

\end{proof}

\subsubsection{}\label{ss:c-redux}

We need one mild improvement of Proposition \ref{p:z-flat-axes}.
Roughly, the statement says that the use of the base-point $0 \in \widehat{\sD}$
is not essential in \emph{loc. cit}.

First, we have a map:
\begin{equation}\label{eq:axis-1}
\widehat{\sD} \times \sZ \to \widehat{\sD} \times \fL^+\bA^1/\bG_m \to 
\bA^1/\bG_m
\end{equation}

\noindent where the latter map is evaluation. Explicitly, this map 
sends $(\tau,(\sL,\nabla,s)) \in \widehat{\sD} \times \sZ$ to 
$s|_{\tau} \in \sL|_{\tau}$.

Now fix $n$. We let $\sD^{\leq n} \coloneqq \Spec(k[[t]]/t^n)$. 

We have a second map:
\begin{equation}\label{eq:axis-2}
\sD^{\leq n} \times \sZ^{\leq n} \to \bA^1/\bG_m
\end{equation}

\noindent defined as follows. By definition, this map will
factor as:
\[
\sD^{\leq n} \times \sZ^{\leq n} \to
\sD^{\leq n} \times \LocSys_{\bG_m,\log}^{\leq n} \xar{\id \times \Pol}
\sD^{\leq n} \times \fL^{pol,\leq n} \bA^1 dt \to 
\bA^1/\bG_m 
\]

\noindent where the last map remains to be defined. 
First, note that the dualizing complex
$\omega_{\sD^{\leq n}} \in \IndCoh(\sD^{\leq n})$ lies in degree $0$
and is a line bundle; it corresponds to the module $(k[[t]]/t^n)^{\vee} = 
t^{-n} k[[t]]dt/k[[t]]dt$. Therefore, given $\tau \in \sD^{\leq n}$, 
we obtain a line $\tau^*(\omega_{\sD^{\leq n}})$. Moreover,
a point $\omega \in \fL^{pol,\leq n} \bA^1 dt$ defines an evident section
of the above line bundle $\omega_{\sD^{\leq n}}$. Restricting to the
point $\tau$, we obtain the desired construction.

\begin{rem}

A choice of coordinate $t$ trivializes the above line bundle
on $\sD^{\leq n}$: the basis element is $\frac{dt}{t^n}$.
Such a trivialization lifts the map \eqref{eq:axis-2} to 
a map to $\bA^1$. This map is explicitly given at 
$(\tau,(\sL,\nabla,s))$ by evaluating the function
$\Pol(\nabla) \cdot \frac{t^n}{dt}$ on $\sD^{\leq n}$
at $\tau$.

\end{rem}

\begin{prop}\label{p:z-flat-axes-var}

The map
$\sZ^{\leq n} \to \bA^1/\bG_m \times \bA^1/\bG_m$ factors
through $C/(\bG_m\times\bG_m)$. For $n>0$, 
the resulting map 
$\sZ^{\leq n} \to C/(\bG_m\times\bG_m)$ is flat along
axes. 

\end{prop}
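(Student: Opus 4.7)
The plan is to work in explicit coordinates after pulling back along the $\fL^+ \bG_m$-torsor to the universal cover $\widetilde{\sZ}^{\leq n}$, whose coordinate ring $R$ is presented by the $n$ relations $\sum_{i+j=k-n} a_i b_j = 0$ for $0 \leq k \leq n-1$ extracted from $\nabla s = 0$. Writing $\tau$ for the coordinate on $\sD^{\leq n} = \Spec k[[t]]/t^n$ and trivializing $\omega_{\sD^{\leq n}}$ by $t^{-n} dt$, the combined map $\sD^{\leq n} \times \widetilde{\sZ}^{\leq n} \to \bA^2$ takes the form
\[
(\tau, (a_i), (b_j)) \mapsto (u, v) := \Big(\sum_{i=0}^{n-1} a_i \tau^i,\ \sum_{j=-n}^{-1} b_j \tau^{j+n}\Big).
\]

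The factorization through $C/(\bG_m \times \bG_m)$ is immediate: expanding $uv$ in $S := R[\tau]/\tau^n$ yields, for $0 \leq k \leq n-1$, a coefficient of $\tau^k$ equal to $\sum_{i+j=k-n} a_i b_j$, vanishing in $R$; coefficients for $k \geq n$ die from $\tau^n = 0$. Hence $uv = 0$ in $S$.

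For flatness along the axes, the two-periodic free resolution of $\cO_{\bA^1_u}$ (resp.\ $\cO_{\bA^1_v}$) over $\cO_C = k[u,v]/(uv)$ reduces matters to the equalities $\Ker(v) = \Image(u)$ and $\Ker(u) = \Image(v)$ as ideals of $S$; the $a_i \leftrightarrow b_{-n+i}$ symmetry of the entire picture interchanges these, so it suffices to prove the first, of which the inclusion $\supseteq$ is automatic from $uv = 0$. The essential input is the equality $\Ann_R(b_{-n}) = (a_0)$, which the same two-periodic resolution applied to the map $R \to C$ shows is exactly equivalent to Proposition \ref{p:z-flat-axes}\eqref{i:z-axes-1} (flatness of $R \to C$ along the $u$-axis). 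Granting this, given $f = \sum_k f_k \tau^k \in \Ker(v)$, one constructs $h_0, h_1, \ldots \in R$ inductively so that $f_k = \sum_{j=0}^{k} h_{k-j} a_j$ for all $k$: having fixed $h_0, \ldots, h_{k-1}$, the $k$-th equation $\sum_{i=0}^k f_i b_{-n+k-i} = 0$ of the system $fv=0$, after substituting the constructed values of $f_i$ for $i < k$ and repeatedly converting each $b_{-n+p}$-term into $b_{-n}$-multiples via the $\sZ^{\leq n}$-relations $\sum_{i+j=p-n} a_i b_j = 0$, collapses to the single condition $(f_k - \sum_{j=1}^k h_{k-j} a_j) b_{-n} = 0$, from which $\Ann_R(b_{-n}) = (a_0)$ produces the required $h_k$. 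Setting $h := \sum_\ell h_\ell \tau^\ell \in S$, a direct expansion verifies $f = u \cdot h$, so $f \in \Image(u)$, as desired.

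The main technical hurdle is thus the identification $\Ann_R(b_{-n}) = (a_0)$ (and equivalently the $u$-axis flatness statement of Proposition \ref{p:z-flat-axes}\eqref{i:z-axes-1}); once this is in hand, the iterative solution of the system is forced and purely formal, and the $a \leftrightarrow b$ symmetry handles the $v$-axis part of flatness.
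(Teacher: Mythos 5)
Your approach is genuinely different from the paper's. The paper factors the problem through the flatness along axes of the finite-dimensional evaluation map $\widetilde{\ev}\colon \sD^{\leq n} \times \fL_n^+ C \to C$, which it deduces from Proposition~\ref{p:flat-axis}, and then proceeds as in Proposition~\ref{p:z-flat-axes} via Corollary~\ref{c:flat-axis-d}; you instead construct the preimage under $u$ by hand via an inductive coordinate computation. Your inductive collapse does work: the identities $\sum_{i+j=k-n} a_i b_j = 0$ for $0 \leq k \leq n-1$ hold in $A_n$ (they are consequences of the actual relations $(m+1)a_{m+1} = \sum_{j=1}^n a_{m+j}b_{-j}$ after specializing to $m = k-n \leq -1$), and with $\Ann_R(b_{-n}) = (a_0)$ in hand the induction indeed produces $h$ with $f = uh$. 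Note though that your stated presentation of $R$ is the Higgs version rather than the de~Rham one (the derivative term $(m+1)a_{m+1}$ is missing), and there are infinitely many relations, not $n$; this happens not to damage the $n$ vanishing identities you actually use, but it is worth fixing.

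There is, however, a genuine gap: the claimed $a_i \leftrightarrow b_{-n+i}$ symmetry does not exist. The ring $A_n$ has infinitely many $a$-generators but only $n$ $b$-generators, and the relations $(m+1)a_{m+1} = \sum_j a_{m+j}b_{-j}$ are manifestly asymmetric in $a$ and $b$ because of the derivative coefficient. Hence $\Ker(u) \subseteq \Image(v)$ in $S$ is not obtained ``by symmetry'' from $\Ker(v) \subseteq \Image(u)$. You must run a separate (but entirely analogous) induction, now solving $gu = 0$ for $g = vh'$ and using the \emph{other} annihilator identity $\Ann_R(a_0) = (b_{-n})$. Relatedly, flatness of $R \to C$ along axes is equivalent, via the $2$-periodic resolution, to \emph{both} $\Ann_R(b_{-n}) = (a_0)$ and $\Ann_R(a_0) = (b_{-n})$ (these are the $\Tor_1$ and $\Tor_2$ conditions), so the phrase ``exactly equivalent to'' a single annihilator identity overstates things \textemdash{} but since Proposition~\ref{p:z-flat-axes}\eqref{i:z-axes-1} delivers both, this part is easily repaired. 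With the symmetry claim dropped and the second inductive argument supplied, your proof is complete.
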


\begin{proof}

For the first assertion, observe that
for $(\sL,\nabla,s) \in \sZ$, 
$s\Pol(\nabla) = 0$ as a polar section
of the line bundle $\sL dt$ on the disc. 
This implies the claim.

For flatness, note that we 
have a canonical evaluation morphism:
\[
\widetilde{\ev}:
\sD^{\leq n} \times \fL_n^+ C \to C.
\]

\noindent It is immediate from 
Proposition \ref{p:flat-axis} that this map 
is flat along axes. Indeed, we need to show
that $\widetilde{\ev}^*(i_*(\sO_{\bA_x^1}))$
is concentrated in cohomological degree $0$
(where $i:\bA_x^1 \to C$ is the embedding).
It suffices to check this after further restriction
to $\fL_n^+ C$, where the assertion is 
exactly \emph{loc. cit}.

From here, the claim proceeds as in the proof of 
Proposition \ref{p:z-flat-axes}.

\end{proof}

\subsection{Some notation}\label{ss:y-not}

We now collect a bit of notation related
to $\sY$.

\subsubsection{}\label{sss:maps-notation}

We begin by introducing notation for various structural maps. 

For $n>0$, recall that we have the morphism:
\[
\iota:\sZ^{\leq n} \to \sZ^{\leq n}.
\]

\noindent For $r \geq 0$, we sometimes let $\iota^r$ denote the $r$-fold
composition of $\iota$.

We let:
\[
\begin{gathered}
\delta_n:\sZ^{\leq n-1} \to \sZ^{\leq n} \\
i_n:\sZ^{\leq n} \to \sY_{\log}^{\leq n} \\
\pi_n:\sZ^{\leq n} \to \sY^{\leq n} \\
\lambda_n:\sY_n \to \sY
\end{gathered}
\]

\noindent denote the canonical maps. 

We also have the maps $\zeta$ and $\widetilde{\zeta}$ fitting into 
a diagram:
\[
\xymatrix{
\LocSys_{\bG_m,\log}^{\leq n} 
\ar[rr]^{\widetilde{\zeta}} \ar[d]^{\widetilde{\pi}_n} 
&& \sZ^{\leq n} 
\ar[d]^{\pi_n} \\
\LocSys_{\bG_m}^{\leq n} \ar[rr]^{\zeta} && \sY^{\leq n}.
}
\]

\noindent Here the horizontal arrows send
$(\sL,\nabla)$ to $(\sL,\nabla,0)$; i.e., we
take $0$ as the flat section of our local system.

\subsubsection{}\label{sss:z-pic}

We introduce the following line bundle on $\sZ^{\leq n}$.

Using our base-point $0 \in \sD$, we obtain a canonical line bundle
$\sO_{\sZ^{\leq n}}(-1)$ on $\sZ^{\leq n}$; its fiber at
$(\sL,\nabla,s)$ is the fiber $\sL|_0$ of $\sL$ at $0$.

For $r \in \bZ$, we let:
\[
\sO_{\sZ^{\leq n}}(r) \coloneqq \sO_{\sZ^{\leq n}}(-1)^{\otimes -r}.
\]

\noindent denote its (suitably normalized) tensor powers. 
For $\sF \in \QCoh(\sZ^{\leq n})$, we let
$\sF(r)$ denote its tensor with $\sO_{\sZ^{\leq n}}(r)$.

We let $\widetilde{\sZ}^{\leq n}$ denote the total space of the $\bG_m$-bundle
defined by $\sO_{\sZ^{\leq n}}(-1)$, i.e., 
$\widetilde{\sZ}^{\leq n} \coloneqq \sZ^{\leq n} \times_{\bB \bG_m} \Spec(k)$.
By Proposition \ref{p:zn-algebraic} (or its proof), we remark that
$\widetilde{\sZ}^{\leq n}$ is an affine scheme, and by Theorem \ref{t:cl},
it is a classical affine scheme. We use notation $\iota$ and $\delta_n$
sometimes for the corresponding maps for $\widetilde{\sZ}^{\leq n}$.

\begin{rem}

Clearly the above line bundle extends canonically to $\sY_{\log}$ and
$\sY_{\log}^{\leq n}$; we use similar notation in those settings.

\end{rem}

\subsection{Fundamental exact sequences}\label{ss:fund}

\subsubsection{}

We now record two exact sequences that we will use for some inductive arguments. 
The results here amount to restatements of Proposition \ref{p:z-flat-axes}.
Suppose $n > 0$ in what follows.

\subsubsection{}

We will presently construct a short exact sequence:
\begin{equation}\label{eq:fund-ses}
0 \to \delta_{n,*}\sO_{\sZ^{\leq n-1}}(1) 
\to \sO_{\sZ^{\leq n}} \to \iota_*\sO_{\sZ^{\leq n}} \to 0
\end{equation}

\noindent in $\QCoh(\sZ^{\leq n})^{\heart}$. 

The map $\sO_{\sZ^{\leq n}} \to \iota_*\sO_{\sZ^{\leq n}}$ is just the
canonical adjunction morphism.

Now observe that there is a canonical map:
\begin{equation}\label{eq:o(1)}
\sO_{\sZ^{\leq n}}(1) \to \sO_{\sZ^{\leq n}};
\end{equation}

\noindent its fiber at $(\sL,\nabla,s)$ is the map:
\[
\sL^{\vee}|_0 \to \sO|_0
\]

\noindent given by pairing with the section $s|_0 \in \sL|_0$. 
We claim that this map factors uniquely as:

\[
\sO_{\sZ^{\leq n}}(1) \onto \delta_{n,*}\sO_{\sZ^{\leq n-1}}(1) 
\dashrightarrow \sO_{\sZ^{\leq n}}
\]

\noindent and that the resulting map defines a short exact sequence 
as in \eqref{eq:fund-ses}.

In fact, this is implicit in work we have done already: these claims are 
obtained by pullback from the short exact sequence:
\[
0 \to k[x,y]/(xy,x) \xar{x \cdot -} k[x,y]/xy \to k[x,y]/(xy,y) \to 0
\]

\noindent of bi-graded $k[x,y]/xy$ modules along the morphism
from Proposition \ref{p:z-flat-axes}. We emphasize that we are using the 
flatness asserted in Proposition \ref{p:z-flat-axes} \eqref{i:z-axes-1}.

\begin{variant}

For later use, we record the following observation. Fix 
a coordinate $t$ on the disc and use the notation of \S \ref{ss:coords} below. 

Then we also have a short exact sequence:
\begin{equation}\label{eq:fund-ses-2}
0 \to \iota_*\sO_{\sZ^{\leq n}} \to \sO_{\sZ^{\leq n}} \to 
\delta_{n,*}\sO_{\sZ^{\leq n-1}} \to 0 
\end{equation}

\noindent in $\QCoh(\sZ^{\leq n})^{\heart}$ in which the right
map is the canonical morphism. The left arrow is the unique map fitting
into a commutative diagram:
\[
\xymatrix{
\sO_{\sZ^{\leq n}} \ar[d] \ar[dr]^{b_{-n} \cdot -} \\
\iota_*\sO_{\sZ^{\leq n}} \ar@{..>}[r] & \sO_{\sZ^{\leq n}}.
}
\]

\noindent Here $b_{-n}$ is as in \S \ref{ss:coords} below. 
Again, the existence of the dotted arrow and the short exact sequence
follow from Proposition \ref{p:flat-axis}.

\end{variant}

\subsection{Nice opens}\label{ss:un}

At some points, it is convenient to refer to the following geometric observations
about $\sZ^{\leq n}$. Roughly speaking, the idea is that 
$\sZ^{\leq n}$ is ``nice" away from $\sZ^{\leq n-1}$, which we sometimes use
for inductive statements. 

We proceed separately in the cases where $n = 1$ and $n > 1$. 


\subsubsection{$n>1$ case}

Define $\sU_n \coloneqq \sZ^{\leq n}$ as the open substack:
\[
\sU_n \coloneqq \sZ^{\leq n} \setminus \sZ^{\leq n-1}.
\]

\begin{lem}\label{l:un}

The natural map:
\[
\LocSys_{\bG_m,\log}^{\leq n} \setminus \LocSys_{\bG_m,\log}^{\leq n-1}
\to 
\sU_n
\]

\noindent (induced by $\widetilde{\zeta}$ from \S \ref{sss:maps-notation})
is an isomorphism. 

\end{lem}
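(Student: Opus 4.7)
The plan is to work upstairs, before quotienting by $\fL^+\bG_m$, and identify both sides of $\widetilde{\zeta}$ with the same explicit open subscheme. Writing $\omega = \sum_{k \geq 0} b_k t^{k-n} dt$, let $V \subset \fL^{\leq n} \bA^1 dt$ be the open locus where the leading coefficient $b_0$ is invertible. Since $d\log f \in \fL^+\bA^1 dt$ for every $f \in \fL^+\bG_m$, the locus $V$ is gauge-invariant, and the open $\LocSys_{\bG_m,\log}^{\leq n} \setminus \LocSys_{\bG_m,\log}^{\leq n-1}$ identifies canonically with $V/\fL^+\bG_m$. Using the description $\sZ^{\leq n} = \mu^{-1}(0)/\fL^+\bG_m$ with $\mu(g,\omega) = g\omega - dg$, the open $\sU_n$ identifies with $(\mu^{-1}(0) \cap (\fL^+\bA^1 \times V))/\fL^+\bG_m$, and $\widetilde{\zeta}$ upstairs is just $\omega \mapsto (0,\omega)$.

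The main step is then to show that the map
\[
\tilde\mu: \fL^+\bA^1 \times V \to \fL^{\leq n} \bA^1 dt \times V, \quad (g,\omega) \mapsto (g\omega - dg, \omega)
\]
is an isomorphism of $V$-schemes. Given $\omega \in V$ and $\omega' = \sum c_j t^{j-n} dt$, we solve $g\omega - dg = \omega'$ for $g = \sum a_k t^k$. Comparing coefficients of $t^{j-n} dt$ yields $\sum_{m=0}^{j} a_m b_{j-m} = c_j$ for $0 \leq j < n$ and $\sum_{m=0}^{j} a_m b_{j-m} - (j-n+1) a_{j-n+1} = c_j$ for $j \geq n$. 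In each case the leading new term is $a_j b_0$ with $b_0$ a unit, so $a_j$ is uniquely determined by $c_0, \ldots, c_j$ and $a_0, \ldots, a_{j-1}$. The hypothesis $n > 1$ is essential for uniqueness (i.e., for $\omega' = 0$): the polar equations force $a_0 = \ldots = a_{n-1} = 0$, and then inductively at $j = n+k$ the equation reads $a_{n+k} b_0 - (k+1) a_{k+1} = 0$, with $a_{k+1}$ already zero since $k+1 \leq n+k-1$ as $n \geq 2$.

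To conclude, $\tilde\mu$ being an isomorphism gives $\mu^{-1}(0) \cap (\fL^+\bA^1 \times V) = \tilde\mu^{-1}(\{0\} \times V) = \{0\} \times V \simeq V$, and $\widetilde{\zeta}$ upstairs realizes the identity on $V$; passing to the $\fL^+\bG_m$-quotient yields the claimed isomorphism. The principal friction in the argument is justifying $\tilde\mu$ rigorously in the pro-finite dimensional setting, but this reduces to the explicit recursive solution above, and any worry about derived enhancements evaporates thanks to Proposition \ref{p:zn-cl}, which ensures $\mu^{-1}(0)$ is already classical.
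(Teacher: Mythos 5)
Your proposal is correct and takes essentially the same approach as the paper's proof. Both reduce to the same recursive computation: after inverting the leading polar coefficient (your $b_0$, the paper's $b_{-n}$), the flatness equation forces all the $a_i$ to vanish one at a time, with the hypothesis $n>1$ ensuring the $dg$-term $(j-n+1)a_{j-n+1}$ always involves an index strictly smaller than $j$; you package this recursion as the statement that $\tilde\mu$ is an isomorphism of $V$-schemes upstairs in the $\fL^+\bG_m$-torsor, while the paper performs it directly in the localized coordinate ring $A_n[b_{-n}^{-1}]$, but the substance is identical.
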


\begin{rem}

By Proposition \ref{p:locsys-log}, we deduce that $\sU_n \simeq 
(\bA^1 \setminus 0 ) \times \bA^{n-1} \times \bB \bG_m$. In particular,
$\sU_n$ is a smooth stack of finite type. 

\end{rem}

\begin{proof} 

It is convenient to use the notation of \S \ref{ss:coords}, which is introduced
below.
In that notation, we have $\sU_n = \Spec(A_n[b_{-n}^{-1}])/\bG_m$.

In the (classical) commutative ring $A_n[b_{-n}^{-1}]$, we have relations:
\[
\begin{gathered}
0 = b_{-n} a_0 \Rightarrow a_0 = 0 \\
0 = b_{-n} a_1 + b_{-n+1} a_0 = b_{-n} a_1 \Rightarrow a_1 = 0 \\
\ldots \Rightarrow a_i = 0 \text{ for all i}
\end{gathered}
\]

\noindent (using $n>1$).
It follows that we have an isomorphism:
\[
A_n[b_{-n}^{-1}] = k[b_{-1},\ldots,b_{-n},b_{-n}^{-1}].
\]

\noindent This amounts to the claim.  

\end{proof}

\subsubsection{$n = 1$ case} 

This case is slightly more technical.

We define $\sU_1$ as the pro-(Zariski open substack of $\sZ^{\leq 1}$):
\[
\sU_1 \coloneqq \underset{r}{\lim} \, {\sZ^{\leq 1} \setminus  
\big(\sZ^{\leq 0} \cup \ldots \cup \iota^r(\sZ^{\leq 0})\big)}.
\] 

\noindent Here each of the structural maps in the limit is affine, 
so the limit exists and is well-behaved. 

\begin{lem}\label{l:u1}

The natural map:
\[
(\bA^1 \setminus \bZ^{\geq 0}) \times \bB \bG_m \to 
\LocSys_{\bG_m,\log}^{\leq 1} \xar{\widetilde{\zeta}} 
\sZ^{\leq 1}
\]

\noindent maps through $\sU_1$ and induces an isomorphism:

\[
(\bA^1 \setminus \bZ^{\geq 0}) \times \bB \bG_m \isom \sU_1.
\]

\end{lem}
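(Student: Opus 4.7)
The plan is to identify each $\iota^r(\sZ^{\leq 0})$ with a principal Cartier divisor inside $\sZ^{\leq 1}$ cut out by the residue coordinate, after which $\sU_1$ becomes visibly the locus where the residue avoids every non-negative integer, and the flat-section equation then forces the section to vanish.

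Let $b_{-1}:\sZ^{\leq 1} \to \bA^1$ denote the residue, obtained as the composite $\sZ^{\leq 1} \to \LocSys_{\bG_m,\log}^{\leq 1} \xar{\Pol} \fL^{pol,\leq 1}\bA^1 dt \to \bA^1$. A direct change-of-trivialization calculation (passing from a generator $e$ of $\sL$ to the generator $e' = t^{-1}e$ of $\sL(1)$) shows $\iota^*(b_{-1}) = b_{-1} + 1$. The first step will be to establish by induction on $r \geq 0$ that $\iota^r(\sZ^{\leq 0})$ equals the closed substack $\sZ^{\leq 1} \cap \{b_{-1} = r\}$ cut out by the single equation $b_{-1} - r$. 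The base case $r = 0$ is the definition of $\sZ^{\leq 0}$. For the inductive step, Proposition \ref{p:z-flat-axes}\eqref{i:z-axes-3} identifies $\iota(\sZ^{\leq 1})$ with the closed substack $\sZ^{\leq 1} \cap \{s|_0 = 0\}$, while the $t^{-1}dt$-coefficient of the defining equation $dg = g\omega$ gives the relation $s|_0 \cdot b_{-1} = 0$ in the structure sheaf of $\sZ^{\leq 1}$. On $\sZ^{\leq 1} \cap \{b_{-1} = r+1\}$ this relation becomes $(r+1) \cdot s|_0 = 0$, forcing $s|_0 = 0$ since $r+1 \in k^{\times}$, so that $\sZ^{\leq 1} \cap \{b_{-1} = r+1\}$ is scheme-theoretically contained in $\iota(\sZ^{\leq 1})$. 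Combined with the inductive hypothesis and $\iota^*(b_{-1}) = b_{-1} + 1$, this yields $\iota^{r+1}(\sZ^{\leq 0}) = \iota\big(\sZ^{\leq 1} \cap \{b_{-1} = r\}\big) = \sZ^{\leq 1} \cap \{b_{-1} = r+1\}$.

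Granting this, the factorization through $\sU_1$ is immediate: on $(\bA^1 \setminus \bZ^{\geq 0}) \times \bB\bG_m$ the residue avoids every non-negative integer and thus misses each $\iota^r(\sZ^{\leq 0})$. To see the factored map is an isomorphism, we test on $T$-points. A $T$-point of $\sU_1$ is a $T$-point $(\sL, \nabla, s)$ of $\sZ^{\leq 1}$ such that $b_{-1} - r \in \Gamma(T,\sO_T)^{\times}$ for every $r \geq 0$. Trivializing $\sL$ locally and writing $g = \sum_{i \geq 0} a_i t^i$ and $\omega = \sum_{j \geq -1} b_j t^j\, dt$, the equation $dg = g\omega$ unpacks to $a_0 b_{-1} = 0$ together with $a_k(k - b_{-1}) = \sum_{i=0}^{k-1} a_i b_{k-1-i}$ for $k \geq 1$. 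Since each $b_{-1} - k$ is a unit on $T$, induction on $k$ gives $a_k = 0$ for all $k$, hence $s = 0$. The point therefore lies in the image of $\widetilde{\zeta}: \LocSys_{\bG_m,\log}^{\leq 1} \to \sZ^{\leq 1}$, and Proposition \ref{p:locsys-log} identifies it with a $T$-point of $(\bA^1 \setminus \bZ^{\geq 0}) \times \bB\bG_m$ inverting the original map.

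The main obstacle is the scheme-theoretic (as opposed to set-theoretic) identification of $\iota^r(\sZ^{\leq 0})$ in the first step: both the flatness input of Proposition \ref{p:z-flat-axes} and the characteristic zero hypothesis are genuinely needed to match the two ideals, with the relation $s|_0 \cdot b_{-1} = 0$ playing the pivotal role.
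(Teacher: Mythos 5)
Your proof is correct and follows essentially the same route as the paper's: pass to the explicit coordinates $A_1 = k[a_0,a_1,\dots,b_{-1}]/((b_{-1}-i)a_i)$ of \S\ref{ss:coords}, observe that inverting all $b_{-1}-i$ kills every $a_i$, and read off the description of $\sU_1$ from Proposition \ref{p:locsys-log}. The only real difference is that you carefully establish, by induction, the scheme-theoretic identity $\iota^r(\sZ^{\leq 0}) = \sZ^{\leq 1}\cap\{b_{-1}=r\}$, which the paper takes as self-evident (and which, for the purpose of forming the open complement $\sU_1$, is actually only needed at the level of underlying sets).
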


\begin{proof}

We again use the coordinates of \S \ref{ss:coords}.
In this notation, $\sU_1$ the quotient stack:
\[
\Spec(A_1[b_{-1}^{-1},(b_{-1}-1)^{-1},(b_{-1}-2)^{-1},\ldots])/\bG_m
\]

\noindent obtained by inverting the elements $\{b_{-1}-i\}$ for each $i \geq 0$.
In this ring, we have the relations:
\[
(b_{-1}-i)a_i = 0
\]

\noindent for each $i \geq 0$, which means that $a_i = 0$ in this localization
for each $i$. The claim then follows. 

\end{proof}

\subsubsection{}\label{sss:u-reg-noeth}

In both cases above, we observe that $\widetilde{\sU}_n \coloneqq 
\widetilde{\sZ}^{\leq n} \times_{\sZ^{\leq n}} \sU_n$ is a regular Noetherian 
affine scheme. 

\subsubsection{A remark}

In spite of the above observations, we warn that the natural map:
\[
\zeta: \LocSys_{\bG_m} \setminus \LocSys_{\bG_m}^{\leq 0} \to 
\sY|_{\LocSys_{\bG_m} \setminus \LocSys_{\bG_m}^{\leq 0}}
\]

\noindent is \emph{not} an isomorphism. However, it is an isomorphism 
if one truncates to irregularity of order $n \leq 1$, 
or if one only evaluates on Noetherian 
test rings (the left hand side is locally of finite type, 
but the right hand side is not). 

\subsection{Coordinates}\label{ss:coords}

\subsubsection{}

To make the above completely explicit, we provide explicit 
coordinates.

\subsubsection{} 

Define a classical commutative ring $A_n$ by taking
(infinitely many) generators 
$a_0,a_1,a_2,\ldots$ and $b_{-1},\ldots,b_{-n}$
and (infinitely) relations coming from equating Taylor series coefficients
of the formal Laurent series:
\[
\sum_{i=0}^{\infty} ia_it^{i-1} =
\sum_{i=0}^{\infty} a_it^i \cdot 
\sum_{i=-n}^{-1} b_{-i}t^{-i}.
\]

We consider $A_n$ as graded with $\deg(a_i) = 1$ 
for all $i$ and $\deg(b_i) = 0$.
This grading on $A_n$ corresponds geometrically 
to a $\bG_m$-action on $\Spec(A_n)$.

Define a map $\Spec(A_n) \to \sZ^{\leq n}$ 
by sending a point $(a_0,a_1,a_2,\ldots,b_{-1},\ldots,b_{-n}) 
\in \Spec(A_n)$ to the trivial line bundle
$\sO$ on the disc and equipping it with the connection
$\nabla = d-\sum_{i = 1}^n b_{-i} t^{-i} dt$ on the punctured
disc and the section $s = \sum_{i \geq 0} a_i t^i$,
which is annihilated by the connection by design.

This map actually factors through $\Spec(A_n)/\bG_m$; the
latter parametrizes the data of a line $\ell$ and
points as above, except that the $a_i$ are sections
of $\ell$. We then take our line bundle to be
$\sO \otimes \ell$, take $\nabla$ given by the same formula,
and similarly for our section $s$.

From Propositions \ref{p:locsys-log} and \ref{p:zn-cl},
we obtain the next result.

\begin{prop}\label{p:zn-coords}

The above map
$\Spec(A_n)/\bG_m \to \sZ^{\leq n}$ is an isomorphism.
I.e., we have a $\bG_m$-equivariant isomorphism
$\Spec(A_n) \simeq \widetilde{\sZ}^{\leq n}$.

\end{prop}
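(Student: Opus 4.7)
My plan is to verify that the proposed map is a $\bG_m$-equivariant isomorphism of classical affine schemes. By Proposition \ref{p:zn-cl} the target $\sZ^{\leq n}$, and hence $\widetilde{\sZ}^{\leq n} = \sZ^{\leq n} \times_{\bB\bG_m}\Spec(k)$, is classical, while $\Spec(A_n)$ is evidently a classical affine scheme, so working classically is legitimate. I will proceed by giving an explicit presentation of $\widetilde{\sZ}^{\leq n}$ as a classical affine scheme using Proposition \ref{p:locsys-log}, and then matching it with $\Spec(A_n)$ in coordinates.

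From the proof of Proposition \ref{p:zn-algebraic} we may write $\sZ^{\leq n} = E/\fL^+\bG_m$, where $E \coloneqq \on{Eq}(\fL^+\bA^1 \times \fL^{\leq n}\bA^1 dt \rightrightarrows \fL^{\leq n}\bA^1 dt)$ is a classical affine scheme by the proof of Proposition \ref{p:zn-cl}. Letting $K \coloneqq \Ker(\fL^+\bG_m \xar{\ev} \bG_m)$ denote the pro-unipotent kernel of evaluation at the base-point, we obtain $\widetilde{\sZ}^{\leq n} = E/K$. The key input I will use is that the proof of Proposition \ref{p:locsys-log} shows $d\log$ restricts to an isomorphism $K \isom \fL^+\bA^1 dt$. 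Consequently the $K$-action on $\fL^{\leq n}\bA^1 dt$ (via gauge transformations $\omega \mapsto \omega + d\log(f)$) is merely additive translation by the subgroup $\fL^+\bA^1 dt$, so the canonical inclusion $\fL^{pol,\leq n}\bA^1 dt \hookrightarrow \fL^{\leq n}\bA^1 dt$ is a slice for this action.

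Pulling back this slice through $E \to \fL^{\leq n}\bA^1 dt$ yields the desired presentation
\[
\widetilde{\sZ}^{\leq n} \simeq \{(g,\omega) \in \fL^+\bA^1 \times \fL^{pol,\leq n}\bA^1 dt : dg = g\omega\}.
\]
Expanding $g = \sum_{i\ge 0} a_i t^i$ and $\omega = \sum_{i=1}^n b_{-i} t^{-i}\,dt$ in the chosen coordinate, the flatness equation $dg = g\omega$ becomes exactly the defining relations of $A_n$, so this right-hand side is $\Spec(A_n)$. The residual $\bG_m$-action comes from $\fL^+\bG_m/K = \bG_m$, which scales $g$ and hence each $a_i$ with weight $1$ while fixing $\omega$ and its coefficients $b_{-i}$, matching the grading on $A_n$. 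The main conceptual input is the $d\log$-trivialization of $K$ supplied by Proposition \ref{p:locsys-log}; the remaining verification is coordinate bookkeeping, so I do not anticipate any genuine obstacle beyond tracking the $\bG_m$-weights through the identifications.
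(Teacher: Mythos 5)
Your argument is correct and fills in exactly what the paper leaves implicit: its one-line proof invokes Proposition \ref{p:locsys-log} (whose proof gives the $d\log$-trivialization $K \coloneqq \Ker(\fL^+\bG_m \xar{\ev} \bG_m) \isom \fL^+\bA^1\,dt$ underlying your slice) and Proposition \ref{p:zn-cl} (for classicality of the equalizer). The only small imprecision is calling $\fL^{pol,\leq n}\bA^1\,dt \hookrightarrow \fL^{\leq n}\bA^1\,dt$ a \emph{canonical} inclusion: the former is by definition a quotient of the latter, and your embedding is the splitting determined by the chosen coordinate $t$ — but since the whole presentation by $A_n$ is coordinate-dependent anyway, this is harmless.
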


\subsection{Regular singular sketches}\label{ss:rs-sketch}

\subsubsection{}

For the sake of explicitness, we draw pictures of $\sZ^{\leq 1}$,
$\sY_{\log}^{\leq 1}$, and $\sY^{\leq 1}$ using
the presentation of \S \ref{ss:coords}. 

\subsubsection{}

By the above, $A_1$ is the (classical) algebra with generators
$b_{-1},a_0,a_1,\ldots$ and relations:
\[
(b_{-1}-i)a_i = 0.
\] 

\noindent For an integer $m \geq 0$, let $A_{1,\leq m}$ denote the
subalgebra generated by $b_{-1},a_0,\ldots,a_m$, and let
$\widetilde{\sZ}_{\leq m}^{\leq 1} \coloneqq \Spec(A_{1,\leq m})$.
We obtain the following picture for $\widetilde{\sZ}_{\leq m}^{\leq 1}$:
\[
\begin{tikzpicture}
\draw[gray, thick] (-3,0) -- (3,0);
\draw[gray, thick] (0,-1) -- (0,1);
\draw[gray, thick] (.5,-1) -- (.5,1);
\draw[gray, thick] (1,-1) -- (1,1);
\draw[gray, thick] (1.5,-1) -- (1.5,1);
\node at (-4, 0)   (a) {$\widetilde{\sZ}_{\leq 3}^{\leq 1} = $} ;
\end{tikzpicture}
\]

\noindent where the horizontal axis is the $b_{-1}$-axis, and the fiber
over $b_{-1} = i$ is given the coordinate $a_i$.

The structural maps $\widetilde{\sZ}_{\leq m+1}^{\leq 1} \to 
\widetilde{\sZ}_{\leq m}^{\leq 1}$ coming from the 
embedding $A_{1,\leq m} \into A_{1,\leq m+1}$ correspond to contracting 
the rightmost line in the picture. Therefore, as:
\[
\widetilde{\sZ}^{\leq 1} = \underset{m}{\lim} \, \widetilde{\sZ}_{\leq m}^{\leq 1}
\]

\noindent we obtain the picture:
\[
\begin{tikzpicture}
\draw[gray, thick] (-3,0) -- (3,0);
\draw[gray, thick] (0,-1) -- (0,1);
\draw[gray, thick] (.5,-1) -- (.5,1);
\draw[gray!90!red, thick] (1,-1) -- (1,1);
\draw[gray!60!red, thick] (1.5,-1) -- (1.5,1);
\draw[gray!30!red, thick] (2,-1) -- (2,1);
\draw[gray!15!red, thick] (2.5,-1) -- (2.5,1);
\node at (-4, 0)   (a) {$\widetilde{\sZ}^{\leq 1} = $} ;
\node at (3.5, 0)   (b) {$\ldots$} ;
\end{tikzpicture}
\]

\noindent where the reddening indicates that interpret the
picture in the \emph{pro}-sense rather than the \emph{ind}-sense.

\begin{rem}

For $n > 1$, the stack $\sZ^{\leq n}$ is non-reduced, so there
is additional complexity in attempting to draw pictures.
(In addition, its Krull dimension grows with $n$.)

\end{rem}

\subsubsection{}

The map $\iota:\widetilde{\sZ}^{\leq 1} \to \widetilde{\sZ}^{\leq 1}$ 
corresponds to a rightward
shift in the above picture. Therefore,
if we let $\widetilde{\sY}_{\log}^{\leq 1}$ similarly be the total space
of the $\bG_m$-torsor over $\sY_{\log}^{\leq 1}$, 
from \eqref{eq:y-log-colim-iota} we obtain the picture:
\[
\begin{tikzpicture}
\draw[gray, thick] (-3,0) -- (3,0);
\draw[gray, thick] (0,-1) -- (0,1);
\draw[gray, thick] (.5,-1) -- (.5,1);
\draw[gray!90!red, thick] (1,-1) -- (1,1);
\draw[gray!60!red, thick] (1.5,-1) -- (1.5,1);
\draw[gray!30!red, thick] (2,-1) -- (2,1);
\draw[gray!15!red, thick] (2.5,-1) -- (2.5,1);
\draw[gray, thick] (-.5,-1) -- (-.5,1);
\draw[gray!90!blue, thick] (-1,-1) -- (-1,1);
\draw[gray!60!blue, thick] (-1.5,-1) -- (-1.5,1);
\draw[gray!30!blue, thick] (-2,-1) -- (-2,1);
\draw[gray!15!blue, thick] (-2.5,-1) -- (-2.5,1);
\node at (-4.5, 0)   (a) {$\widetilde{\sY}_{\log}^{\leq 1} = $} ;
\node at (3.5, 0)   (b) {$\ldots$} ;
\node at (-3.5, 0)   (b) {$\ldots$} ;
\end{tikzpicture}
\]

\noindent where the bluing in the picture indicates that we interpret
the limit in the \emph{ind}-sense (and the reddening is as before).

Informally, $\widetilde{\sY}_{\log}^{\leq 1}$ is a \emph{semi-infinite comb}. It has
infinitely many bristles, which are attached to the handle at integer
points; in the positive direction, these bristles have
pro-nature, and in the negative direction they have ind-nature.\footnote{We 
are unfortunately unable to produce non-synesthetic
pictures adequately distinguishing between projective and inductive limits. 
If asked to better explain the above pictures, 
we would resort to the formal descriptions
$\sZ^{\leq 1} = \lim_m \sZ_{\leq m}^{\leq 1}$ and $\sY_{\log}^{\leq 1} = 
\colim_{\iota} \sZ^{\leq 1}$.}

\subsubsection{}

In each of these pictures, $\bG_m$ acts by scaling in the vertical direction, i.e.,
scaling the bristles. So forming stacky quotients for this action, 
we obtain the promised sketches of 
$\sZ^{\leq 1}$ and $\sY_{\log}^{\leq 1}$.

Finally, the action of $\bZ = \Gr_{\bG_m}^{\leq 1}$ on $\sY_{\log}$ is generated
by rightward shift in the above pictures. Quotienting by this action gives a 
picture\footnote{Roughly speaking, this picture looks like $\bA^1/\bZ$ with a 
single bristle attached at $\bZ/\bZ$. But we must not forget the semi-infinite nature
of that bristle, which we find difficult to visually express in this setting.} 
for $\sY^{\leq 1}$.

\section{Local Abel-Jacobi morphisms}\label{s:aj}

\subsection{}

In this section, we collect some standard results
about local Abel-Jacobi maps and (simplified) 
Contou-Carr\`ere pairings for use in \S \ref{s:weyl}.
This material is standard and included for the reader's
convenience.

\subsection{}

Let $\widehat{\sD} \in \IndSch$ denote the formal disc
$\Spf(k[[t]]) \coloneqq \colim_n \Spec(k[[t]]/t^n)$.

Let $\widehat{\Delta}:\widehat{\sD} \to \widehat{\sD} \times \sD$ denote the
graph of the canonical map $\widehat{\sD} \to \sD$. 
Clearly this map is a closed embedding (in particular, schematic).

We define: 
\[
\sO(-\widehat{\Delta}) \coloneqq \Ker\big(
\sO_{\widehat{\sD} \times \sD} \to \widehat{\Delta}_*(\sO_{\widehat{\sD}})\big) 
\in \QCoh(\widehat{\sD} \times \sD).
\]

\noindent One readily checks 
that $\sO(-\widehat{\Delta})$ is a line bundle on 
$\widehat{\sD} \times \sD$: in fact, 
a choice of coordinate on the disc 
gives a trivialization of it. 
Moreover, as:
\[
\widehat{\sD}
\underset{\widehat{\sD} \times \sD }{\times}
(\widehat{\sD} \times \o{\sD} ) = \emptyset
\]

\noindent there is a canonical trivialization
of 
$\sO(-\widehat{\Delta})|_{\widehat{\sD} \times \o{\sD}}$. 

Thus, we obtain a map:
\[
\AJ^{-1}:\widehat{\sD} \to \Gr_{\bG_m}.
\] 

\noindent We 
define the \emph{local Abel-Jacobi map} 
$\AJ$ as $\AJ^{-1}$ composed
with the inversion map $\Gr_{\bG_m} \to \Gr_{\bG_m}$.

Explicitly, we have:
\[
\begin{gathered}
\AJ: \widehat{\sD} \to \Gr_{\bG_m} \\
\tau \mapsto (\sO_{\sD}(\tau),1)
\end{gathered}
\]

\noindent where $\sO_{\sD}(\tau)$ is the line bundle on 
the disc with sections having at worst simple
poles at $\tau \in \widehat{\sD} \subset \sD$
and the evident trivialization ``$1$" on $\o{\sD}$.

\subsection{}\label{ss:dlog-aj}

We now consider a closely related map to the Abel-Jacobi map.

We have an evident evaluation map:
\begin{equation}\label{eq:ev-a1}
\widehat{\sD} \times \fL^+ \bA^1 \to \bA^1
\end{equation}

\noindent that is linear in the second coordinate.
Using the non-degeneracy of the residue pairing, 
we deduce that there is a unique map:
\begin{equation}\label{eq:dlog-aj}
\widehat{\sD} \to
\fL^{pol} \bA^1 dt.
\end{equation}

\noindent such that the composition:
\[
\widehat{\sD} \times \fL^+ \bA^1 
\xar{\eqref{eq:dlog-aj} \times \id}
\fL^{pol} \bA^1 dt \times 
\fL^+ \bA^1 \xar{(f,\omega) \mapsto \Res(f\omega)} 
\bA^1
\]

\noindent recovers the evaluation map above.

\begin{rem}\label{r:dlog-aj-fmla}

Explicitly, \eqref{eq:dlog-aj} is given in coordinates
by:
\[
(\tau \in \widehat{\sD}) \mapsto \sum_{i = 0}^{\infty}
\frac{\tau^i}{t^{i+1}} dt.
\]

\end{rem}

We have the following elementary calculation.

\begin{lem}\label{l:dlog-aj}

The map \eqref{eq:dlog-aj} coincides with the composition:
\[
\widehat{\sD} \xar{\AJ} \Gr_{\bG_m} 
\xar{-d\log} \fL^{pol} \bA^1 dt.
\]

\end{lem}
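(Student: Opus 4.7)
The plan is to exploit the universal characterization of \eqref{eq:dlog-aj}: by non-degeneracy of the residue pairing between $\fL^{pol}\bA^1 dt$ and $\fL^+\bA^1$, the morphism \eqref{eq:dlog-aj} is the unique one for which the composition
\[
\widehat{\sD} \times \fL^+\bA^1 \xar{\eqref{eq:dlog-aj} \times \id} \fL^{pol}\bA^1 dt \times \fL^+\bA^1 \xar{(\omega,f) \mapsto \Res(f\omega)} \bA^1
\]
recovers the evaluation map \eqref{eq:ev-a1}. Therefore it suffices to check that the map $(-d\log)\circ \AJ$ satisfies the same characterization, i.e.\ that
\[
\Res\bigl(-d\log(\AJ(\tau)) \cdot f\bigr) = f(\tau)
\]
for every $\tau \in \widehat{\sD}$ and $f \in \fL^+\bA^1$. (As all maps are between classical prestacks and both constructions are functorial in test rings, this pointwise verification, carried out universally over $\widehat{\sD}$, suffices.)

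The core computation is a coordinatized form of the classical residue formula. Fix the coordinate $t$ on $\sD$. The line bundle $\sO_{\sD}(\tau)$ is canonically trivialized on $\sD$ by the generator $(t-\tau)^{-1}$, and the trivialization on $\o{\sD}$ given by the tautological section $1$ differs from this $\sD$-trivialization by the invertible Laurent series $(t-\tau) \in \fL\bG_m$. Under the identification $\Gr_{\bG_m} \simeq \fL\bG_m/\fL^+\bG_m$, this exhibits $\AJ^{-1}(\tau)$ as the class of $(t-\tau)$; since $\AJ$ is the composition of $\AJ^{-1}$ with group inversion, $\AJ(\tau)$ is represented by $(t-\tau)^{-1}$. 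Applying $d\log$ (which is a group homomorphism) then yields
\[
-d\log(\AJ(\tau)) = d\log(t-\tau) = \frac{dt}{t-\tau},
\]
whose polar part, expanded as a formal Laurent series at $t=0$, is exactly $\sum_{i \geq 0} \tau^i t^{-i-1} dt$ — matching Remark~\ref{r:dlog-aj-fmla}.

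Pairing with $f = \sum_{j\ge 0} a_j t^j$ via the residue gives $\sum_j a_j \tau^j = f(\tau)$, establishing the required characterization. The main subtlety is not analytic but bookkeeping: tracking the signs and inversions built into the definition $\AJ = \on{inv}\circ \AJ^{-1}$ and into the identification $\Gr_{\bG_m} \simeq \fL\bG_m/\fL^+\bG_m$, so that the single minus sign in $-d\log$ exactly cancels the minus sign produced by $d\log((t-\tau)^{-1}) = -d\log(t-\tau)$ and the expected formula emerges. Once the conventions are pinned down, the remainder is the elementary geometric series expansion of $(t-\tau)^{-1}$.
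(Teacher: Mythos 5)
Your proof is correct and takes essentially the same approach as the paper's: choose a coordinate, represent $\AJ(\tau)$ by $(t-\tau)^{-1}\in\fL\bG_m$, compute $-d\log$, and compare to the geometric-series expansion from Remark~\ref{r:dlog-aj-fmla}. You spell out the justification for the lift and frame the check explicitly via the residue-pairing characterization of \eqref{eq:dlog-aj}, but this is the same content the paper invokes when it says ``comparing to Remark~\ref{r:dlog-aj-fmla}.''
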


\begin{proof}

Choose coordinates as in Remark \ref{r:dlog-aj-fmla}.
The map $\AJ:\widehat{\sD} \to \Gr_{\bG_m}$ lifts to a 
map to $\fL\bG_m$ via $\tau \mapsto \frac{1}{t-\tau}$.
We then have:
\[
d\log \frac{1}{t-\tau} = 
\frac{d\frac{1}{t-\tau}}{\frac{1}{t-\tau}} = 
-\frac{dt}{t-\tau} = -\frac{1}{t}\sum_{i = 0}^{\infty} \frac{\tau^i}{t^i} dt
\]

\noindent Comparing to Remark \ref{r:dlog-aj-fmla}, we obtain the 
claim.

\end{proof}

\begin{rem}

Following the lemma, we denote the above map by 
$-d\log \AJ$, and its additive inverse by $d\log \AJ$.

\end{rem}

\subsection{Compatibility with truncations}\label{ss:aj-trun}

We use the following observation. We remind that
$\sD^{\leq n} \coloneqq \Spec(k[[t]]/t^n)$.

Clearly $d\log \AJ$ maps $\sD^{\leq n}$ maps 
into $\fL^{pol,\leq n} \bA^1 dt$. 
Therefore, by definition (see \S \ref{ss:loop-gr-trun}),
we deduce that $\AJ$ maps $\sD^{\leq n}$ into 
$\Gr_{\bG_m}^{\leq n}$.

\subsection{The positive Grassmannian}\label{ss:pos-gr}

Next, we define: 
\[
\fL^{pos} \bG_m \coloneqq
\fL \bG_m \underset{\fL \bA^1} {\times} \fL^+ \bA^1.
\]

\noindent There is an evident commutative monoid
structure on $\fL^{pos} \bG_m$ and homomorphism
$\fL^+ \bG_m \to \fL^{pos} \bG_m$.

We then define:
\[
\Gr_{\bG_m}^{pos} \coloneqq \fL^{pos} \bG_m/\fL^+ \bG_m =
\Ker(\fL^+\bA^1/\bG_m \to \fL \bA^1/\bG_m).
\]

Explicitly, $\Gr_{\bG_m}^{pos}$ parametrizes
the data of $\sL$ a line bundle on the disc and
$\sigma \in \Gamma(\sD,\sL)$ a (regular) section
such that $\sigma|_{\o{\sD}}$ trivializes $\sL$.

\subsection{}

For $n \geq 0$, we define: 
\[
\Sym^n \widehat{\sD} \coloneqq \Spf(k[[t_1,\ldots,t_n]])^{S_n} 
\in \IndSch.
\]

\noindent for $S_n$ the symmetric group.
We then let: 
\[
\Sym\widehat{\sD} \coloneqq 
\coprod_n \Sym^n \widehat{\sD} \in \IndSch.
\]

\subsection{}

As for usual smooth curves, $\Sym\widehat{\sD}$ parametrizes
effective Cartier divisors on $\sD$ supported on $\widehat{\sD}$.
Hence, we obtain an isomorphism:
\[
\Sym\widehat{\sD} \isom \Gr_{\bG_m}^{pos}
\]

\noindent such that the composition:
\[
\widehat{\sD} = \Sym^1 \widehat{\sD} \to 
\Sym\widehat{\sD} \simeq \Gr_{\bG_m}^{pos}
\subset \Gr_{\bG_m}
\]

\noindent is $\AJ$.

This is an isomorphism of commutative monoids for the
evident product on $\Sym(\widehat{\sD})$.

\subsection{Log Contou-Carr\'ere}

\subsubsection{}

We now observe the following.

There is a unique pairing:
\[
\pair{-}{-}:\Gr_{\bG_m}^{pos} \times \fL^+ \bA^1 \to \bA^1
\]

\noindent that is bilinear for multiplicative monoid
structures on both sides and whose restriction 
to $\widehat{\sD} \times \fL^+ \bA^1$ is 
\eqref{eq:ev-a1}. 

Explicitly, for $D \in \Gr_{\bG_m}^{pos}$ an effective 
Cartier divisor
on $\sD$ supported on $\widehat{\sD}$ and 
$f\in \fL^+ \bA^1$, we have:
\[
\pair{D}{f} = f(D) \coloneqq \Nm_D(f).
\]

\subsubsection{}

There are some variants of the pairing above.

First, note that 
$\pair{-}{-}|_{\Gr_{\bG_m}^{pos} \times \fL^+ \bG_m}$
maps into $\bG_m$. As $\Gr_{\bG_m}$ is the group completion
of $\Gr_{\bG_m}^{pos}$ (in the evident sense), we obtain a 
pairing:
\[
\pair{-}{-}:\Gr_{\bG_m} \times \fL^+ \bG_m \to 
\bG_m.
\]

\noindent This pairing is compatible with Contou-Carr\'ere's
pairing \cite{cc} in the evident sense.

\subsubsection{}

Next, let $\Sym^m \sD^{\leq n} \coloneqq
\Spec((k[[t]]/t^n)^{\otimes m,S_m})$, which is an
affine scheme. We let $\Gr_{\bG_m}^{pos,\leq n}\coloneqq 
\coprod_m \Sym^m \sD^{\leq n}$.

The evident map:
\[
\Gr_{\bG_m}^{pos,\leq n} \to \coprod_m \Sym^m \widehat{\sD} \to \Gr_{\bG_m}
\]

\noindent maps into $\Gr_{\bG_m}^{\leq n}$. Indeed,
this map is a map of monoids, so the same is
true of its composition with $d\log:\Gr_{\bG_m} \to 
\fL^{pol} \bA^1 dt$. As $\sD^{\leq n}$ maps into
$\fL^{pol,\leq n} \bA^1 dt$ (cf. \S \ref{ss:aj-trun}),
we obtain the claim by definition of $\Gr_{\bG_m}^{\leq n}$.

For similar reasons, the pairing:
\[
\pair{-}{-}:
\Gr_{\bG_m}^{pos,\leq n} \times \fL^+ \bA^1 \to \bA^1
\]

\noindent factors through a pairing:
\[
\Gr_{\bG_m}^{pos,\leq n} \times \fL_n^+ \bA^1 \to \bA^1 
\]

\noindent that we also denote by $\pair{-}{-}$.

\begin{rem}\label{r:gr-n-pos-gp-cpl}

The base-point $0 \in \widehat{\sD}$ gives a point of 
$\Gr_{\bG_m}^{pos,\leq n}$ (of degree 1), hence 
a translation map $T:\Gr_{\bG_m}^{pos,\leq n} \to \Gr_{\bG_m}^{pos,\leq n}$.
There is a natural map:
\[
\colim \, (\Gr_{\bG_m}^{pos,\leq n} \xar{T} \Gr_{\bG_m}^{pos,\leq n} \xar{T} \ldots)
\to \Gr_{\bG_m}^{\leq n}
\]

\noindent that we claim is an isomorphism. 
Indeed, the assertion is evident at the level of $k$-points (both sides 
are $\bZ$), so it suffices to check the assertion on tangent complexes,
where it is straightforward (both sides naturally identify with 
$t^{-n+1}k[[t]]/k[[t]]$). 

\end{rem}

\subsubsection{}\label{ss:pairing-map}

We will use the following constructions (in a quite simple setting).

Let $V$ be a vector space. We consider $V$ as a prestack\footnote{If $Y$ is
coconnective, this prestack is a (DG) indscheme.}
with functor of points $\Spec(A) \mapsto \Omega^{\infty}(A\otimes V) \in \Gpd$.

Now suppose $X$ is an ind-proper indscheme equipped with a morphism
$\vph:X \to V$. We obtain a canonical ``integration" map:
\begin{equation}\label{eq:integration}
\Gamma^{\IndCoh}(X,\omega_X) \to V.
\end{equation}

\noindent Indeed, our map $\vph$ is (tautologically) equivalent to
a morphism $\sO_X \to V\otimes \sO_X \in \QCoh(X)$. Tensoring with the dualizing
sheaf on $X$, we obtain a morphism $\omega_X \to V \otimes \omega_X \in \IndCoh(X)$,
or what is the same, a map $p^!(k) \to p^!(V)$ for $p:X \to \Spec(k)$ the projection.
By ind-properness and adjunction, 
we obtain a map $p_*^{\IndCoh}p^!(k) \to V$ as desired. 

\begin{rem}\label{r:upsilon-ff}

Using \cite{grbook} \S II.3 Lemma 3.3.7, one finds that this is a bijection;
i.e., specifying a map \eqref{eq:integration} is equivalent to giving
$X \to V$.

\end{rem}

We also have the following variant.
Suppose we are also given $Y$ a prestack and
$\pi:X \times Y \to \bA^1$ a map.

The map $\pi$ can be encoded by a morphism $X \to \Gamma(Y,\sO_Y)$ (thinking of the target
as a prestack as above). Therefore, 
we obtain a comparison map:
\[
c_{\pi}:\Gamma^{\IndCoh}(X,\omega_X) \to \Gamma(Y,\sO_Y) \in 
\Vect
\]

\noindent by the above. (As in Remark \ref{r:upsilon-ff}, 
one can actually recover $\pi$ from $c_{\pi}$.) 

\subsubsection{}

We now have the following non-degeneracy assertion for our
pairings.

\begin{prop}\label{p:cc-log}

Each of the morphisms:
\[
\begin{gathered}
\Gamma^{\IndCoh}(\Gr_{\bG_m}^{pos},\omega_{\Gr_{\bG_m}^{pos}}) 
\to \Gamma(\fL^+ \bA^1,\sO_{\fL^+ \bA^1}) \\
\Gamma^{\IndCoh}(\Gr_{\bG_m},\omega_{\Gr_{\bG_m}}) 
\to \Gamma(\fL^+ \bG_m,\sO_{\fL^+ \bG_m}) \\
\Gamma^{\IndCoh}(\Gr_{\bG_m}^{pos,\leq n},\omega_{\Gr_{\bG_m}^{pos,\leq n}}) 
\to \Gamma(\fL_n^+ \bA^1,\sO_{\fL_n^+ \bA^1}) \\
\Gamma^{\IndCoh}(\Gr_{\bG_m}^{\leq n},\omega_{\Gr_{\bG_m}^{\leq n}}) 
\to \Gamma(\fL_n^+ \bG_m,\sO_{\fL_n^+ \bA^1})
\end{gathered}
\]

\noindent coming from our pairings and \S \ref{ss:pairing-map}
is an isomorphism.

Moreover, each isomorphism is a map of commutative algebras,
where the left hand sides are commutative algebras using
the commutative monoid structures on the relevant spaces,
while the right hand sides are commutative algebras 
as functions on schemes.

\end{prop}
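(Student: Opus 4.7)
The plan is to prove the first assertion directly and deduce the others by two kinds of bootstrapping: localization (to pass from positive to full Grassmannians) and truncation (to pass to the bounded-$n$ versions).

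For the first assertion, I would start with the one-point case. The pairing $\widehat{\sD} \times \fL^+ \bA^1 \to \bA^1$ is given in coordinates by $(\tau, f = \sum_{j\geq 0} a_j t^j) \mapsto f(\tau) = \sum_j a_j \tau^j$. Since $\widehat{\sD} = \colim_m \Spec(k[[t]]/t^m)$ is ind-proper (each $\Spec(k[[t]]/t^m)$ being a finite affine scheme), the global sections $\Gamma^{\IndCoh}(\widehat{\sD},\omega_{\widehat{\sD}})$ compute the continuous dual $(k[[t]])^{\vee}_{\mathrm{cts}} = \bigoplus_{j\geq 0} k\cdot \delta^{(j)}$, where $\delta^{(j)}$ extracts the $j$-th Taylor coefficient at $0$. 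Under the pairing, $\delta^{(j)} \mapsto a_j \in \Gamma(\fL^+\bA^1,\sO)$. Now invoke the isomorphism $\Gr_{\bG_m}^{pos} \simeq \Sym\widehat{\sD} = \coprod_n \Sym^n\widehat{\sD}$: each $\Sym^n\widehat{\sD} = \colim_m \Spec\bigl((k[[t]]/t^m)^{\otimes n, S_n}\bigr)$ is ind-proper, and the K\"unneth formula together with taking $S_n$-invariants (equivalently, $S_n$-coinvariants, as we are in characteristic zero) yields
\[
\Gamma^{\IndCoh}(\Sym^n\widehat{\sD},\omega) \simeq \Sym^n\!\bigl(\Gamma^{\IndCoh}(\widehat{\sD},\omega)\bigr).
\]
Summing over $n$ and using that $\Sym\widehat{\sD}$ is the free commutative monoid on $\widehat{\sD}$ (so the monoid multiplication goes to the symmetric-algebra multiplication under $\Gamma^{\IndCoh}(-,\omega)$), I would obtain
\[
\Gamma^{\IndCoh}(\Gr_{\bG_m}^{pos},\omega) \simeq \Sym\!\bigl(\bigoplus_j k\cdot \delta^{(j)}\bigr) \simeq k[a_0,a_1,a_2,\ldots],
\]
as commutative algebras. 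That this matches the map coming from $\langle-,-\rangle$ follows from the fact that $\pair{D_1+D_2}{f} = \Nm_{D_1}(f)\Nm_{D_2}(f)$ and $\pair{D}{fg}=\pair{D}{f}\pair{D}{g}$, which ensure the map is a monoid-to-algebra homomorphism in both variables.

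For the remaining three assertions I would bootstrap. The second (full Grassmannian, $\fL^+ \bG_m$): $\Gr_{\bG_m}$ is the group completion of $\Gr_{\bG_m}^{pos}$, obtained by inverting the effective divisor $[0] \in \Sym^1\widehat{\sD}$ corresponding to the basepoint of $\widehat{\sD}$; applying $\Gamma^{\IndCoh}(-,\omega)$ and using that localization of commutative monoids passes through $\Gamma^{\IndCoh}$ (this is essentially Remark~\ref{r:gr-n-pos-gp-cpl} applied to $\Gr_{\bG_m}$), this corresponds to inverting the image $a_0$, yielding $k[a_0^{\pm 1}, a_1, a_2,\ldots] = \Gamma(\fL^+\bG_m,\sO)$. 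The third assertion is the same proof as the first, only with $\widehat{\sD}$ replaced throughout by the (honest, finite-type, proper) scheme $\sD^{\leq n}$; the continuous dual becomes $(k[[t]]/t^n)^\vee$, an $n$-dimensional vector space spanned by $\delta^{(0)},\ldots,\delta^{(n-1)}$, and the free commutative algebra on it is $k[a_0,\ldots,a_{n-1}] = \Gamma(\fL_n^+ \bA^1,\sO)$. The fourth is obtained from the third by the same $a_0$-inversion as in the second.

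The main technical obstacle is the algebra-level identification $\Gamma^{\IndCoh}(\Sym^n\widehat{\sD},\omega) \simeq \Sym^n \Gamma^{\IndCoh}(\widehat{\sD},\omega)$, compatibly with the multiplications on both sides. This requires (a) the K\"unneth formula $\omega_{X\times Y}\simeq \omega_X\boxtimes\omega_Y$ for our ind-proper ind-schemes, (b) commuting $\Gamma^{\IndCoh}(-,\omega)$ with filtered colimits along closed embeddings (used to reduce to the finite level $\sD^{\leq m}$), and (c) the passage to $S_n$-(co)invariants, which is harmless in characteristic zero. Once these are in hand, the only remaining work is the entirely explicit matching $\delta^{(j)}\leftrightarrow a_j$, which is built into the formula for the pairing.
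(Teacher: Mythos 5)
Your proposal is correct and follows essentially the same route as the paper's proof: identify $\Gr_{\bG_m}^{pos}$ with $\Sym\widehat{\sD}$, compute $\Gamma^{\IndCoh}(-,\omega)$ at finite level as the linear dual of a finite-dimensional algebra, observe the map is automatically an algebra homomorphism by bilinearity, check it on generators, and bootstrap the remaining cases by localization (group completion, which is precisely Remark~\ref{r:gr-n-pos-gp-cpl}) and by truncation. The only difference is cosmetic ordering: the paper proves the truncated positive case $\Gr_{\bG_m}^{pos,\leq n}$ first, where everything is literally finite (so no K\"unneth for ind-schemes or commutation with filtered colimits is needed at all), and then obtains the untruncated case by passing to the colimit in $n$; you go the other direction, starting with $\Gr_{\bG_m}^{pos}$ and invoking K\"unneth plus commutation of $\Gamma^{\IndCoh}$ with filtered colimits along closed embeddings to reduce to finite level. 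The paper's ordering is a bit cleaner since it sidesteps those ind-scheme reductions entirely, but both are sound.
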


\begin{proof}

Each of these maps is a map of commutative algebras
because of the bilinearity of the pairings.

Next, we have:
\[
\begin{gathered}
\Gamma^{\IndCoh}(\Gr_{\bG_m}^{pos,\leq n},\omega_{\Gr_{\bG_m}^{pos,\leq n}}) =
\underset{m}{\oplus} \,
\Gamma^{\IndCoh}(\Sym^m\sD^{\leq n},\omega_{\Sym^m\sD^{\leq n}})
= \\
\underset{m}{\oplus} \, ((k[[t]]/t^n)^{\otimes m,S_m})^{\vee}
= \Sym((k[[t]]/t^n)^{\vee})
\end{gathered}
\]

\noindent where the second equality is by finiteness of
$\Sym^m \sD^{\leq n}$.

We similarly have 
$\fL_n^+ \bA^1 = \Spec(\Sym((k[[t]]/t^n)^{\vee}))$ by 
definition. Under these identifications, it suffices to
show that the map:
\[
\Sym((k[[t]]/t^n)^{\vee}) = 
\Gamma^{\IndCoh}(\Gr_{\bG_m}^{pos,\leq n},\omega_{\Gr_{\bG_m}^{pos,\leq n}}) 
\to \Gamma(\fL_n^+ \bA^1,\sO_{\fL_n^+ \bA^1}) =
\Sym((k[[t]]/t^n)^{\vee})
\]

\noindent from the proposition is the identity: 
it suffices to check this on
the generators of the left hand side (as we have a map
of commutative algebras), and there it follows 
by construction of the pairing.

Passing to the colimit as $n \to \infty$, we recover
the first map under consideration.

The second and fourth cases follow from Remark \ref{r:gr-n-pos-gp-cpl}.

\end{proof}

\begin{rem}

The assertion $\Gamma^{\IndCoh}(\Gr_{\bG_m},\omega_{\Gr_{\bG_m}}) 
\simeq \Gamma(\fL^+ \bG_m,\sO_{\fL^+ \bG_m})$ via this
pairing is standard from Contou-Carr\`ere \cite{cc}. 
The other assertions
are apparently less well-known,\footnote{For example,
they concern Cartier duality for commutative monoids,
which is less often considered than the group case.}
though they easy results.

\end{rem}

\section{Coherent sheaves on $\sY$}\label{s:indcoh}

\subsection{Overview}

In this section, we define $\IndCoh^*$ for $\sY$ and its relatives.
This section follows \cite{methods} \S 6, though we keep our exposition largely
independent of \emph{loc. cit}. 

We strive to keep our exposition as explicit as possible, so assume
whatever simplifying assumptions we like. We refer to \emph{loc. cit}. for
a more thorough development of the subject.

\subsection{Affine case}

\subsubsection{}

Suppose $S$ is an eventually coconnective (e.g. classical) affine scheme.

We define the (non-cocomplete, DG) subcategory $\Coh(S) \subset \QCoh(S)$
to consist of objects $\sF \in \QCoh(S)^+$ such that for each $n$, the object
$\tau^{\geq -n}(\sF) \in \QCoh(S)^{\geq -n}$ is compact. 
In other words, $\sF$ should be eventually coconnective and 
$\ul{\Hom}_{\QCoh(S)}(\sF,-)$ should commute with
filtered colimits in $\QCoh(S)^{\geq -n}$ for each $n$.
Clearly $\Perf(S) \subset \Coh(S)$.

\begin{notation}

For $S = \Spec(A)$, we also sometimes write $\Coh(A) \subset A\mod$ for 
the subcategory $\Coh(S) \subset \QCoh(S) \simeq A\mod$.

\end{notation}

We define $\IndCoh^*(S)$ as $\Ind(\Coh(S))$. 
Observe that we have a
natural functor $\Psi:\IndCoh^*(S) \to \QCoh(S)$.

Define $\IndCoh^*(S)^{\leq 0}$ as the subcategory generated
under colimits by coherent objects that are connective (with respect to the
$t$-structure on $\QCoh(S)$). There is a corresponding 
$t$-structure on $\IndCoh^*(S)$ with this as its connective subcategory.

\begin{lem}[\cite{methods} Lemma 6.4.1]\label{l:psi-affine}

The above functor $\Psi$ is $t$-exact and induces an equivalence 
$\Psi:\IndCoh^*(S)^+ \isom \QCoh(S)^+$.

\end{lem}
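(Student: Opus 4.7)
The plan is to establish $t$-exactness of $\Psi$ first, then deduce the equivalence on $+$-parts by identifying both sides with the ind-category of a common class of compact generators on bounded-below truncations.

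For $t$-exactness: right $t$-exactness is immediate from the construction, since $\IndCoh^*(S)^{\leq 0}$ is the smallest cocomplete subcategory of $\IndCoh^*(S)$ containing $\Coh(S) \cap \QCoh(S)^{\leq 0}$, the functor $\Psi$ is continuous (being the ind-extension of the inclusion $\Coh(S) \subset \QCoh(S)$), and it sends these generators into $\QCoh(S)^{\leq 0}$. For left $t$-exactness, I would use that when $S$ is eventually coconnective, $\QCoh(S)^{\leq 0}$ is generated under colimits by connective perfect complexes, hence by connective coherent complexes since $\Perf(S) \subset \Coh(S)$. Consequently, $\Psi(\sF) \in \QCoh(S)^{\geq 1}$ if and only if $\ul{\Hom}(\sG, \Psi(\sF)) \in \Vect^{\geq 1}$ for all $\sG \in \Coh(S) \cap \QCoh(S)^{\leq 0}$; by the definition of $\Psi$ as the ind-extension of $\Coh(S) \subset \QCoh(S)$, these Homs agree with $\ul{\Hom}_{\IndCoh^*(S)}(\sG, \sF)$, so the condition translates to $\sF \in \IndCoh^*(S)^{\geq 1}$.

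For the equivalence of $+$-parts, I would reduce to showing $\Psi^{\geq -n}\colon \IndCoh^*(S)^{\geq -n} \isom \QCoh(S)^{\geq -n}$ for each $n \geq 0$ and then pass to the filtered colimit over $n$, using that $\sC^+ = \bigcup_n \sC^{\geq -n}$ on both sides. Both categories are presentable. On the $\IndCoh^*$ side, since $\tau^{\geq -n}$ preserves $\Coh(S)$ when $S$ is eventually coconnective and is left adjoint to the inclusion, $\IndCoh^*(S)^{\geq -n}$ is compactly generated by $\Coh(S)^{\geq -n} \coloneqq \Coh(S) \cap \QCoh(S)^{\geq -n}$. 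On the $\QCoh$ side, the same class of objects is compact in $\QCoh(S)^{\geq -n}$ and generates it, which is exactly how coherence was defined (via compactness of truncations). Since $\Psi$ acts as the identity on this common class of generators with the same mapping complexes, $\Psi^{\geq -n}$ is the ind-extension of an equivalence of small DG categories, hence itself an equivalence.

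The hardest ingredient is the last step, namely verifying that the compact objects of $\QCoh(S)^{\geq -n}$ are exactly $\Coh(S)^{\geq -n}$ and that they in fact generate $\QCoh(S)^{\geq -n}$ under filtered colimits. For classical Noetherian $S$ this is standard; in the generality of arbitrary eventually coconnective affine $S$, it follows by unwinding the compactness-of-truncations definition of $\Coh(S)$ together with the fact that $\QCoh(S)^{\geq -n}$ is compactly generated (since $\QCoh(S)$ is). This is the natural point at which to defer to the careful treatment in \cite{methods} \S 6 rather than re-deriving it.
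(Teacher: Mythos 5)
The paper does not prove this statement; it cites \cite{methods}~Lemma 6.4.1 directly, so there is no in-paper proof to compare against. I will evaluate your argument on its own terms.

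Your treatment of right $t$-exactness is fine. In the left $t$-exactness step there is a genuine slip: you assert that for $\sG \in \Coh(S) \cap \QCoh(S)^{\leq 0}$ the complexes $\ul{\Hom}_{\QCoh(S)}(\sG, \Psi\sF)$ and $\ul{\Hom}_{\IndCoh^*(S)}(\sG, \sF)$ ``agree by the definition of $\Psi$.'' This is not automatic for coherent $\sG$. Writing $\sF = \colim_i \sF_i$ with $\sF_i \in \Coh(S)$, one has $\ul{\Hom}_{\IndCoh^*}(\sG,\sF) = \colim_i \ul{\Hom}(\sG,\sF_i)$ because $\sG$ is compact in $\IndCoh^*(S)$, whereas $\ul{\Hom}_{\QCoh}(\sG,\Psi\sF) = \ul{\Hom}_{\QCoh}(\sG, \colim_i\sF_i)$ commutes with that colimit only when $\sG$ is compact in $\QCoh(S)$, i.e., \emph{perfect}. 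The agreement you invoke is, for general coherent $\sG$, essentially the full-faithfulness one is ultimately trying to establish, so as written the step is circular. The implication you actually need ($\sF \in \IndCoh^*(S)^{\geq 1} \Rightarrow \Psi(\sF) \in \QCoh(S)^{\geq 1}$) does go through cleanly if you test only against connective \emph{perfect} $\sG$ (for which the Hom-agreement holds and which still generate $\QCoh(S)^{\leq 0}$ under colimits); the digression through coherent test objects should simply be dropped.

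For the equivalence on the eventually coconnective parts, the strategy — reducing to $\Psi^{\geq -n}$ for each $n$ and identifying compact generators on both sides — is reasonable. But be aware that the statements you lean on are genuinely nontrivial in this generality: that $\tau^{\geq -n}$ preserves $\Coh(S)$ requires showing that compactness of a truncation in $\QCoh(S)^{\geq -n}$ implies compactness in $\QCoh(S)^{\geq -m}$ for all $m > n$, which is not a formal consequence of the definitions (the comparison triangle $\tau^{\leq -n-1}X \to X \to \tau^{\geq -n}X$ only controls the Hom-complex in non-positive degrees); and the claim that the compact objects of $\QCoh(S)^{\geq -n}$ are \emph{precisely} $\Coh(S)\cap\QCoh(S)^{\geq -n}$, and that they generate, needs an argument that truncations $\tau^{\geq -n}(P)$ of perfects actually lie in $\Coh(S)$, which is the same subtlety. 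You are right to identify this as the hard ingredient and to defer it to \cite{methods}; just be aware it is not an ``unwinding of definitions'' but the substantive content of the lemma.
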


\subsubsection{Pullbacks}

Let $f:S \to T$ be an eventually coconnective morphism of affine, eventually
coconnective schemes.

\subsubsection{Pushforwards}

Now suppose $f:S \to T$ is a morphism of affine, eventually coconnective schemes.
In this case, there is a unique continuous DG functor:
\[
f_*^{\IndCoh}:\IndCoh^*(S) \to \IndCoh^*(T)
\]

\noindent fitting into a commutative diagram:
\[
\xymatrix{
\IndCoh^*(S) \ar[rr]^{f_*^{\IndCoh}} \ar[d]^{\Psi} &&  \IndCoh^*(T)\ar[d]^{\Psi} \\
\QCoh(S) \ar[rr]^{f_*} && \QCoh(T)
}
\]

\noindent and such that $f_*^{\IndCoh}(\Coh(S)) \subset \QCoh(T)^+$.
(Indeed, this functor is necessarily the ind-extension of 
$\Coh(S) \xar{f_*} \QCoh(T)^+ \simeq \IndCoh^*(T)^+ \subset \IndCoh^*(T)$.

In general, the functor $f_*^{\IndCoh}$ can\footnote{Unfortunately, \cite{methods}, which
is currently being revised, erroneously claims otherwise.}be pathological, i.e., it may fail to map $\IndCoh^*(S)^+$ into $\IndCoh^*(T)^+$.
This issue complicates the theory, compared to the finite type setting. 

\subsubsection{}

Suppose $f:S \to T$ as above is eventually coconnective (i.e., of finite
$\Tor$-amplitude). In this case,
$f^*:\QCoh(T) \to \QCoh(S)$ maps $\Coh(T)$ to $\Coh(S)$. By ind-extension,
we obtain a functor $f^{*,\IndCoh}:\IndCoh^*(T) \to \IndCoh^*(S)$.
It is immediate to see that it is left adjoint to $f_*^{\IndCoh}$.

Moreover, $f^{*,\IndCoh}$ is manifestly right $t$-exact. Therefore, 
$f_*^{\IndCoh}$ \emph{is} left $t$-exact in this case. 

\subsubsection{}

Suppose $f:S \to T$ is proper (e.g., an almost finitely presented
closed embedding). 

Then $f_*:\Coh(S) \to \QCoh(T)^+$ maps into $\Coh(T)$. Therefore,
the functor $f_*^{\IndCoh}$ admits a continuous right adjoint, 
that we denote $f^!$ in this case. 

\subsubsection{Semi-coherence}

We now provide a general hypothesis that ensures that $f_*^{\IndCoh}$ is
left $t$-exact, even when $f$ is not eventually coconnective. 

\begin{defin}

A connective and eventually coconnective 
commutative algebra $A$ is \emph{semi-coherent}
if every $M \in A\mod^+$ can be written as a filtered colimit
of objects $M_i$ such that:

\begin{itemize}

\item $M_i \in \Coh(A)$ 
for every $i$.

\item There exists an integer $r$ independent of $i$ 
such that every $M_i$ lies in $A\mod^{\geq -r}$.

\end{itemize}

We say $S = \Spec(A)$ is \emph{semi-coherent} if $A$ is.

\end{defin}

\begin{rem}\label{r:semi-coh-ab}

In the definition, if one restricts to $M \in A\mod^{\heart}$,
one finds that such an $r$ may be constructed (if it exists)
independent of $M$ (by considering direct sums of modules).  
It follows that it suffices to check the hypothesis just for $M \in A\mod^{\heart}$.

\end{rem}

\begin{example}

Any Noetherian (and eventually coconnective) 
$A$ is obviously semi-coherent. More generally, any 
coherent (and eventually coconnective) $A$ is semi-coherent.

\end{example}

We now have the following simple observation.

\begin{lem}\label{l:semi-coh-functor}

Suppose $S$ is a semi-coherent eventually connective affine scheme.
Suppose $\sC \in \DGCat_{cont}$ is a DG category with a $t$-structure
compatible with filtered colimits.  
Suppose $F:\IndCoh^*(S) \to \sC$ is a continuous left $t$-exact DG functor 
such that $F(\Coh(S)) \subset \sC^+$.
Then $F(\IndCoh^*(S)^+) \subset \sC^+$.

\end{lem}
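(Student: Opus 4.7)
The plan is to bootstrap from the hypothesis on coherent objects by writing any $\sF \in \IndCoh^*(S)^+$ as a filtered colimit of coherent sheaves lying in a uniformly bounded range of degrees, and then to invoke continuity of $F$ together with left $t$-exactness. The semi-coherence hypothesis is precisely what will furnish the uniform bound.

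First, I would write $S = \Spec(A)$ and take $\sF \in \IndCoh^*(S)^+$. Using the equivalence $\Psi:\IndCoh^*(S)^+ \isom \QCoh(S)^+$ of Lemma \ref{l:psi-affine}, it corresponds to some $M \in A\mod^+$. By semi-coherence, I may write $M \simeq \colim_i M_i$ in $A\mod$ with $M_i \in \Coh(A)$ and $M_i \in A\mod^{\geq -r}$ for some fixed integer $r$ independent of $i$. Via the equivalence $\Psi|_{\IndCoh^*(S)^+}$, these lift to coherent objects $\sF_i \in \Coh(S) \cap \IndCoh^*(S)^{\geq -r}$.

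Next, I claim that $\sF \simeq \colim_i \sF_i$, the colimit being formed in $\IndCoh^*(S)$. The key point is that the $t$-structure on $\IndCoh^*(S)$ is compatible with filtered colimits: its connective part is, by definition, generated under colimits by the connective coherents, which are compact in $\IndCoh^*(S) = \Ind(\Coh(S))$, so the coconnective part is closed under filtered colimits. Consequently $\colim_i \sF_i \in \IndCoh^*(S)^{\geq -r}$, and in particular lies in the subcategory $\IndCoh^*(S)^+$ on which $\Psi$ is an equivalence. Since $\Psi$ is continuous, $\Psi(\colim_i \sF_i) \simeq \colim_i M_i \simeq M \simeq \Psi(\sF)$, and the claim follows.

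Finally, continuity of $F$ gives $F(\sF) \simeq \colim_i F(\sF_i)$. Each $F(\sF_i)$ lies in $\sC^+$ by hypothesis; moreover, by left $t$-exactness applied to $\sF_i \in \IndCoh^*(S)^{\geq -r}$, each $F(\sF_i) \in \sC^{\geq -r}$. Since the $t$-structure on $\sC$ is compatible with filtered colimits, $F(\sF) \in \sC^{\geq -r} \subset \sC^+$, as desired. The main obstacle is really the potential mismatch between filtered colimits in $\QCoh(S)$ and in $\IndCoh^*(S)$; semi-coherence is tailored to control this by arranging that the relevant colimit lives inside a bounded-below subcategory, where $\Psi$ is an equivalence and the two notions of colimit are identified.
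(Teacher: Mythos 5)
Your proof is correct and follows the same route as the paper's: semi-coherence furnishes a filtered presentation of $\sF$ by coherent objects lying in a uniformly bounded range of degrees, and continuity plus left $t$-exactness of $F$ then finish the argument. You are in fact somewhat more careful than the paper's terse version in spelling out why the filtered colimit formed in $\IndCoh^*(S)$ agrees with the one in $\QCoh(S)$ — a genuine subtlety, resolved exactly as you say by the uniform bound and the fact that $\Psi$ restricts to an equivalence on $\IndCoh^*(S)^+$.
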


\begin{proof}

Suppose $\sF \in \IndCoh^*(S)^+$. By assumption and $S$, we can write
$\sF$ as a filtered colimit $\colim_i \sF_i$ with 
$\sF_i \in \Coh(S) \cap \QCoh(S)^{\geq -r}$ for
some $r$; equivalently, $\sF_i \in \Coh(S) \cap \IndCoh^*(S)^{\geq -r}$.
Then $F(\sF) = \colim_i F(\sF_i) \in \sC^{\geq -r}$ by assumption, giving
the claim.

\end{proof}

\begin{cor}\label{c:semi-coh-push}

Suppose $f:S \to T$ is a morphism of eventually coconnective affine schemes
with $S$ semi-coherent. Then $f_*^{\IndCoh}$ is $t$-exact.

\end{cor}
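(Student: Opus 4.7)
The plan is to deduce $t$-exactness from the defining commutative square $\Psi \circ f_*^{\IndCoh} = f_* \circ \Psi$ combined with Lemma~\ref{l:psi-affine}. The formal ingredient is $t$-exactness of $f_* \colon \QCoh(S) \to \QCoh(T)$, which is immediate since $f$ is a morphism of affine schemes (so $f_*$ is restriction of scalars along a ring map). The only non-formal input is the paper's warning that $f_*^{\IndCoh}$ can fail to preserve eventual coconnectivity in general: the semi-coherence hypothesis on $S$ is precisely what rules this pathology out in our setting, and it is the heart of the argument.

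Right $t$-exactness is the easy half. By definition $\IndCoh^*(S)^{\leq 0}$ is generated under colimits by connective coherent objects and $\IndCoh^*(T)^{\leq 0}$ is closed under colimits, so it suffices to treat $\sF \in \Coh(S) \cap \IndCoh^*(S)^{\leq 0}$. For such $\sF$ the output $f_*^{\IndCoh}(\sF)$ lies in $\IndCoh^*(T)^+$ by the very construction of $f_*^{\IndCoh}$, and $\Psi(f_*^{\IndCoh}(\sF)) = f_*(\Psi(\sF)) \in \QCoh(T)^{\leq 0}$ by $t$-exactness of $\Psi$ and of $f_*$. Since $\Psi$ is a $t$-exact equivalence on $+$-parts, this forces $f_*^{\IndCoh}(\sF) \in \IndCoh^*(T)^{\leq 0}$.

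The same commutative-square trick also handles the coherent case of left $t$-exactness: for $\sF \in \Coh(S) \cap \IndCoh^*(S)^{\geq n}$, the identity $\Psi(f_*^{\IndCoh}(\sF)) = f_*(\Psi(\sF)) \in \QCoh(T)^{\geq n}$ together with $f_*^{\IndCoh}(\sF) \in \IndCoh^*(T)^+$ and the equivalence of Lemma~\ref{l:psi-affine} yields $f_*^{\IndCoh}(\sF) \in \IndCoh^*(T)^{\geq n}$. This coherent-level left $t$-exactness is exactly what the proof of Lemma~\ref{l:semi-coh-functor} consumes, so that lemma --- whose statement formally requires left $t$-exactness but whose argument only invokes it on the coherent resolving system --- applies to give $f_*^{\IndCoh}(\IndCoh^*(S)^+) \subset \IndCoh^*(T)^+$. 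Concretely, one writes $\sF \in \IndCoh^*(S)^+$ as $\colim \sF_i$ with $\sF_i \in \Coh(S) \cap \IndCoh^*(S)^{\geq -r}$ via semi-coherence of $S$ and Lemma~\ref{l:psi-affine}, bounds each $f_*^{\IndCoh}(\sF_i)$ in $\IndCoh^*(T)^{\geq -r}$ by the coherent case, and uses closure of $\IndCoh^*(T)^{\geq -r}$ under filtered colimits. This is the step where semi-coherence is essential, and the rest of the proof would collapse without it --- I expect it to be the main obstacle.

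With eventual coconnectivity secured, the finish is formal. For $\sF \in \IndCoh^*(S)^{\geq 0}$ we now know $f_*^{\IndCoh}(\sF) \in \IndCoh^*(T)^+$, so the commutative square is legitimate at the level of $\Psi$: $\Psi(f_*^{\IndCoh}(\sF)) = f_*(\Psi(\sF)) \in \QCoh(T)^{\geq 0}$ by $t$-exactness of $f_*$, and the $t$-exact equivalence $\Psi \colon \IndCoh^*(T)^+ \isom \QCoh(T)^+$ transports this back to $f_*^{\IndCoh}(\sF) \in \IndCoh^*(T)^{\geq 0}$. Combined with right $t$-exactness this proves $t$-exactness of $f_*^{\IndCoh}$.
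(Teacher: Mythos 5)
Your proof is correct and follows the same route as the paper: establish preservation of eventual coconnectivity via Lemma~\ref{l:semi-coh-functor} and semi-coherence, then transport $t$-exactness across the $\Psi$-square using Lemma~\ref{l:psi-affine} and affineness. You are in fact slightly more careful than the paper, which simply asserts ``By Lemma~\ref{l:semi-coh-functor}, $f_*^{\IndCoh}$ is left $t$-exact'' without noting the circularity hazard in the lemma's stated hypothesis (left $t$-exactness would already trivially give preservation of $+$, making semi-coherence superfluous); your observation that the lemma's proof only consumes left $t$-exactness on $\Coh(S)$ --- verifiable non-circularly via the $\Psi$-square --- resolves this cleanly, and your explicit treatment of right $t$-exactness fills in a step the paper leaves implicit.
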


\begin{proof}

By Lemma \ref{l:semi-coh-functor}, $f_*^{\IndCoh}$ is left $t$-exact.
Therefore, we have a commutative diagram:
\[
\xymatrix{
\IndCoh^*(S)^+ \ar[rr]^{f_*^{\IndCoh}} \ar[d]^{\Psi} &&  \IndCoh^*(T)^+\ar[d]^{\Psi} \\
\QCoh(S)^+ \ar[rr]^{f_*} && \QCoh(T)^+
}
\]

\noindent in which the left arrows are $t$-exact equivalences and the
bottom arrow is $t$-exact (by affineness). This gives the claim.

\end{proof}

\subsubsection{}

We have the following important technical result.

\begin{prop}\label{p:semi-coh}

The (classical) commutative algebra $A_n$ defined in \S \ref{ss:coords} is
semi-coherent.

\end{prop}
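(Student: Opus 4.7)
The plan is to argue by induction on $n$, exploiting the geometric structure of $\sZ^{\leq n}$ developed in this section. For the base case $n=0$, the defining relations force $(k+1)a_{k+1}=0$ for all $k \geq 0$, so in characteristic zero $a_i = 0$ for $i \geq 1$ and $A_0 = k[a_0]$ is Noetherian, hence trivially semi-coherent.

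For the inductive step, Remark \ref{r:semi-coh-ab} reduces the problem to verifying the condition on classical modules $M \in A_n\mod^\heart$. I would decompose each such $M$ using the stratification of $\sZ^{\leq n}$ into the closed substack $\sZ^{\leq n-1}$ (corresponding to $A_{n-1}$, controlled by the inductive hypothesis with some bound $r_{n-1}$) and the open stratum $\sU_n$ from \S \ref{ss:un}. For $n \geq 2$ the stratum $\sU_n$ is a regular Noetherian affine stack (Lemma \ref{l:un} together with \S \ref{sss:u-reg-noeth}), while the case $n=1$ is analogous but uses the pro-open description of $\sU_1$ from Lemma \ref{l:u1}. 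Writing $M$ as a colimit of finitely generated submodules, and combining the fundamental short exact sequences \eqref{eq:fund-ses}--\eqref{eq:fund-ses-2} with the Cartesian diagram of Proposition \ref{p:z-flat-axes} \eqref{i:z-axes-3}, each finitely generated submodule becomes an iterated extension of coherent pieces supported on $\sZ^{\leq n-1}$ (to which induction applies) and of pieces pulled back from the simpler geometry $\bA^1 \times \bB\bG_m$ via $\iota$.

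The hard part will be uniformly bounding the coconnectivity $r_n$ independently of $M$. The cohomological amplitudes I need to control are (i) local cohomology along the closed embedding $\sZ^{\leq n-1} \hookrightarrow \sZ^{\leq n}$, and (ii) pushforward from $\sU_n$ to $\sZ^{\leq n}$. Both bounds should rest on the flatness results of \S \ref{ss:flatness}, in particular the flatness along the axes from Proposition \ref{p:z-flat-axes} \eqref{i:z-axes-1}, which ensures that the short exact sequences \eqref{eq:fund-ses}--\eqref{eq:fund-ses-2} remain exact after tensoring with any classical module (so no unexpected higher Tors appear). Combined with Corollary \ref{c:iota-afp} (stating that $\iota$ is an almost finitely presented closed embedding), this should bound the coconnectivity of the resulting pieces in terms of $r_{n-1}$ and a constant depending only on $n$, yielding the required $r_n$.
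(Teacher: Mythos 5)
Your high-level plan — induct on $n$, split a module $M \in A_n\mod^{\heartsuit}$ into its generic part on $\sU_n = \{b_{-n}\neq 0\}$ and its $b_{-n}$-torsion part supported on $\sZ^{\leq n-1} = \{b_{-n}=0\}$, treat the former via regularity of $\sU_n$ and the latter by induction — is the paper's approach. But the tools you name for executing it are the wrong ones, and the actual mechanism for the generic part is missing. The short exact sequences \eqref{eq:fund-ses}--\eqref{eq:fund-ses-2}, the Cartesian diagram of Proposition \ref{p:z-flat-axes} \eqref{i:z-axes-3}, and Corollary \ref{c:iota-afp} all concern the shift map $\iota$ (multiplication by $t$ on the section, i.e.\ the $x$-axis of $C$) and the auxiliary space $\bA^1\times\bB\bG_m$; none of them govern the irregularity stratification by $\sZ^{\leq n-1}\into\sZ^{\leq n}$ (the $y$-axis), which is given by $\delta_n$ and Proposition \ref{p:z-flat-axes} \eqref{i:z-axes-2}. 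Saying the generic pieces are ``pulled back from $\bA^1\times\bB\bG_m$ via $\iota$'' does not make sense here.

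The genuine gap is in the generic step. Knowing $\sU_n$ is regular Noetherian does not, by itself, let you realize $M[b_{-n}^{-1}]$ as a uniformly coconnective colimit of coherent $A_n$-complexes, because the localization $A_n \to A_n[b_{-n}^{-1}]$ is not of finite type, so coherence is not preserved under restriction. The missing move is via $\Tor$-amplitude: since $A_n[b_{-n}^{-1}]$ is regular Noetherian, any bounded-below module over it has finite $\Tor$-amplitude; flatness of the localization transfers this to finite $\Tor$-amplitude over $A_n$; one then represents $M[b_{-n}^{-1}]$ by a bounded complex of flat $A_n$-modules and applies Lazard's theorem to see each flat module is $0$-good. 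Without this (or some substitute) the generic case is unresolved. For the torsion part your outline is essentially right once the references are fixed: filter the $b_{-n}$-power-torsion by $\widetilde M_i = \ker(b_{-n}^i)$, note each subquotient is an $A_{n-1}$-module, apply the inductive bound $r_{n-1}$, and use that $\delta_n$ is almost finitely presented (via \eqref{i:z-axes-2}, not Corollary \ref{c:iota-afp}) so coherent $A_{n-1}$-modules push forward to coherent $A_n$-modules.
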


In other words, $\widetilde{\sZ}^{\leq n}$ is semi-coherent.

We defer the proof of the proposition to \S \ref{ss:z-semi-coh-pf}, 
at the end of this section. Until that point, we assume it.

\subsection{Coherent sheaves with support}

\subsubsection{}

We now consider the following situation. 

Let $Z$ be an affine, eventually coconnective scheme. 
Let $f:Z \to \bA^1$ be a function. Let $U = \{f \neq 0\} \subset Z$, 
and let $j:U \to Z$ be the embedding.
Let $i:Z_0 \into Z$ be a closed subscheme with 
$Z \setminus Z_0 = U$.

The following type of result is standard in the finite type setting.

\begin{lem}\label{l:coh-supp}

Suppose that:

\begin{itemize}

\item $i$ is almost finitely presented.

\item The map $i_*^{\IndCoh}:\IndCoh^*(Z_0) \to \IndCoh^*(Z)$ is $t$-exact.

\end{itemize}

\noindent Then the functor:
\[
(j^{*,\IndCoh},i^!):\IndCoh^*(Z) \to \IndCoh^*(U) \times \IndCoh^*(Z_0)
\]

\noindent is conservative. 

\end{lem}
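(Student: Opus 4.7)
I will prove conservativity by showing $\Hom_{\IndCoh^*(Z)}(\sG,\sF) = 0$ for every coherent $\sG$, which suffices since $\Coh(Z)$ compactly generates $\IndCoh^*(Z) = \Ind(\Coh(Z))$. Fix $\sG \in \Coh(Z)$ and consider the cofiber sequence
\[
\sG \xrightarrow{\cdot f} \sG \to \sG_{Z_0}
\]
in $\IndCoh^*(Z)$, where $\sG_{Z_0}$ denotes the cofiber (which for $Z_0 = V(f)$ is $\sG \otimes^L_A A/^L f$; more generally it is built from objects in the essential image of $i_*^{\IndCoh}$ using the hypotheses on $i$). By adjunction and the assumption $i^!\sF = 0$, one obtains $\Hom_{\IndCoh^*(Z)}(\sG_{Z_0},\sF) = 0$. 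The cofiber sequence then yields that multiplication by $f$ is an equivalence on $M := \Hom_{\IndCoh^*(Z)}(\sG,\sF)$; in particular $M$ is naturally an $A_f$-module, where $A = \Gamma(Z,\sO_Z)$.

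Now bring in $j^{*,\IndCoh}\sF = 0$. Functoriality gives a comparison map
\[
M = \Hom_{\IndCoh^*(Z)}(\sG,\sF) \to \Hom_{\IndCoh^*(U)}(j^{*,\IndCoh}\sG, j^{*,\IndCoh}\sF) = 0.
\]
When $\sG$ is perfect (compact in $\QCoh(Z)$), this comparison is precisely the localization $M \to M \otimes_A A_f$; since $M$ is already an $A_f$-module, this localization is an isomorphism, and combined with the vanishing of the target forces $M = 0$. This settles the case of perfect $\sG$.

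For a general coherent $\sG$, I reduce to the case $\sF \in \IndCoh^*(Z)^{\heart}$. The $t$-structure on $\IndCoh^*(Z)$ is separated: any object in $\bigcap_n \IndCoh^*(Z)^{\geq n}$ has trivial Hom from each coherent generator (using that coherent objects are bounded below in cohomological degree), hence is zero. Combined with the $t$-exactness of $j^{*,\IndCoh}$ (an open embedding, hence étale) and the left $t$-exactness of $i^!$ (the right adjoint of the assumed $t$-exact $i_*^{\IndCoh}$), the vanishing hypotheses can be propagated to each cohomology object $H^n\sF$, and it suffices to treat the heart case. For $\sF \in \IndCoh^*(Z)^{\heart}$, Lemma \ref{l:psi-affine} identifies $\sF$ with a classical $A$-module, and the hypotheses translate concretely: $j^{*,\IndCoh}\sF = 0$ gives $\sF_f = 0$ (so $\sF$ is $f$-power torsion) while $i^!\sF = 0$ gives $\on{fib}(\sF \xrightarrow{f} \sF) = 0$ (so $f$ acts bijectively on $\sF$); any $A$-module that is simultaneously $f$-power torsion and $f$-invertible must be zero.

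The main obstacle will be propagating the vanishing $i^!\sF = 0$ to $i^!(H^n\sF) = 0$ in the previous step, since $i^!$ is only left $t$-exact (not fully $t$-exact) in general. Executing this propagation requires an inductive truncation argument using the fiber sequences $\tau^{<n}\sF \to \sF \to \tau^{\geq n}\sF$ together with the spectral sequence associated to the Postnikov filtration, and it is precisely at this juncture that the assumed $t$-exactness of $i_*^{\IndCoh}$ (beyond the mere almost finite presentation of $i$) is essential.
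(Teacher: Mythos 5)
Your opening steps are on the right track and parallel the paper's strategy (reduce to showing $\Hom_{\IndCoh^*(Z)}(\sG,\sF)=0$ for $\sG$ coherent, and pass from $\sG$ to $\sG/f$ using the vanishing $j_*^{\IndCoh}j^{*,\IndCoh}\sF = \colim(\sF \xar{f} \sF \xar{f}\cdots)=0$). But there are two genuine gaps.

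First, the entire technical content of the lemma is concentrated in your parenthetical remark that $\sG_{Z_0}$ ``is built from objects in the essential image of $i_*^{\IndCoh}$ using the hypotheses on $i$.'' This is exactly what has to be proved, and it is not automatic: $\sG/f$ lives on the derived vanishing locus $V(f)$, which need not coincide with $Z_0$. The argument requires (a) the commutative-algebra fact that, because $i$ is almost finitely presented, the ideal $I$ of $Z_0^{cl}$ satisfies $I^r \subset (f)$ for some $r$, so any $H^0(A/f)$-module carries a finite filtration with subquotients that are $H^0(A)/I$-modules; and (b) the assumed $t$-exactness of $i_*^{\IndCoh}$, which guarantees that the square comparing $i_*^{\IndCoh}$ and $i_*$ via $\Psi$ commutes on $\IndCoh^+$, so that these $\QCoh$-level filtrations lift to filtrations in $\IndCoh^*(Z)$ by pushforwards from $Z_0$. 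Without these two ingredients, the adjunction step $\Hom(\sG_{Z_0},\sF)=0$ from $i^!\sF=0$ is unjustified. Note also that your attribution of where $t$-exactness enters is off: it is used here to match the $\QCoh$ and $\IndCoh^*$ filtrations, not to propagate vanishing through truncations.

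Second, the reduction to $\sF\in\IndCoh^*(Z)^{\heart}$ does not go through, as you yourself flag. Since $i^!$ is only left $t$-exact, $i^!\sF=0$ does not imply $i^!(H^n\sF)=0$ — taking the lowest nonvanishing cohomology $H^n\sF[-n]=\fib(\sF\to\tau^{\geq n+1}\sF)$ only yields $i^!(H^n\sF[-n])\simeq i^!(\tau^{\geq n+1}\sF)[-1]$, which need not vanish. You identify this as ``the main obstacle'' but do not resolve it, so the general-$\sG$ case is left open. Moreover the heart-case translation $i^!\sF=0 \Leftrightarrow f$ acts bijectively again presumes $Z_0=V(f)$ and hence again elides the commutative algebra above. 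The paper avoids the heart reduction altogether: it shows directly that any nonzero $\sF$ with $j^{*,\IndCoh}\sF=0$ receives a nonzero map from some $i_*^{\IndCoh}(\sH)$, whence $i^!\sF\neq 0$ by adjunction — no truncation of $\sF$ is ever needed. Your first two steps can be salvaged into a complete proof by dropping the heart reduction and instead supplying the filtration argument for $\sG/f$.
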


\begin{proof}

Let $\sF \in \IndCoh^*(Z)$ be given. We suppose $\sF$ is a non-zero
object with $j^{*,\IndCoh}(\sF) = 0$. Our goal is to construct 
an object $\sH \in \IndCoh^*(Z_0)$ and a non-zero map 
$i_*^{\IndCoh}(\sH) \to \sF$. 

\step We have:\footnote{Indeed, $j_*^{\IndCoh}$ is automatically left $t$-exact
(hence $t$-exact) because $j$ is flat; therefore, this formula follows from
the quasi-coherent setting by ind-extension.}
\[
j_*^{\IndCoh}j^{*,\IndCoh}(\sF) = \colim \big(\sF \xar{f} \sF \xar{f} \ldots \big) = 
0.
\]

Let $\sG \in \Coh(Z)$ be given with a non-zero map $\alpha:\sG \to \sF$; such 
$(\sG,\alpha)$
exists because $\sF$ is assumed non-zero. 

By compactness of $\sG$, there is an integer $n$ such that the composition:
\[
\sG \xar{\alpha} \sF \xar{f^n} \sF
\]

\noindent is null-homotopic. This map coincides with the composition:
\[
\sG \xar{f^n} \sG \xar{\alpha} \sF
\]

\noindent so this map must be null-homotopic as well. 
It follows that $\alpha$ factors through a (necessarily non-zero) map: 
\[
\sG/f^n\coloneqq \Coker(f^n:\sG \to \sG) \to \sF
\]

\noindent for some $n \gg 0$.

We have a standard co/fiber sequence:
\[
\sG/{f^{n-1}} \xar{f} \sG/f^n \to \sG/f.
\]

\noindent By descending induction, we see that there must exist
a non-zero map from $\sG/f \to \sF$. Replacing $\sG$ with $\sG/f \in \Coh(Z)$, 
we can assume that $f$ acts by zero on our original object $\sG$.

\step We now digress to make the following simple commutative algebra 
observation.\footnote{In our eventual application, $Z_0$ is the classical
scheme underlying $\{f=0\}$. This step is unnecessary under that
additional hypothesis.}
Suppose $Z = \Spec(A)$ below.

Let $I \subset H^0(A)$ be the ideal of 
elements vanishing on $Z_0^{cl}$. Let $(f) \subset H^0(A)$ 
be the ideal generated by $f$. We claim that there are integers $r,s$
such that:
\[
I^r \subset (f), (f^s) \subset I. 
\]

For any $g \in I$, we have $\{g \neq 0\} \subset \{f \neq 0\}$ by assumption. 
Therefore, $f$ is invertible in $H^0(A)[g^{-1}]$, so $g^{r_0} \in (f)$ for some
$r_0$ (depending on $g$). 
Because $i$ is almost finitely presented, $I$ is finitely generated,
so we obtain $I^r \subset (f)$ for $r \gg 0$.

The other inclusion (which we do not need) follows similarly: $\{f \neq 0\}$ is covered
by $\{g \neq 0\}$ for $g \in I$, so $1 \in I[f^{-1}] = H^0(A)[f^{-1}]$,
so $f^s \in I$ for some $s$.

We deduce the following: any module: 
\[
M \in H^0(A/f)\mod^{\heart} \subset H^0(A)\mod^{\heart} = 
A\mod^{\heart}
\]

\noindent admits a finite filtration:
\[
0 = I^r M \subset I^{r-1} M \subset \ldots \subset IM \subset M
\]

\noindent with subquotients lying in $H^0(A)/I\mod^{\heart} \subset A\mod^{\heart}$.

\step We now return to earlier setting.

Recall that we have $\sG \in \Coh(Z)$ on which $f$ acts null-homotopically.
Because $\sG$ is bounded, it has a finite Postnikov filtration as an object
of $\QCoh(Z)$
with associated graded terms $H^i(\sG)[-i]$. 
Each $H^i(\sG) \in 
\QCoh(Z)^{\heart}$. Because $f$ acts by zero on them, they lie
in $\QCoh(\{f = 0\})^{\heart}$. By the previous step, each
of these terms is obtained by successively extending 
objects in $\QCoh(Z_0)^{\heart}$.

Now the diagram:
\[
\xymatrix{
\IndCoh(Z_0)^+ \ar[r]^{i_*^{\IndCoh}} \ar[d]^{\Psi} & \IndCoh(Z)^+ \ar[d]^{\Psi} \\
\QCoh(Z_0)^+ \ar[r]^{i_*} & \QCoh(Z)^+}
\]

\noindent commutes because we assumed $i_*^{\IndCoh}$ $t$-exact;
moreover, the vertical arrows are equivalences. Therefore, $\sG$
admits a finite filtration as an object of $\IndCoh^*(Z)$
with associated graded terms $\IndCoh$-pushed forward from $Z_0$. 
Therefore, one of these associated graded terms must have a non-zero map
to $\sF$, giving the claim.  

\end{proof}

\subsubsection{Application to $\widetilde{\sZ}^{\leq n}$}

We now apply the above to construct generators of $\IndCoh^*(\widetilde{\sZ}^{\leq n})$.

\begin{prop}\label{p:tilde-z-gens}

For
$n > 0$, the DG category $\IndCoh^*(\widetilde{\sZ}^{\leq n})$ is compactly 
generated by the objects 
$\{\iota_*^{r,\IndCoh}(\sO_{\widetilde{\sZ}^{\leq n}})\}_{r \geq 0}$.

For $n = 0$, the DG category $\IndCoh^*(\widetilde{\sZ}^{\leq 0})$ is compactly 
generated by $\sO_{\widetilde{\sZ}^{\leq 0}})$.

\end{prop}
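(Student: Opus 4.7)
Compactness of each $\iota_*^{r,\IndCoh}(\sO_{\widetilde{\sZ}^{\leq n}})$ is immediate: $\iota^r$ is a closed embedding by Corollary \ref{c:iota-afp} and $\sO$ is coherent, so the pushforward lies in $\Coh$, hence is compact in $\IndCoh^*$. The real content is generation, which I prove by induction on $n$, with the inductive step running Lemma \ref{l:coh-supp} against $\delta_n$ together with the short exact sequence \eqref{eq:fund-ses-2}.

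For the base case $n = 0$, the equation $dg = g\omega$ for regular $\omega = \sum_{j\geq 0} c_j t^j\, dt$ has a unique formal solution determined by $a_0 \coloneqq g(0)$ and the Taylor coefficients of $\omega$, so $\widetilde{\sZ}^{\leq 0} = \Spec(k[a_0, c_0, c_1, \ldots])$ is an infinite-variable polynomial ring. Any coherent module over such a ring is finitely presented, hence supported on a finitely generated (regular Noetherian) polynomial subring, so it admits a finite projective resolution; therefore $\Coh = \Perf$, and $\sO$ compactly generates $\IndCoh^*(\widetilde{\sZ}^{\leq 0}) = \QCoh(\widetilde{\sZ}^{\leq 0})$.

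For the inductive step, fix $n \geq 1$, assume the result for $n - 1$, and let $\sF \in \IndCoh^*(\widetilde{\sZ}^{\leq n})$ satisfy $\Hom(\iota_*^{r,\IndCoh}(\sO), \sF) = 0$ for every $r \geq 0$. I apply Lemma \ref{l:coh-supp} to the closed embedding $\delta_n:\widetilde{\sZ}^{\leq n-1} \to \widetilde{\sZ}^{\leq n}$ and the function $f = b_{-n}$, whose non-vanishing locus is $\widetilde{U}_n$; its hypotheses are verified by Proposition \ref{p:z-flat-axes} (almost finite presentation of $\delta_n$) and by Corollary \ref{c:semi-coh-push} combined with Proposition \ref{p:semi-coh} ($t$-exactness of $\delta_{n,*}^{\IndCoh}$). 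To see $j^{*,\IndCoh}(\sF) = 0$, note that $\widetilde{U}_n$ is regular Noetherian by Lemmas \ref{l:un} and \ref{l:u1} (for $n = 1$ it is a filtered limit of such, which suffices), so $\sO_{\widetilde{U}_n}$ generates; since $\sO$ is compact in the ambient category, $\Hom(\sO,-)$ commutes with the filtered colimit $j_*^{\IndCoh} j^{*,\IndCoh}(\sF) \simeq \colim(\sF \xar{b_{-n}} \sF \xar{b_{-n}} \cdots)$, giving $\Gamma(\widetilde{U}_n, \sF) = \Gamma(\sF)[b_{-n}^{-1}] = 0$ from the $r = 0$ instance of the hypothesis. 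To see $\delta_n^!(\sF) = 0$, the inductive hypothesis reduces to showing $\Hom(\delta_{n,*}\iota_{n-1,*}^r \sO, \sF) = 0$ for every $r$. The identity $\delta_n \circ \iota_{n-1} = \iota_n \circ \delta_n$ (immediate from the coordinate description in \S \ref{ss:coords}) rewrites this as $\Hom(\iota_{n,*}^r \delta_{n,*}\sO, \sF) = 0$, and applying the exact functor $\iota_{n,*}^r$ to \eqref{eq:fund-ses-2} yields a short exact sequence $0 \to \iota_{n,*}^{r+1}\sO \to \iota_{n,*}^r \sO \to \iota_{n,*}^r \delta_{n,*}\sO \to 0$ whose outer terms $\Hom$-pair trivially with $\sF$ by hypothesis, forcing the middle to do so as well. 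Conservativity of $(j^{*,\IndCoh}, \delta_n^!)$ then gives $\sF = 0$.

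The main obstacle lies in the base case: justifying $\Coh = \Perf$ for the non-Noetherian ring $A_0$ requires an explicit reduction to finitely generated polynomial subrings. A secondary technicality is the pro-open nature of $\widetilde{U}_1$ when $n = 1$, which demands identifying $j_*^{\IndCoh} j^{*,\IndCoh}$ with a filtered colimit of ordinary localizations rather than a single Zariski localization.
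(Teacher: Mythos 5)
Your strategy — induction on $n$, excision via Lemma \ref{l:coh-supp}, a fundamental short exact sequence, and the commutation $\delta_n\circ\iota_{n-1}=\iota_n\circ\delta_n$ — is essentially the paper's, and the inductive step is sound for $n>1$. But there are two problems.

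\textbf{Base case.} You have $\widetilde{\sZ}^{\leq 0}$ wrong: what you wrote down is the equalizer $\{dg = g\omega\}\subset\fL^+\bA^1\times\fL^+\bA^1dt$, which is the universal $\fL^+\bG_m$-torsor, not the $\bG_m$-torsor $\widetilde{\sZ}^{\leq 0} = \sZ^{\leq 0}\times_{\bB\bG_m}\Spec(k)$. Passing to the latter quotients by the congruence subgroup $\sK_1=\ker(\fL^+\bG_m\to\bG_m)$, whose $d\log$-action freely translates the regular coefficients $c_j$; a transversal is $c_j=0$ for all $j$, which then forces $a_i=0$ for $i\geq 1$. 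So $\widetilde{\sZ}^{\leq 0}\cong\bA^1$, as the coordinate ring $A_0=k[a_0]$ of \S\ref{ss:coords} says directly. Your ``main obstacle'' about $\Coh=\Perf$ for non-Noetherian polynomial rings is therefore spurious; the base case is trivial.

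\textbf{The $n=1$ step is the genuine gap.} Your Lemma~\ref{l:coh-supp} application takes $f=b_{-1}$, so the open $U=\{b_{-1}\neq 0\}=\Spec(A_1[b_{-1}^{-1}])$. This is \emph{not} $\widetilde{\sU}_1$: Lemma~\ref{l:u1} requires inverting every $b_{-1}-i$, $i\geq 0$, which is what kills all the $a_i$'s and produces a localization of $k[b_{-1}]$. After inverting $b_{-1}$ alone, only $a_0$ dies; the ring $A_1[b_{-1}^{-1}]\cong k[b_{-1}^{\pm},a_1,a_2,\ldots]/((b_{-1}-i)a_i)_{i\geq 1}$ remains non-Noetherian, and your claim that $\sO$ generates its $\IndCoh^*$ has no justification (indeed that is morally the same difficulty as the original problem, shifted by one tooth). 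Your concluding remark flags a ``secondary technicality'' about pro-opens, but the proposed fix (a colimit formula for $j_*j^*$) does not repair this, because even granted that formula you still need a single-function excision whose open is regular Noetherian, and you don't have one. The paper handles $n=1$ by iterating Lemma~\ref{l:coh-supp} against the finitely many teeth $\iota^s(\widetilde{\sZ}^{\leq 0})$, $s=0,\ldots,r$, for each $r$, concluding $\sF$ is $*$-extended from $\widetilde{\sZ}^{\leq 1}\setminus\bigcup_{s\leq r}\iota^s(\widetilde{\sZ}^{\leq 0})$ for every $r$, and only in the limit lands in $\widetilde{\sU}_1$ where the structure sheaf genuinely generates. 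You actually derive the needed input — $\Hom(\iota_*^r\delta_{1,*}\sO,\sF)=0$ for all $r$, hence $!$-restriction of $\sF$ to each tooth $\iota^r(\widetilde{\sZ}^{\leq 0})$ vanishes — but you apply the excision lemma only once instead of iterating it, so the argument does not close.

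A small slip worth fixing: in your treatment of the short exact sequence $0\to\iota^{r+1}_*\sO\to\iota^r_*\sO\to\iota^r_*\delta_{n,*}\sO\to 0$, the two terms that ``$\Hom$-pair trivially by hypothesis'' are the sub and the middle, and the conclusion is about the quotient, not the middle.
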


\begin{proof}

We proceed by induction on $n$. 

\step 

The $n = 0$ case is simple: $\widetilde{\sZ}^{\leq 0}$ is $\bA^1$ (with coordinate
$a_0$). 

\step 

We now fix $n > 0$, and that the proposition for $n - 1$. 
We introduce the following notation.   

Let $\sC \subset \IndCoh^*(\widetilde{\sZ}^{\leq n})$ be the subcategory generated
by the objects $\iota_*^{r,\IndCoh}\sO_{\widetilde{\sZ}^{\leq n}}$. We need
to show that $\sC$ is the full $\IndCoh^*$.

\step \label{st:gen-3}

We observe that \eqref{eq:fund-ses} implies that 
$\delta_{n,*}^{\IndCoh}(\sO_{\sZ}^{\leq n-1}) \in \sC$. 
More generally, we find 
$\iota_*^{r,\IndCoh}\delta_{n,*}^{\IndCoh}(\sO_{\sZ}^{\leq n-1}) \in \sC$ for
any $r\geq 0$. 

\step 

Below, we will apply Lemma \ref{l:coh-supp} with 
$i:Z_0 \to Z$ corresponding to 
$\delta_n:\widetilde{\sZ}^{\leq n-1} \to \widetilde{\sZ}$. We remark
that $\delta_n$ is almost finitely presented by 
Lemma \ref{p:z-flat-axes} \eqref{i:z-axes-2} 
(cf. the proof of Corollary \ref{c:iota-afp}), and that $\delta_{n,*}^{\IndCoh}$
is $t$-exact by Proposition \ref{p:semi-coh}. 

In other words, our application of the lemma is justified. 

\step 

We now treat the $n = 1$ case. 

Suppose $\sF \in \IndCoh^*(\widetilde{\sZ}^{\leq 1})$ lies in the 
right orthogonal to $\sC$. 

By Step \ref{st:gen-3} and the $n = 0$ case, $\delta_1^!(\sF) = 0$, and 
$\delta_1^!(\iota^r)^! \sF = 0$ more generally. Therefore, 
by Lemma \ref{l:coh-supp}, $\sF$ is $*$-extended from:
\[
\widetilde{\sZ}^{\leq 1} \setminus  
\big(\widetilde{\sZ}^{\leq 0} \cup \ldots \cup \iota^r(\widetilde{\sZ}^{\leq 0})\big)
\]

\noindent for each $r$. It follows that $\sF$ is $*$-extended from:
\[
\widetilde{\sU}_1 \coloneqq 
\widetilde{\sZ}^{\leq 1} \times_{\sZ^{\leq 1}} \sU_1.
\]

But by Lemma \ref{l:u1} (cf. \S \ref{sss:u-reg-noeth}), 
$\IndCoh^*(\widetilde{\sU}_1) \simeq \QCoh(\widetilde{\sU}_1)$ 
is generated by its structure sheaf. Since this is the $*$-restriction
of the structure sheaf on $\sZ^{\leq 1}$, we obtain the claim. 

\step 

We now assume $n>1$ case. 

In this case, we have:
\[
\iota_*^{r,\IndCoh}\delta_{n,*}^{\IndCoh} \simeq 
\delta_{n,*}^{\IndCoh}
\iota_*^{r,\IndCoh}.
\]

\noindent (Here we use $\iota$ to denote the map for both $\sZ^{\leq n}$ 
and $\sZ^{\leq n-1}$.)

By induction and Step \ref{st:gen-3}, $\sC$ contains the subcategory
generated under colimits by the essential image of
$\delta_{n,*}^{\IndCoh}$. Therefore, by Lemma \ref{l:coh-supp}, 
any $\sF$ in the right orthogonal to $\sC$ is $*$-extended
from $\widetilde{\sU}_n$. 

But again, $\IndCoh^*(\widetilde{\sU}_n) \simeq \QCoh(\widetilde{\sU}_n)$ is generated
by its structure sheaf by Lemma \ref{l:un}; as 
$\sO_{\widetilde{\sZ}^{\leq n}} \in \sC$, we obtain the claim.

\end{proof}

\subsection{Stacky case}

\subsubsection{}

Now suppose $S$ is a stack of the form $T/G$ for $G$ classical affine group scheme
and $T$ an eventually coconnective affine scheme.

In this case, $G$ acts weakly on $\IndCoh^*(T)$, i.e., $\QCoh(G)$
acts canonically on $\IndCoh^*(T)$ (by flatness of $G$ over a point). 
We set $\IndCoh^*(S) \coloneqq \IndCoh^*(T)^{G,w}$.


There is a unique $t$-structure on $\IndCoh^*(S)$ such that the
forgetful functor:
\[
\IndCoh^*(S) \to \IndCoh^*(T)
\]

\noindent is $t$-exact. 

\subsubsection{}

In the above setting, given a map $f:S_1 \to S_2$ of stacks of the
above types, we obtain similar functoriality as in the non-stacky case;
we omit the details.  

\subsubsection{}

We now have:

\begin{cor}\label{c:z-gen}

The category $\IndCoh^*(\sZ^{\leq n})$ is compactly generated
by the objects $\iota_*^{r,\IndCoh}(\sO_{\sZ^{\leq n}})(m)$, where $r \geq 0$ and 
$m \in \bZ$.

\end{cor}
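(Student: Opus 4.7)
The plan is to reduce to Proposition \ref{p:tilde-z-gens} via faithfully flat descent along the $\bG_m$-torsor $\pi:\widetilde{\sZ}^{\leq n} \to \sZ^{\leq n}$, arguing compactness and generation separately.

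For compactness, I would note that $\iota$ is an almost finitely presented closed embedding by Corollary \ref{c:iota-afp}, so $\iota_*^{\IndCoh}$ preserves $\Coh$ and hence compactness in $\IndCoh^*$. By Proposition \ref{p:semi-coh}, $\widetilde{\sZ}^{\leq n}$ is semi-coherent, so its structure sheaf is coherent; this transfers to the stacky quotient $\sZ^{\leq n}$, and tensoring with the line bundle $\sO_{\sZ^{\leq n}}(m)$ preserves coherence. So each proposed generator is compact.

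For generation, suppose $\sG \in \IndCoh^*(\sZ^{\leq n})$ is right-orthogonal to every $\iota_*^{r,\IndCoh}(\sO_{\sZ^{\leq n}})(m)$. Since $\pi$ is faithfully flat and affine, $\pi^{*,\IndCoh}$ is $t$-exact and conservative, $\pi_*^{\IndCoh}$ is continuous, and one has $\pi_*^{\IndCoh}\pi^{*,\IndCoh}(\sG) \simeq \bigoplus_{m \in \bZ} \sG(m)$. Combining the base-change identity $\pi^{*,\IndCoh} \iota_*^{r,\IndCoh}(\sO_{\sZ^{\leq n}}) \simeq \iota_*^{r,\IndCoh}(\sO_{\widetilde{\sZ}^{\leq n}})$ with $(\pi^{*,\IndCoh},\pi_*^{\IndCoh})$-adjunction, the vanishing hypothesis rewrites as
\[
\ul{\Hom}_{\widetilde{\sZ}^{\leq n}}\!\bigl(\iota_*^{r,\IndCoh}(\sO_{\widetilde{\sZ}^{\leq n}}),\,\pi^{*,\IndCoh}(\sG)\bigr) \simeq \bigoplus_{m \in \bZ}\ul{\Hom}_{\sZ^{\leq n}}\!\bigl(\iota_*^{r,\IndCoh}(\sO_{\sZ^{\leq n}})(-m),\,\sG\bigr) = 0
\]
for every $r \geq 0$. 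Proposition \ref{p:tilde-z-gens} then forces $\pi^{*,\IndCoh}(\sG) = 0$, and conservativity of $\pi^{*,\IndCoh}$ gives $\sG = 0$.

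The main point to keep honest is the behavior of $\IndCoh^*$ under the $\bG_m$-quotient: $t$-exactness and conservativity of $\pi^{*,\IndCoh}$, affine pushforward along $\pi$, and the base-change identity all must hold at the level of the weak-invariants formalism set up earlier in the section. Granted those formal inputs, the argument is essentially routine: equivariant compact generation is obtained from its non-equivariant counterpart by tracking $\bG_m$-characters, i.e., by allowing all twists $\sO_{\sZ^{\leq n}}(m)$.
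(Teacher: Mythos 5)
Your proposal is correct and follows essentially the same route as the paper: reduce to Proposition \ref{p:tilde-z-gens} via the $\bG_m$-torsor $\widetilde{\sZ}^{\leq n}\to\sZ^{\leq n}$, using that compactness can be tested after forgetting and that the pushforward/averaging of $\sO_{\widetilde{\sZ}^{\leq n}}$ decomposes as $\bigoplus_m \sO_{\sZ^{\leq n}}(m)$. (One cosmetic nit: your appeal to Proposition \ref{p:semi-coh} for coherence of the structure sheaf is unnecessary \textemdash{} $\sO_{\widetilde{\sZ}^{\leq n}}$ is automatically compact in $\IndCoh^*$ of the affine scheme $\widetilde{\sZ}^{\leq n}$ \textemdash{} but this doesn't affect the argument.)
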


\begin{proof}

For $G$ an affine algebraic group (in particular, of finite type) and 
$\sF \in \sC^{G,w}$, it is a general fact that $\sF$ is compact
if and only if its image $\Oblv(\sF) \in \sC$ is compact.\footnote{See 
\cite{methods}, proof of Lemma 5.20.2.}
Moreover, the functor $\Oblv:\sC^{G,w} \to \sC$ always generates
the target under colimits. 

Therefore, writing $\sZ^{\leq n} = \widetilde{\sZ}^{\leq n}/\bG_m$, 
the above objects are indeed compact. The generation follows from 
Proposition \ref{p:tilde-z-gens}.

\end{proof}

\subsubsection{}

There is a natural functor:
\[
\Psi:\IndCoh^*(\sZ^{\leq n}) \to \QCoh(\sZ^{\leq n})
\]

\noindent obtained by passing to weak $\bG_m$-invariants for
the corresponding functor for $\widetilde{\sZ}^{\leq n}$. This functor
is an equivalence on coconnective (equivalently: eventually coconnective) 
subcategories via the corresponding assertion for $\widetilde{\sZ}^{\leq n}$.

We let $\Coh(\sZ^{\leq n}) \subset \IndCoh^*(\sZ^{\leq n})$ 
denote the subcategory of compact objects. By the above, 
the functor $\Coh(\sZ^{\leq n}) \to \QCoh(\sZ^{\leq n})$
induced by $\Psi$ is fully faithful; its essential image is 
exactly the subcategory of 
eventually coconnective almost compact objects. 

\subsubsection{}

We now observe that the functor:
\[
\iota_*^{\IndCoh}:\IndCoh^*(\sZ^{\leq n}) \to \IndCoh^*(\sZ^{\leq n})
\]

\noindent is $t$-exact. 

Indeed, by definition, this assertion reduces to the analogous
statement for $\widetilde{\sZ}^{\leq n}$, which in turn follows 
from Proposition \ref{p:semi-coh}.

The same analysis applies for pushforward functors:
\[
\delta_{n+1,*}^{\IndCoh}:\IndCoh^*(\sZ^{\leq n}) \to \IndCoh^*(\sZ^{\leq n+1}).
\]

\subsection{Ind-algebraic stacks}

We define:
\[
\IndCoh^*(\sY_{\log}^{\leq n}) \coloneqq \colim \, \big(
\IndCoh^*(\sZ^{\leq n}) \xar{\iota_*^{\IndCoh}}
\IndCoh^*(\sZ^{\leq n}) \xar{\iota_*^{\IndCoh}} \ldots \big) \in \DGCat_{cont}.
\]

\noindent We let: 
\[
i_{n,*}^{\IndCoh}: \IndCoh^*(\sZ^{\leq n}) \to \IndCoh^*(\sY_{\log}^{\leq n})
\]

\noindent denote the structural functor. 

Similarly, we define:
\[
\IndCoh^*(\sY_{\log}) \coloneqq \underset{n}{\colim} \, \IndCoh^*(\sY_{\log}^{\leq n}) =
\underset{\iota_*^{\IndCoh},\delta_{n,*}^{\IndCoh}}{\underset{n,m}{\colim}} \, \IndCoh^*(\sZ^{\leq n}) \in \DGCat_{cont}.
\]

By the above and \cite{whit} Lemma 5.4.3, there is a unique 
$t$-structure on $\IndCoh^*(\sY_{\log}^{\leq n})$ such that 
each structural functor:
\[
\IndCoh^*(\sZ^{\leq n}) \to \IndCoh^*(\sY_{\log}^{\leq n})
\]

\noindent is $t$-exact. Similarly, there is a unique $t$-structure on 
$\IndCoh^*(\sY_{\log})$ such that each structural functor:
\[
\IndCoh^*(\sY_{\log}^{\leq n}) \to \IndCoh^*(\sY_{\log})
\]

\noindent is $t$-exact.

Each of these categories is compactly generated as all of our structural
functors preserve compact objects.

\subsection{Grassmannian actions}

Suppose $n>0$ in what follows. 

\subsection{}

Recall that $\Gr_{\bG_m}$ tautologically acts on $\sY_{\log}$.

Observe that $\sZ \subset \sY_{\log}$ is preserved
under the action of the submonoid 
$\Gr_{\bG_m}^{pos} \subset \Gr_{\bG_m}$.

Indeed, suppose more generally that $Y$ is a prestack
mapping to $\fL \bA^1/\bG_m$. Then:
\[
\Gr_{\bG_m}^{pos} = \Ker(\fL^+ \bA^1/\bG_m \to \fL \bA^1/\bG_m)
\]

\noindent acts canonically on:
\[
Z \coloneqq Y \underset{\fL \bA^1/\bG_m}{\times} \fL^+ \bA^1/\bG_m
\]

\noindent compatibly with the canonical $\Gr_{\bG_m} = 
\Ker(\fL^+\bB \bG_m \to \fL \bB \bG_m)$-action on:
\[
Y_{\log} \coloneqq Y \underset{\fL\bB\bG_m}{\times} \fL^+ \bB \bG_m.
\]

Taking $Y = \sY$, we recover the claim.

\subsubsection{}

Next, observe that the action of 
$\Gr_{\bG_m}^{pos,\leq n} \subset \Gr_{\bG_m}^{\leq n}$
on $\sY_{\log}^{\leq n}$ preserves $\sZ^{\leq n}$.

Indeed, this is an immediate consequence of the above.

\subsubsection{}

By the above, there is a canonical action:
\[
\QCoh(\Gr_{\bG_m}^{pos,\leq n}) \actson \QCoh(\sZ^{\leq n})
\]

\noindent where the left hand side is given its convolution monoidal
structure. 

For $\sF \in \Coh(\Gr_{\bG_m}^{pos,\leq n}) \subset \QCoh(\Gr_{\bG_m}^{\leq n})$,
the corresponding action functor:
\[
\QCoh(\sZ^{\leq n}) \to \QCoh(\sZ^{\leq n})
\]

\noindent clearly preserves $\Coh(\sZ^{\leq n})$. Therefore, we obtain 
a canonical action:
\[
\IndCoh(\Gr_{\bG_m}^{pos,\leq n}) \actson \IndCoh^*(\sZ^{\leq n}).
\]

\subsubsection{}

By Remark \ref{r:gr-n-pos-gp-cpl}, we have:
\[
\IndCoh(\Gr_{\bG_m}^{\leq n}) \underset{\IndCoh(\Gr_{\bG_m}^{pos,\leq n})}{\otimes} 
\simeq \big(
\IndCoh^*(\sZ^{\leq n}) \xar{\iota_*^{\IndCoh}}
\IndCoh^*(\sZ^{\leq n}) \xar{\iota_*^{\IndCoh}} \ldots \big) \simeq 
\IndCoh^*(\sY_{\log}^{\leq n}).
\]

\noindent Thus, we obtain a natural action of $\IndCoh(\Gr_{\bG_m}^{\leq n})$
on $\IndCoh^*(\sY_{\log}^{\leq n})$ such that the functor
$i_{n,*}^{\IndCoh}$ is a morphism of $\IndCoh(\Gr_{\bG_m}^{pos,\leq n})$-module
categories. 

\subsubsection{}

The map $\delta_{n,*}^{\IndCoh}:\IndCoh^*(\sZ^{\leq n}) \to \IndCoh^*(\sZ^{\leq n+1})$
is naturally a map of $\IndCoh(\Gr_{\bG_m}^{pos,\leq n})$-module categories,
as is evident by again considering subcategories of compact objects. 

Therefore, $\IndCoh(\Gr_{\bG_m})$ naturally acts on $\IndCoh(\sY_{\log})$,
compatibly with the above actions. 

\subsection{Ind-coherent sheaves on $\sY$}

We now define:
\[
\IndCoh^*(\sY) \coloneqq \IndCoh^*(\sY_{\log})^{\Gr_{\bG_m},w}.
\]

\noindent By definition, the right hand side is:
\[
\TwoHom_{\IndCoh(\Gr_{\bG_m})\mod}(\Vect, \IndCoh^*(\sY_{\log})).
\]

We let:
\[
\Oblv:\IndCoh^*(\sY) \to \IndCoh^*(\sY_{\log})
\]

\noindent denote the (conservative) forgetful functor.
It admits a left adjoint:
\[
\Av_!^{\Gr_{\bG_m},w}.
\] 

The resulting monad on $\IndCoh^*(\sY_{\log})$ is easily seen to be 
$t$-exact.\footnote{This reduces to the assertion that for 
$\sF \in \IndCoh(\Gr_{\bG_m}^{\leq n,pos})^{\heart}$, the 
action functor $\IndCoh^*(\sZ^{\leq n}) \to \IndCoh^*(\sZ^{\leq n})$
is $t$-exact. Filtering $\sF$, we are reduced to the case that
it is the skyscraper sheaf at a $k$-point. In that case, the
relevant functor is a composition of functors $\iota_*^{\IndCoh}$,
so $t$-exact by Proposition \ref{p:semi-coh}.}
Therefore, we obtain:

\begin{prop}\label{p:y-t-str}

There is a unique $t$-structure on $\IndCoh^*(\sY)$ such that the
functor $\Oblv:\IndCoh^*(\sY) \to \IndCoh^*(\sY_{\log})$ is $t$-exact.
Moreover, the functor $\Av_!^{\Gr_{\bG_m},w}$ is $t$-exact.

\end{prop}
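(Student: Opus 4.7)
The plan is to transfer the $t$-structure through the conservative functor $\Oblv$, and then read off the $t$-exactness of $\Av_!^{\Gr_{\bG_m},w}$ from the $t$-exactness of the monad $T = \Oblv \circ \Av_!^{\Gr_{\bG_m},w}$. Uniqueness is automatic from conservativity of $\Oblv$: the connective and coconnective subcategories of $\IndCoh^*(\sY)$ are forced to be the preimages under $\Oblv$ of the corresponding subcategories of $\IndCoh^*(\sY_{\log})$. For existence I would define $\IndCoh^*(\sY)^{\leq 0}$ and $\IndCoh^*(\sY)^{\geq 0}$ as these preimages and verify the axioms. Closure under shifts is immediate; closure under limits is immediate because $\Oblv$ is a right adjoint; and closure of $\IndCoh^*(\sY)^{\leq 0}$ under colimits follows from the monadic presentation $\IndCoh^*(\sY) = T\mod$, in which colimits are computed in the underlying category $\IndCoh^*(\sY_{\log})$. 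The orthogonality $\Hom_{\IndCoh^*(\sY)}(\sF,\sG[-1]) = 0$ for connective $\sF$ and coconnective $\sG$ follows from the analogous vanishing downstairs via the bar identification $\Hom_{\IndCoh^*(\sY)}(\sF,\sG) = \Tot\,\Hom_{\IndCoh^*(\sY_{\log})}(T^\bullet\Oblv\sF,\Oblv\sG)$.

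The main content is the existence of truncation triangles, where $t$-exactness of $T$ is essential. Given $\sF \in \IndCoh^*(\sY)$, I would realize $\sF$ as the geometric realization of its monadic bar resolution $|\Av_!^{\Gr_{\bG_m},w}(T^\bullet\Oblv\sF)|$, apply $\tau^{\leq 0}$ of $\IndCoh^*(\sY_{\log})$ termwise to the underlying simplicial diagram, and use $t$-exactness of $T$ to upgrade the truncated diagram to a simplicial object in $T\mod = \IndCoh^*(\sY)$. Its realization supplies $\tau^{\leq 0}\sF$, and the co/fiber sequence to $\sF$ yields the truncation triangle. This is the one nontrivial step and the main potential obstacle; it requires the compatibility of the $t$-structures with filtered colimits and with geometric realizations of connective diagrams, both of which hold here because the $t$-structures are defined via compact generators. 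One may alternatively package this as a direct invocation of the general principle that a $t$-exact monad lifts a $t$-structure to its module category.

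Finally, $\Av_!^{\Gr_{\bG_m},w}$ is right $t$-exact as the left adjoint to the now-$t$-exact functor $\Oblv$; its left $t$-exactness translates, via the defining property of the $t$-structure on $\IndCoh^*(\sY)$, into the left $t$-exactness of $\Oblv \circ \Av_!^{\Gr_{\bG_m},w} = T$, which is exactly the hypothesis footnoted in the statement of the proposition.
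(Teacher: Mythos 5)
Your approach matches the paper's: the paper reduces the proposition to a general lemma (stated immediately after it) on lifting a $t$-structure through a right $t$-exact monad, with the $t$-exactness of $T = \Oblv \circ \Av_!^{\Gr_{\bG_m},w}$ supplied by the footnote in the preceding paragraph, and you have spelled out a proof of that lemma together with the deduction of the ``moreover'' clause that the paper leaves implicit. One caveat on your hands-on construction of truncations: the identification of $|\tau^{\leq 0}\Oblv B_\bullet|$ with $\tau^{\leq 0}\Oblv\sF$ (for $B_\bullet$ the bar resolution) does not follow from the $t$-structure being compatible with realizations of connective diagrams --- that only shows the realization is connective; one also needs $|\tau^{\geq 1}\Oblv B_\bullet|$ to be coconnective, and realizations of coconnective simplicial objects are not coconnective in general (already pushouts of coconnective objects can fail to be). The argument is saved by the fact that $\Oblv B_\bullet \to \Oblv\sF$ is a \emph{split} augmented simplicial object, so applying the functor $\tau^{\leq 0}$ termwise produces another split one whose realization is the truncated augmentation; relatedly, the coherent lift of the truncated diagram to $T\mod$ is cleanest by observing that right $t$-exactness makes $T$ restrict to a monad on $\IndCoh^*(\sY_{\log})^{\leq 0}$, rather than truncating the individual bar terms and assembling module structures by hand. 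Your fallback of invoking the general principle directly is exactly what the paper does and sidesteps all of this; note also that only right $t$-exactness of $T$ is needed for the $t$-structure to exist (full $t$-exactness is what the left $t$-exactness of $\Av_!^{\Gr_{\bG_m},w}$ requires, as you correctly use).
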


Indeed, this follows from:

\begin{lem}

Let $\sC \in \DGCat_{cont}$ be equipped with a $t$-structure compatible with
filtered colimits and a right $t$-exact monad $T:\sC \to \sC$.
Then $T\mod(\sC)$ admits a unique $t$-structure such that the
forgetful functor $T\mod(\sC) \to \sC$ is $t$-exact. 

\end{lem}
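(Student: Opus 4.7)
The definition of $T\mod(\sC)^{\leq 0}$ is forced by the requirement that $\Oblv$ be $t$-exact: it must be $\Oblv^{-1}(\sC^{\leq 0})$. Combined with the conservativity of $\Oblv$ (automatic from monadicity), this determines the entire $t$-structure, so uniqueness is immediate once existence is established.

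For existence, the key is to construct the connective truncation functor. Given $M \in T\mod(\sC)$, set $N \coloneqq \tau^{\leq 0}_{\sC}(\Oblv M) \in \sC^{\leq 0}$. Because $T$ is right $t$-exact, $T(N) \in \sC^{\leq 0}$, so the composite
\[
T(N) \to T(\Oblv M) \to \Oblv M
\]
has connective source and hence factors uniquely through the unit $N \to \Oblv M$; this factorization supplies a $T$-action on $N$, producing an object $\tau^{\leq 0}_T M \in T\mod(\sC)^{\leq 0}$ together with a map $\tau^{\leq 0}_T M \to M$. The adjunction $\ul{\Hom}_{T\mod(\sC)}(N', \tau^{\leq 0}_T M) \simeq \ul{\Hom}_{T\mod(\sC)}(N', M)$ for $N' \in T\mod(\sC)^{\leq 0}$ then follows by combining the universal property of $\tau^{\leq 0}_\sC$ with $T$-equivariance of the relevant maps. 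The main obstacle, and where I expect the real work to concentrate, is making the above construction homotopy-coherent rather than merely at the level of underlying objects. The cleanest route is to note that $T$ restricts to a monad on $\sC^{\leq 0}$ (exactly by right $t$-exactness), that the resulting category of modules identifies with the full subcategory $\Oblv^{-1}(\sC^{\leq 0}) \subset T\mod(\sC)$, and that the adjunction $\sC^{\leq 0} \rightleftarrows \sC$ promotes along the monad to an adjunction between the respective module categories — the right $t$-exactness of $T$ is precisely the compatibility required for this promotion.

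Once $\tau^{\leq 0}_T$ has been constructed, the remaining axioms are formal. Define $T\mod(\sC)^{\geq 1} \coloneqq (T\mod(\sC)^{\leq 0})^{\perp}$; for any $M$, the cofiber $C$ of $\tau^{\leq 0}_T M \to M$ satisfies $\ul{\Hom}(N', C) \simeq 0$ for every $N' \in T\mod(\sC)^{\leq 0}$, since the resulting fiber sequence of mapping complexes collapses by the adjunction just established. Hence $C \in T\mod(\sC)^{\geq 1}$, and the required truncation triangle for $M$ is exhibited. Closure of $T\mod(\sC)^{\leq 0}$ under the appropriate shift is inherited from $\sC^{\leq 0}$ via commutation of $\Oblv$ with shifts. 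Compatibility of the resulting $t$-structure with filtered colimits also follows automatically: $\Oblv$ preserves filtered colimits (being a left adjoint, or more directly since the free/forget adjunction is continuous), and $\sC^{\leq 0}$ is closed under filtered colimits by hypothesis.
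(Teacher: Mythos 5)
The paper does not prove this lemma; it is stated as a known fact immediately after Proposition~\ref{p:y-t-str} with no accompanying \texttt{proof} environment, so there is no argument in the text to compare yours against. Evaluated on its own terms, your proof is correct. You rightly flag that the naive move --- factoring $T(N) \to \Oblv M$ through $N = \tau^{\leq 0}(\Oblv M)$ --- only produces a map $T(N) \to N$, not a coherent $T$-module structure, and then you supply the correct fix: since $T$ is right $t$-exact it restricts to a monad $T_0$ on $\sC^{\leq 0}$, the functor $T_0\mod(\sC^{\leq 0}) \to T\mod(\sC)$ induced by $i \colon \sC^{\leq 0} \hookrightarrow \sC$ is fully faithful with essential image $\Oblv^{-1}(\sC^{\leq 0})$, and the adjunction $i \dashv \tau^{\leq 0}$ lifts to an adjunction between the module categories because $i$ intertwines $T_0$ and $T$ strictly (this is an instance of adjunction-lifting for monadic functors, cf. \cite{higheralgebra} \S 4.7). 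The remaining $t$-structure axioms then come out formally, as you say.

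Two small remarks. First, your claim that $T\mod(\sC)^{\leq 0}$ \emph{must} equal $\Oblv^{-1}(\sC^{\leq 0})$ for uniqueness is correct but not quite immediate: $t$-exactness of $\Oblv$ directly gives only the inclusion $T\mod(\sC)^{\leq 0} \subset \Oblv^{-1}(\sC^{\leq 0})$; the reverse inclusion uses conservativity of $\Oblv$ together with the truncation triangle, via the observation that if $\Oblv(M) \in \sC^{\leq 0}$ then $\Oblv(\tau^{\geq 1}M)$ is simultaneously coconnective and (as the cofiber of a map of connective objects with vanishing target) is forced to vanish. Second, the explicit construction of $\tau^{\leq 0}$ can be bypassed entirely: once you know $\Oblv^{-1}(\sC^{\leq 0})$ is a presentable full subcategory of $T\mod(\sC)$ closed under colimits and extensions (both inherited from $\sC^{\leq 0}$ via $\Oblv$), it is automatically the connective part of a $t$-structure by \cite{higheralgebra} Proposition 1.4.4.11. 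That route is slightly more efficient, but yours is equally valid and makes the compatibility with $\Oblv$ more transparent.
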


\subsubsection{}\label{ss:y-trun-indcoh}

In the truncated setting, we similarly define:
\[
\IndCoh^*(\sY^{\leq n}) \coloneqq 
\IndCoh^*(\sY_{\log}^{\leq n})^{\Gr_{\bG_m}^{\leq n},w}.
\]

\noindent We again have adjoint functors:
\[
\Oblv,\Av_!^{\Gr_{\bG_m}^{\leq n},w}
\]

\noindent and a $t$-structure on $\IndCoh^*(\sY^{\leq n})$.

\subsubsection{}

By functoriality of the constructions, there is a natural functor:
\[
\lambda_{n,*}^{\IndCoh}:\IndCoh^*(\sY^{\leq n}) \to \IndCoh^*(\sY).
\]

\noindent This functor is $t$-exact and preserves compact objects. Moreover, we have:

\begin{lem}\label{l:lambda-ff}

For $n>0$, the functor $\lambda_{n,*}^{\IndCoh}$ is fully faithful.

\end{lem}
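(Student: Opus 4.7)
\emph{Plan.} The approach is to reduce the fully faithfulness of $\lambda_{n,*}^{\IndCoh}$ to the non-equivariant analogue, namely fully faithfulness of the pushforward $F_n : \IndCoh^*(\sY_{\log}^{\leq n}) \to \IndCoh^*(\sY_{\log})$, and then transfer it along the Grassmannian averagings. Since $\lambda_{n,*}^{\IndCoh}$ preserves compact objects (as just noted above the statement), Brown representability supplies a continuous right adjoint $\lambda_n^!$, and fully faithfulness is equivalent to the unit $\id \to \lambda_n^! \lambda_{n,*}^{\IndCoh}$ being an equivalence on a set of compact generators of $\IndCoh^*(\sY^{\leq n})$.

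\emph{Non-equivariant step.} The first task is to show that $F_n$ is fully faithful. By construction $\IndCoh^*(\sY_{\log}) = \colim_m \IndCoh^*(\sY_{\log}^{\leq m})$, with transition functors the pushforwards associated with the closed embeddings $\sY_{\log}^{\leq m} \hookrightarrow \sY_{\log}^{\leq m+1}$ of reasonable ind-algebraic stacks (Corollary~\ref{c:yn-log-cl}). Unwinding the further presentation $\IndCoh^*(\sY_{\log}^{\leq m}) = \colim_{\iota_*^{\IndCoh}} \IndCoh^*(\sZ^{\leq m})$, the task reduces to fully faithfulness of the pushforward $\IndCoh^*(\sZ^{\leq m}) \to \IndCoh^*(\sZ^{\leq m+1})$; this arises from an almost finitely presented closed embedding $\sZ^{\leq m} \hookrightarrow \sZ^{\leq m+1}$ (by an argument parallel to Corollary~\ref{c:iota-afp}, now applied to the additional coordinate in the polar part rather than to the shift $\iota$), and its $\IndCoh^*$-pushforward admits a continuous right adjoint $!$-pullback whose counit is an equivalence on the essential image — forcing fully faithfulness. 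Because a filtered colimit of fully faithful functors with fully faithful transitions remains fully faithful, $F_n$ is fully faithful.

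\emph{Equivariant transfer.} Compact generators of $\IndCoh^*(\sY^{\leq n})$ are of the form $\Av_!^{\Gr_{\bG_m}^{\leq n},w}(c)$ for $c$ compact in $\IndCoh^*(\sY_{\log}^{\leq n})$, and similarly for $\IndCoh^*(\sY)$. Using the $(\Av_!,\Oblv)$-adjunctions and the $t$-exactness from Proposition~\ref{p:y-t-str}, the needed comparison of $\Hom$-spaces in $\IndCoh^*(\sY^{\leq n})$ and $\IndCoh^*(\sY)$ translates (after two applications of adjunction) into a comparison of $\Hom$-spaces in $\IndCoh^*(\sY_{\log}^{\leq n})$ and $\IndCoh^*(\sY_{\log})$, where the non-equivariant step applies — modulo a compatibility isomorphism of the shape $\Oblv \circ \lambda_{n,*}^{\IndCoh} \simeq F_n \circ \Oblv$ (on the relevant averaged compact objects), together with a projection-formula-style interchange of $F_n$ and $\Av_!$.

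\emph{Main obstacle.} Establishing that compatibility is the hard part of the argument, since $\lambda_{n,*}^{\IndCoh}$ encodes not only the pushforward along the closed embedding $\sY_{\log}^{\leq n} \hookrightarrow \sY_{\log}$ but also a change of group from $\Gr_{\bG_m}^{\leq n}$ to $\Gr_{\bG_m}$. One must track how the $\Gr_{\bG_m}$-action on $\sY_{\log}$ interacts with the closed substack $\sY_{\log}^{\leq n}$: the subgroup $\Gr_{\bG_m}^{\leq n}$ preserves $\sY_{\log}^{\leq n}$, while the ``excess'' $\Gr_{\bG_m}/\Gr_{\bG_m}^{\leq n}$ should move $\sY_{\log}^{\leq n}$ genuinely out of itself so that no extra contributions survive after the $\Gr_{\bG_m}$-averaging. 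The hypothesis $n > 0$ is essential here: the truncated decomposition $\Gr_{\bG_m}^{\leq n} \simeq \bZ \times (\Ker(\Res)^{\leq n})_0^{\wedge}$ from the proof of Proposition~\ref{p:locsys} (combined with Proposition~\ref{p:locsys-trun}) gives a sufficiently explicit description of the quotient $\Gr_{\bG_m}/\Gr_{\bG_m}^{\leq n}$ for the orbit analysis to go through, whereas for $n=0$ the quotient is the full $\bZ \times \Ker(\Res)_0^{\wedge}$ and the cancellation required to match the two averagings fails.
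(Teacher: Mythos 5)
Your non-equivariant reduction step is fatally flawed. You claim that the pushforward $\delta_{m+1,*}^{\IndCoh}\colon \IndCoh^*(\sZ^{\leq m}) \to \IndCoh^*(\sZ^{\leq m+1})$ is fully faithful because the counit of the $(\delta_*^{\IndCoh},\delta^!)$-adjunction is an equivalence on the essential image. Two problems. First, you have the adjunction backwards: fully faithfulness of the \emph{left} adjoint $\delta_*^{\IndCoh}$ is equivalent to the \emph{unit} $\id \to \delta^!\delta_*^{\IndCoh}$ being an equivalence, not the counit; and the triangle identity guarantees the counit has a one-sided inverse on the essential image for \emph{any} adjunction, so that observation proves nothing. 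Second, and more to the point, the unit is not an isomorphism for a closed embedding in the $\IndCoh$ setting: already for $\{0\} \hookrightarrow \bA^1$, one has $i^!i_*(k) \simeq k \oplus k[-1]$, so $i_*^{\IndCoh}$ is not fully faithful. This has nothing to do with the infinite-type subtleties of $\sZ^{\leq n}$; it fails in the simplest finite-type example. Consequently $F_n\colon \IndCoh^*(\sY_{\log}^{\leq n}) \to \IndCoh^*(\sY_{\log})$ is \emph{not} fully faithful, and any strategy that reduces Lemma~\ref{l:lambda-ff} to that statement cannot work.

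The actual mechanism in the paper is quite different and explains why the equivariance cannot be stripped away: the functor $\lambda_{n,*}^{\IndCoh}$ is fully faithful because the morphism $\sY^{\leq n} \to \sY$ is \emph{formally \'etale} for $n>0$, which is pulled back from $\LocSys_{\bG_m}^{\leq n} \to \LocSys_{\bG_m}$ being formally \'etale. That formal \'etaleness is a de Rham phenomenon: by Proposition~\ref{p:locsys} one has $\LocSys_{\bG_m} \simeq \bA^1/\bZ \times \Ker(\Res)_{dR} \times \bB\bG_m$, and for $n>0$ the inclusion $\Ker(\Res)^{\leq n}_{dR} \hookrightarrow \Ker(\Res)_{dR}$ is a map of de Rham prestacks, hence has vanishing relative cotangent complex. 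This de Rham direction only appears \emph{after} quotienting by $\Gr_{\bG_m}$: at the level of $\sY_{\log}$ the corresponding map is the plain closed embedding $\fL^{pol,\leq n}\bA^1 dt \hookrightarrow \fL^{pol}\bA^1 dt$ of (ind-)affine spaces, which is not formally \'etale, so the non-equivariant pushforward is genuinely not fully faithful. Your ``equivariant transfer'' step is therefore not transferring a true statement. Finally, your diagnosis of why $n>0$ is needed (``orbit analysis on $\Gr_{\bG_m}/\Gr_{\bG_m}^{\leq n}$'') is off: the point is that for $n=0$ the map $\LocSys_{\bG_m}^{\leq 0} = \bB\bG_m \to \LocSys_{\bG_m}$ involves the inclusion of the closed point $0 \in \bA^1/\bZ$, which is not formally \'etale because $\bA^1/\bZ$ is an honest (non-de Rham) stack; for $n>0$ this factor is hit isomorphically and only the $dR$ factor varies.
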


Roughly speaking, this is true because the map $\sY^{\leq n} \to \sY$ is 
formally \'etale for $n>0$, which follows from the corresponding
fact for $\LocSys_{\bG_m}^{\leq n} \to \LocSys_{\bG_m}$ (which 
follows from Proposition \ref{p:locsys-log}). Unwinding the constructions
to convert this argument into a proof is straightforward. 

\subsection{Proof of Proposition \ref{p:semi-coh}}\label{ss:z-semi-coh-pf}

We now give the proof of Proposition \ref{p:semi-coh}, which we deferred earlier.

\subsubsection{}

Our argument is by induction on $n$. We proceed in steps.

\subsubsection{Step 1: $n = 0$ case}

The $n = 0$ case is trivial, as $A_0 = k[a_0]$ is Noetherian.

\subsubsection{Step 2: $n = 1$ case} 

The inductive step we give for $n>1$ below may be adapted to treat the 
$n = 1$ case, but it requires somewhat more work; we indicate how this works
in \S \ref{sss:semi-coh-7}. We prefer
to give a direct argument. Actually, we will show that $A_1$ is coherent.

Recall the Noetherian subalgebras $A_{1,\leq m} \subset A_1$ from \S \ref{ss:rs-sketch}.
It suffices to show that for any finitely presented $M \in A_1\mod^{\heart}$,
there is an integer $m \gg 0$ and finitely generated
$N \in A_{1,\leq m}\mod^{\heart}$ such that
there is an isomorphism $N \otimes_{A_{1,\leq m}} A_1 \isom M$, where
the tensor product is understood in the derived sense.

Choose some $m_0$ and $N_0 \in A_{1,\leq m_0}\mod^{\heart}$ with an 
isomorphism $H^0(N_0 \otimes_{A_{1,\leq m_0}} A_1) \isom M$; this may be
done by choosing generators and relations for $M$ and $m_0$ such that 
the relations all involve linear combinations with coefficients in $A_{1,\leq m_0}$.

Choose $m > m_0$ such that $N_0$ does not contain any $(b_{-1}-i)$-torsion 
for any integer $i\geq m$; this can be done because the finitely generated
$A_{1,\leq m_0}$-module $N_0$ only admits finitely many associated primes.
Then it is easy to see that $N \coloneqq 
H^0(A_{1,\leq m} \otimes_{A_{1,\leq m_0}} N_0)$ satisfies the hypotheses.

\begin{rem}

We see here that $A_1$ is actually coherent, not merely semi-coherent.
We do not consider this question for general $n$.

\end{rem}

\subsubsection{Step 3: filtered colimits, extensions, and effective bounds} 
We now make the following observations for d\'evissage.

For an integer $r$, we say that $M \in A_n\mod$ is \emph{$r$-good} if $M$ can be 
expressed as a filtered colimit of coherent objects concentrated in 
degrees $\geq -r$. Clearly any such $M$ lies in $A_n\mod^{\geq -r}$.

In other words, $M$ is $r$-good if it lies in the full subcategory:
\[
\Ind(\Coh(A) \cap A\mod^{\geq -r}) \subset A\mod^{\geq -r}.
\]

\noindent Here the natural functor is fully faithful because
$\Coh(A) \cap A\mod^{\geq -r}$ consists of objects that are 
compact in $A\mod^{\geq -r}$ by assumption.

It follows from the second description that $r$-good objects are closed
under filtered colimits (for fixed $r$). We also observe that $r$-good objects
are closed under extensions.

Finally, we say an object $M \in A_n\mod$ is \emph{good} if it is
$r$-good for some $r$. So our task is to show that any $M \in A_n\mod^+$ 
is good, or equivalently, any $M \in A_n\mod^{\heart}$.

\subsubsection{Step 4: reductions}

We now begin our induction. Take $n>1$ and assume the result for $n-1$.

Suppose $M \in A_n\mod^+$. 
It it suffices to show that:
\[
M[b_{-n}^{-1}] \coloneqq \underset{b_{-n}}{\colim} \, M \text{ and }
\Coker(M \to M[b_{-n}^{-1}])
\]

\noindent are good (where $\Coker$ indicates the \emph{homotopy cokernel}, 
i.e., the cone).

We check these assertions below. We remark that by Remark \ref{r:semi-coh-ab},
we are reduced to considering $M \in A_n\mod^{\heart}$.

\subsubsection{Step 5: generic case}

Note that $M[b_{-n}^{-1}] \in A_n[b_{-n}^{-1}]\mod^{\heart} \subset A_n\mod^{\heart}$.

As $n>1$, we have an isomorphism:
\[
A_n[b_{-n}^{-1}] = k[b_{-1},\ldots,b_{-n},b_{-n}^{-1}].
\]

\noindent by Lemma \ref{l:un}.

As this algebra is regular Noetherian, it follows that
$M[b_{-n}^{-1}]$ has bounded $\Tor$-amplitude as an 
$A_n[b_{-n}^{-1}]$-module (simply because it is bounded from below),
so the same is true of $M[b_{-n}^{-1}]$ as an $A_n$-module.
By standard homological algebra, $M[b_{-n}^{-1}]$ can be represented
by a bounded complex of flat (classical) $A_n$-modules. 
By Lazard, a flat $A_n$-module is good, so $M[b_{-n}^{-1}]$ is good.

\subsubsection{Step 6: torsion case} 

We now show that $\widetilde{M} = \Coker(M \to M[b_{-n}^{-1}])$ is good. 
More generally, we show that any bounded complex 
$\widetilde{M} \in A_n\mod^+$ such that $b_{-n}$ acts locally nilpotently on
its cohomologies is good. As the complex $\widetilde{M}$ is assumed bounded,
it suffices to treat its cohomology groups one at a time, so 
we can assume (up to shifting) that $\widetilde{M} \in A_n\mod^{\heart}$.

By induction, there exists an integer $r$ such that any module
$N \in A_{n-1}\mod^{\heart}$ is $r$-good as an $A_{n-1}$-module.
Because the projection $A_n \xar{b_{-n} \mapsto 0} A_{n-1}$ is
almost finitely presented by Proposition \ref{p:z-flat-axes} \eqref{i:z-axes-2},
so coherent $A_{n-1}$-complexes restrict to coherent $A_n$-complexes, 
$N$ restricts to an $r$-good $A_n$-module. 

Now let $\widetilde{M}_i \subset \widetilde{M}$ be the 
non-derived kernel of the map $b_{-n}^i:\widetilde{M} \to \widetilde{M}$. 
Clearly $\widetilde{M}_i/\widetilde{M}_{i-1} \in A_{n-1}\mod^{\heart} \subset
A_n\mod^{\heart}$, so is $r$-good for the above $r$ (which is independent
of everything in sight except $n$). The module 
$\widetilde{M}_i$ is then $r$-good, as it is obtained by successively extending
$r$-good modules. Finally, $\widetilde{M} = \colim_i \widetilde{M}_i$ is 
a filtered colimit of $r$-good modules, so is $r$-good. This concludes the argument.

\subsubsection{Step 7: revisiting the $n = 1$ case}\label{sss:semi-coh-7}

Finally, we make a remark that the above argument can be adapted to 
treat the $n = 1$ case, if one so desires. As in Lemma \ref{l:u1}, 
we should consider the localization of 
$A_1$ at $\{b_{-1},b_{-1}-1,b_{-1}-2,\ldots\}$
(instead of simply at $b_{-1}$); one again then obtains a Noetherian ring
of finite global dimension. The argument concludes by noting that 
any classical $A_1/(b_{-1}-i) = k[a_i]$-module is $0$-good, so 
any module for $A_1$ that is torsion for the above elements (thus
necessarily the direct sum of its torsion with respect to each) is
$0$-good (by an argument as above).

In other words, we need to localize at infinitely many elements before
obtaining a regular ring; the saving
grace is that the goodness for torsion modules at each is bounded independently
of the element.

\section{Spectral realization of Weyl algebras}\label{s:weyl}

\subsection{Overview}

\subsubsection{}

Let $i_n:\sZ^{\leq n} \into \sY_{\log}^{\leq n}$ denote the 
canonical embedding, as in \S \ref{ss:y-not}.

We have $i_{n,*}^{\IndCoh}(\sO_{\sZ^{\leq n}}) \in \Coh(\sY_{\log}^{\leq n})$,
which lies in $\IndCoh^*(\sY_{\log}^{\leq n})^{\heart}$ by
Proposition \ref{p:zn-cl}.

Let $\sF_n \coloneqq 
\Av_!^{\Gr_{\bG_m}^{\leq n},w}
(i_{n,*}(\sO_{\sZ^{\leq n}}))$, which is a compact object in $\IndCoh^*(\sY^{\leq n})$.

In this section, we construct an action of 
a Weyl algebra in $2n$ generators on $\sF_n$ (considered as an object of 
$\IndCoh^*(\sY^{\leq n})$).

\subsubsection{}

To formulate our construction more canonically, let
$W_n$ denote the algebra of global differential operators
on the scheme $\fL_n^+ \bA^1 = \Spec\Sym(t^{-n}k[[t]]dt/k[[t]]dt)$. 

Let $W_n^{op}$ denote $W_n$ with the reversed multiplication.
We will construct a canonical homomorphism:
\begin{equation}\label{eq:action}
W_n^{op} \to \ul{\End}_{\IndCoh^*(\sY^{\leq n})}(\sF_n) \in \Alg = \Alg(\Vect).
\end{equation}

\begin{rem}

In Proposition \ref{p:ff}, we will show that this map is actually an isomorphism. 

\end{rem}

\subsection{Reduction to generators and relations}\label{ss:coconn-reduction}

Note that $\sF_n$ lies in the heart of the $t$-structure constructed
in Proposition \ref{p:y-t-str} (or rather, its truncated counterpart, as in 
\S \ref{ss:y-trun-indcoh}).

Therefore, the right hand side of \eqref{eq:action} 
lies in cohomological degrees $\geq 0$.

As the left hand side of \eqref{eq:action} is certainly in cohomological 
degree $0$, it suffices to construct a homomorphism:
\[
W_n^{op} \to \tau^{\leq 0} \ul{\End}_{\IndCoh^*(\sY^{\leq n})}(\sF_n) = 
H^0\ul{\End}_{\IndCoh^*(\sY^{\leq n})}(\sF_n) \in \Alg(\Vect^{\heart}).
\]

As both terms are now in degree $0$ and the left hand side
has a standard algebra presentation, such a construction 
may be given by constructing generators and checking relations.

We need to construct maps:
\begin{equation}\label{eq:fns}
\begin{gathered}
t^{-n}k[[t]]dt/k[[t]]dt = (k[[t]]/t^n)^{\vee} \to 
\ul{\End}_{\IndCoh^*(\sY^{\leq n})}(\sF_n) \\
(\omega \in t^{-n}k[[t]]dt/k[[t]]dt) \mapsto \vph_{\omega}
\end{gathered}
\end{equation}

\noindent and:
\begin{equation}\label{eq:vfs}
\begin{gathered}
k[[t]]/t^n \to 
\ul{\End}_{\IndCoh^*(\sY^{\leq n})}(\sF_n) \\
(f \in k[[t]]/t^n) \mapsto \xi_{f}.
\end{gathered}
\end{equation}

\noindent We then need to check that the $\vph_{\omega}$ operators
mutually commute, that the $\xi_f$ operators mutually commute, and
the identity:\footnote{The sign on the right hand side reflects working
with $W_n^{op}$, not $W_n$.}

\begin{equation}\label{eq:uncertainty}
[\xi_f,\vph_{\omega}] = -\Res(f\omega) \cdot \id \in  
H^0\ul{\End}_{\IndCoh^*(\sY^{\leq n})}(\sF_n).
\end{equation}

We provide these constructions and check these identities in what follows.

\subsection{Action of functions}\label{ss:fn-intro}

First, we construct the map $\omega \mapsto \vph_{\omega}$ 
from \eqref{eq:fns}.

We assume the reader is familiar with the notation
from \S \ref{s:aj} below. 

\subsubsection{}\label{ss:fn-concl}

We have a commutative diagram:
\[
\xymatrix{
\Gr_{\bG_m}^{pos,\leq n} \times \sZ^{\leq n} \ar[d]^{\alpha}
 \ar[dr]^{p_1} 
& \\
\Gr_{\bG_m}^{pos,\leq n} \times \sZ^{\leq n} \ar[r]^{p_1}
& \Gr_{\bG_m}^{pos,\leq n}
}
\]

\noindent where $\alpha = (p_1,\act)$ for
$\act:\Gr_{\bG_m}^{pos,\leq n} \times \sZ^{\leq n} \to \sZ^{\leq n}$
the action morphism from above.

We have an evident isomorphism:\footnote{One can work with
$\IndCoh^*$ or $\QCoh$ for our purposes here. 
We chose
$\IndCoh^*$ for the sake of definiteness. 

We also have
written $(-)^*$ in place of $(-)^{*,\IndCoh}$ to keep
the notation simpler. To be clear, this notation refers to
the left adjoint to the $\IndCoh^*$ pushforward functor.}
\[
\omega_{\Gr_{\bG_m}^{pos,\leq n}} \boxtimes 
\sO_{\sZ^{\leq n}} = 
p_1^*(\omega_{\Gr_{\bG_m}^{pos,\leq n}}) \simeq
\alpha^*p_1^*(\omega_{\Gr_{\bG_m}^{pos,\leq n}}) \in 
\IndCoh^*(\Gr_{\bG_m}^{pos,\leq n} \times \sZ^{\leq n})
\]

\noindent giving a morphism:
\[
\omega_{\Gr_{\bG_m}^{pos,\leq n}} \boxtimes 
\sO_{\sZ^{\leq n}} \to 
\alpha_*^{\IndCoh}(\omega_{\Gr_{\bG_m}^{pos,\leq n}} \boxtimes 
\sO_{\sZ^{\leq n}}).
\]

\noindent Applying $p_{2,*}^{\IndCoh}$, we obtain a canonical
morphism:
\[
\Gamma^{\IndCoh}(\omega_{\Gr_{\bG_m}^{pos,\leq n}}) \otimes  
\sO_{\sZ^{\leq n}} \to 
p_{2,*}^{\IndCoh}
\alpha_*^{\IndCoh}(\omega_{\Gr_{\bG_m}^{pos,\leq n}} \boxtimes 
\sO_{\sZ^{\leq n}}) = 
\act_*^{\IndCoh}(\omega_{\Gr_{\bG_m}^{pos,\leq n}} \boxtimes 
\sO_{\sZ^{\leq n}}).
\]

We also have a canonical adjunction morphism
$\gamma_*^{\IndCoh}(\omega_{\Gr_{\bG_m}^{pos,\leq n}}) \to 
\omega_{\Gr_{\bG_m}}$ for $\gamma:\Gr_{\bG_m}^{pos,\leq n} \to 
\Gr_{\bG_m}^{\leq n}$ the (ind-closed) embedding.

Pushing forward along $i_n:\sZ^{\leq n} \to \sY_{\log}^{\leq n}$
and composing, we obtain a canonical morphism:
\[
\Gamma^{\IndCoh}(\omega_{\Gr_{\bG_m}^{pos,\leq n}}) \otimes  
i_{n,*}^{\IndCoh}(\sO_{\sZ^{\leq n}}) \to 
\act_*^{\IndCoh}(\omega_{\Gr_{\bG_m}}^{\leq n} \boxtimes 
i_{n,*}^{\IndCoh}(\sO_{\sZ^{\leq n}}) = 
\Oblv \Av_!^{\Gr_{\bG_m}^{\leq n},w} 
i_{n,*}(\sO_{\sZ^{\leq n}}).
\]


By Proposition \ref{p:cc-log}, we have a canonical isomorphism:
\[
\Gamma^{\IndCoh}(\omega_{\Gr_{\bG_m}^{pos,\leq n}}) \simeq
\Gamma(\fL_n^+ \bA^1,\sO_{\fL_n^+ \bA^1}).
\]

\noindent Therefore, by adjunction, the above gives a morphism:
\[
\begin{gathered}
\Gamma(\fL_n^+ \bA^1,\sO_{\fL_n^+ \bA^1}) \to 
\ul{\Hom}_{\IndCoh^*(\sY_{\log}^{\leq n})}
\big(i_{n,*}(\sO_{\sZ^{\leq n}}),
\Oblv\Av_!^{\Gr_{\bG_m}^{\leq n},w}  i_{n,*}(\sO_{\sZ^{\leq n}})\big) = \\
\ul{\End}_{\IndCoh^*(\sY^{\leq n})}
\big(\Av_!^{\Gr_{\bG_m}^{\leq n},w}  i_{n,*}(\sO_{\sZ^{\leq n}})\big).
\end{gathered}
\]

\noindent By \emph{loc. cit}., this is a morphism
of algebras. 

For $\omega \in t^{-n}k[[t]]dt/k[[t]]dt = (k[[t]]/t^n)^{\vee}$,
there is a corresponding linear function on $\fL_n^+ \bA^1$;
its image under the above map is by definition $\vph_{\omega}$ in 
\eqref{eq:fns}. As the above map extended to the symmetric
algebra on this vector space, we have verified that
the operators $\vph_{\omega}$ commute.

\begin{rem}

More evocatively, but a little less rigorously, we have algebra maps:
\[
\Gr_{\bG_m}^{pos,\leq n} \to \Maps_{/\sY^{\leq n}}(\sZ^{\leq n},\sZ^{\leq n}) \to 
\ul{\End}_{\IndCoh^*(\sY^{\leq n})}(\sF_n)
\]

\noindent where the first map is given by the action and the second 
map sends a map $f:\sZ^{\leq n} \to \sZ^{\leq n}$ over $\sY^{\leq n}$ 
to the map: 
\[
\sF_n = \pi_{n,*}^{\IndCoh}(\sO_{\sZ^{\leq n}}) 
\to \pi_{n,*}^{\IndCoh}f_*^{\IndCoh}(\sO_{\sZ^{\leq n}}) = 
\pi_{n,*}^{\IndCoh}(\sO_{\sZ^{\leq n}}) = \sF_n
\]

\noindent where $\pi_n$ is the ind-proper morphism $\sZ^{\leq n} \to \sY^{\leq n}$ 
and the first map comes by functoriality from the canonical\footnote{This
map is tautologically equivalent via $\Psi$ to the adjunction morphism
$\sO_{\sZ} \to f_*^{\IndCoh}(\sO_{\sZ^{\leq n}}) \in \QCoh(\sZ^{\leq n})$.}
map $\sO_{\sZ^{\leq n}} \to f_*^{\IndCoh}(\sO_{\sZ^{\leq n}})$.

We then obtain an algebra map:
\[
\Gamma(\fL_n^+ \bA^1,\sO_{\fL_n^+ \bA^1}) =
\Gamma^{\IndCoh}(\omega_{\Gr_{\bG_m}^{pos,\leq n}}) \to 
\ul{\End}_{\IndCoh^*(\sY^{\leq n})}(\sF_n)
\]

\noindent from \eqref{eq:integration}. The detailed construction articulates
this idea explicitly.

\end{rem}

\subsection{Action of vector fields}

We now define the map $f \mapsto \xi_f$ from
\eqref{eq:vfs}.

\subsubsection{}

We begin with an elementary observation
involving $C = \Spec(k[x,y]/xy)$.
Let $i:\bA_y^1 \into C$ denote the 
embedding of the $y$-axis.

The map:
\[
x\cdot-:\sO_C \to \sO_C
\]

\noindent obviously factors through a map: 
\[
i_*\sO_{\bA_y^1} \to \sO_C
\]

\noindent that we also denote $y \cdot -$.

\subsubsection{}\label{ss:c-ind}

We now vary to above to incorporate $\bG_m$-actions.

Consider $\bG_m \times \bG_m$ acting on
$C$ with the first $\bG_m$ scaling the $x$-axis
and the second $\bG_m$ scaling the $y$-axis.

The map $y\cdot-$ evidently has bidegree
$(0,1)$.
Therefore, on the stack $C/\bG_m\times\bG_m$, we obtain
the following. 

By definition, there are tautological line bundles 
$\sL_1$ and $\sL_2$ on $C/\bG_m\times \bG_m$
equipped with sections $x \in \sL_1$ and 
$y \in \sL_2$ such that
$xy = 0$ as a section of $\sL_1 \otimes \sL_2$. 

From the above, we obtain a canonical map:
\begin{equation}\label{eq:sections-ann}
y\cdot -: i_*\sO_{0/\bG_m \times \bA_y^1/\bG_m} \to \sL_2.
\end{equation}

\subsubsection{}

Before applying the above, we introduce some more
notation.

Let $\Gr_{\bG_m}^{neg} \subset \Gr_{\bG_m}$ denote
the image of $\Gr_{\bG_m}^{pos}$ under the inversion
map $\Gr_{\bG_m} \isom \Gr_{\bG_m}$. 
We use similar notation for $\Gr_{\bG_m}^{neg,\leq n}$.

Note that $\AJ^{-1}:\widehat{\sD} \to \Gr_{\bG_m}$
maps into $\Gr_{\bG_m}^{neg}$. Similarly,
$\AJ^{-1}$ maps $\sD^{\leq n}$ into 
$\Gr_{\bG_m}^{neg,\leq n}$.

We let $\act_{neg}:\Gr_{\bG_m}^{neg} \times \sY_{\log}
\to \sY_{\log}$ denote the action map.
We use evident variants of this as well; crucially,
we also let $\act_{neg}$ denote the composition:
\[
\sD^{\leq n} \times \sZ^{\leq n} 
\xar{\AJ^{-1} \times i_n} 
\Gr_{\bG_m}^{neg} \times \sY_{\log} \xar{\act_{neg}}
\sY_{\log}.
\] 

\subsubsection{}\label{ss:act-inv}

Now recall from Proposition \ref{p:z-flat-axes-var}
that we have a canonical map:
\[
\sD^{\leq n} \times \sZ^{\leq n} \to 
C/\bG_m \times \bG_m.
\]

In the notation of \S \ref{ss:c-ind},
the line bundle $\sL_2$ pulls
back to 
$\omega_{\sD^{\leq n}} \boxtimes \sO_{\sZ^{\leq n}}$  
by construction. The pullback of its section $y$ 
corresponds to the canonical map:
\[
\sZ^{\leq n} \to \LocSys_{\bG_m,\log}^{\leq n}
\xar{\Pol}
\fL^{pol,\leq n} \bA^1 dt = 
\Gamma(\sD^{\leq n},\omega_{\sD^{\leq n}})  
\] 

\noindent (with the last term really meaning the
scheme attached to this vector space). We sometimes
denote this section as $\omega^{univ}$ in what
follows.

We let 
$\act_{neg}^{-1}(\sZ^{\leq n}) \subset 
\sD^{\leq n} \times \sZ^{\leq n}$ denote the fiber
product:
\[
(\sD^{\leq n} \times \sZ^{\leq n})
\underset{C/\bG_m\times\bG_m}{\times}
(0/\bG_m \times \bA_y^1/\bG_m).
\]

\noindent (See \S \ref{ss:act-neg-rems} for
an explanation of the notation.) 
By Proposition \ref{p:z-flat-axes-var}, the
derived fiber product is classical.\footnote{This
fact is psychologically convenient, but not literally
necessary in what follows. I.e., a straightforward
restructuring of the discussion that follows could avoid 
directly appealing to this fact.} We let
$\alpha$ denote the embedding of
$\act_{neg}^{-1}(\sZ^{\leq n})$ into 
$\sD^{\leq n} \times \sZ^{\leq n}$.

Pulling back \eqref{eq:sections-ann}, we obtain 
a canonical map:
\begin{equation}\label{eq:supp}
\alpha_*^{\IndCoh}
(\sO_{\act_{neg}^{-1}(\sZ^{\leq n})}) \to 
\omega_{\sD^{\leq n}} \boxtimes \sO_{\sZ^{\leq n}}
\in \IndCoh^*(\sD^{\leq n} \times \sZ^{\leq n})^{\heart}
\simeq \QCoh(\sD^{\leq n} \times \sZ^{\leq n})^{\heart}.
\end{equation}

\noindent (As the notation indicates, the
superscript $\IndCoh$ on $\alpha_*$ is a matter
of perspective: it is convenient for us for later 
purposes to view this morphism occurring in $\IndCoh^*$
as opposed to $\QCoh$.)

\begin{rem}

In other words, the section $\omega^{univ}$
is scheme-theoretically supported on 
$\act_{neg}^{-1}(\sZ^{\leq n})$.

\end{rem}

\subsubsection{}\label{ss:act-neg-rems}

We now claim that $\act_{neg}^{-1}(\sZ^{\leq n})$
is the \emph{classical} (not derived!) fiber
product:
\begin{equation}\label{eq:act-neg-rems}
\act_{neg}^{-1}(\sZ^{\leq n}) = 
\Big(
(\sD^{\leq n} \times \sZ^{\leq n})
\underset{\sY_{\log}^{\leq n}}{\times}
\sZ^{\leq n} 
\Big)^{cl} \subset \sD^{\leq n} \times \sZ^{\leq n}.
\end{equation}

\noindent Here the morphism 
$\sD^{\leq n} \times \sZ^{\leq n} \to \sY_{\log}^{\leq n}$
is $\act_{neg}$.

Indeed, we have seen that 
$\act_{neg}^{-1}(\sZ^{\leq n})$
is a classical stack, so it suffices to check
the above identity on $A$-points for classical
commutative rings $A$, i.e., we are free to 
manipulate our $A$-points in the most naive way.

To calculate $\act_{neg}^{-1}(\sZ^{\leq n})$,
we need to recall from 
Proposition \ref{p:z-flat-axes-var} that
in the notation of \S \ref{ss:c-ind}, the
pullback of the line bundle $\sL$ comes from
the evident universal line bundle
on $\widehat{\sD} \times \sZ$; i.e.,
its fiber at a point 
$(\tau,(\sL,\nabla,s)) \in \widehat{\sD} \times \sZ$ 
is $\sL|_{\tau}$. Its canonical section,
denoted $x$ in \S \ref{ss:c-ind}, corresponds
to $s|_{\tau} \in \sL|_{\tau}$. 

Therefore, as a classical prestack,
$\act_{neg}^{-1}(\sZ^{\leq n}) \subset 
\sD^{\leq n} \times \sZ^{\leq n}$
corresponds to those data
$(\tau,(\sL,\nabla,s))$ with $s \in \sL(-\tau)$.

Clearly this description exactly matches
\eqref{eq:act-neg-rems}.

\begin{rem}

The derived version of the
fiber product \eqref{eq:act-neg-rems} is easily
seen to differ from 
$\act_{neg}^{-1}(\sZ^{\leq n})$, so our notation
is a bit abusive from our perspective, which
emphasizes derived geometry. (However, we do
not feel so bad about this as we have provided
another derivedly good definition of the space.)

\end{rem}

\subsubsection{}\label{ss:can-constr}

We now construct a canonical map:
\[
\on{can}:
i_{n,*}^{\IndCoh}\sO_{\sZ^{\leq n}} \to 
\act_{neg,*}^{\IndCoh}
(\omega_{\sD^{\leq n}} \boxtimes 
i_{n,*}\sO_{\sZ^{\leq n}}) \in 
\IndCoh^*(\sY_{\log}^{\leq n})
\]

\noindent as follows. 

By the above, we have a commutative diagram:
\[
\xymatrix{
\act_{neg}^{-1}(\sZ^{\leq n}) \ar[d]^{\act_{neg}^{\prime}} 
\ar[rr]^{\alpha} && 
\sD^{\leq n} \times \sZ^{\leq n} \ar[d]^{\act_{neg}} \\ 
\sZ^{\leq n} \ar[rr]^{i_n} && \sY_{\log}^{\leq n}.
}
\]

Now applying $i_{n,*}^{\IndCoh}$
to the evident counit map, we obtain:
\[
\begin{gathered}
i_{n,*}^{\IndCoh}\sO_{\sZ^{\leq n}} \to 
i_{n,*}^{\IndCoh} \act_{neg,*}^{\prime,\IndCoh}
\act_{neg}^{\prime,*,\IndCoh}(\sO_{\sZ^{\leq n}}) = 
i_{n,*}^{\IndCoh} \act_{neg,*}^{\prime,\IndCoh}
\sO_{\act_{neg}^{-1}(\sZ^{\leq n})} = \\
\act_{neg,*}^{\IndCoh} \alpha_*^{\IndCoh}
\sO_{\act_{neg}^{-1}(\sZ^{\leq n})}.
\end{gathered}
\]

\noindent We then applying $\act_{neg,*}^{\IndCoh}$
to \eqref{eq:supp} to obtain a map:
\[
\act_{neg,*}^{\IndCoh}\alpha_*^{\IndCoh}
(\sO_{\act_{neg}^{-1}(\sZ^{\leq n})}) \to 
\act_{neg,*}^{\IndCoh}
(\omega_{\sD^{\leq n}} \boxtimes \sO_{\sZ^{\leq n}}).
\]

\noindent Composing these two maps, we obtain
the desired map $\on{can}$.

\subsubsection{}

More generally, suppose $f \in k[[t]]/t^n$.

There is an associated action map 
$f\cdot -:\omega_{\sD^{\leq n}} \to 
\omega_{\sD^{\leq n}} \in \QCoh(\sD^{\leq n})$.
By functoriality, this gives rise to a morphism:
\[
f\cdot-:\act_{neg,*}^{\IndCoh}
(\omega_{\sD^{\leq n}} \boxtimes \sO_{\sZ^{\leq n}})
\to \act_{neg,*}^{\IndCoh}
(\omega_{\sD^{\leq n}} \boxtimes \sO_{\sZ^{\leq n}}).
\]

We then define: 
\[
\on{can}_f:
i_{n,*}^{\IndCoh}\sO_{\sZ^{\leq n}} \to 
\act_{neg,*}^{\IndCoh}
(\omega_{\sD^{\leq n}} \boxtimes 
i_{n,*}\sO_{\sZ^{\leq n}})
\]

\noindent as the composition of $\on{can} (= \on{can}_1)$
with $f\cdot-$.

\subsubsection{}

We now conclude the construction. 
Fix $f$ as above.

By adjunction,
there is a canonical morphism
$\AJ_*^{-1,\IndCoh}(\omega_{\sD^{\leq n}}) \to 
\omega_{\Gr_{\bG_m}^{\leq n}}$. 

Therefore, $\on{can}_f$ gives rise to a morphism:
\[
i_{n,*}^{\IndCoh}\sO_{\sZ^{\leq n}} \to 
\act_{neg,*}^{\IndCoh}
(\omega_{\Gr_{\bG_m}^{\leq n}} \boxtimes 
i_{n,*}\sO_{\sZ^{\leq n}}) = 
\Oblv\Av_!^{\Gr_{\bG_m}^{\leq n},w} 
(i_{n,*}\sO_{\sZ^{\leq n}}).
\]

\noindent By adjunction, we obtain the desired
morphism:
\[
\xi_f:\sF_n \to \sF_n.
\]

\subsubsection{}

Next, we show that the endomorphisms
$\xi_f$ of $\sF_n$ mutually commute.

By construction,\footnote{See \S \ref{ss:sigma-comp} for a much more general
setup.}
the assertion immediately reduces to the following one.

\begin{lem}

The section:
\[
p_{13}^*(\omega^{univ}) \otimes p_{23}^*(\omega^{univ}) \in 
p_1^*(\omega_{\sD^{\leq n}}) \otimes p_2^*(\omega_{\sD^{\leq n}}) =
\omega_{\sD^{\leq n}} \boxtimes \omega_{\sD^{\leq n}} 
\boxtimes \sO_{\sZ^{\leq n}}
\]

\noindent is $\bZ/2$-equivariant (for the natural action
permuting the first two factors).

\end{lem}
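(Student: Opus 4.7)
The plan is to unwind the assertion and reduce it to the basic fact that, for a line bundle $L$ in degree zero on a classical scheme, the symmetric monoidal swap $L \otimes L \simeq L \otimes L$ has no sign.

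First, I observe that all the objects in sight live in cohomological degree zero: since $\sD^{\leq n} = \Spec(k[[t]]/t^n)$ is zero-dimensional, its dualizing complex $\omega_{\sD^{\leq n}}$ is a classical line bundle (explicitly, the module $t^{-n}k[[t]]dt/k[[t]]dt$); by Proposition \ref{p:zn-cl}, $\sZ^{\leq n}$ is classical as well, so $\sO_{\sZ^{\leq n}}$ is in degree zero. Hence the external tensor product $\omega_{\sD^{\leq n}} \boxtimes \omega_{\sD^{\leq n}} \boxtimes \sO_{\sZ^{\leq n}}$ is a classical line bundle on the classical scheme $\sD^{\leq n} \times \sD^{\leq n} \times \sZ^{\leq n}$, and there is no room for Koszul signs to intervene.

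Next, let $\sigma$ denote the involution of $\sD^{\leq n} \times \sD^{\leq n} \times \sZ^{\leq n}$ swapping the first two factors. Then $\sigma \circ p_{13} = p_{23}$ and $\sigma \circ p_{23} = p_{13}$, so pulling back gives $\sigma^*(p_{13}^* \omega^{univ} \otimes p_{23}^* \omega^{univ}) = p_{23}^* \omega^{univ} \otimes p_{13}^* \omega^{univ}$. The canonical $\bZ/2$-equivariant structure on the line bundle $\omega_{\sD^{\leq n}} \boxtimes \omega_{\sD^{\leq n}} \boxtimes \sO_{\sZ^{\leq n}}$ is the one induced from the commutativity constraint of the tensor product in $\QCoh$, which for line bundles concentrated in degree zero is simply $a \otimes b \mapsto b \otimes a$ without any sign. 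Under this identification, the equivariance assertion reduces to the identity $p_{23}^* \omega^{univ} \otimes p_{13}^* \omega^{univ} = p_{13}^* \omega^{univ} \otimes p_{23}^* \omega^{univ}$, which is the commutativity of multiplication of sections of a line bundle.

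Since there is no obstacle beyond this formal check, the lemma follows. The only thing to verify carefully is that the equivariant structure on the line bundle in question is indeed the naive (signless) one, which is immediate from the fact that every relevant object is in degree zero on a classical scheme.
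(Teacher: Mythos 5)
Your proof is correct (one small slip: $\sigma\circ p_{13}$ should read $p_{13}\circ\sigma=p_{23}$, and similarly for the other identity), but it takes a genuinely different route from the paper's. You reduce the lemma to abstract nonsense: since $\omega_{\sD^{\leq n}}$ is a classical line bundle in degree zero and $\sZ^{\leq n}$ is classical, the symmetric-monoidal swap on $\omega_{\sD^{\leq n}}\boxtimes\omega_{\sD^{\leq n}}\boxtimes\sO_{\sZ^{\leq n}}$ carries no Koszul sign, and then $\sigma^*(p_{13}^*\omega^{univ}\otimes p_{23}^*\omega^{univ})$ is tautologically identified with $p_{13}^*\omega^{univ}\otimes p_{23}^*\omega^{univ}$ under the natural equivariant structure. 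In fact your argument establishes more than the lemma asks: $p_{13}^*s\otimes p_{23}^*s$ is $\bZ/2$-equivariant for \emph{any} section $s$ of \emph{any} classical line bundle on $\sD^{\leq n}\times\sZ^{\leq n}$, with no input from the specific form of $\omega^{univ}$. The paper argues differently: it interprets the section as pulled back along an explicit morphism $\sD^{\leq n}\times\sD^{\leq n}\times\fL_n^+\bA^1/\bG_m\to\bA^1/\bG_m$ (evaluate a line bundle and its section at two points of the disc and tensor the results) and shows this morphism factors through $\Sym^2(\sD^{\leq n})\times\fL_n^+\bA^1/\bG_m$ via the norm construction. Your argument is shorter and isolates the only thing that could obstruct the statement (a sign, absent here because $\omega_{\sD^{\leq n}}$ sits in cohomological degree zero). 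The paper's argument produces an explicit descent datum, and, as the remark following the lemma indicates, the norm construction it exhibits extends with $\Gr_{\bG_m}^{neg,\leq n}$ in place of $\sD^{\leq n}$, permitting a uniform treatment of the commutativity of both the operators $\xi_f$ and $\vph_{\omega}$ — a generalization that the degree-zero observation does not on its own suggest.
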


\begin{proof}

There is a canonical morphism:
\[
\sD^{\leq n} \times \sD^{\leq n} \times \fL_n^+ \bA^1/\bG_m
\]

\noindent given as follows. To a point of the left hand side 
defined by
$\tau_1 \in \sD^{\leq n}$, $\tau_2 \in \sD^{\leq n}$,
a line bundle $\sL$ on $\sD^{\leq n}$ and a section 
$s \in \sL$, our map assigns the line
$\sL|_{\tau_1} \otimes \sL|_{\tau_2}$ with
its section $s|_{\tau_1}\otimes s|_{\tau_2}$.

In fact, this map factors through a map:
\[
\Sym^2(\sD^{\leq n}) \times \fL_n^+ \bA^1/\bG_m.
\]

\noindent Indeed, for a point $(D,(\sL,s))$ of the left hand
side, we may regard $D$ as a finite subscheme of $\sL$.
The map then sends the datum to 
$(\Lambda^2(s) \in \Lambda^2(\sL|_D)) \in \bA^1/\bG_m$.
I.e., this is evident by the usual norm construction.

Unwinding the constructions, this implies the claim.

\end{proof}

\begin{rem}

We could have organized the discussion differently. 
Instead, we could have generalized our
construction of $\xi_f$ with $\Gr_{\bG_m}^{neg,\leq n}$ replacing
$\sD^{\leq n}$ via a suitable use of norms (as in the lemma
above). Then we would immediately deduce commutativity of the
operators $\xi_f$ by the same argument and for the operators
$\vph_{\omega}$.

\end{rem}

\subsection{Uncertainty}

We now check the remaining Weyl algebra relation
\eqref{eq:uncertainty}.

\subsubsection{}\label{ss:gpd}

First, let us put the above constructions a common framework to allow us to 
compute the relevant compositions.

Let $\sH^{\leq n} \coloneqq \sZ^{\leq n} \times_{\sY^{\leq n}} \sZ^{\leq n}$.
This stack is a groupoid over $\sZ^{\leq n}$ with its projections
$p_1,p_2:\sH^{\leq n} \rightrightarrows \sZ^{\leq n}$ ind-proper.

By adjunction and base-change, morphisms $\sF_n \to \sF_n$ are equivalent to
sections of $p_1^!(\sO_{\sZ^{\leq n}})$. For\footnote{For $V \in \Vect$,
$v\in V$ means $v \in \Omega^{\infty} V$, i.e., we have a point of the
underlying $\infty$-groupoid. If we concretely model $V$ as a cochain
complex, we obtain such data from cycles in degree $0$.}
$\sigma \in \Gamma^{\IndCoh}(\sH^{\leq n},p_1^!(\sO_{\sZ^{\leq n}}))$, 
we let $\psi_{\sigma}:\sF_n \to \sF_n$ denote the corresponding morphism.

In particular, suppose we are given a prestack $S$ with an ind-proper morphism
$\eta:S \to \sH^{\leq n}$. Given a section of 
$\eta^!p_1^!(\sO_{\sZ^{\leq n}})$, we obtain a section of 
$p_1^!(\sO_{\sZ^{\leq n}})$ by functoriality, hence a morphism
as above.

\begin{example}\label{e:fns}

Let $S = \sD^{\leq n} \times \sZ^{\leq n}$ and let
$\eta:S \to \sH^{\leq n}$
be the morphism: 
\[
(p_2,\act\circ (\AJ \times \id)):
(\tau,(\sL,\nabla,s) \mapsto ((\sL,\nabla,s),(\sL(\tau),\nabla,s)) \in 
\sZ^{\leq n} \underset{\sY^{\leq n}}{\times} \sZ^{\leq n} = 
\sH^{\leq n}.
\]

For $\omega \in (k[[t]]/t^n)^{\vee} \overset{Prop. \ref{p:cc-log}}{=} 
\Gamma^{\IndCoh}(\sD^{\leq n},\omega_{\sD^{\leq n}})$, we
obtain the evident section $p_1^*(\omega)$ of:
\[
\omega_{\sD^{\leq n}} \boxtimes \sO_{\sZ^{\leq n}} = 
p_2^! \sO_{\sZ^{\leq n}}  = 
\eta^!p_1^!(\sO_{\sZ^{\leq n}}).
\]

\noindent The corresponding map $\sF_n \to \sF_n$ is
\eqref{eq:fns}.

\end{example}

\begin{example}\label{e:vfs}

In the notation of \S \ref{ss:act-inv}, let 
$S = \act_{neg}^{-1}(\sZ^{\leq n}) \subset \sD^{\leq n} \times \sZ^{\leq n}$.
Let $\eta$ be the evident morphism defined using \eqref{eq:act-neg-rems}.\footnote{In the notation of \S \ref{ss:can-constr}, this 
morphism is given by $(p_1\alpha,\act_{neg}^{\prime})$.}
The construction of $\on{can}$ (resp. $\on{can}_f$) amounted to showing
that the section $\omega^{univ}$ (hence $p_1^*(f)\cdot \omega^{univ}$) 
comes from a canonical section of
$\eta^!p_1^!(\sO_{\sZ^{\leq n}})$ (i.e., these sections are scheme-theoretically
supported on $\act_{neg}^{-1}(\sZ^{\leq n}) $). By construction,
the corresponding map $\sF_n \to \sF_n$ is \eqref{eq:vfs}.

\end{example}

\begin{rem}\label{r:h-cl}

Although 
$\sH^{\leq n}$ is not classical, our constructions
here are not sensitive to this. The reason is that
we have a closed embedding $\iota:\sH^{\leq n} \into 
\Gr_{\bG_m}^{\leq n} \times \sZ^{\leq n}$ 
(cf. \S \ref{ss:h-embed} below). As 
$\Gr_{\bG_m}^{\leq n}$ is ind-finite and 
$\sZ^{\leq n}$ is classical, 
$\Gamma(p_1^!(\sO_{\sZ^{\leq n}})) = \Gamma(\iota^!(\omega_{\Gr_{\bG_m}^{\leq n}}
\boxtimes \sO_{\sZ^{\leq n}}))$ is a coconnective complex,
and its $H^0$ is the same for $\sH^{\leq n}$ and for its
underlying classical stack (both identify with 
sections of $\omega_{\Gr_{\bG_m}^{\leq n}}
\boxtimes \sO_{\sZ^{\leq n}}$ scheme-theoretically supported
on $\sH^{\leq n,cl}$).

\end{rem}

\subsubsection{}\label{ss:sigma-comp}

We now spell out how to concretely compose morphisms of the above type.

We have three morphisms:
\[
p_{12},p_{23},p_{13}:
\sH^{\leq n} \underset{\sZ^{\leq n}}{\times} \sH^{\leq n} = 
\sZ^{\leq n} \underset{\sY^{\leq n}}{\times}
\sZ^{\leq n} \underset{\sY^{\leq n}}{\times} \sZ^{\leq n}
\to 
\sZ^{\leq n} \underset{\sY^{\leq n}}{\times} \sZ^{\leq n} =
\sH^{\leq n}.
\]

\noindent We remark that $p_{13}$ corresponds to the multiplication
on the groupoid.

We begin by claiming that there is a canonical 
isomorphism: 

\begin{equation}\label{eq:p13-!}
p_{13}^!p_1^!(\sO_{\sZ^{\leq n}}) \simeq 
p_{12}^*p_1^!(\sO_{\sZ^{\leq n}}) \otimes 
p_{23}^*p_1^!(\sO_{\sZ^{\leq n}}). 
\end{equation}

\noindent To see this, first observe that
$p_1p_{13} = p_1p_{12}$, so:
\[
p_{13}^!p_1^!(\sO_{\sZ^{\leq n}}) = 
p_{12}^!p_1^!(\sO_{\sZ^{\leq n}}).
\]

Next, we have evident isomorphisms:
\[
\begin{gathered}
p_{12}^!p_1^! \sO_{\sZ^{\leq n}} = 
p_{12}^!(p_1^! \sO_{\sZ^{\leq n}}
\underset{\sO_{\sH^{\leq n}}}{\otimes} 
\sO_{\sH^{\leq n}}) = 
p_{12}^*p_1^! \sO_{\sZ^{\leq n}}
\underset{\sH^{\leq n} \underset{\sZ^{\leq n}} \sH^{\leq n} }
{\otimes} 
p_{12}^!\sO_{\sH^{\leq n}}.
\end{gathered}
\]

\noindent Then the Cartesian diagram:
\[
\xymatrix{
\sH^{\leq n} \underset{\sZ^{\leq n}}{\times} \sH^{\leq n} \ar[rr]^{p_{12}} 
\ar[d]_{p_{23}} && \sH^{\leq n} \ar[rr]^{p_1} \ar[d]^{p_2} && 
\sZ^{\leq n} \\
\sH^{\leq n} \ar[rr]^{p_1} && \sZ^{\leq n}
}
\]

\noindent yields an isomorphism: 
\[
p_{12}^!\sO_{\sH^{\leq n}} = 
p_{12}^!p_2^*\sO_{\sZ^{\leq n}} = 
p_{23}^*p_1^!\sO_{\sZ^{\leq n}}.
\]

\noindent Combining the above isomorphims yields \eqref{eq:p13-!}.

Therefore, given $\sigma_1, \sigma_2$ sections of
$p_1^!(\sO_{\sZ^{\leq n}})$, we obtain a section:
\[
p_{12}^*(\sigma_1) \otimes p_{23}^*(\sigma_2)
\]

\noindent of $p_{13}^!p_1^!(\sO_{\sZ^{\leq n}})$
(using \eqref{eq:p13-!}). As $p_{13}$ is ind-proper,
we obtain an induced section $\sigma_1 \star \sigma_2$ 
of $p_1^!(\sO_{\sZ^{\leq n}})$. 

Then unwinding constructions, we find:
\[
\psi_{\sigma_2} \psi_{\sigma_1} = \psi_{\sigma_1 \star \sigma_2}.
\]


\subsubsection{}\label{ss:h-embed}

To analyze the composition above, 
it is convenient to embed our spaces as follows.

First, define a morphism:
\[
\iota:\sH^{\leq n} \into \Gr_{\bG_m}^{\leq n} \times \sZ^{\leq n}
\]

\noindent as:
\[
\sH^{\leq n} = 
\sZ^{\leq n} \underset{\sY^{\leq n}}{\times} \sZ^{\leq n} 
\into 
\sZ^{\leq n} \underset{\sY^{\leq n}}{\times} \sY_{\log}^{\leq n}
\overset{\simeq}{\leftarrow} \Gr_{\bG_m}^{\leq n} \times \sZ^{\leq n} 
\]

\noindent where the last arrow is $(p_2,\act)$. Note that
the composition of $\iota$ with the projection to $\sZ^{\leq n}$
is the projection $p_1:\sH^{\leq n} \to \sZ^{\leq n}$.

Next, we define a similar morphism:
\[
\widetilde{\iota}:
\sH^{\leq n} \underset{\sZ^{\leq n}}{\times}
\sH^{\leq n} \into 
\Gr_{\bG_m}^{\leq n} \times \Gr_{\bG_m}^{\leq n} \times \sZ^{\leq n}
\]

\noindent as:
\[
\sH^{\leq n} \underset{\sZ^{\leq n}}{\times}
\sH^{\leq n} = 
\sZ^{\leq n} \underset{\sY^{\leq n}}{\times} \sZ^{\leq n} 
\underset{\sY^{\leq n}}{\times} \sZ^{\leq n} 
\into 
\sZ^{\leq n} \underset{\sY^{\leq n}}{\times} \sY_{\log}^{\leq n}
\underset{\sY^{\leq n}}{\times} \sY_{\log}^{\leq n}
\overset{\simeq}{\leftarrow} 
\Gr_{\bG_m}^{\leq n} \times \Gr_{\bG_m}^{\leq n} \times \sZ^{\leq n} 
\]

\noindent where this time the last arrow is 
$(p_3,\act \circ p_{23}, \act \circ (\id \times \act))$. 

As in Remark \ref{r:h-cl}, a section of 
$\Gamma^{\IndCoh}(\sH^{\leq n},p_1^!(\sO_{\sZ^{\leq n}}))$
is the same as a section of 
$\omega_{\Gr_{\bG_m}^{\leq n}} \boxtimes \sO_{\sZ^{\leq n}}$
scheme-theoretically supported on $\sH^{\leq n}$ 
(equivalently, on the underlying classical stack).
The same applies for 
$\widetilde{\iota}$: a section
of $p_{13}^!p_1^!(\sO_{\sZ^{\leq n}})$ is the same
as a section of:
\[
\omega_{\Gr_{\bG_m}^{\leq n}} \boxtimes \omega_{\Gr_{\bG_m}^{\leq n}} \boxtimes \sO_{\sZ^{\leq n}}
\]

\noindent scheme-theoretically supported on 
$\sH^{\leq n} \times_{\sZ^{\leq n}} \sH^{\leq n}$ (which is
equivalent to saying at the 
classical level).

\subsubsection{}\label{sss:recipe}

Now fix $\omega \in (k[[t]]/t^n)^{\vee}$ and $f \in k[[t]]/t^n$.
In effect, \S \ref{ss:sigma-comp} constructed two sections of
$p_{13}^!p_1^!(\sO_{\sZ^{\leq n}})$, which
induce the compositions 
$\xi_f\vph_{\omega}$ and $\vph_{\omega}\xi_f$ (after 
pushforward along $p_{13}$).

Let us explicitly describe the resulting sections of 
$\omega_{\Gr_{\bG_m}^{\leq n}} \boxtimes \omega_{\Gr_{\bG_m}^{\leq n}} 
\boxtimes \sO_{\sZ^{\leq n}}$ under the
above dictionary.

For $\vph_{\omega}\xi_f$, the corresponding section
is supported on:
\[
\sD^{\leq n} \times \sD^{\leq n} \times \sZ^{\leq n}
\overset{\AJ \times \AJ^{-1} \times \id}{\into}
\Gr_{\bG_m}^{\leq n} \times \Gr_{\bG_m}^{\leq n} 
\times \sZ^{\leq n}.
\]

\noindent The corresponding section of:
\[
\omega_{\sD^{\leq n}} \boxtimes 
\omega_{\sD^{\leq n}} \boxtimes \sO_{\sZ^{\leq n}}
\]

\noindent is $\omega \boxtimes p_2^*(f)\cdot\omega^{univ}$,
with notation as in Examples \ref{e:fns} and \ref{e:vfs}.\footnote{As
we have shown, this section is scheme-theoretically supported
on $\sD^{\leq n} \boxtimes \act_{neg}^{-1}(\sZ^{\leq n})$,
hence on $(\sH^{\leq n} \times_{\sZ^{\leq n}}\sH^{\leq n})^{cl}$.} It is convenient to 
rewrite this section as:
\[
p_2^*(f) \cdot (p_1^*(\omega) \otimes p_{23}^*(\omega^{univ})) \in 
p_1^*(\omega_{\sD^{\leq n}}) \otimes 
p_{23}^*(\omega_{\sD^{\leq n}} \boxtimes \sO_{\sZ^{\leq n}}).
\]

For $\xi_f\vph_{\omega}$, the corresponding section
is supported on:
\[
\sD^{\leq n} \times \sD^{\leq n} \times \sZ^{\leq n}
\xar{\AJ^{-1} \times \AJ \times \id}
\Gr_{\bG_m}^{\leq n} \times \Gr_{\bG_m}^{\leq n} 
\times \sZ^{\leq n}.
\]

\noindent The corresponding section of:
\[
\omega_{\sD^{\leq n}} \boxtimes 
\omega_{\sD^{\leq n}} \boxtimes \sO_{\sZ^{\leq n}}
\]

\noindent is:\footnote{For clarity, we again
highlight that we have shown
this section is in fact supported on 
$(\sH^{\leq n} \times_{\sZ^{\leq n}}\sH^{\leq n})^{cl}$, as
it should be. Indeed, at the classical level, our previous
analysis shows that it is
supported on 
$(\act_{neg}\circ (\id\times\act))^{-1} (\sZ^{\leq n}) 
\subset  (\sH^{\leq n} \times_{\sZ^{\leq n}}\sH^{\leq n})^{cl}$.}
\[
\begin{gathered}
p_1^*(f) \cdot 
(p_2^*(\omega) \otimes (p_1\times \act)^*(\omega^{univ})) \in 
p_2^*(\omega_{\sD^{\leq n}}) \otimes 
(p_1\times \act)^*(\omega_{\sD^{\leq n}} \boxtimes \sO_{\sZ^{\leq n}}) = \\
\omega_{\sD^{\leq n}} \boxtimes \omega_{\sD^{\leq n}} 
\boxtimes \sO_{\sZ^{\leq n}}.
\end{gathered}
\]

Now note that we have a commutative diagram:
\[
\xymatrix{
\sH^{\leq n} \underset{\sZ^{\leq n}}{\times} \sH^{\leq n} 
\ar[rr]^{\widetilde{\iota}} \ar[d]_{p_{13}} &&
\Gr_{\bG_m}^{\leq n} \times \Gr_{\bG_m}^{\leq n} \times \sZ^{\leq n} 
\ar[d]^{m\times \id} \\
\sH^{\leq n} \ar[rr]^{\iota} && \Gr_{\bG_m}^{\leq n} \times \sZ^{\leq n}
}
\]

\noindent for $m$ the multiplication on $\Gr_{\bG_m}^{\leq n}$.
As this multiplication is commutative, we see that
$\xi_f\vph_{\omega}$ could as well have been defined
by the section as above obtained by transposing
the first two coordinates, i.e.:
\[
\begin{gathered}
p_2^*(f) \cdot 
(p_1^*(\omega) \otimes (p_2,\act \circ p_{13})^*(\omega^{univ})).
\end{gathered}
\]

\noindent This section has the advantage of
sharing its support with our previous one for 
$\vph_{\omega}\xi_f$.

\subsubsection{}

We can now conclude the argument. We use the next elementary lemmas.

\begin{lem}\label{l:cotr-1}

Let: 
\[
\cotr \in 
\Gamma(\sD^{\leq n},\sO_{\sD^{\leq n}}) \otimes
\Gamma(\sD^{\leq n},\sO_{\sD^{\leq n}})^{\vee} = 
\Gamma^{\IndCoh}(\sD^{\leq n} \times \sD^{\leq n},
\sO_{\sD^{\leq n}} \boxtimes \omega_{\sD^{\leq n}})
\]

\noindent denote the canonical vector. 

We then have an equality:
\[
p_{23}^*(\omega^{univ}) = 
(p_2,\act \circ p_{13})^*(\omega^{univ}))
+p_{12}^*(\cotr)
\in 
\sO_{\sD^{\leq n}} \boxtimes 
\omega_{\sD^{\leq n}} \boxtimes \sO_{\sZ^{\leq n}}.
\]

\end{lem}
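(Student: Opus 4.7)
The identity is a cocycle formula measuring how the universal polar form $\omega^{univ}$ transforms under the gauge action of the Abel-Jacobi class $\AJ(\tau_1)$ on the connection, and the plan is to extract it directly from Lemma \ref{l:dlog-aj} together with the definition of $\cotr$. First, I would unfold the two pullbacks. By construction (see \S\ref{ss:act-inv}), $\omega^{univ}$ extends naturally to a section on $\sD^{\leq n} \times \sY_{\log}^{\leq n}$ obtained from the polar-part map $\Pol:\sY_{\log}^{\leq n} \to \fL^{pol,\leq n}\bA^1 dt = \Gamma(\sD^{\leq n},\omega_{\sD^{\leq n}})$. After the transposition in \S\ref{ss:h-embed}, the map $(p_2,\act\circ p_{13})$ sends $(\tau_1,\tau_2,z) \mapsto (\tau_2, \AJ(\tau_1)\cdot z)$, so its pullback of $\omega^{univ}$ records the polar part of the gauge-transformed connection evaluated at $\tau_2$, while $p_{23}^*(\omega^{univ})$ records the polar part of the original connection at the same point.

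Next, I would invoke the gauge action formula from \S\ref{ss:action-rigorous}: acting by $g \in \fL\bG_m$ sends the connection form $\omega$ to $\omega + d\log g$, and hence $\Pol(\omega) \mapsto \Pol(\omega) + \Pol(d\log g)$. Taking $g$ to be the lift $(t-\tau_1)^{-1}$ of $\AJ(\tau_1)$ given in the proof of Lemma \ref{l:dlog-aj} and applying Remark \ref{r:dlog-aj-fmla}, one gets $d\log\AJ(\tau_1) = -\sum_{i\geq 0} \tfrac{\tau_1^i}{t^{i+1}} dt$, which is already polar. Evaluating this polar form at $\tau_2$ via the canonical pairing between $\fL^{pol,\leq n}\bA^1 dt$ and sections of $\omega_{\sD^{\leq n}}$ produces the element $-\sum_i \tau_1^i \otimes \tfrac{d\tau_2}{\tau_2^{i+1}}$ of $\sO_{\sD^{\leq n}} \boxtimes \omega_{\sD^{\leq n}}$, so that $(p_2,\act\circ p_{13})^*(\omega^{univ}) - p_{23}^*(\omega^{univ}) = -\sum_i \tau_1^i \otimes \tfrac{d\tau_2}{\tau_2^{i+1}}$.

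Finally, I would identify the right-hand side with $-p_{12}^*(\cotr)$. By definition $\cotr$ corresponds to $\id \in \End(k[[t]]/t^n)$ under the residue duality $\Gamma(\sD^{\leq n},\omega_{\sD^{\leq n}}) = \Gamma(\sD^{\leq n},\sO_{\sD^{\leq n}})^{\vee}$; in the basis $\{t^i\}$ with residue-dual basis $\{\tfrac{dt}{t^{i+1}}\}$, one has $\cotr = \sum_i t^i \otimes \tfrac{dt}{t^{i+1}}$, and pulling back via $p_{12}$ yields exactly the correction term. Rearranging gives the stated identity. In fact this matching is essentially tautological: the map $-d\log\AJ:\widehat{\sD} \to \fL^{pol}\bA^1 dt$ from Remark \ref{r:dlog-aj-fmla} is literally the image of $\cotr$ under the canonical isomorphism $\Hom(\sD^{\leq n},\fL^{pol,\leq n}\bA^1 dt) = k[[t]]/t^n \otimes (k[[t]]/t^n)^{\vee}$ — the Cauchy-kernel expansion \emph{is} the identity on $k[[t]]/t^n$. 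The only genuine obstacle is bookkeeping of signs: one must carefully reconcile the direction of the transposition in \S\ref{ss:h-embed}, the sign convention of $d\log\AJ$ in Lemma \ref{l:dlog-aj}, and the conventions governing the residue pairing defining $\cotr$, all of which happen to align to produce precisely the $+$ sign in the statement.
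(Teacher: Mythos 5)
Your proposal is correct and follows essentially the same route as the paper: all three sections are interpreted as maps $\sD_{\tau_1}^{\leq n}\times\sZ^{\leq n}\to\fL^{pol,\leq n}\bA^1\,dt$, the difference between the first two is computed to be $d\log\AJ(\tau_1)$ via the gauge-transformation formula, and $p_{12}^*(\cotr)$ is identified with $-d\log\AJ(\tau_1)$ using Lemma~\ref{l:dlog-aj}. The only cosmetic difference is that you spell out the Cauchy-kernel expansion of $\cotr$ in coordinates where the paper simply cites Lemma~\ref{l:dlog-aj}.
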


\begin{proof}

For convenience, we give the first (resp. second) factor
of our triple product as 
$\sD_{\tau_1}^{\leq n}$ (resp. $\sD_{\tau_2}^{\leq n}$), 
where $\tau_1,\tau_2$ denote the respective coordinates. 

For a prestack $S$, note that a section of 
$\omega_{\sD_{\tau_2}^{\leq n}} \boxtimes \sO_S$ is equivalent to
a map $S \to \fL^{pol,\leq n} \bA^1 dt$.

For $S = \sD_{\tau_1}^{\leq n} \times \sZ^{\leq n}$,
our sections above correspond to maps:
\[
\sD_{\tau_1}^{\leq n} \times \sZ^{\leq n} 
\xar{(\tau_1,(\sL,\nabla,s)) \mapsto ???}
\fL^{pol,\leq n} \bA^1 dt
\]

\noindent as follows:

\begin{align*}
p_{23}^*(\omega^{univ}) & \leftrightsquigarrow 
(\tau_1,(\sL,\nabla,s)) \mapsto 
\Pol(\nabla) \\
(p_2,\act \circ p_{13})^*(\omega^{univ})) 
& \leftrightsquigarrow 
(\tau_1,(\sL,\nabla,s)) \mapsto 
\Pol((\sL(\tau_1),\nabla,s)) = \\ 
& \hspace{2cm} \Pol(\nabla)+d\log\AJ(\tau_1) \\
p_{12}^*(\cotr) & \leftrightsquigarrow 
(\tau_1,(\sL,\nabla,s)) \mapsto 
-d\log\AJ(\tau_1)
\end{align*}

\noindent where in the last two lines the map
$d\log\AJ$ was considered in
\S \ref{ss:dlog-aj}-\ref{ss:aj-trun}; the last line
follows from Lemma \ref{l:dlog-aj}.

We clearly obtain the assertion.

\end{proof}

We also use the following observations.

\begin{lem}\label{l:cotr-2}

The above section $\cotr$ of 
$\sO_{\sD^{\leq n}} \boxtimes \omega_{\sD^{\leq n}}$
is scheme-theoretically supported on the diagonally embedded
$\sD^{\leq n} \subset \sD^{\leq n} \times \sD^{\leq n}$.

Moreover, for $f \in k[[t]]/t^n$ and 
$\omega \in t^{-n}k[[t]]dt/k[[t]]dt$, the section:
\[
p_2^*(f) \cdot (p_1^*(\omega) \otimes \cotr) \in 
\omega_{\sD^{\leq n}} \boxtimes \omega_{\sD^{\leq n}} 
\]

\noindent maps to $\Res(f\omega) \in k$ under the
adjunction morphism:
\[
\Gamma^{\IndCoh}(\sD^{\leq n} \times \sD^{\leq n},
\omega_{\sD^{\leq n}} \boxtimes \omega_{\sD^{\leq n}} ) \to k.
\]

\end{lem}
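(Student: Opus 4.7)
The plan is to reduce both parts to elementary linear algebra on the finite-dimensional vector space $A := k[[t]]/t^n$. Identifying $\Gamma^{\IndCoh}(\sD^{\leq n},\omega_{\sD^{\leq n}}) = A^{\vee}$ via the residue pairing (so that $A^{\vee}\simeq t^{-n}k[[t]]dt/k[[t]]dt$), the counit of the adjunction $p_*^{\IndCoh} p^! \to \id$ for $p\colon \sD^{\leq n}\to \Spec(k)$ becomes precisely the residue map $\Res\colon A^{\vee}\to k$. Under the canonical isomorphism $A\otimes A^{\vee}\simeq \End_k(A)$, the section $\cotr$ is tautologically the element corresponding to $\id_A$; explicitly in coordinates one has $\cotr = \sum_{i=0}^{n-1} t^i \otimes t^{-i-1}dt$.

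For the first assertion, being scheme-theoretically supported on the diagonal $\Delta\colon \sD^{\leq n}\into \sD^{\leq n}\times \sD^{\leq n}$ amounts to being annihilated by the defining ideal $I_{\Delta}\subset A\otimes A$, which is generated by the elements $1\otimes a - a\otimes 1$ for $a\in A$. On the module $A\otimes A^{\vee}\simeq \End_k(A)$, such an element acts on $\phi$ by $\phi\circ (a\cdot-) - (a\cdot-)\circ \phi$, i.e.\ the commutator $[\phi,\,a\cdot-]$. Since $\id_A$ commutes with every endomorphism, this vanishes for $\phi = \cotr$, giving the claim. No subtlety here.

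For the second assertion, the strategy is a direct calculation. Using the explicit formula for $\cotr$ and keeping careful track of which tensor factor goes where (the only real bookkeeping in the argument, forced by the construction of $\cotr$ as the ``identity section''), one obtains
\[
p_2^*(f)\cdot \bigl(p_1^*(\omega)\otimes \cotr\bigr) \;=\; \sum_{i=0}^{n-1} (t^i\omega)\otimes (f\cdot t^{-i-1}dt) \;\in\; A^{\vee}\otimes A^{\vee}.
\]
The adjunction map $\Gamma^{\IndCoh}(\sD^{\leq n}\times \sD^{\leq n},\omega\boxtimes\omega) = A^{\vee}\otimes A^{\vee}\to k$ is the external product $\Res\otimes \Res$. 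Noting that $\Res(f\cdot t^{-i-1}dt)$ is the coefficient $[f]_i$ of $t^i$ in $f$, the sum collapses to $\sum_i [f]_i\Res(t^i\omega) = \Res\bigl((\sum_i [f]_i t^i)\,\omega\bigr) = \Res(f\omega)$, as required.

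Conceptually, both parts are expressions of a single fact, namely that $\cotr$ is the reproducing kernel for the residue pairing $A\times A^{\vee}\to k$. There is no substantive obstacle; the mild care needed is only in the identification of the line-bundle tensor product $(\omega\boxtimes \sO)\otimes (\sO\boxtimes\omega) = \omega\boxtimes\omega$ at the level of global sections, and this is handled by trivializing $\omega$ (e.g.\ via the basis element $t^{-n}dt$) if one wishes to be fully explicit.
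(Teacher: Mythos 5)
Your proposal is correct and follows essentially the same approach as the paper: the first part is the same observation that $\id_A$, being an $A$-bimodule map, corresponds under $A\otimes A^{\vee}\simeq \End_k(A)$ to an element annihilated by the diagonal ideal, and the second part performs the same computation the paper does abstractly (evaluating the resulting tensor on $1\otimes 1$), just written out explicitly in the coordinate basis $\cotr = \sum_i t^i \otimes t^{-i-1}dt$.
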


\begin{proof}

Let $A$ be a commutative, classical, finite $k$-algebra.

As $\id_A:A \to A$ is a morphism of $A$-bimodules,
the corresponding element $\cotr \in A^{\vee} \otimes A \in 
A\otimes A\mod^{\heart}$ is scheme-theoretically 
supported on the diagonal 
$\Spec(A) \subset \Spec(A) \times \Spec(A)$.

Moreover, for $f \in A$ and $\omega \in A^{\vee}$, we
have an element $f \otimes \omega \in A \otimes A^{\vee}$.
Tensoring over $A\otimes A$ with $A^{\vee} \otimes A$,
we obtain an element:
\[
f\otimes \omega \cdot \cotr \in A^{\vee} \otimes A^{\vee}.
\]

\noindent It is easy to see that when we evaluate this
tensor on $1 \otimes 1 \in A \otimes A$, we obtain
$\omega(f) \in k$.

Taking $A = k[[t]]/t^n$, we obtain our claim.

\end{proof}

By our earlier discussion, the endomorphism 
$\xi_f \vph_{\omega}-\vph_{\omega}\xi_f$ of $\sF_n$
corresponds to the section:
\[
p_2^*(f) \cdot 
(p_1^*(\omega) \otimes (p_2,\act \circ p_{13})^*(\omega^{univ})) -
p_2^*(f) \cdot 
(p_1^*(\omega) \otimes p_{23}^*(\omega^{univ}))
\]

\noindent of $\omega_{\sD^{\leq n}} \boxtimes 
\omega_{\sD^{\leq n}} \boxtimes \sO_{\sZ^{\leq n}}$.\footnote{Of 
course we consider
$\sD^{\leq n} \times \sD^{\leq n} \times
\sZ^{\leq n}$ mapping to $\Gr_{\bG_m}^{\leq n} \times \sZ^{\leq n}$
via $(\tau_1,\tau_2,(\sL,\nabla,s)) \mapsto 
(\AJ(\tau_1)\cdot\AJ^{-1}(\tau_2),(\sL,\nabla,s))$.}

By Lemma \ref{l:cotr-1}, the above section coincides with:
\[
-p_2^*(f) \cdot p_{12}^*(p_1^*(\omega)\otimes\cotr).
\]

\noindent By Lemma \ref{l:cotr-2}, this section is supported
on the diagonally embedded:
\[
\sD^{\leq n} \times \sZ^{\leq n} 
\xar{\Delta \times \id} 
\sD^{\leq n} \times \sD^{\leq n} \times \sZ^{\leq n},
\]
 
\noindent which evidently
maps to $1 \times \sZ^{\leq n} \subset \Gr_{\bG_m}^{\leq n} \times \sZ^{\leq n}$
under $p_{13}$. Moreover, Lemma \ref{l:cotr-2} implies that the
pushforward of the above section to $\sZ^{\leq n} = 1 \times \sZ^{\leq n}$
is: 
\[
-\Res(f\omega) \in k \subset \Gamma(\sO_{\sZ^{\leq n}}) \subset 
\Gamma^{\IndCoh}(\Gr_{\bG_m}^{\leq n} \times \sZ^{\leq n}, 
\omega_{\Gr_{\bG_m}^{\leq n}} \boxtimes \sZ^{\leq n}).
\]

\noindent This amounts to \eqref{eq:uncertainty}, concluding
the argument.

%
%
%

\section{Compatibility with class field theory}\label{s:cft}

In this section, we show that the construction of \S \ref{s:weyl} is compatible
with geometric class field theory in a suitable sense. 

\subsection{Conventions regarding $D$-modules}\label{ss:diff-1}

\subsubsection{} Before proceeding, we take a moment to establish some
notational conventions regarding $D$-modules. 
We refer to \cite{crystals} for details.

\subsubsection{}

Let $X$ be a smooth variety. As in \emph{loc. cit}., 
we have the so-called \emph{left} forgetful functor 
$\Oblv^{\ell}:D(X) \to \QCoh(X)$. It is normalized
so that the functor $\Oblv^{\ell}[\dim X]$
is $t$-exact; for example, this functor sends the dualizing
$\omega_X$ to $\sO_X$ (with its standard left $D$-module structure).

\subsubsection{}

The functor $\Oblv^{\ell}$ admits the left adjoint
$\ind^{\ell}$. For us, we take as a definition that the sheaf of
differential operators on $X$ is:
\[
D_X \coloneqq \ind^{\ell}(\sO_X).
\] 

\noindent We remark that this object is concentrated in degree $-\dim X$.
(On general grounds, it coincides with $\ind^r(\omega_X)$, where 
$\ind^r$ is the right $D$-module induction functor and
$\omega_X = \Omega_X^{\dim X}[\dim X] \in \QCoh(X)$ is the
dualizing sheaf.)

\subsubsection{}

Now suppose $X$ is smooth and affine. Let $\on{Diff}(X)$ be the 
algebra of global differential operators on $X$.

As in \cite{crystals}, $\End_{D(X)}(D_X)$ canonically coincides with
$\on{Diff}(X)$, but with reversed multiplication. It follows that there
is a canonical equivalence:
\begin{equation}\label{eq:diff-dmod}
\on{Diff}(X)\mod \simeq D(X)
\end{equation}

\noindent between left modules for $\on{Diff}(X)$ and the category $D(X)$,
sending $\on{Diff}(X)$ to $D_X$. We remark that this functor is only
$t$-exact up to shift. 

\subsection{Construction of functors}

\subsubsection{}

In \S \ref{s:weyl}, we constructed compact objects
$\sF_n \in \IndCoh^*(\sY^{\leq n})^c$ and morphisms:
\[
W_n^{op} \to \ul{\End}_{\IndCoh^*(\sY^{\leq n})}(\sF_n).
\]

\noindent We obtain a corresponding functor:
\[
\ol{\Delta}_n:W_n\mod 
\overset{\eqref{eq:diff-dmod}}{\simeq} 
D(\fL_n^+\bA^1) \to \IndCoh^*(\sY_n)
\in \DGCat_{cont}
\]

\noindent uniquely characterized by sending $D_{\fL_n^+\bA^1}$ to 
$\sF_n$ compatibly with the action of $W_n^{op} = 
\TwoEnd_{D(\fL_n^+\bA^1)}(D_{\fL_n^+\bA^1})$. 

\begin{rem}

Above, we carefully normalized $D_{\fL_n^+ \bA^1}$ to lie in cohomological
degree $-n$. This is not especially relevant in our analysis 
until \S \ref{s:main}.

\end{rem}

\subsubsection{}\label{sss:delta-n}

For later use, we also use the notation 
$\Delta_n$ to denote the further composition:
\[
D(\fL_n^+\bA^1) \xar{\ol{\Delta}_n} \IndCoh^*(\sY_n) 
\xar{\lambda_{n,*}^{\IndCoh}} \IndCoh^*(\sY).
\]

\subsection{Cartier duality}

We now review some constructions from geometric class field theory.

\subsubsection{}

Let $\sK_n \subset \fL^+ \bG_m$ be the $n$th congruence subgroup, 
i.e., $\sK_n \coloneqq \Ker(\fL^+\bG_m \to \fL_n^+\bG_m)$.

Recall that there is a canonical bimultiplicative pairing:
\begin{equation}\label{eq:cartier}
\LocSys_{\bG_m}^{\leq n} \times (\fL\bG_m/\sK_n)_{dR} \to \bB \bG_m.
\end{equation}

\noindent One basic property is that its restriction along the map:
\[
\LocSys_{\bG_m}^{\leq n} \times \fL\bG_m/\sK_n
\to 
\LocSys_{\bG_m}^{\leq n} \times (\fL\bG_m/\sK_n)_{dR} \to \bB \bG_m
\]

\noindent factors through the Contou-Carr\`ere duality pairing:
\[
\LocSys_{\bG_m}^{\leq n} \times \fL\bG_m/\sK_n \to 
\bB \fL^{\leq n} \bG_m \times \fL\bG_m/\sK_n \xar{CC} 
\bB \bG_m.
\]

\noindent Here we normalize the sign in the Contou-Carr\`ere pairing
so that we have a commutative diagram:
\[
\xymatrix{
\bB \fL \bG_m \times \fL^+\bG_m \ar[rr] \ar[d] & & 
\bB \fL \bG_m \times \fL\bG_m \ar[d]^{CC} \\
\bB \Gr_{\bG_m} \times \fL^+\bG_m \ar[rr] & &
\bB \bG_m
}
\]

\noindent where the bottom arrow is induced by the pairing of 
Proposition \ref{p:cc-log}.

From \eqref{eq:cartier} and functoriality, we obtain a symmetric
monoidal Fourier-Mukai functor:
\begin{equation}\label{eq:trun-lcft}
D(\fL\bG_m/\sK_n) \to \QCoh(\LocSys_{\bG_m}^{\leq n}).
\end{equation}

\begin{thm}\label{t:trun-lcft}

The functor \eqref{eq:trun-lcft} is an equivalence.
In particular, there is a canonical fully faithful symmetric monoidal functor
$D(\fL_n^+\bG_m) \into \QCoh(\LocSys_{\bG_m}^{\leq n})$.

\end{thm}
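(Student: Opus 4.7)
The plan is to bootstrap from the full Beilinson--Drinfeld local class field theory equivalence \eqref{eq:intro-lcft}, $D^*(\fL\bG_m) \simeq \QCoh(\LocSys_{\bG_m})$, whose Fourier--Mukai kernel is the $n \to \infty$ limit of that underlying \eqref{eq:trun-lcft}. First I would show that the natural inclusion $D(\fL\bG_m/\sK_n) \into D^*(\fL\bG_m)$ (as $\sK_n$-equivariant objects) matches, under \eqref{eq:intro-lcft}, with the pushforward from $\QCoh(\LocSys_{\bG_m}^{\leq n})$ along the closed embedding $\LocSys_{\bG_m}^{\leq n} \into \LocSys_{\bG_m}$. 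Since $\sK_n$ is a pro-unipotent group indscheme, strong and weak $\sK_n$-invariance agree, so the matching reduces to checking that the Contou--Carr\`ere kernel, restricted along $\fL\bG_m \to \fL\bG_m/\sK_n$, is scheme-theoretically supported on $\LocSys_{\bG_m}^{\leq n}$ on the other factor; this is essentially a tautology from the construction of $\LocSys_{\bG_m}^{\leq n}$ via irregularity.

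For the main equivalence, I would use the product decomposition of Proposition \ref{p:locsys-trun}, $\LocSys_{\bG_m}^{\leq n} \simeq \bA^1/\bZ \times \Ker(\Res)^{\leq n}_{dR} \times \bB\bG_m$, together with a parallel splitting $\fL\bG_m/\sK_n \simeq \bZ \times \Gr_{\bG_m}^{\circ} \times \bG_m \times \bA^{n-1}$ obtained from a uniformizer (extracting valuation, principal Grassmannian image, constant term, and logarithm of principal units). By bimultiplicativity, the pairing \eqref{eq:cartier} decomposes as an exterior product of three pairings, each of which should be identified with a classical duality: Pontryagin/Fourier duality $D(\bZ) \simeq \QCoh(\bA^1/\bZ)$ for the valuation/regular-singular-exponent component; standard Fourier duality for vector groups giving $D(\Gr_{\bG_m}^{\circ} \times \bA^{n-1}) \simeq \QCoh(\Ker(\Res)^{\leq n}_{dR})$ via $d\log$ on the formal factor and the truncated residue pairing on the polynomial factor; and the tautological Cartier duality $D(\bG_m) \supset \Rep(\bG_m) \simeq \QCoh(\bB\bG_m)$ on the remaining component.

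The main obstacle is checking that the factorization of the Contou--Carr\`ere pairing under these splittings is genuinely diagonal, i.e., that the cross-terms vanish in the chosen trivializations and that the signs/normalizations are the ones appearing in each classical duality statement; this requires a somewhat involved but elementary unwinding, using Lemma \ref{l:dlog-aj} and the residue pairing formulas from \S \ref{s:aj}. Once these three factor-equivalences are assembled, symmetric monoidality of \eqref{eq:trun-lcft} is automatic from the bimultiplicativity of the pairing. The final assertion follows by restriction along the closed embedding $\fL_n^+\bG_m \into \fL\bG_m/\sK_n$ identifying $\fL_n^+\bG_m$ with the degree-zero component of the $\bZ$-factor: pushforward of $D$-modules is fully faithful and symmetric monoidal (as $\fL_n^+\bG_m$ is a subgroup), yielding the claimed embedding into $\QCoh(\LocSys_{\bG_m}^{\leq n})$.
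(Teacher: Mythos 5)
The paper does not actually give a proof of Theorem~\ref{t:trun-lcft}: it is stated and used as the input from which Theorem~\ref{t:full-lcft} (the Beilinson--Drinfeld equivalence) is \emph{deduced} by passing to the limit over $n$. Your first paragraph therefore runs the logic backwards relative to the paper --- you treat the BD equivalence as the known input. That is not in itself an error (BD is available as a theorem), but it is worth flagging that the paper's architecture is the reverse.

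The real problems are in your second paragraph, where you attempt a direct proof by factoring the Cartier pairing. The proposed splitting
$\fL\bG_m/\sK_n \simeq \bZ \times \Gr_{\bG_m}^{\circ} \times \bG_m \times \bA^{n-1}$
cannot be right: $\Gr_{\bG_m}^{\circ}$ is identified in the proof of Proposition~\ref{p:locsys} with the \emph{formal} group $\Ker(\Res)_0^{\wedge}$, while $\fL\bG_m/\sK_n$ is a finite-type group scheme over $k$. There is no Grassmannian factor. The correct splitting, after choosing a coordinate, is
$\fL\bG_m/\sK_n \simeq \bZ \times \bG_m \times \bA^{n-1}$,
with $\bZ$ the valuation, $\bG_m$ the constant term of a unit, and $\bA^{n-1}$ the principal units mod $\sK_n$.

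Moreover, your factor matching is reversed. The dualities should be: $D(\bZ) \simeq \QCoh(\bB\bG_m)$ (Cartier duality for $\bZ$, i.e.\ $\bigoplus_\bZ\Vect \simeq \Rep(\bG_m)$); $D(\bG_m) \simeq \QCoh(\bA^1/\bZ)$ (Mellin transform, matching the constant term with the regular singular exponent modulo $\bZ$); and $D(\bA^{n-1}) \simeq \QCoh(\Ker(\Res)^{\leq n}_{dR})$ (Fourier transform for vector groups via the truncated residue pairing). Your proposed statement ``$D(\bZ) \simeq \QCoh(\bA^1/\bZ)$'' is false even as an abstract equivalence of categories (one side is semisimple, the other is not), and ``$D(\bG_m) \supset \Rep(\bG_m) \simeq \QCoh(\bB\bG_m)$'' does not parse --- there is no natural inclusion of $\Rep(\bG_m)$ into $D(\bG_m)$ of the indicated sort. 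Once you correct the splitting and pair the factors correctly, your overall strategy (exterior tensor of three classical dualities, then check bimultiplicative compatibility of the Cartier kernel) is sound and standard for $\bG_m$. The ``in particular'' clause via pushforward along $\fL_n^+\bG_m \into \fL\bG_m/\sK_n$ is fine once the main equivalence is in place.
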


Passing to the limit over $n$, we also obtain the following theorem
of Beilinson-Drinfeld.

\begin{thm}\label{t:full-lcft}

There is a canonical symmetric monoidal equivalence:
\[
D^*(\fL\bG_m) \isom \QCoh(\LocSys_{\bG_m}).
\]

\end{thm}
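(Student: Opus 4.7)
The strategy is to pass to the limit in $n$ from the truncated equivalences of Theorem~\ref{t:trun-lcft}. On the spectral side, Proposition~\ref{p:locsys-trun} identifies $\LocSys_{\bG_m} = \colim_n \LocSys_{\bG_m}^{\leq n}$ along ind-closed embeddings, so
\[
\QCoh(\LocSys_{\bG_m}) \simeq \lim_n \QCoh(\LocSys_{\bG_m}^{\leq n})
\]
with structural functors the $*$-pullbacks $i_{n,n+1}^*$ along the inclusions $\LocSys_{\bG_m}^{\leq n} \hookrightarrow \LocSys_{\bG_m}^{\leq n+1}$. On the automorphic side, I would present $\fL\bG_m = \lim_n \fL\bG_m/\sK_n$ along the smooth $\bG_a$-bundle projections $\pi_{n,n+1}:\fL\bG_m/\sK_{n+1} \to \fL\bG_m/\sK_n$, and apply the standard conventions of \cite{dmod} for $D^*$ on such Tate pro-indscheme structures to express $D^*(\fL\bG_m)$ as the corresponding limit of the $D(\fL\bG_m/\sK_n)$.

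The key step is then to show the truncated Fourier--Mukai equivalences \eqref{eq:trun-lcft} are compatible with these limit structures, yielding commutative squares relating $D(\fL\bG_m/\sK_{n+1}) \simeq \QCoh(\LocSys_{\bG_m}^{\leq n+1})$ to $D(\fL\bG_m/\sK_n) \simeq \QCoh(\LocSys_{\bG_m}^{\leq n})$. This in turn reduces to a compatibility of the defining pairings \eqref{eq:cartier}: the level-$(n+1)$ pairing, when restricted to $\LocSys_{\bG_m}^{\leq n} \subset \LocSys_{\bG_m}^{\leq n+1}$, should descend through the projection $(\fL\bG_m/\sK_{n+1})_{dR} \to (\fL\bG_m/\sK_n)_{dR}$ to recover the level-$n$ pairing. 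This is essentially the content of Contou-Carr\`ere duality combined with the residue vanishing $\Res(f\omega) = 0$ for $f \in \sK_n$ and $\omega \in t^{-n}k[[t]]dt/k[[t]]dt$, which is built into the last assertion of Proposition~\ref{p:cc-log}.

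With this compatibility in hand, one obtains a coherent tower of commutative squares of equivalences, and passing to the limit yields the desired symmetric monoidal functor $D^*(\fL\bG_m) \to \QCoh(\LocSys_{\bG_m})$; it is an equivalence because each finite stage is. The main obstacle is the delicate bookkeeping of $D$-module conventions---specifically, ensuring the structural functors that present $D^*(\fL\bG_m)$ as a limit of the $D(\fL\bG_m/\sK_n)$ are precisely the ones matched by Fourier--Mukai to the $*$-pullback structural functors on the spectral side, and that any degree shifts arising from the $\bG_a$-bundles are absorbed by the normalization of \eqref{eq:trun-lcft}. Once this is reconciled, the rest is a direct limit of the equivalences of Theorem~\ref{t:trun-lcft}, with the symmetric monoidal structure induced termwise from the bimultiplicativity of \eqref{eq:cartier}.
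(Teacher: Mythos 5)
The paper does not itself prove this theorem: it is attributed to Beilinson--Drinfeld, and the paper's entire indication of proof is the sentence ``Passing to the limit over $n$, we also obtain the following theorem.'' Your proposal is a plausible expansion of that one-liner, and the overall strategy---present both sides as limits of the level-$n$ categories, identify the structural functors, and check termwise compatibility of the Fourier--Mukai kernels via the Contou-Carr\`ere pairing---is the right one.

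One of your stated facts is, however, literally false as written. For $f \in \sK_n$, so $f = 1 + t^n g$ with $g \in k[[t]]$, and $\omega \in t^{-n}k[[t]]dt/k[[t]]dt$, you have
\[
\Res(f\omega) = \Res(\omega) + \Res(t^n g \,\omega) = \Res(\omega),
\]
and $\Res(\omega)$ is simply the $t^{-1}dt$-coefficient of $\omega$, which is not zero in general. The compatibility you actually need is that $\sK_n$ and the image of $\LocSys^{\leq n}_{\bG_m}$ are mutual annihilators under the Contou-Carr\`ere pairing; at the additive (polar) level, this is $\Res(g\,\omega) = 0$ for $g \in t^n k[[t]]$ and $\omega$ of pole order at most $n$, where $g$ is the additive increment $f-1$, not the group element $f$. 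Similarly, the last assertion of Proposition \ref{p:cc-log} is a duality statement between distributions on $\Gr_{\bG_m}^{\leq n}$ and functions on $\fL_n^+\bG_m$; it does encode the needed orthogonality, but not in the form you invoke. Neither correction undermines the limiting argument---it just needs to be phrased in terms of the correct pairing---but as stated the key compatibility step would fail.
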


\subsection{Formulation of the equivariance property}

Now observe that $\ol{\Delta}_n$ maps a category acted on by 
$D(\fL_n^+\bG_m)$ to one acted on by 
$\QCoh(\LocSys_{\bG_m}^{\leq n})$ (via the structure map 
$\sY^{\leq n} \to \LocSys_{\bG_m}^{\leq n}$).

Using Theorem \ref{t:trun-lcft}, we can regard both sides as 
acted on by $D(\fL_n^+\bG_m)$. 

In the remainder of this section, we show that 
$\ol{\Delta}_n$ is canonically a morphism of 
$D(\fL_n^+\bG_m)$-module categories.

\subsection{Digression on Harish-Chandra data}\label{ss:hc}

We will construct the equivariance structure using the theory of \emph{Harish-Chandra
data}. We review this theory below.

In what follows, $G$ is an affine algebraic group and $A \in \Alg$ is a DG algebra.
In our applications, $A$ will be classical, so at some points in the discussion we 
will assume that.

\subsubsection{} 

First, suppose that $G$ acts on $A$ as an associative algebra. In other words,
we assume we are given a lift of $A$ along the forgetful functor 
$\Alg(\Rep(G)) \to \Alg(\Vect) = \Alg$.

This defines a canonical weak action of $G$ on $A\mod$. Its basic property is that
the forgetful functor $A\mod \to \Vect$ is weakly $G$-equivariant.

\subsubsection{}

Now suppose that $\sC$ is a DG category with a weak $G$-action. 

Recall that $\Rep(G)$ naturally acts on $\sC^{G,w}$.
Therefore, we can consider $\sC^{G,w}$ as enriched over $\Rep(G)$.
For $\sF,\sG\in \sC^{G,w}$, we let:
\[
\ul{\Hom}_{\sC}^{enh}(\sF,\sG) \in \Rep(G)
\]

\noindent denote the corresponding mapping complex.

\begin{rem}

There are natural maps:
\[
\Oblv\ul{\Hom}_{\sC}^{enh}(\sF,\sG) \to \ul{\Hom}_{\sC}(\Oblv(\sF),\Oblv(\sG))
\]
\noindent that are easily seen to be isomorphisms for $\sF$ compact. 

\end{rem}

Similarly, for
$\sF \in \sC^{G,w}$, we let:
\[
\ul{\End}_{\sC}^{enh}(\sF,\sG) \in \Alg(\Rep(G)) 
\]

\noindent denote the inner endomorphism algebra.

\subsubsection{}\label{sss:weak-functor}

In our earlier setting, suppose we are given a weakly $G$-equivariant
functor:
\[
F:A\mod \to \sC.
\]

\noindent Note that $A \in A\mod^{G,w}$, so we obtain a natural object 
$\sF \coloneqq F^{G,w}(A) \in \sC^{G,w}$. Moreover, there is a canonical map:
\[
\vph:A \simeq \ul{\End}_{A\mod}^{enh}(A) \to \ul{\End}_{\sC^{G,w}}(\sF)^{op} \in \Alg(\Rep(G)). 
\]

\noindent (Here the superscript \emph{op} indicates the reversed multiplication.) 

Conversely, given $\sF \in \sC^{G,w}$ and $\vph$ as above, we obtain a functor 
$F$ as above. Indeed, as is standard, a datum $\vph$ is the
same as specifying a morphism 
\[
A\mod^{G,w} = A\mod(\Rep(G)) \to \sC^{G,w} 
\]

\noindent of $\Rep(G)$-module categories; by de-equivariantization
(i.e., tensoring over $\Rep(G)$ with $\Vect$), we obtain the 
desired functor $F:A\mod \to \sC$.

\subsubsection{}\label{sss:hc-recap}

Next, recall from \cite{methods} \S 10
that $\Alg(\Rep(G))$ carries a canonical monad $B \mapsto U(\fg) \# B$.

Here $U(\fg) \# B$ is the usual smash (or crossed) product construction.
Non-derivedly, modules for $U(\fg) \# B$ are vector spaces equipped
with an action of $B$ and an action of $\fg$ that are compatible under the
action of $\fg$ on $B$ by derivations. For a proper derived construction 
(in the more complicated setting of topological algebras), we refer to
\cite{methods}.

By definition, making $A \in \Alg(\Rep(G))$ into a module over this monad
is a \emph{Harish-Chandra} datum for $A$. When $A$ is classical, this 
amounts to the data of a morphism:
\[
i:\fg \to A 
\]

\noindent such that $i$ is a $G$-equivariant morphism of Lie algebras such that 
$[i(\xi),-]:A \to A$
coincides with the derivation defined by  $\xi \in \fg$ and the $G$-action on $A$.

From \emph{loc. cit}., it follows that specifying a Harish-Chandra datum for
$A$ is the same as extending the weak $G$-action on $A\mod$ to a strong $G$-action.

\subsubsection{}\label{sss:hc-functor}

Now suppose that $G$ acts strongly on $\sC$. Let $\sF \in \sC^{G,w}$
be a fixed object. We claim that $\ul{\End}_{\sC}^{enh}(\sF) \in \Alg(\Rep(G))$ 
carries a canonical Harish-Chandra datum.

The construction is formal. Let $\sD_0 \subset \sC^{G,w}$ be the
(non-cocomplete) DG category generated by $\sF$ under finite colimits and 
direct summands. As in \cite{dario-*/!}, the monoidal category:
\[
\sH\sC_G \coloneqq \TwoEnd_{G\mod}(D(G)^{G,w})^{op} = \fg\mod^{G,w}
\]

\noindent of Harish-Chandra bimodules acts canonically on $\sC^{G,w}$. 
Moreover, $\sH\sC_G$ is \emph{rigid} monoidal, so its (non-cocomplete) 
monoidal subcategory 
$\sH\sC_G^c$ of compact objects preserves $\sD_0$. Therefore, 
$\sH\sC_G$ acts on $\sD_1 \coloneqq \Ind(\sD_0)$. By de-equivariantization,
$\sD_1 = \sD^{G,w}$ for a canonical strong $G$-category $\sD$.
But clearly $\sD = \ul{\End}_{\sC}^{enh}(\sF)\mod$ by construction,
so we obtain the Harish-Chandra datum as desired (from \S \ref{sss:hc-recap}).

\begin{rem}

In particular, there is a canonical map 
$\fg \to \ul{\End}_{\sC}^{enh}(\sF)$ of Lie algebras in $\Rep(G)$.
This map is the standard \emph{obstruction to strong equivariance}
on $\sF$.

\end{rem}

Moreover, we see that if $A \in \Alg(\Rep(G))$ is equipped with a Harish-Chandra
datum, giving a strongly $G$-equivariant functor:
\[
F:A\mod \to \sC
\]

\noindent is equivalent to giving $\sF \in \sC^{G,w}$ and
a map $\vph:A \to \ul{\End}_{\sC}^{enh}(\sF)^{op}$ that is
\emph{compatible with the Harish-Chandra data}.

In the case that $A$ is classical and $\ul{\End}_{\sC}^{enh}(\sF)^{op}$ 
is coconnective, this simply amounts to saying that composition:
\[
\fg \xar{i} A \xar{\vph} \ul{\End}_{\sC}^{enh}(\sF)^{op}
\]

\noindent coincides with the obstruction to strong equivariance on the object $\sF$.

\subsubsection{}\label{sss:connected-obstr}

We now add one more mild observation before concluding our discussion. The
reader may safely skip this material and return back to it as necessary.

Suppose $G$ acts weakly on $\sC$ and $\sF,\sG \in \sC^{G,w}$.
There is a canonical map:

\[
\Oblv(\ul{\Hom}_{\sC}^{enh}(\sF,\sG)) \to \ul{\Hom}_{\sC}(\Oblv(\sF),\Oblv(\sG))
\in \Vect
\]

\noindent that is an isomorphism whenever $\sF$ is compact. (This is the
reason we use the subscript $\sC$ in $\ul{\Hom}_{\sC}^{enh}$ and
not $\ul{\Hom}_{\sC^{G,w}}^{enh}$: the underlying vector space
of the representation $\ul{\Hom}_{\sC}^{enh}(\sF,\sG)$ is 
comparable to, and often isomorphic to, the complex of maps
from $\sF$ to $\sG$ in $\sC$ itself, not in $\sC^{G,w}$.)

Now suppose that we are in the setting of \S \ref{sss:hc-functor}.
Suppose $\sF \in \sC^{G,w}$ is compact, $A$ is classical, and
$\ul{\End}_{\sC}^{enh}(\sF)$ is coconnective. Then the data
of a map:
\[
A \to \ul{\End}_{\sC}^{enh}(\sF)^{op}
\]

\noindent compatible with the Harish-Chandra data is equivalent to 
a map:
\[
\vph:A \to  \ul{\End}_{\sC}(\sF)^{op} \in \Alg = \Alg(\Vect)
\]

\noindent such that the composition:
\[
\fg \xar{i} A \xar{\vph} \ul{\End}_{\sC}(\sF)^{op}
\]

\noindent is the obstruction to strong equivariance. Indeed, this follows
from the assumption that $\sF$ is compact, so $\ul{\End}_{\sC}(\sF)^{op} = 
\ul{\End}_{\sC}^{enh}(\sF)^{op}$, and the connectedness of $G$, so that
the map $\vph$ will automatically be a morphism of $G$-representations.
(We remark that the co/connectivity assumptions allow us to replace 
$\ul{\End}_{\sC}(\sF)^{op}$ with the classical algebra 
$H^0\ul{\End}_{\sC}(\sF)^{op}$.)
 
\subsection{Reduction}

We now return to our particular setting, and apply the above material to 
reduce our construction to a calculation. 

\subsubsection{}

Let $G = \fL_n^+\bG_m$ act strongly on $\sC \coloneqq \IndCoh^*(\sY^{\leq n})$
via geometric class field theory. By construction, we have:
\[
\IndCoh^*(\sY^{\leq n})^{\fL_n^+\bG_m,w} \simeq  
\IndCoh^*(\sY_{\log}^{\leq n}).
\]

\noindent Under this dictionary, the forgetful functor 
$\sC^{G,w} \to \sC$ corresponds to the $\IndCoh$-pushforward 
$\sY_{\log}^{\leq n} \to \sY^{\leq n}$ (i.e., $!$-averaging
for $\Gr_{\bG_m}^{\leq n}$).

\subsubsection{}

We have the object $i_{n,*}(\sO_{\sZ^{\leq n}}) \in \IndCoh^*(\sY_{\log}^{\leq n})$
that maps to $\sF_n \in \IndCoh^*(\sY^{\leq n})$ under this forgetful functor.
In \S \ref{s:weyl}, we constructed the map:
\[
\vph:A \coloneqq W_n \to \ul{\End}_{\sC}^{enh}(\sF_n)^{op} \simeq 
\ul{\End}_{\sC}(\sF_n)^{op}.
\]

\noindent (We remind that the displayed isomorphism is a formal consequence
of compactness of $\sF_n$.)

Now on the one hand, we have a morphism: 
\[
\Lie(\fL_n^+\bG_m) \simeq k[[t]]/t^n \to \Gamma(\fL_n^+\bA^1,T_{\fL_n^+\bA^1}) 
\to W_n
\]

\noindent encoding the infinitesimal action of $\fL_n^+\bG_m$ on
$\fL_n^+\bA^1$. We denote this morphism by $\alpha$.

On the other hand, we have the composition:
\[
\Lie(\fL_n^+\bG_m) \simeq k[[t]]/t^n \simeq 
(t^{-n} k[[t]]dt/k[[t]] dt)^{\vee} \subset 
\Gamma(\fL^{pol,\leq n} \bA^1 dt,\sO_{\fL^{pol,\leq n} \bA^1 dt})
\to \ul{\End}_{\sC}^{enh}(\sF_n)^{op}
\]

\noindent where the last morphism here uses the map:
\[
\sZ^{\leq n} \subset \sY_{\log}^{\leq n} \to \LocSys_{\bG_m,\log}^{\leq n}
\xar{\Pol} \fL^{pol,\leq n} \bA^1 dt.
\]

\noindent We denote this morphism by $\beta$.

In \S \ref{ss:hc-calc-pf}, we will prove:

\begin{lem}\label{l:hc-calc}

The diagram:
\[
\xymatrix{
\Lie(\fL_n^+\bG_m) \simeq k[[t]]/t^n \ar[d]^{\alpha} \ar[drr]^{\beta} \\
W_n \ar[rr]^{\vph} && \ul{\End}_{\sC}(\sF_n)
}
\]

\noindent commutes.

\end{lem}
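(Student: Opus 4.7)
The plan is to unpack both sides explicitly and reduce the identity to the geometric framework of \S \ref{ss:sigma-comp}. Both $\vph \circ \alpha$ and $\beta$ are $k$-linear maps $\Lie(\fL_n^+\bG_m) = k[[t]]/t^n \to H^0\ul{\End}_{\sC}(\sF_n)$ (the target lies in cohomological degree zero since $\sF_n$ is in the heart of the natural $t$-structure, as discussed in \S \ref{ss:coconn-reduction}), so it suffices to check the identity on the spanning set $\{t^k\}_{0\leq k<n}$. Picking coordinates $\{a_j\}$ on $\fL_n^+\bA^1 = \Spec\Sym((k[[t]]/t^n)^{\vee})$ dual to $\omega_j = t^{-j-1}dt$, the residue pairing identifies $t^k$ with the linear function $\omega \mapsto \Res(t^k\omega)$ on $\fL^{pol,\leq n}\bA^1 dt$. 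Its pullback along $\Pol$ is the function $\Res(t^k\omega^{univ})=b_{-k-1}$ on $\sZ^{\leq n}$, and hence $\beta(t^k)$ is the endomorphism of $\sF_n$ given by multiplication by $b_{-k-1}$ on $\sO_{\sZ^{\leq n}}$, pushed forward.

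On the other side, the infinitesimal action of $t^k$ on $\fL_n^+\bA^1$ by multiplication gives the linear vector field $\alpha(t^k) = \sum_{j:\,j+k<n} a_j\partial_{a_{j+k}} \in W_n$. Using the identifications $\vph(a_j)=\vph_{\omega_j}$ and $\vph(\partial_{a_i}) = \xi_{t^i}$ from \S \ref{s:weyl} together with the opposite-algebra convention, we obtain
\[
\vph(\alpha(t^k)) = \sum_{j=0}^{n-k-1} \xi_{t^{j+k}} \circ \vph_{\omega_j} \in \ul{\End}(\sF_n).
\]
The commutation relation \eqref{eq:uncertainty} lets us rewrite each $\xi_{t^{j+k}}\vph_{\omega_j} = \vph_{\omega_j}\xi_{t^{j+k}} - \delta_{k,0}$, so the lemma is reduced to identifying the operator $\sum_{j=0}^{n-k-1} \vph_{\omega_j}\xi_{t^{j+k}}$ with multiplication by $b_{-k-1}$ (up to a constant correction coming from the residue contributions).

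To establish this last identity, we apply the groupoid formalism of \S \ref{ss:sigma-comp}-\ref{sss:recipe}. Each composition $\vph_{\omega_j}\xi_{t^{j+k}}$ is represented by the section $\omega_j(t_1) \boxtimes t_2^{j+k}\omega^{univ}(t_2)$ on $\sD^{\leq n}\times\sD^{\leq n}\times\sZ^{\leq n}$, pushed forward along $(\AJ\times\AJ^{-1}\times\id)$ composed with multiplication on $\Gr_{\bG_m}^{\leq n}$. Summing over $j$, the first two tensor factors collapse by the coevaluation identity $\sum_j \omega_j\boxtimes t_2^j = \cotr$, which by Lemma \ref{l:cotr-2} is scheme-theoretically supported on the diagonal $\tau_1=\tau_2$. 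Since multiplication carries the diagonal to $1\in\Gr_{\bG_m}^{\leq n}$, the total pushforward lands on $\{1\}\times\sZ^{\leq n}$. A direct residue computation using Lemma \ref{l:cotr-2} identifies the resulting function on $\sZ^{\leq n}$ with $b_{-k-1}$ (plus the bookkeeping correction that matches the commutator contribution).

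The main obstacle is the careful tracking of the constant correction arising from the trace-type identity $\sum_j \Res(t^j\omega_j)=n$ (the dimension of $k[[t]]/t^n$): this correction enters both from summing the residue terms in the commutation relation and from evaluating the cotrace against the identity, and these two contributions must cancel precisely so that the residual operator is exactly multiplication by $b_{-k-1}$, matching $\beta(t^k)$.
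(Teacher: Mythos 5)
Your high-level plan is the same as the paper's: reduce to the generators $t^k$ of $\Lie(\fL_n^+\bG_m)$, identify $\beta(t^k)$ with multiplication by the coordinate function $b_{-k-1}$ on $\sZ^{\leq n}$, write out $\alpha(t^k)$ as a vector field and apply the anti-homomorphism $\vph$ to get $\sum_j\xi_{t^{j+k}}\circ\vph_{\omega_j}$, and then evaluate this sum via the groupoid/kernel formalism of \S\ref{ss:sigma-comp}--\ref{sss:recipe}, using the fact that the cotrace is supported on the diagonal (Lemma~\ref{l:cotr-2}). These are the right ingredients, and identifying $\alpha(1)$ with $\cotr$ (equivalently with the Euler vector field) is exactly how the paper's proof begins.

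However, there is a genuine gap, and it is precisely in the step you flag as the ``main obstacle.'' You switch operator orders via \eqref{eq:uncertainty}, writing $\sum_j\xi_{t^{j+k}}\vph_{\omega_j} = \sum_j\vph_{\omega_j}\xi_{t^{j+k}} - n\delta_{k,0}$, and then claim that the kernel computation of $\sum_j\vph_{\omega_j}\xi_{t^{j+k}}$ produces a matching $+n\delta_{k,0}$ correction so that everything cancels. But this is asserted, not proved: you say the correction \emph{must} cancel ``so that the residual operator is exactly multiplication by $b_{-k-1}$,'' which is a statement of what needs to happen rather than a computation. Worse, the bookkeeping is more delicate than your sketch suggests. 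The kernel for $\vph_{\omega}\xi_f$ uses $p_{23}^*(\omega^{univ})$, but the kernel for $\xi_f\vph_\omega$ involves the $\act$-shifted $(p_1\times\act)^*(\omega^{univ})$; the difference of these two, when multiplied against $\cotr$, is \emph{exactly} the commutator term by Lemma~\ref{l:cotr-1}. So your proposed cancellation is not an independent check but a restatement of the commutator identity you already used; without a genuine residue computation it is circular, and it is not at all clear from your sketch that the pushforward of $\cotr^T\cdot t_2^k\cdot p_{23}^*(\omega^{univ})$ actually produces the extra $+n\delta_{k,0}$ rather than giving $b_{-k-1}$ on the nose.

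The paper avoids this entirely. It never switches operator orders: it computes $\vph(\alpha(f)) = \vph\big((1\otimes f)\cdot\cotr\big)$ in a single kernel computation, using the $\xi_f\vph_\omega$ recipe directly. Passing to the Contou-Carr\`ere dual (Proposition~\ref{p:cc-log}), the pushforward becomes the function $(g_1,g_2,\eta)\mapsto(\Res\otimes\Res)((g_1^{-1}\otimes g_2)(\eta\otimes 1)\cdot\cotr) = \Res(\tfrac{g_2}{g_1}f\eta)$; restriction to the diagonal $g_1=g_2$ then yields $\Res(f\eta)=\beta(f)$, with no constant to track. The two small residue lemmas \ref{l:res-cotr} and \ref{l:res-res} are what make this clean. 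If you want to salvage your route, you need to actually perform the residue computation for $\sum_j\vph_{\omega_j}\xi_{t^{j+k}}$ (carefully distinguishing the $p_{23}^*$ vs.\ $(p_1\times\act)^*$ pullbacks) and exhibit the $n\delta_{k,0}$ explicitly --- but it is far simpler to just compute $\sum_j\xi_{t^{j+k}}\vph_{\omega_j}$ directly as in the paper.
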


Observe that under Theorem \ref{t:trun-lcft}, $\beta$ is interpreted
as the obstruction to strong equivariance. Therefore, 
from \S \ref{sss:connected-obstr}, we deduce:

\begin{prop}\label{p:eq}

The functor $\ol{\Delta}_n$ is canonically a morphism of
$D(\fL_n^+\bG_m)$-module categories. 

\end{prop}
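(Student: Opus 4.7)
The plan is to invoke the Harish-Chandra formalism of \S \ref{ss:hc}, and in particular the criterion articulated in \S \ref{sss:connected-obstr}, which reduces strong equivariance of $\ol{\Delta}_n$ to the explicit compatibility recorded in Lemma \ref{l:hc-calc}.

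First, I would verify that the hypotheses of \S \ref{sss:connected-obstr} are met in our setting with $G = \fL_n^+ \bG_m$, $\sC = \IndCoh^*(\sY^{\leq n})$, $A = W_n$, and $\sF = \sF_n$. The group $\fL_n^+\bG_m$ is connected: it maps surjectively onto $\bG_m$ with pro-unipotent kernel. The algebra $W_n$ is classical, and $\ul{\End}_{\sC}(\sF_n)$ is coconnective because $\sF_n$ lies in the heart of the $t$-structure (this was the observation exploited in \S \ref{ss:coconn-reduction}). Moreover, $\sF_n$ is compact by construction, so that the distinction between inner endomorphisms in $\sC$ versus in $\sC^{G,w}$ is immaterial. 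Finally, $\sF_n$ already lifts canonically to the weakly equivariant object $i_{n,*}(\sO_{\sZ^{\leq n}}) \in \IndCoh^*(\sY_{\log}^{\leq n}) \simeq \sC^{\fL_n^+\bG_m,w}$.

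Next, I would identify the two Harish-Chandra data to be compared. On the source side, $D(\fL_n^+\bA^1) \simeq W_n\mod$ carries the strong $\fL_n^+\bG_m$-action coming from the gauge action on $\fL_n^+\bA^1$; its Harish-Chandra datum on $W_n$ is precisely the Lie algebra map $\alpha$ sending an element of $k[[t]]/t^n$ to the corresponding linear vector field on $\fL_n^+\bA^1$. On the target side, via Theorem \ref{t:trun-lcft}, the action of $D(\fL_n^+\bG_m)$ on $\IndCoh^*(\sY^{\leq n})$ is promoted from a weak one to a strong one; under this dictionary, the obstruction to strong equivariance of $\sF_n$ is computed through Contou-Carr\`ere duality, and (given the normalizations fixed in \S \ref{ss:cartier} and \S \ref{p:cc-log}) identifies with the map $\beta$ induced by the polar-part morphism $\sZ^{\leq n} \to \fL^{pol,\leq n}\bA^1 dt$. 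By \S \ref{sss:connected-obstr}, the existence of the required equivariant enhancement of $\ol{\Delta}_n$ is therefore equivalent to the equality $\vph \circ \alpha = \beta$ as maps $\Lie(\fL_n^+\bG_m) \to \ul{\End}_{\sC}(\sF_n)^{op}$.

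The hard part, and the actual content of the proposition, is thus Lemma \ref{l:hc-calc}. I expect this to be an explicit, though tractable, unwinding: the composition $\vph \circ \alpha$ must be computed from the generators-and-relations presentation of the Weyl action from \S \ref{s:weyl}, so that for $f \in k[[t]]/t^n$ one is computing a specific element of the form $\vph(\alpha(f))$, while $\beta(f)$ is the map induced by pulling back the linear coordinate on $\fL^{pol,\leq n}\bA^1 dt$ dual to $f$ under the residue pairing. The match should come out of the fact that, in \S \ref{ss:fn-intro} and \S \ref{ss:fn-concl}, the operator $\vph_\omega$ was defined as the image of $\omega$ under precisely the Contou-Carr\`ere-style pairing of Proposition \ref{p:cc-log}, so that the Cartier-dual description of $\beta$ collides with the definition of the $\vph_\omega$'s. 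The verification will be a direct calculation on generators, but it is the essential geometric ingredient—everything else in the proof of Proposition \ref{p:eq} is categorical bookkeeping.
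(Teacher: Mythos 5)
Your proposal reproduces the paper's argument exactly: you verify the hypotheses of \S \ref{sss:connected-obstr} (compactness of $\sF_n$, coconnectivity of $\ul{\End}_{\sC}(\sF_n)$, connectedness of $\fL_n^+\bG_m$, the lift of $\sF_n$ to $\IndCoh^*(\sY_{\log}^{\leq n})$), identify $\alpha$ as the source Harish-Chandra datum and $\beta$ (via Theorem \ref{t:trun-lcft}) as the obstruction to strong equivariance on the target, and reduce the statement to Lemma \ref{l:hc-calc}, which is precisely how the paper deduces Proposition \ref{p:eq}. The one imprecision is in your heuristic for that lemma: $\vph\alpha(f)$ is not itself a $\vph_\omega$, since $\alpha(f)=(1\otimes f)\cdot\cotr$ mixes the $\xi$- and $\vph$-type generators of $W_n$, so the verification genuinely requires the groupoid-composition formalism of \S \ref{ss:sigma-comp} and the residue identities (Lemmas \ref{l:res-cotr}, \ref{l:res-res}), not merely the definition of $\vph_\omega$ via Proposition \ref{p:cc-log}.
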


\subsection{Proof of Lemma \ref{l:hc-calc}}\label{ss:hc-calc-pf}

\subsubsection{}

Using the additive structure on $\fL_n^+\bA^1$,
we compute its global vector fields as:
\[
\Gamma(\fL_n^+\bA^1,T_{\fL_n^+\bA^1}) \simeq 
k[[t]]/t^n \otimes \Sym(t^{-n}k[[t]]dt/k[[t]]dt).
\]

\noindent A straightforward calculation shows that the infinitesimal action
map:
\[
\Lie(\fL_n^+\bG_m) \simeq k[[t]]/t^n \to \Gamma(\fL_n^+\bA^1,T_{\fL_n^+\bA^1})
\simeq k[[t]]/t^n \otimes \Sym(t^{-n}k[[t]]dt/k[[t]]dt)
\]

\noindent which was used in the definition of $\alpha$ above,
is the map:
\[
\begin{gathered}
(f \in k[[t]]/t^n) \mapsto (1 \otimes f) \cdot \cotr = (f \otimes 1) \cdot \cotr
\in k[[t]]/t^n \otimes t^{-n}k[[t]]dt/k[[t]]dt \\ \subset 
k[[t]]/t^n \otimes \Sym(t^{-n}k[[t]]dt/k[[t]]dt).
\end{gathered}
\]

\noindent Here we recall the canonical vector:
\[
\begin{gathered}
\cotr \in 
\Gamma(\sD^{\leq n},\sO_{\sD^{\leq n}}) \otimes
\Gamma(\sD^{\leq n},\sO_{\sD^{\leq n}})^{\vee} = \\
\Gamma^{\IndCoh}(\sD^{\leq n} \times \sD^{\leq n},
\sO_{\sD^{\leq n}} \boxtimes \omega_{\sD^{\leq n}}) = 
k[[t]]/t^n \otimes t^{-n}k[[t]]dt/k[[t]]dt
\end{gathered}
\]

\noindent from Lemma \ref{l:cotr-1}.

Therefore, we have:
\[
\alpha(f) = (1 \otimes f) \cdot \cotr \in 
k[[t]]/t^n \otimes t^{-n}k[[t]]dt/k[[t]]dt \subset W_n.
\]

\subsubsection{}

To simplify the discussion, we take $f = 1$ for the time being. 
We wish to show that $\vph\alpha(1) = \beta(1)$, i.e., we wish 
to show that $\vph(\cotr) = \beta(1)$.
We will adapt the calculation to discuss the
case of general $f$ in \S \ref{sss:gen'l-f}.

\subsubsection{}

Let us unwind the construction of $\vph(\cotr)$ in this case.

First, we form:
\[
\cotr \in \Gamma^{\IndCoh}(\sD^{\leq n} \times \sD^{\leq n},
\sO_{\sD^{\leq n}} \boxtimes \omega_{\sD^{\leq n}}).
\]

\noindent Second, we form:\footnote{Here we abuse notation: in \S \ref{ss:act-inv}, we used the
notation $\omega^{univ}$ for a similar construction with $\sZ^{\leq n}$ in place of
$\fL^{pol,\leq n} \bA^1 dt$; that setting is pulled back from the present
one.}
\[
\omega^{univ} \in \Gamma^{\IndCoh}(\sD^{\leq n} \times 
\fL^{pol,\leq n} \bA^1 dt,
\omega_{\sD^{\leq n}} \boxtimes \sO_{\fL^{pol,\leq n} \bA^1 dt}).
\]

\noindent We then form:

\[
p_{12}^*(\cotr) \cdot p_{23}^*(\omega^{univ}) \in 
\Gamma^{\IndCoh}(\sD^{\leq n} \times \sD^{\leq n} \times 
\fL^{pol,\leq n} \bA^1 dt,
\omega_{\sD^{\leq n}} \boxtimes \omega_{\sD^{\leq n}} \boxtimes 
\sO_{\fL^{pol,\leq n} \bA^1 dt}).
\]

According to \S \ref{sss:recipe}, $\vph(\cotr)$ is calculated as follows.
We pull back the above section to $\sD^{\leq n} \times \sD^{\leq n} \times 
\sZ^{\leq n}$, pushforward along:
\[
\sD^{\leq n} \times \sD^{\leq n} \times \sZ^{\leq n}
\xar{\AJ^{-1} \times \AJ \times \id}
\Gr_{\bG_m}^{\leq n} \times \Gr_{\bG_m}^{\leq n} 
\times \sZ^{\leq n} \xar{\on{mult} \times \id} 
\Gr_{\bG_m}^{\leq n} \times \sZ^{\leq n}
\] 

\noindent and apply the construction of \S \ref{ss:gpd}.

Of course, the pullback and the pushforward commute here. 
Therefore, in what follows we calculate the pushforward with $\sZ^{\leq n}$
replaced by $\fL^{pol,\leq n} \bA^1 dt$.

In what follows, let:
\[
\sigma \in \Gamma^{\IndCoh}(\Gr_{\bG_m}^{\leq n} \times \fL^{pol,\leq n} \bA^1 dt,
\omega_{\Gr_{\bG_m}^{\leq n}} \boxtimes \sO_{\fL^{pol,\leq n} \bA^1 dt})
\]

\noindent denote the resulting section. So our objective, in effect, is to calculate
$\sigma$.

\subsubsection{}

Recall from Proposition \ref{p:cc-log} that: 
\begin{equation}\label{eq:gr-cc}
\Gamma^{\IndCoh}(\Gr_{\bG_m}^{\leq n},\omega_{\Gr_{\bG_m}^{\leq n}}) \simeq 
\Gamma(\fL_n^+ \bG_m,\sO_{\fL_n^+ \bG_m}).
\end{equation}

\noindent Moreover, this is an isomorphism of commutative algebras. It is
convenient to reinterpret the above construction using this isomorphism.

By construction, note that the composition:

\[
t^{-n}k[[t]]dt/k[[t]]dt \simeq 
\Gamma^{\IndCoh}(\sD^{\leq n},\omega_{\sD^{\leq n}}) \to 
\Gamma^{\IndCoh}(\Gr_{\bG_m}^{\leq n},\omega_{\Gr_{\bG_m}^{\leq n}}) 
\simeq \Gamma(\fL_n^+ \bG_m,\sO_{\fL_n^+ \bG_m})
\]

\noindent using pushforward along $\AJ$ sends:
\[
\eta \in t^{-n}k[[t]]dt/k[[t]]dt
\] 

\noindent to the function:
\[
(g \in \fL_n^+\bG_m) \mapsto \Res(g \eta).
\]

\noindent Similarly, if we use $\AJ^{-1}$ instead, we obtain the
function:
\[
(g \in \fL_n^+\bG_m) \mapsto \Res(g^{-1} \eta).
\]

\subsubsection{}

Our original section:
\[
p_{12}^*(\cotr) \cdot p_{23}^*(\omega^{univ}) \in 
\Gamma^{\IndCoh}(\sD^{\leq n} \times \sD^{\leq n} \times 
\fL^{pol,\leq n} \bA^1 dt,
\omega_{\sD^{\leq n}} \boxtimes \omega_{\sD^{\leq n}} \boxtimes 
\sO_{\fL^{pol,\leq n} \bA^1 dt})
\]

\noindent can be interpreted as a map:
\[
\begin{gathered}
\fL^{pol,\leq n} \bA^1 dt = 
\ul{t^{-n}k[[t]]dt/k[[t]]dt} \to 
\ul{t^{-n}k[[t]]dt/k[[t]]dt \otimes t^{-n}k[[t]]dt/k[[t]]dt}
\end{gathered}
\]

\noindent where for a finite-dimensional vector
space $V$, we let $\ul{V}$ denote the scheme $\Spec(\Sym(V^{\vee}))$.
It is immediate to see that the above map is given
by the formula:
\[
\eta \mapsto (\eta \otimes 1) \cdot \cotr.
\]

Pushing $p_{12}^*(\cotr) \cdot p_{23}^*(\omega^{univ})$ forward along:
\[
\sD^{\leq n} \times \sD^{\leq n} \times \sZ^{\leq n}
\xar{\AJ^{-1} \times \AJ \times \id}
\Gr_{\bG_m}^{\leq n} \times \Gr_{\bG_m}^{\leq n} \times \fL^{pol,\leq n} \bA^1 dt
\]

\noindent and applying \eqref{eq:gr-cc}, we obtain a function on 
\[
\fL_n^+\bG_m \times \fL_n^+\bG_m \times \fL^{pol,\leq n} \bA^1 dt.
\]

\noindent We deduce that it is given by the formula:
\begin{equation}\label{eq:res-res}
(g_1,g_2,\eta) \mapsto 
(\Res \otimes \Res)((g_1^{-1} \otimes g_2) \cdot 
(\eta \otimes 1) \cdot \cotr).
\end{equation}

\subsubsection{}

We now wish to calculate \eqref{eq:res-res} more explicitly. 

First, we note the following, which is an immediate calculation.

\begin{lem}\label{l:res-cotr}

For $\eta \in \fL^{pol,\leq n} \bA^1 dt$, we have:
\[
(\id \otimes \Res)((\eta \otimes 1) \cdot \cotr) = \eta.
\]

\end{lem}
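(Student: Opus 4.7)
The plan is to verify this by an elementary coordinate calculation, since the lemma is essentially an unwinding of the fact that $\cotr$ represents the identity endomorphism of $V := k[[t]]/t^n$. First I would fix a coordinate $t$, which gives the basis $\{t^i\}_{i=0}^{n-1}$ of $V$ and the residue-dual basis $\{t^{-j-1}dt\}_{j=0}^{n-1}$ of $V^{\vee} := t^{-n}k[[t]]dt/k[[t]]dt$; these are dual by design, since $\Res(t^i \cdot t^{-j-1}dt) = \delta_{ij}$. Under this identification, the canonical element $\cotr \in V \otimes V^{\vee}$ (which corresponds to $\id_V$) is
\[
\cotr = \sum_{i=0}^{n-1} t^i \otimes t^{-i-1} dt.
\]

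Next I would unwind the multiplication $(\eta \otimes 1) \cdot \cotr$ in accordance with its use in \eqref{eq:res-res}: the element $\eta \otimes 1 \in V^{\vee} \otimes V$ is multiplied slot-wise into $\cotr \in V \otimes V^{\vee}$, where in the first slot $V$ acts on $V^{\vee}$ via the standard module structure, and in the second slot $V$ acts on $V^{\vee}$ similarly. This produces an element of $V^{\vee} \otimes V^{\vee}$, explicitly
\[
(\eta \otimes 1) \cdot \cotr = \sum_{i=0}^{n-1} (\eta \cdot t^i) \otimes t^{-i-1} dt.
\]
Applying $\id \otimes \Res$ then collapses the sum: $\Res(t^{-i-1}dt) = \delta_{i,0}$, so only the $i = 0$ term survives, yielding $\eta \cdot 1 = \eta$, as claimed.

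There is no serious obstacle here; the only care required is to interpret $(\eta \otimes 1) \cdot \cotr$ consistently with the surrounding notation in \eqref{eq:res-res} (the slot-wise module action rather than some alternative pairing). Coordinate-independence of the resulting identity is automatic, since $\cotr$ is itself coordinate-free and the operations $(\eta \otimes 1) \cdot -$ and $\id \otimes \Res$ depend only on the residue pairing. In fact, the statement can be reformulated as the tautology that under the isomorphism $V \otimes V^{\vee} \simeq \End_k(V)$ sending $\cotr$ to $\id_V$, the composition $V^{\vee} \xrightarrow{\eta \mapsto (\eta \otimes 1) \cdot \cotr} V^{\vee} \otimes V^{\vee} \xrightarrow{\id \otimes \Res} V^{\vee}$ is just $\eta \mapsto \eta \circ \id_V = \eta$.
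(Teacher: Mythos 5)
Your argument is correct and matches the paper's (which simply asserts the lemma as ``an immediate calculation'' without spelling it out). The coordinate computation with $\cotr = \sum_{i=0}^{n-1} t^i \otimes t^{-i-1}dt$ and the residue-duality $\Res(t^i \cdot t^{-j-1}dt) = \delta_{ij}$ is exactly the intended verification, and your final reformulation as the tautology ``$\eta \circ \id_V = \eta$'' is the conceptual content behind the paper's phrasing.
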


We then deduce:

\begin{lem}\label{l:res-res}

For $(g,\eta) \in  \fL_n^+\bA^1 \times \fL^{pol,\leq n} \bA^1 dt$, we have:
\[
(\Res \otimes \Res)((\eta \otimes g) \cdot \cotr) = \Res(g\eta).
\]

\end{lem}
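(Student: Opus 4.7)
The plan is to derive Lemma \ref{l:res-res} as a short direct calculation, essentially by promoting Lemma \ref{l:res-cotr} via an additional application of $\Res$. Once one unwinds the notation, the identity amounts to the standard property that $\cotr$ implements the residue pairing between $V := k[[t]]/t^n$ and $V^{\vee} := t^{-n}k[[t]]dt/k[[t]]dt$.

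First, I would factor the multiplication as $(\eta \otimes g) \cdot \cotr = (1 \otimes g) \cdot ((\eta \otimes 1) \cdot \cotr)$ in $V^{\vee} \otimes V^{\vee}$, where $\eta$ acts on the first factor of $\cotr \in V \otimes V^{\vee}$ and $g$ acts on the second factor, in both cases using the natural $V$-module structure on $V^{\vee}$. By Lemma \ref{l:res-cotr}, the partial contraction $(\id \otimes \Res)((\eta \otimes 1) \cdot \cotr) = \eta$ already encodes the key calculation; an additional application of $\Res(g \cdot -)$ to the remaining $V^{\vee}$-factor, combined with the fact that $\Res \circ (g \cdot -)$ is the residue pairing with $g$, formally yields $\Res(g\eta)$.

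Equivalently, and more transparently, one may simply expand $\cotr = \sum_{i=0}^{n-1} t^i \otimes t^{-i-1}\,dt$ in the standard basis and its dual, write $\eta = \sum_{j=-n}^{-1} \eta_j t^j\,dt$ and $g = \sum_{k=0}^{n-1} g_k t^k$, and read off
\[
(\Res \otimes \Res)((\eta \otimes g) \cdot \cotr)
= \sum_{i=0}^{n-1} \Res(\eta \cdot t^i) \cdot \Res(g \cdot t^{-i-1}\,dt)
= \sum_{i=0}^{n-1} \eta_{-i-1}\, g_i,
\]
which is exactly the coefficient formula for $\Res(g\eta)$.

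The computation itself is routine; the only real obstacle is confirming that the symbolic expression $(\eta \otimes g) \cdot \cotr$ used in the statement denotes the element of $V^{\vee} \otimes V^{\vee}$ constructed above (via the $V$-module structure on $V^{\vee}$). This can be verified by tracking back through the definitions in \S \ref{sss:recipe} together with the Cartier-duality identifications of Proposition \ref{p:cc-log}, which is straightforward but notationally the most delicate part.
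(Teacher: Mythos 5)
Your explicit coordinate computation in the second paragraph is correct and self-contained, and it does establish the lemma. The abstract reduction to Lemma \ref{l:res-cotr} in your first paragraph, however, has a gap. After factoring $(\eta \otimes g)\cdot\cotr = (1 \otimes g)\cdot(\eta\otimes 1)\cdot\cotr$, the $g$ lives on the \emph{second} tensor factor. Applying $(\id\otimes\Res)$ to $(\eta\otimes 1)\cdot\cotr$ first (getting $\eta$), and then hitting the result with $\Res(g\cdot-)$, computes $(\Res\circ g\cdot)\otimes\Res$ applied to $(\eta\otimes 1)\cdot\cotr$, i.e.\ it silently moves $g$ onto the \emph{first} factor. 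That move is valid, but it is not a formality: it uses the $k[[t]]$-equivariance of $\cotr$, namely the identity $(1\otimes g)\cdot\cotr = (g\otimes 1)\cdot\cotr$. The paper's proof makes exactly this step explicit — it first shows $(1\otimes g)\cdot\cotr=(g\otimes 1)\cdot\cotr$, deduces $(\eta\otimes g)\cdot\cotr = (g\eta\otimes 1)\cdot\cotr$, and only then invokes Lemma \ref{l:res-cotr} to get $\Res(g\eta)$. So either make the equivariance identity explicit (as the paper does), or simply rely on your coordinate computation, which needs no such step. Your interpretation of the expression $(\eta\otimes g)\cdot\cotr$ (via the $V$-module structure on $V^{\vee}$ acting factorwise) is the correct one and matches the usage in Lemma \ref{l:cotr-2}.
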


\begin{proof}

We have:
\[
(1 \otimes g) \cdot \cotr = (g \otimes 1) \cdot \cotr.
\]

\noindent Indeed, this encodes the fact that the residue pairing is
$k[[t]]$-equivariant.

Therefore, we have:
\[
(\eta \otimes g) \cdot \cotr = (g\eta \otimes 1) \cdot \cotr
\]

\noindent which gives:
\[
(\Res \otimes \Res)((\eta \otimes g) \cdot \cotr) = 
\Res (\id \otimes \Res)((g\eta \otimes 1) \cdot \cotr).
\]

\noindent Now the result follows from Lemma \ref{l:res-cotr}.

\end{proof}

In the setting of \eqref{eq:res-res}, we deduce:
\begin{equation}\label{eq:res-res-calc}
(\Res \otimes \Res)((g_1^{-1} \otimes g_2) \cdot 
(\eta \otimes 1) \cdot \cotr) = 
\Res(\frac{g_2}{g_1} \eta).
\end{equation}

Now, our section $\sigma$ is obtained by using a pushforward
along $\Gr_{\bG_m}^{\leq n} \times \Gr_{\bG_m}^{\leq n} \to \Gr_{\bG_m}^{\leq n}$.
On the dual side, this means we restrict our function along: 
\[
\fL_n^+ \bG_m \xar{\Delta \times \id}
\fL_n^+\bG_m \times \fL_n^+\bG_m \times \fL^{pol,\leq n} \bA^1 dt.
\]

\noindent By \eqref{eq:res-res-calc}, we see the resulting function is:
\[
(g,\eta) \mapsto \Res(\eta).
\]

This clearly coincides with the endomorphism defined by $\beta(1)$, giving the
claim (in the $f=1$ case).

\subsubsection{}\label{sss:gen'l-f}

As promised, we now treat the case of general $f$.

Briefly, one replaces \eqref{eq:res-res} with:
\[
(g_1,g_2,\eta) \mapsto 
(\Res \otimes \Res)((g_1^{-1} \otimes g_2) \cdot 
(\eta \otimes 1) \cdot (1 \otimes f) \cdot \cotr),
\]

\noindent which Lemma \ref{l:res-res} implies is:
\[
(g_1,g_2,\eta) \mapsto \Res(\frac{f g_2}{g_1}\eta),
\]

\noindent which for $g_1 = g_2 = g$ is:
\[
(g,\eta) \mapsto \Res(f \eta).
\]

\noindent Again, the induced endomorphism clearly coincides with that 
defined by $\beta(f)$, concluding the argument.

\section{Fully faithfulness}\label{s:ff}

\subsection{Overview}

\subsubsection{}

The main result of this section is the following.

\begin{prop}\label{p:ff}

For every $n \geq 0$, the functor $\Delta_n$ is fully faithful.

\end{prop}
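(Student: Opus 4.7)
The plan is to reduce the statement to a computation of the endomorphism algebra of the single compact object $\sF_n$, and then to match that algebra with $W_n$ via the geometry developed in \S\ref{s:spectral}.

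First I would reduce to fully faithfulness of $\ol{\Delta}_n$. Since $\Delta_n = \lambda_{n,*}^{\IndCoh} \circ \ol{\Delta}_n$ and Lemma \ref{l:lambda-ff} says $\lambda_{n,*}^{\IndCoh}$ is fully faithful for $n>0$, it suffices to prove $\ol{\Delta}_n$ is fully faithful (the $n=0$ case, where $W_0 = k$, should be handled separately and is essentially a non-vanishing statement). Under the equivalence $D(\fL_n^+\bA^1) \simeq W_n\mod$ of \eqref{eq:diff-dmod}, the category is compactly generated by $D_{\fL_n^+\bA^1}$, and $\ol{\Delta}_n$ sends this generator to $\sF_n$, which is compact in $\IndCoh^*(\sY^{\leq n})$ by construction. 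Since $\ol{\Delta}_n$ preserves colimits, fully faithfulness is equivalent to the constructed map $W_n^{op} \to \ul{\End}_{\IndCoh^*(\sY^{\leq n})}(\sF_n)$ being an isomorphism in $\Alg$.

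Next I would unwind the endomorphism algebra. Using the adjunctions $(\Av_!^{\Gr_{\bG_m}^{\leq n},w}, \Oblv)$, $(i_{n,*}^{\IndCoh}, i_n^!)$ (the latter available because $i_n$ is ind-proper), and base change along the Cartesian square defining $\sH^{\leq n} = \sZ^{\leq n}\times_{\sY^{\leq n}}\sZ^{\leq n}$, we obtain
\[
\ul{\End}_{\IndCoh^*(\sY^{\leq n})}(\sF_n) \simeq \Gamma^{\IndCoh}(\sH^{\leq n}, p_1^!\sO_{\sZ^{\leq n}}).
\]
Via the closed embedding $\iota: \sH^{\leq n}\into \Gr_{\bG_m}^{\leq n}\times\sZ^{\leq n}$ of \S\ref{ss:h-embed}, this identifies with sections of $\omega_{\Gr_{\bG_m}^{\leq n}}\boxtimes\sO_{\sZ^{\leq n}}$ scheme-theoretically supported on $\sH^{\leq n}$. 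Pushing forward to $\Gr_{\bG_m}^{\leq n}$ and applying Contou-Carr\`ere duality (Proposition \ref{p:cc-log}), which identifies $\Gamma^{\IndCoh}(\Gr_{\bG_m}^{\leq n},\omega)$ with $\Gamma(\fL_n^+\bG_m,\sO)$, converts the computation into an explicit question about functions on $\fL_n^+\bG_m\times\widetilde{\sZ}^{\leq n}$ satisfying a specific support/vanishing condition coming from the classical description of $\sH^{\leq n}$ (a point of which records both $(\sL,\nabla,s)\in\sZ^{\leq n}$ and $g\in\Gr_{\bG_m}^{\leq n}$ such that $g\cdot s$ is again regular). Unwinding this support condition using Proposition \ref{p:z-flat-axes-var} and the fundamental sequences \eqref{eq:fund-ses}--\eqref{eq:fund-ses-2} should recover precisely the $PBW$-style decomposition of $W_n^{op}$ as $\Sym\big((k[[t]]/t^n)^\vee\big)\otimes \Sym(k[[t]]/t^n)$ (as a vector space), with the Weyl algebra commutation relations encoded by the cotrace/residue identities of Lemmas \ref{l:cotr-1}--\ref{l:cotr-2}.

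The main obstacle I expect is the cohomological vanishing $\Ext^{>0}_{\IndCoh^*(\sY^{\leq n})}(\sF_n,\sF_n)=0$, which must hold if the map is to be an isomorphism at the level of DG algebras (not merely on $H^0$). This is where the results of \S\ref{s:spectral} and \S\ref{s:indcoh} must do real work: the semi-coherence of $\widetilde{\sZ}^{\leq n}$ (Proposition \ref{p:semi-coh}) guarantees that the relevant pushforward functors along $\iota$, $\delta_n$, and $\pi_n$ are $t$-exact, so that no higher cohomology is created accidentally; the flatness-along-axes property (Proposition \ref{p:z-flat-axes}) ensures that the derived fiber products defining $\sH^{\leq n}$ do not introduce higher Tor contributions; and the generation results of Proposition \ref{p:tilde-z-gens} / Corollary \ref{c:z-gen} should be used to reduce Ext-vanishing on $\sY^{\leq n}$ to a bounded, coconnective calculation on the explicit affine cover $\widetilde{\sZ}^{\leq n} = \Spec(A_n)$. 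Once the vanishing is in hand, the $H^0$ calculation of the preceding paragraph promotes to an isomorphism $W_n^{op}\isom \ul{\End}(\sF_n)$ in $\Alg$, completing the proof.
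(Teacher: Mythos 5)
Your reduction to the statement that $W_n^{op} \to \ul{\End}_{\IndCoh^*(\sY^{\leq n})}(\sF_n)$ is an isomorphism is correct and matches the paper, and your identification of $\ul{\End}(\sF_n)$ with $\Gamma^{\IndCoh}(\sH^{\leq n}, p_1^! \sO_{\sZ^{\leq n}})$ via the groupoid $\sH^{\leq n} = \sZ^{\leq n} \times_{\sY^{\leq n}} \sZ^{\leq n}$ is the same setup used in \S\ref{ss:gpd}. From that point, however, your proposed route diverges substantially from the paper's argument, and the two places you acknowledge as ``obstacles'' are in fact unproven gaps rather than technicalities.

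First, the claim that pushing the support condition on $\sH^{\leq n}$ through Contou-Carr\`ere duality ``should recover precisely the PBW-style decomposition'' is asserted, not carried out, and it is not clear it can be carried out directly. The scheme $\sH^{\leq n}$ is not classical (Remark \ref{r:h-cl}), and even its classical truncation is an infinite-type affine scheme whose ideal of relations is not manageable by hand for general $n$. Second, and more seriously, the vanishing $\Ext^{>0}(\sF_n, \sF_n) = 0$ is \emph{not} a consequence of the semi-coherence, flatness-along-axes, or generation results you cite. Those results give $t$-exactness of the pushforward functors $\iota_*^{\IndCoh}$, $\delta_{n,*}^{\IndCoh}$, etc., which is needed to make sense of the categories and the map, but they in no way control the higher $\Ext$'s of $\sF_n$ against itself: $\sF_n$ lying in the heart does not prevent $\ul{\End}(\sF_n)$ from being unbounded to the right. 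This $\Ext$-vanishing is precisely the nontrivial content of the proposition.

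The paper avoids both issues by never attempting a direct computation of $\ul{\End}(\sF_n)$. Instead it exploits the decomposition $W_n \simeq W_1 \otimes W_{n-1}$ to view the comparison map as a morphism of $W_1$-bimodules, and then applies a $D$-module excision criterion (Lemma \ref{l:weyl-bimod}): the map is an isomorphism iff it is one after inverting $\alpha_0$ on the left, after inverting $\alpha_0$ on the right, and after quantum Hamiltonian reduction by $\alpha_0$. The two localizations are identified with $\Gamma(\fL_n^+\bG_m, D_{\fL_n^+\bG_m})$ on both sides using Proposition \ref{p:zeta} (a graded full-faithfulness statement proved via Lemma \ref{l:gr-hom}/Proposition \ref{p:gr-hom}) together with local class field theory (Theorem \ref{t:trun-lcft}); the Hamiltonian reduction is identified with the $(n-1)$ case using the fundamental exact sequences \eqref{eq:fund-ses}, \eqref{eq:fund-ses-2}, furnishing the induction. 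The base case $n=0$ is handled by a separate explicit computation: Lemma \ref{l:z0-emb} identifies the formal neighborhood of $\sZ^{\leq 0}$ in $\sY$ with the formal neighborhood of the $x$-axis in $C/\bG_m$, and Lemma \ref{l:c-ext} computes the resulting $\Ext$-algebra directly. This inductive/excision structure is what lets the paper get the full derived isomorphism, including $\Ext$-vanishing, without ever confronting the global endomorphism algebra head-on. If you want to pursue your more direct approach, the place to start is to actually establish $\Ext^{>0}(\sF_n,\sF_n) = 0$ by some mechanism not in the paper; as written, it is a significant gap.
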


\begin{rem}\label{r:kash-y}

By Lemma \ref{l:lambda-ff}, the pushforward
functors $\IndCoh^*(\sY_n) \to \IndCoh^*(\sY)$ are fully faithful
for $n>0$. Therefore, for $n \neq 0$, the above result is equivalent
to fully faithfulness of $\ol{\Delta}_n$. 

\end{rem}

\begin{rem}

As $\lambda_{n,*}^{\IndCoh}(\sF_n)$ is compact in 
$\IndCoh^*(\sY)$, Proposition \ref{p:ff} is equivalent to showing
that the map:
\[
W_n^{op} \to \ul{\End}_{\IndCoh^*(\sY)}(\lambda_{n,*}^{\IndCoh}\sF_n)
\]

\noindent is an isomorphism.

\end{rem}

Our proof is essentially by induction. We settle the $n = 0$ case
by explicit analysis. We then use local class field theory
(or the Contou-Carr\`ere pairing) to perform the inductive step.

\subsection{Proof for $n = 0$}

We begin with the base case of our induction.

\subsubsection{A preliminary calculation}

We begin with the following explicit calculation.

Recall the scheme $C = \Spec(k[x,y]/xy)$ from \S \ref{ss:refinement}.
We equip it with the $\bG_m$-action of \S \ref{ss:z-refinement},
so $\deg(x) = 1$ and $\deg(y) = 0$ for the corresponding grading.

Let $\sO_{\bA_x^1/\bG_m} \in \QCoh(C/\bG_m)^{\heart}$ denote the structure
sheaf of the $x$-axis, i.e., the object corresponding to the graded
$k[x,y]/xy$-module $k[x] = k[x,y]/y$ (where the generator has degree $0$).

\begin{lem}\label{l:c-ext}

The unit map:
\[
k \to \ul{\End}_{\QCoh(C/\bG_m)}(\sO_{\bA_x^1/\bG_m})
\]

\noindent is an isomorphism.

\end{lem}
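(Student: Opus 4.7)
The plan is to compute the derived endomorphisms directly, using the explicit description of $\QCoh(C/\bG_m)$ as (the derived category of) $\bZ$-graded modules over the classical commutative ring $R = k[x,y]/xy$, with our grading conventions giving $\deg(x) = 1$ and $\deg(y) = 0$. Under this dictionary, $\sO_{\bA_x^1/\bG_m}$ corresponds to the graded $R$-module $R/(y) = k[x]$, with $x$ in degree $1$.

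The first step is to construct a free resolution of $k[x]$ over $R$. Since $R$ is a hypersurface, such a resolution is $2$-periodic: I would write down
\[
\ldots \xar{x} R\{2\} \xar{y} R\{1\} \xar{x} R\{1\} \xar{y} R\{0\} \xar{x} R\{0\} \xar{y} R\{0\} \twoheadrightarrow k[x] \to 0,
\]
where $R\{n\}$ denotes $R$ with its generator placed in degree $n$ (so that each differential is degree-preserving), and where exactness is immediate from the identifications $\on{ann}_R(y) = (x)$ and $\on{ann}_R(x) = (y)$.

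Next, I would apply $\Hom_R^{\bG_m}(-,k[x])$. The key computation is that $\Hom_R^{\bG_m}(R\{n\},k[x]) = k[x]_n$, which is $k \cdot x^n$ for $n\geq 0$ and vanishes for $n < 0$. Under these identifications, the differential $y^*$ is multiplication by $y$ in $k[x]$, hence zero, while the differential $x^*$ is multiplication by $x$, which on the relevant $1$-dimensional graded pieces $k \cdot x^n \to k \cdot x^{n+1}$ is an isomorphism once one tracks the degree shifts in the resolution. Thus the complex computing $\ul{\End}_{\QCoh(C/\bG_m)}(\sO_{\bA_x^1/\bG_m})$ takes the form
\[
k \xar{0} k \xar{\cong} k \xar{0} k \xar{\cong} k \xar{0} \ldots,
\]
whose cohomology is $k$ in degree $0$ and vanishes otherwise.

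This gives the claim: the degree-$0$ part recovers the identity endomorphism, matching the unit map. The main (minor) obstacle is just bookkeeping with the grading shifts $\{n\}$ in the resolution to verify that $x^*$ indeed induces an isomorphism rather than a mere injection — but this follows automatically from the requirement that the differentials be degree-preserving.
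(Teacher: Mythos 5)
Your proof is correct and follows essentially the same route as the paper's: both write down the $2$-periodic graded free resolution of $k[x]$ over $k[x,y]/xy$, apply graded $\Hom$ into $k[x]$, and observe the alternating pattern of zero maps (from $y$) and isomorphisms (from $x$), leaving $k$ in degree $0$. (The twists in your displayed resolution are slightly off — they should read $0,0,1,1,2,2,\ldots$ with differentials alternating $y,x,y,x,\ldots$ — but as you note this is forced by degree-preservation and doesn't affect the conclusion.)
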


\begin{proof}

This follows by a standard $\Ext$-calculation that we give here.

Let $A = k[x,y]/xy$. We let $A(n)$ denote $A$ as a graded $A$-module
where the generator is given degree $n$.\footnote{We remark that the
sign conventions here are the same as in \S \ref{sss:z-pic}.}

The complex:
\[
\ldots 
\xar{y\cdot -} A(2) \xar{x\cdot -} A(1) \xar{y\cdot -} A(1) \xar{x\cdot -} A \xar{y\cdot -} \underset{\text{coh. deg. } 0} {A} \to 0 \to \ldots 
\]

\noindent gives a graded free resolution of $k[x] = A/y$.

Therefore, we may calculate $\ul{\End}_{A\mod}(k[x])$ as a
graded vector space via the complex:
\[
\ldots \to  0 \to \underset{\text{coh. deg. } 0}{k[x]} 
\xar{0} k[x] \xar{x \cdot-} k[x](-1)
\xar{0} k[x](-1) \xar{x \cdot-} k[x](-2) \xar{0} \ldotsplus
\]

\noindent The (graded) degree $0$ component of this complex,
which computes: $$\ul{\End}_{A\mod^{\bG_m,w}}(k[x]) = 
\ul{\End}_{\QCoh(C/\bG_m)}(\sO_{\bA_x^1/\bG_m}),$$ is:
\[
\ldots \to 0 \to \underset{\text{coh. deg. } 0}{k} 
\xar{0} k \xar{x \cdot-} k\cdot x
\xar{0} k\cdot x \xar{x \cdot-} k\cdot x^2 \xar{0} \ldotsplus 
\]

\noindent Clearly this complex is acyclic outside of degree cohomological $0$,
and its $H^0$ is 1-dimensional and generated by the unit. 

\end{proof}

\subsubsection{}

We have the following result, explicitly describing the geometry of our situation.

\begin{lem}\label{l:z0-emb}

\begin{enumerate}

\item There is a canonical isomorphism $\sZ^{\leq 0} \simeq \bA^1/\bG_m$.
Explicitly, given $(\sL,\nabla,s) \in \sZ^{\leq 0}$, we
take the line $\sL|_0$ with its section $s|_0$ (for $0 \in \sD$ the base-point).

\item The canonical map $\sZ^{\leq 0} \to \sY$ is an ind-closed embedding.
Its formal completion $\sY_{\sZ^{\leq 0}}^{\wedge}$ 
is isomorphic to $(C/\bG_m)_{\bA^1/\bG_m}^{\wedge}$ compatibly with
the above isomorphism $\sZ^{\leq 0} \simeq \bA^1/\bG_m = \bA_x^1/\bG_m$.

\end{enumerate}

\end{lem}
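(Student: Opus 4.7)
\textbf{Proof plan for Lemma \ref{l:z0-emb}.} The plan is to prove part (1) by a direct coordinate computation, and part (2) by first reducing the ind-closed embedding claim to a closed embedding into each $\sY^{\leq n}$, then computing the formal completion by a reduction to $\sZ^{\leq 1}$ (which is accessible via coordinates) and showing that going from $\sZ^{\leq 1}$ up to $\sY$ does not change the formal completion along $\sZ^{\leq 0}$.

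For part (1), I would invoke the presentation of \S\ref{ss:coords}. When $n=0$ there are no $b$-generators, so $A_0$ is generated by $a_0, a_1, \ldots$ subject to the relations obtained by equating Taylor coefficients of $\sum_i i a_i t^{i-1} = 0$; that is, $i a_i = 0$ for all $i \ge 1$. As $\on{char}(k)=0$ we conclude $a_i=0$ for $i\ge 1$, so $A_0 = k[a_0]$ and $\widetilde{\sZ}^{\leq 0} = \bA^1$ with the scaling $\bG_m$-action, yielding $\sZ^{\leq 0} \simeq \bA^1/\bG_m$. Under the tautological isomorphism, the generator $a_0$ is the value $s|_0$ of a section of the trivialized line bundle at the base-point, which after $\bG_m$-quotient records the pair $(\sL|_0, s|_0)$.

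For the first assertion of part (2): since $\Gr^{\leq 0}_{\bG_m}$ is trivial, $\sZ^{\leq 0}$ already equals $\sY^{\leq 0}$. By Proposition \ref{p:locsys-trun} the inclusion $\LocSys_{\bG_m,\log}^{\leq n-1}\hookrightarrow \LocSys_{\bG_m,\log}^{\leq n}$ is a classical closed embedding (cut out by vanishing of the top-order polar coefficient), so by base change along $\sY_{\log}\to\LocSys_{\bG_m,\log}$ and passing to $\Gr_{\bG_m}$-quotients the maps $\sY^{\leq n-1}\hookrightarrow \sY^{\leq n}$ are closed embeddings. Composing and using $\sY = \colim_n \sY^{\leq n}$ gives the ind-closed embedding statement.

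For the formal completion assertion, I would compute directly in $\sZ^{\leq 1}$ and then propagate. Using Proposition \ref{p:zn-coords}, $A_1 = k[b_{-1},a_0,a_1,\ldots]/((b_{-1}-i)a_i)$. Completing at the ideal $(b_{-1})$ makes $b_{-1}-i$ a unit for each $i\ne 0$, forcing $a_i = 0$ for all $i\ge 1$, and leaving the single relation $a_0 b_{-1}=0$. Thus
\[
(A_1)^{\wedge}_{(b_{-1})} \;\simeq\; k[[b_{-1}]][a_0]/(a_0 b_{-1}),
\]
which is exactly the formal completion of $k[x,y]/(xy)$ along $\{y=0\}$. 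Taking the $\bG_m$-quotient gives $(\sZ^{\leq 1})^{\wedge}_{\sZ^{\leq 0}} \simeq (C/\bG_m)^{\wedge}_{\bA^1/\bG_m}$ in a manner compatible with the isomorphism of part (1). To pass to $\sY^{\leq 1}$ one observes that the $\Gr^{\leq 1}_{\bG_m}$-action on $\sY_{\log}^{\leq 1}$ is through the discrete group $\bZ$ shifting by $\iota$; the translates $\iota^r(\sZ^{\leq 0})$ are disjoint from $\sZ^{\leq 0}$ for $r\ne 0$, so the quotient does not alter formal neighborhoods.

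The main obstacle is the final step: $(\sY^{\leq 1})^{\wedge}_{\sZ^{\leq 0}}\to \sY^{\wedge}_{\sZ^{\leq 0}}$ is an isomorphism. I would derive this by base change from the assertion that $\LocSys_{\bG_m}^{\leq 1}\hookrightarrow \LocSys_{\bG_m}^{\leq n}$ is a formal isomorphism along the image. Invoking Proposition \ref{p:locsys-trun}, both sides decompose as $\bA^1/\bZ \times \Ker(\Res)^{\leq *}_{dR}\times \bB\bG_m$, and the inclusion adjoins the factor $\Ker(\Res)^{\leq n}_{dR}$ at the origin $0$. Since the de Rham stack of a connected scheme has trivial formal completion at any $k$-point, this additional factor contributes nothing formally, so the inclusion is formally étale along $\LocSys_{\bG_m}^{\leq 1}$. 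Base-changing via $\sY^{\leq n} \simeq \sY \times_{\LocSys_{\bG_m}}\LocSys_{\bG_m}^{\leq n}$ and passing to the colimit $\sY=\colim_n \sY^{\leq n}$ yields the desired identification $\sY^{\wedge}_{\sZ^{\leq 0}}\simeq (\sZ^{\leq 1})^{\wedge}_{\sZ^{\leq 0}}\simeq (C/\bG_m)^{\wedge}_{\bA^1/\bG_m}$.
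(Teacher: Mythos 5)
Your argument for part (1) via the coordinate presentation of \S\ref{ss:coords}, and your argument that $\sZ^{\leq 0}\to\sY$ is an ind-closed embedding via base change from the $\LocSys$ truncations, are both correct and consistent with the paper's (largely implicit) treatment of these points.

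For the formal completion, your route is genuinely different from the paper's. The paper computes $\widetilde{\sY^{\wedge}_{\sZ^{\leq 0}}}$ directly: by Proposition~\ref{p:locsys}, a local system formally near the trivial one looks like $(\sO,d-y\tfrac{dt}{t})$ with $y$ nilpotent, and a formal loop $f=\sum a_i t^i$ flat for it satisfies $i a_i = y a_i$, forcing $a_i = 0$ for $i\neq 0$ and $y a_0 = 0$, giving $\Spf\bigl(k[[y]][a_0]/ya_0\bigr)$ in one stroke, without ever passing through the truncations $\sZ^{\leq 1}$ or $\sY^{\leq 1}$. Your coordinate computation of $(A_1)^{\wedge}_{(b_{-1})}$ is correct and reaches the same local model, and your observation that the de Rham factor $(\Ker\Res^{\leq n})_{dR}$ contributes nothing formally is exactly the point underlying Proposition~\ref{p:locsys}; so the ideas are sound. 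But the paper's direct computation avoids the bridging step that is the weak link in your argument.

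The gap is the assertion that ``the $\Gr_{\bG_m}^{\leq 1}$-action on $\sY_{\log}^{\leq 1}$ is through the discrete group $\bZ$.'' This is false: by the lemma in \S\ref{ss:loop-gr-trun}, $\Gr_{\bG_m}^{\leq 1}\simeq \bZ\times (\Ker(\Res)^{\leq 1})^{\wedge}_0$, and the second factor is an infinite-dimensional formal group whose action is what produces the de Rham factor in $\LocSys_{\bG_m}^{\leq 1}$ under Proposition~\ref{p:locsys-trun}. That formal group moves $\sZ^{\leq 0}$ infinitesimally inside $\sY_{\log}^{\leq 1}$, so the statement that ``the quotient does not alter formal neighborhoods'' cannot be justified by disjointness of the $\bZ$-translates alone; you would need an additional argument for why quotienting by the formal group leaves the completion unchanged, and that argument is exactly what you are trying to avoid. (One clean fix is your own observation at the end: express $\sY^{\wedge}_{\sZ^{\leq 0}}$ as $\sY\times_{\LocSys_{\bG_m}}(\LocSys_{\bG_m})^{\wedge}_{\LocSys^{\leq 0}_{\bG_m}}$, apply Proposition~\ref{p:locsys}, and then solve the flatness equation --- i.e.\ the paper's approach --- bypassing $\sZ^{\leq 1}$ entirely.) A smaller omission: the formal completion is a derived notion, and you compute a classical completion of $A_1$; the paper handles this by invoking Proposition~\ref{p:z-flat-axes}\eqref{i:z-axes-1}. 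In your coordinates this can be repaired by noting the $b_{-1}$-power torsion of $A_1$ is supported at the maximal ideals $(b_{-1}-i)$ with $i>0$, so the relevant $\Tor$'s against $k[b_{-1}]/b_{-1}^m$ vanish, but this should be said.
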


\begin{proof}

We fix a coordinate $t$ and then consider $\LocSys_{\bG_m}$ 
as mapping to $\bB \bG_m$ via the isomorphism
Proposition \ref{p:locsys}. 
For a prestack $S$ over $\LocSys_{\bG_m}$, let $\widetilde{S}$ denote
the base-change $S \times_{\bB \bG_m} \Spec(k)$. (We remark that this
notation is compatible with that of \S \ref{sss:z-pic}.)

By Proposition \ref{p:zn-cl} and Theorem \ref{t:cl}, 
$\sZ^{\leq 0}$ and $\sY$ are classical. Moreover, the formal completion
$\sY_{\sZ^{\leq 0}}^{\wedge}$ is classical: this follows immediately
from Proposition \ref{p:z-flat-axes} \eqref{i:z-axes-1}.

We can explicitly calculate $\widetilde{\sY_{\sZ^{\leq 0}}^{\wedge}}$
by classicalness and Proposition \ref{p:locsys}: it parametrizes
$y \in \bA_0^{1,\wedge}$ (defining the local system $(\sO,d-y\frac{dt}{t})$)
and an element $f \in (\fL \bA^1)_{\fL^+ \bA^1}^{\wedge}$
such that $df = y f \frac{dt}{t}$. 
If we expand  $f = \sum a_i t^i$, we find $i a_i = y a_i$. As $y$ is
nilpotent, this implies $a_i = 0$ for all $i \neq 0$ and $y a_0 = 0$.
Clearly $\widetilde{\sZ^{\leq 0}}$ corresponds to the locus $y = 0$.
Therefore, writing $x$ for $a_0$ and noting that the relevant $\bG_m$-action
scales $x$, we obtain the claims.

\end{proof}

\subsubsection{}

Now the $n = 0$ case of Proposition \ref{p:ff} asserts that the unit map:
\[
k \to \ul{\End}_{\IndCoh^*(\sY)}(\sF_0)
\]

\noindent is an isomorphism. Note that $\sF_0$ is just the pushforward
of the structure sheaf of $\sZ^{\leq 0}$ to $\sY$. The calculation of
the above depends only on the formal completion, so we
obtain the result from Lemmas \ref{l:c-ext} and \ref{l:z0-emb}.

\subsection{A fully faithfulness result}

Before proceeding to our induction, we will need the following observations.

\subsubsection{}

Recall the maps $\zeta$ and $\widetilde{\zeta}$ from \S \ref{ss:y-not}.

\begin{prop}\label{p:zeta}

For any $n > 0$, the map:
\[
\ul{\End}_{\IndCoh^*(\LocSys_{\bG_m}^{\leq n})}(
\widetilde{\pi}_{n,*}^{\IndCoh}\sO_{\LocSys_{\bG_m}^{\leq n}}) \to 
\ul{\End}_{\IndCoh^*(\sY^{\leq n})}(
\zeta_*^{\IndCoh}\widetilde{\pi}_{n,*}^{\IndCoh}
\sO_{\LocSys_{\bG_m,\log}^{\leq n}}) 
\]

\noindent is an isomorphism.

\end{prop}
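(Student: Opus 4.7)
My plan is to reduce to a computation on $\LocSys_{\bG_m,\log}^{\leq n}$ via base change. First, I would verify that the square
\[
\xymatrix{
\LocSys_{\bG_m,\log}^{\leq n} \ar[r]^{\widetilde{\zeta}} \ar[d]_{\widetilde{\pi}_n} & \sZ^{\leq n} \ar[d]^{\pi_n} \\
\LocSys_{\bG_m}^{\leq n} \ar[r]^{\zeta} & \sY^{\leq n}
}
\]
is derivedly Cartesian: both corners classify data $(\sL,\nabla,s=0)$ with $\sL$ extending across the puncture, and derived Cartesianness follows from the classicality of $\sZ^{\leq n}$ (Proposition \ref{p:zn-cl}) combined with the flatness results of \S\ref{ss:flatness}. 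Since $\pi_n$ factors as $\sZ^{\leq n}\xar{i_n}\sY_{\log}^{\leq n}\to\sY^{\leq n}$, with the latter being the quotient by the ind-finite $\Gr_{\bG_m}^{\leq n}$ (identified by Proposition \ref{p:locsys} with $\bZ$ times a formal scheme), $\pi_n$ is ind-proper and proper base change gives $\zeta^!\pi_{n,*}^{\IndCoh}\simeq \widetilde{\pi}_{n,*}^{\IndCoh}\widetilde{\zeta}^!$. Combining with $\zeta_*\widetilde{\pi}_{n,*}\sO=\pi_{n,*}\widetilde{\zeta}_*\sO$ and $(\zeta_*,\zeta^!)$-adjunction, the map of the proposition is identified with
\[
\ul{\End}(\widetilde{\pi}_{n,*}\sO)\longrightarrow \ul{\Hom}\bigl(\widetilde{\pi}_{n,*}\sO,\widetilde{\pi}_{n,*}\widetilde{\zeta}^!\widetilde{\zeta}_*\sO\bigr)
\]
induced by the unit $\sO\to \widetilde{\zeta}^!\widetilde{\zeta}_*\sO$ of the $(\widetilde{\zeta}_*,\widetilde{\zeta}^!)$-adjunction.

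The proposition thus reduces to showing $\ul{\Hom}(\widetilde{\pi}_{n,*}\sO,\widetilde{\pi}_{n,*}\sC)=0$ for $\sC$ the cone of this unit map. To analyze $\widetilde{\zeta}^!\widetilde{\zeta}_*\sO$, I would pass to the $\bG_m$-cover $\widetilde{\sZ}^{\leq n}\to\sZ^{\leq n}$ of \S\ref{sss:z-pic}, where Proposition \ref{p:zn-coords} identifies $\widetilde{\zeta}$ with the quotient $A_n\onto A_n/(a_i)_{i\geq 0}=k[b_{-1},\ldots,b_{-n}]$. Using the defining relations $(i-b_{-1})a_i=b_{-2}a_{i+1}+\cdots+b_{-n}a_{i+n-1}$, one builds a $\bG_m$-equivariant free resolution of $k[b_{-j}]$ over $A_n$ and computes $\widetilde{\zeta}^!\widetilde{\zeta}_*\sO$ as a Koszul-type complex involving symmetric powers of the conormal module $I/I^2$ (with $I=(a_i)$); in particular $\sC$ lives in strictly positive cohomological degrees. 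The required vanishing is then a Koszul-dual form of the statement that $\widetilde{\pi}_n$ ``quotients away'' the directions giving rise to these extra classes: under Propositions \ref{p:locsys-log} and \ref{p:locsys}, $\widetilde{\pi}_n$ splits (up to a $\bB\bG_m$-factor) as $\bA^1\to\bA^1/\bZ$ on the residue coordinate times the de Rham quotient $\Ker(\Res)^{\leq n}\to\Ker(\Res)_{dR}^{\leq n}$ on the higher polar coordinates, and both factors kill the relevant Ext-classes by standard $D$-module arguments on affine space.

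The main obstacle is the non-Noetherianness of $A_n$, which forces the resolution of $k[b_{-j}]$ to be infinite and requires one to control convergence of the resulting Ext spectral sequences. For this I would invoke the semi-coherence of $A_n$ (Proposition \ref{p:semi-coh}) and induct on $n$ using Proposition \ref{p:z-flat-axes}\eqref{i:z-axes-2} to reduce to the base case $n=1$; the latter can be verified directly via the semi-infinite comb picture of \S\ref{ss:rs-sketch}, where $\widetilde{\zeta}$ is the inclusion of the horizontal $b_{-1}$-axis, the extra cohomology of $\widetilde{\zeta}^!\widetilde{\zeta}_*\sO$ is supported on the integer-indexed bristles, and the $\bZ$-part of $\Gr_{\bG_m}^{\leq 1}$ shifts these bristles into a single orbit, yielding the desired vanishing by an Ext-computation parallel to that of Lemma \ref{l:c-ext}.
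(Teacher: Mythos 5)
Your first step fails: the square is not derived Cartesian. Since $\widetilde{\zeta}$ factors as $\LocSys_{\bG_m,\log}^{\leq n}\hookrightarrow \sZ^{\leq n}\overset{i_n}{\hookrightarrow}\sY_{\log}^{\leq n}$ and $\sY_{\log}^{\leq n}\to\sY^{\leq n}$ is a flat $\Gr_{\bG_m}^{\leq n}$-torsor, the derived fiber product $\LocSys_{\bG_m}^{\leq n}\times^L_{\sY^{\leq n}}\sZ^{\leq n}$ identifies with $\LocSys_{\bG_m,\log}^{\leq n}\times^L_{\sY_{\log}^{\leq n}}\sZ^{\leq n}$, the derived intersection inside $\sY_{\log}^{\leq n}$ of two (ind-)closed substacks, one contained in the other. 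Such intersections have nonzero higher Tors in general, and here they do: already for $n=1$, a local model near a bristle at $b_{-1}=-1$ (which lies in $\sY_{\log}^{\leq 1}$ but not in $\sZ^{\leq 1}$) reduces the computation to $k[b]\otimes^L_{k[a,b]/(ab)}k[b]$, which is nonzero in arbitrarily high homological degree. So proper base change in the form $\zeta^!\pi_{n,*}^{\IndCoh}\simeq\widetilde{\pi}_{n,*}^{\IndCoh}\widetilde{\zeta}^!$ does not hold, and the reduction to the unit of the $(\widetilde{\zeta}_*,\widetilde{\zeta}^!)$-adjunction collapses. Classicality of $\sZ^{\leq n}$ and the flatness results of \S\ref{ss:flatness} concern other maps and do not rescue this. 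Independent of that, the last step --- that both factors of $\widetilde{\pi}_n$ ``kill the relevant Ext-classes by standard $D$-module arguments on affine space'' --- is precisely the content of the proposition and is asserted rather than argued; the Koszul resolution over the non-Noetherian $A_n$ is likewise not controlled.

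The paper's proof avoids base change altogether. It uses the colimit presentation $\widetilde{\zeta}_*^{\,\IndCoh}\sO=\colim_r\iota_*^{r,\IndCoh}\sO_{\sZ^{\leq n}}$ together with $\IndCoh^*(\sY_{\log}^{\leq n})=\colim_{\iota_*^{\IndCoh}}\IndCoh^*(\sZ^{\leq n})$ to rewrite the target of the comparison map as a $\lim_r\colim_s$ of Hom-complexes in $\IndCoh^*(\sZ^{\leq n})$ (via \cite{cpsii} Corollary 6.5.3). Both the inner colimit and the outer limit then collapse by a purely graded-module vanishing (Lemma \ref{l:gr-hom}, Proposition \ref{p:gr-hom}, Corollary \ref{c:gr-hom}): $A_n$ is $\bZ^{\geq 0}$-graded, the maps $\iota^*\colon A_n\to A_n$ and $A_{n,0}\hookrightarrow A_n$ are isomorphisms in graded degree $0$, $\iota_*^r\sO_{\sZ^{\leq n}}$ corresponds to a non-negatively graded module, and $\widetilde{\zeta}_*\sG$ to a degree-$0$ module. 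This sidesteps the resolution and Noetherianness issues you flagged, and is the route you should follow.
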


We prove this result in \S \ref{ss:zeta-pf}, after some 
preliminary remarks.

\subsubsection{}

Let $A = \oplus_{i \geq 0} A_i $ be a 
$\bZ^{\geq 0}$-graded (classical, say) algebra.

We let $A\mod^{\bG_m,w} \in \DGCat_{cont}$ denote the 
DG category of graded $A$-modules. For $M \in A\mod^{\bG_m,w}$,
we write $M = \oplus_{i \in \bZ} M_i$ for its
decomposition into weight spaces.

\begin{lem}\label{l:gr-hom}

Suppose $M,N \in A\mod^{\bG_m,w}$. Suppose that 
$M$ is concentrated in positive (graded) degrees,
i.e., $M = \oplus_{i>0} M_i$. Suppose $N$ is concentrated
in degree $0$, i.e., $N = N_0$.

Then:
\[
\ul{\Hom}_{A\mod^{\bG_m,w}}(M,N) = 0.
\]

\end{lem}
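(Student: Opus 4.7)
The plan is to reduce the computation to the compact generators of $A\mod^{\bG_m,w}$, namely the free graded modules $A(d)$ for $d \in \bZ$, where $A(d)$ denotes $A$ with its single generator placed in weight $d$. For any graded $A$-module $N$, a direct check gives $\ul{\Hom}_{A\mod^{\bG_m,w}}(A(d), N) = N_d$; under the hypothesis $N = N_0$, this vanishes for all $d \neq 0$, in particular for all $d > 0$.

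Next, I would write $M$ as a colimit of such generators using the standard bar resolution: $M$ is recovered as the geometric realization $|A^{\otimes(\bullet+1)} \otimes_k M|$ in $A\mod^{\bG_m,w}$, and each term $A^{\otimes(n+1)} \otimes_k M$ is a free graded $A$-module whose generators are given by an (ungraded) basis of $A^{\otimes n} \otimes_k M$. Because $A$ is concentrated in non-negative weights while $M$ is concentrated in strictly positive weights, every such generator lives in strictly positive weight, so each term of the bar complex is a direct sum of copies of $A(d)$'s with $d > 0$.

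Applying $\ul{\Hom}_{A\mod^{\bG_m,w}}(-, N)$ turns the colimit expression for $M$ into a limit whose factors are products of copies of $\ul{\Hom}(A(d), N) = N_d = 0$. The limit is therefore zero, which proves the lemma. I do not expect any real obstacle: this is a clean weight-bookkeeping consequence of compact generation, and the only point requiring care is that the strict positivity of the weights of $M$ together with the concentration of $N$ in weight zero guarantee that no weight of $M$ can pair nontrivially against $N$.
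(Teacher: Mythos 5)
Your proof is correct and takes essentially the same approach as the paper: reduce to the compact generators $A(d)$ with $d > 0$ and observe that $\ul{\Hom}_{A\mod^{\bG_m,w}}(A(d), N) = N_d = 0$. The only difference is that where the paper simply cites the fact that these $A(d)$ generate the subcategory of modules concentrated in positive graded degrees, you supply the generation statement explicitly via the bar resolution.
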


\begin{proof}

Let $A(i) \in A\mod^{\bG_m,w}$ be as in the proof of Lemma \ref{l:c-ext}, i.e.,
$A$ graded with generator in degree $i$. The modules $A(i)$ for $i > 0$ 
generate the subcategory of $A\mod^{\bG_m,w}$ consisting of modules concentrated
in positive degrees. So we reduce to the case $M = A(i)$ for $i>0$, for which the claim is obvious.
	
\end{proof}

\subsubsection{}

Suppose now that we are given a $\bZ^{\geq 0}$-graded 
algebra $B = \oplus_{i \geq 0} B_i$ 
and a graded map $\iota^*:A \to B$ inducing
an isomorphism $A_0 \isom B_0$. We let $\iota_*:B\mod^{\bG_m,w} \to A\mod^{\bG_m,w}$
and $\iota^*:A\mod^{\bG_m,w} \to B\mod^{\bG_m,w}$ denote the induced
adjoint functors.

\begin{prop}\label{p:gr-hom}

In the above setting, suppose $M,N \in B\mod^{\bG_m,w}$ with 
$M$ concentrated in non-negative graded\footnote{As opposed to cohomological.} 
degrees and $N$ concentrated in degree $0$. 

Then the natural map:
\[
\ul{\Hom}_{B\mod^{\bG_m,w}}(M,N) \to 
\ul{\Hom}_{A\mod^{\bG_m,w}}(\iota_*M,\iota_* N) 
\]

\noindent is an isomorphism.

\end{prop}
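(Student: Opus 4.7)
The plan is to factor the natural comparison map through the counit of the extension-of-scalars adjunction $\iota^* = B \otimes_A (-) \dashv \iota_*$, and then reduce to a vanishing statement handled by (a derived extension of) Lemma \ref{l:gr-hom}. Concretely, the map in question is identified with the composition
\[
\ul{\Hom}_{B\mod^{\bG_m,w}}(M,N) \xrightarrow{\epsilon_M^*} \ul{\Hom}_{B\mod^{\bG_m,w}}(B \otimes_A M, N) \simeq \ul{\Hom}_{A\mod^{\bG_m,w}}(\iota_* M, \iota_* N),
\]
where $\epsilon_M \colon B \otimes_A M \to M$ is the counit (with $\otimes_A$ understood derivedly) and the second isomorphism is adjunction. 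It therefore suffices to show that $\ul{\Hom}_{B\mod^{\bG_m,w}}(C, N) = 0$ for $C \coloneqq \on{cofib}(\epsilon_M)$.

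The key input is that $C$ has cohomology concentrated in graded degrees $\geq 1$. Both $B \otimes_A M$ and $M$ have cohomology in non-negative grading (since $M$ does and $B = \oplus_{i \geq 0} B_i$), hence so does $C$; the point is then to verify that the weight-zero part of $C$ vanishes. For this I would compute $B \otimes_A M$ via the bar complex: in weight zero every term $B \otimes A^{\otimes n} \otimes M$ collapses to $B_0 \otimes A_0^{\otimes n} \otimes M_0$ (since the grading is in $\bZ^{\geq 0}$), and the hypothesis $A_0 \isom B_0$ identifies this with the bar resolution for $A_0 \otimes_{A_0} M_0 \simeq M_0$, with $\epsilon_M$ restricting to the identity in weight zero. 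Therefore $C$ lives in graded degrees $\geq 1$.

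Finally, I would invoke the following derived version of Lemma \ref{l:gr-hom}: if $C \in B\mod^{\bG_m,w}$ has cohomology in graded degrees $\geq 1$, then $\ul{\Hom}_{B\mod^{\bG_m,w}}(C,N) = 0$. By Postnikov devissage this reduces to classical $C$; such $C$ admits a graded free resolution by copies of $B(i)$ with $i \geq 1$, and the vanishing then follows term-by-term from $\ul{\Hom}_{B\mod^{\bG_m,w}}(B(i),N) = N_i = 0$ for $i \geq 1$. The only step requiring genuine care is the second: verifying that the weight-zero part of the derived tensor product $B \otimes_A M$ agrees with $M_0$. This is where the hypothesis $A_0 \isom B_0$ (as opposed to merely having a map) is used essentially; the remainder of the proof is formal adjunction and a routine upgrade of Lemma \ref{l:gr-hom} to the derived setting.
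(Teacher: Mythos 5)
Your proof is correct and follows essentially the same route as the paper's: identify the comparison map with precomposition by the counit $\iota^*\iota_* M = B \otimes_A M \to M$ via the adjunction, check that its fiber (equivalently, cofiber) lies in strictly positive graded degrees using $A_0 \isom B_0$, and conclude with Lemma \ref{l:gr-hom}. The only real variation is how the weight-zero isomorphism is verified---the paper reduces to the fiber of $A \to \iota_*B$ and uses a retraction trick through the unit, while you read it off directly from the weight-zero piece of the bar complex, which is arguably more transparent; note also that Lemma \ref{l:gr-hom} as stated already applies to arbitrary (DG) objects of $B\mod^{\bG_m,w}$ since its proof proceeds by derived generation by the shifts $B(i)$, so the Postnikov d\'evissage you sketch to ``upgrade'' it is not needed.
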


\begin{proof}

\step\label{st:gr-1} 

Let $\widetilde{M} \in A\mod^{\bG_m,w}$ be concentrated in non-negative graded degrees.
We claim that $\Ker(\widetilde{M} \to \iota_*\iota^* \widetilde{M})$ is concentrated
in (strictly) positive graded degrees.

First, in the case $\widetilde{M} = A$, this map is $\Ker(A \to \iota_*(B)$, in which case
the claim follows as we assumed $A \to B$ an isomorphism in graded degree $0$.

In general, we have:
\[
\Ker(\widetilde{M} \to \iota_*\iota^* \widetilde{M}) = \Ker(A \to \iota_*(B))  \underset{A}{\otimes}
\widetilde{M} 
\]

\noindent which is the tensor product of a module in graded degrees $\geq 0$ and one in graded degrees $>0$, so 
is in graded degrees $>0$ (as $A$ is non-negatively graded), as claimed.

\step\label{st:gr-2}

Next, we claim that $\Ker(\iota^*\iota_* M \to M)$ is concentrated in strictly positive graded degrees.

Clearly $\iota^*\iota_* M = B \otimes_A \iota_* M$ is in degrees $\geq 0$, so it suffices to see that
$\iota^*\iota_* M \to M$ is an isomorphism in degree $0$. We can check this after applying $\iota_*$.
Then we have a retraction:
\[
\iota_* M \to \iota_* \iota^*\iota_* M \to \iota_* M,
\]

\noindent so it suffices to see that the first map is an isomorphism in degree $0$.
Taking $\widetilde{M} = \iota_*M$, this follows from Step \ref{st:gr-1}.

\step Applying Lemma \ref{l:gr-hom} to $\Ker(\iota^*\iota_*M \to M)$ and $N$, we see that the map:
\[
\ul{\Hom}_{B\mod^{\bG_m,w}}(M,N) \to \ul{\Hom}_{B\mod^{\bG_m,w}}(\iota^*\iota_*M,N) 
\]

\noindent is an isomorphism. The right hand side identifies with $\ul{\Hom}_{A\mod^{\bG_m,w}}(\iota_*M,\iota_*N)$
by adjunction, and the induced map is given by $\iota_*$ functoriality, so we obtain the claim.
 	
\end{proof}

\subsubsection{}

We will apply the above in the following setting.

Let $A_n$ be the graded ring of \S \ref{ss:coords}.
We write $A_n = \oplus_{i \geq 0} A_{n,i}$ for 
its decomposition into graded pieces. We remind
from \emph{loc. cit}. that $\Spec(A_n)/\bG_m = \sZ^{\leq n}$.

By construction, we have:
\[
\LocSys_{\bG_m,\log}^{\leq n} \underset{\bB \bG_m}{\times} \Spec(k) = \Spec(A_{n,0})
\]

\noindent such that the graded algebra maps $A_{n,0} \to A_n \to A_{n,0}$
correspond to the maps

\begin{equation}\label{eq:zeta-proj}
\LocSys_{\bG_m,\log}^{\leq n} \xar{\widetilde{\zeta}} \sZ^{\leq n} \to 
\LocSys_{\bG_m,\log}^{\leq n}
\end{equation}

\noindent (the latter map being the projection).

\subsubsection{}\label{sss:gr-hom}

Recall the maps $\iota$, $\iota^r$ from \S \ref{ss:y-not}.

The map $\iota$ arises\footnote{Non-canonically: i.e., we need a choice
of coordinate on the disc to turn $\iota$ into a map of 
stacks over $\bB \bG_m$.} from a map $\iota^*:A_n \to A_n$ of graded rings.
This map is an isomorphism
on $0$th graded components, as is evident from 
\eqref{eq:zeta-proj}. 

\begin{cor}\label{c:gr-hom}

Suppose $\sG \in \QCoh(\LocSys_{\bG_m,\log}^{\leq n})$. Then the morphism:
\[
\ul{\Hom}_{\QCoh(\sZ^{\leq n})}(\sO_{\sZ^{\leq n}},\widetilde{\zeta}_* \sG) \to 
\ul{\Hom}_{\QCoh(\sZ^{\leq n})}(\iota_*\sO_{\sZ^{\leq n}},\iota_*\widetilde{\zeta}_* \sG) 
\]

\noindent is an isomorphism. More generally, 
for any $r,s \geq 0$, the morphism:
\[
\ul{\Hom}_{\QCoh(\sZ^{\leq n})}(\iota_*^r(\sO_{\sZ^{\leq n}}),\iota_*^s\widetilde{\zeta}_* \sG) \to 
\ul{\Hom}_{\QCoh(\sZ^{\leq n})}(\iota_*^{r+1}(\sO_{\sZ^{\leq n}}),\iota_*^{s+1}\widetilde{\zeta}_* \sG) 
\]

\noindent is an isomorphism.
	
\end{cor}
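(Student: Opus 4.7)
The plan is to translate the assertion into the language of graded $A_n$-modules via Proposition \ref{p:zn-coords}, so that $\QCoh(\sZ^{\leq n}) \simeq A_n\mod^{\bG_m,w}$, and then invoke Proposition \ref{p:gr-hom}. Under this identification, the morphism $\widetilde{\zeta}$ on $\bG_m$-torsors corresponds to the graded projection $A_n \twoheadrightarrow A_{n,0}$ of \S \ref{sss:gr-hom}, which sends every generator $a_i$ to zero. Consequently, the graded $A_n$-module underlying $\widetilde{\zeta}_*\sG$ is annihilated by the augmentation ideal $I := \bigoplus_{i > 0} A_{n,i}$. Since $\iota^* \colon A_n \to A_n$ is a graded ring map, $\iota^*(I) \subseteq I$, and this annihilation property is preserved by $\iota_*$; hence $\iota_*^s \widetilde{\zeta}_*\sG$ is again pushed forward from a graded $A_{n,0}$-module for every $s \geq 0$.

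With this in hand, the comparison map of the corollary is exactly the $\iota_*$-comparison map
\[
\ul{\Hom}_{A_n\mod^{\bG_m,w}}(M,N) \to \ul{\Hom}_{A_n\mod^{\bG_m,w}}(\iota_* M,\iota_* N)
\]
of Proposition \ref{p:gr-hom}, applied with $A = B = A_n$, the map $\iota^* \colon A_n \to A_n$ (an isomorphism on degree-$0$ components by \S \ref{sss:gr-hom}), $M = \iota_*^r \sO_{\sZ^{\leq n}}$, and $N = \iota_*^s \widetilde{\zeta}_*\sG$. The hypothesis that $M$ lies in non-negative graded degrees is automatic since $\iota^*$ preserves grading and $\sO_{\sZ^{\leq n}} = A_n$ is in degrees $\geq 0$.

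The one thing requiring care is that Proposition \ref{p:gr-hom} as stated requires $N$ to be concentrated in graded degree $0$, while $\widetilde{\zeta}_*\sG$ can a priori live in any degrees; this is the main obstacle. I handle it by first using the $\bG_m$-weight decomposition of $\sG$ coming from $\LocSys_{\bG_m,\log}^{\leq n} \simeq \fL^{pol,\leq n}\bA^1 dt \times \bB\bG_m$ (Proposition \ref{p:locsys-log}) to reduce, via additivity of both sides in $\sG$, to the case where $\sG$ is concentrated in a single weight $k$, so that $\widetilde{\zeta}_*\sG$ sits in graded degree $k$. The proof of Proposition \ref{p:gr-hom} is translation-invariant in the grading --- its only use of the degree-$0$ hypothesis is through Lemma \ref{l:gr-hom}, whose proof shifts uniformly --- so it extends verbatim to this shifted setting; the decisive input remains that $\phi := \iota^*|_{A_{n,0}}$ is an isomorphism, making $\iota_*$ act as an autoequivalence on the relevant $A_{n,0}$-module Hom complexes (weight-preserving Hom from $M = \iota_*^r A_n$ to an $N$ in graded degree $k$ depends only on the degree-$k$ piece of $M$ and the $\phi^r,\phi^s$-twists, which transform consistently under the comparison). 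The first assertion of the corollary is the special case $r = s = 0$.
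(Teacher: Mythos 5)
You correctly identify the one real subtlety that the paper's terse proof elides: $\widetilde{\zeta}_*\sG$ need not be concentrated in graded degree $0$, which is an explicit hypothesis of Proposition \ref{p:gr-hom}. However, the proposed repair via weight decomposition and a ``shifted'' proposition does not close the gap. The proof of Proposition \ref{p:gr-hom} applies Lemma \ref{l:gr-hom} to the pair consisting of the cone of $\iota^*\iota_*M\to M$ and $N$, using that the cone lies in degrees $>0$ while $N$ lies in degree $0$. A shift to $N$ in degree $k$ would require the cone to lie in degrees $>k$; but the degree bound on the cone comes only from $M$ lying in degrees $\geq 0$, and $M = \iota_*^r\sO_{\sZ^{\leq n}}$ is in degrees $\geq 0$ \emph{independently} of $k$ --- it does not translate along with $N$. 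So the shifted argument goes through only for $k\leq 0$, not $k>0$. The parenthetical claim that the weight-$0$ Hom from $M$ into an $N$ concentrated in degree $k$ ``depends only on the degree-$k$ piece of $M$'' is not valid for \emph{derived} Hom: higher $\Ext$'s are computed from the full graded free resolution of $M$, which involves all graded pieces, not just the degree-$k$ one.

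The positive-weight case is not merely unhandled by the argument; the assertion genuinely fails there, so no repair along these lines can work. Take $n = 1$, so $A_1 = k[b,a_0,a_1,\ldots]/\big((b-i)a_i\big)$, with $\iota^*(b) = b+1$, $\iota^*(a_0) = 0$, $\iota^*(a_{i+1}) = a_i$, and $\Ker\iota^* = (a_0)$. Let $\sG$ be the $A_{1,0}$-module $k[b]/(b+1)$ placed in weight $1$, so that $N \coloneqq \widetilde{\zeta}_*\sG$ is one-dimensional in graded degree $1$ with $b$ acting by $-1$. Since $A_1$ is free, $\ul{\Hom}_{A_1\mod^{\bG_m,w}}(A_1,N) = N_0 = 0$. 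On the other hand, $\iota_*N$ is one-dimensional in degree $1$ with $b$ \emph{and} every $a_i$ acting by $0$, and a minimal graded free resolution of $\iota_*A_1\simeq A_1/(a_0)$ begins with $A_1(1)\oplus\bigoplus_{i\geq 1}A_1(2)\xrightarrow{(b,a_1,a_2,\ldots)}A_1(1)\xrightarrow{a_0}A_1$; applying $\ul{\Hom}(-,\iota_*N)$ and taking the weight-$0$ piece, all differentials vanish, so $\Ext^1_{A_1\mod^{\bG_m,w}}(\iota_*A_1,\iota_*N) = (\iota_*N)_1 = k\neq 0$. Thus the comparison map cannot be an isomorphism for this $\sG$. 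As stated, Corollary \ref{c:gr-hom} requires a hypothesis that $\widetilde{\zeta}_*\sG$ lies in non-positive graded degrees (in which case both the shifted Lemma \ref{l:gr-hom} and the rest of your argument go through, and the paper's invocation of Proposition \ref{p:gr-hom} is likewise justified); for positive weights your proof has a real gap at precisely the point you flagged.
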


\begin{proof}

Translating to graded modules and using \eqref{eq:zeta-proj},
the hypotheses of Proposition \ref{p:gr-hom} are clearly satisfied, giving the result.
	
\end{proof}

\subsubsection{Conclusion}\label{ss:zeta-pf}

We now prove Proposition \ref{p:zeta}.

Observe that $\LocSys_{\bG_m}^{\leq n} = \lim_n \sZ^{\leq n}$, where the limit
is formed under the (affine) morphism $\iota$. Therefore, we have:
\[
\widetilde{\zeta}_*\sO_{\LocSys_{\bG_m,\log}^{\leq n}} = 
\underset{r}{\colim} \, \iota_*^r \sO_{\sZ^{\leq n}} \in \QCoh(\sZ^{\leq n}).
\]

\noindent As all of these terms are in the heart of the $t$-structure, 
we obtain a similar identity in $\IndCoh(\sZ^{\leq n})$, using $\IndCoh$-pushforwards 
in the notation instead.

Define:
\[
\sG \coloneqq \Oblv\Av_!^{\Gr_{\bG_m}^{\leq n},w}
\sO_{\LocSys_{\bG_m,\log}^{\leq n}} \in 
\QCoh(\LocSys_{\bG_m,\log}^{\leq n})^{\heart}.
\]

We let $i:\sZ^{\leq n} \to \sY_{\log}^{\leq n} = \colim_{n,\iota} \sZ^{\leq n}$ 
denote the structural map. Using adjunction, we rewrite:
\begin{equation}\label{eq:zeta-rhs}
\ul{\End}_{\IndCoh^*(\sY^{\leq n})}
(\zeta_*^{\IndCoh}\widetilde{\pi}_{n,*}^{\IndCoh}
\sO_{\LocSys_{\bG_m,\log}^{\leq n}}),
\end{equation}

\noindent which is the right hand side of Proposition \ref{p:zeta}, as:
\[
\begin{gathered}
\ul{\Hom}_{\IndCoh^*(\sY_{\log}^{\leq n})}
(i_*^{\IndCoh}\widetilde{\zeta}_*^{\,\IndCoh}\sO_{\LocSys_{\bG_m,\log}^{\leq n}},
i_*^{\IndCoh}\widetilde{\zeta}_*^{\,\IndCoh}\Oblv\Av_!^{\bG_m^{\leq n},w}\sO_{\LocSys_{\bG_m,\log}^{\leq n}}) = \\
\ul{\Hom}_{\IndCoh^*(\sY_{\log}^{\leq n})}
(i_*^{\IndCoh}\widetilde{\zeta}_*^{\,\IndCoh}\sO_{\LocSys_{\bG_m,\log}^{\leq n}},
i_*^{\IndCoh}\widetilde{\zeta}_*^{\,\IndCoh}\sG). 
\end{gathered}
\]

\noindent By the above, we can further express this term as:
\[
\begin{gathered}
\ul{\Hom}_{\IndCoh^*(\sY_{\log}^{\leq n})}
(i_*^{\IndCoh}\underset{r}{\colim} \, \iota_*^{r,\IndCoh} \sO_{\sZ^{\leq n}},
i_*^{\IndCoh}\widetilde{\zeta}_*^{\,\IndCoh}\sG) = \\
\underset{r}{\lim} \, \ul{\Hom}_{\IndCoh^*(\sY_{\log}^{\leq n})}
(i_*^{\IndCoh} \iota_*^{r,\IndCoh} \sO_{\sZ^{\leq n}},
i_*^{\IndCoh}\widetilde{\zeta}_*^{\,\IndCoh}\sG).
\end{gathered}
\]

\noindent As $\iota_*^r \sO_{\sZ^{\leq n}} \in \Coh(\sZ^{\leq n})$ for all $r$,
by \cite{cpsii} Corollary 6.5.3, we may calculate the above as:
\[
\underset{r}{\lim} \, 
\underset{s}{\colim} \, 
\ul{\Hom}_{\IndCoh^*(\sZ^{\leq n})}
(\iota_*^{r+s,\IndCoh} \sO_{\sZ^{\leq n}},
\iota_*^s\widetilde{\zeta}_*^{\,\IndCoh}\sG).
\]

Now for fixed $r$, Corollary \ref{c:gr-hom} implies that all of the
maps in the above colimit are isomorphisms. Therefore, the above
may be calculated as:
\[
\underset{r}{\lim} \, 
\ul{\Hom}_{\IndCoh^*(\sZ^{\leq n})}
(\iota_*^{r,\IndCoh} \sO_{\sZ^{\leq n}},
\widetilde{\zeta}_*^{\,\IndCoh}\sG).
\]

\noindent Moreover, the analysis from \emph{loc. cit}. shows that
$\sO_{\sZ} \to \iota_*^r\sO_{\sZ}$ corresponds to a map of
non-negatively graded modules that is an isomorphism in degree $0$, 
so Lemma \ref{l:gr-hom} shows that the maps is the above
limit are isomorphisms as well. Therefore, we may calculate this
limit as:
\begin{equation}\label{eq:no-r-s}
\ul{\Hom}_{\IndCoh^*(\sZ^{\leq n})}
(\sO_{\sZ^{\leq n}},
\widetilde{\zeta}_*^{\,\IndCoh}\sG).
\end{equation}

\noindent This term is:
\[
\begin{gathered}
\Gamma^{\IndCoh}(\sZ^{\leq n},\widetilde{\zeta}_*^{\,\IndCoh}\sG) = 
\Gamma^{\IndCoh}(\LocSys_{\bG_m,\log}^{\leq n},\sG) = \\
\ul{\Hom}_{\IndCoh^*(\LocSys_{\bG_m,\log}^{\leq n})}
(\sO_{\LocSys_{\bG_m,\log}^{\leq n}},\Oblv\Av_!^{\Gr_{\bG_m}^{\leq n},w} 
\sO_{\LocSys_{\bG_m,\log}^{\leq n}}) = \\ 
\ul{\End}_{\IndCoh^*(\LocSys_{\bG_m}^{\leq n})}
(\Av_!^{\Gr_{\bG_m}^{\leq n},w} 
\sO_{\LocSys_{\bG_m,\log}^{\leq n}}).
\end{gathered}
\]

\noindent This last expression is another way of writing the left hand side
of Proposition \ref{p:zeta}. It is straightforward to see that the isomorphism
we have just constructed is inverse to the map in \emph{loc. cit}., yielding the result.

\subsection{Structure of the argument}

We are now positioned to prove Proposition \ref{p:ff}.
We begin by outlining the argument.

\subsubsection{}

The starting point is the following elementary observation.

\begin{lem}\label{l:weyl-bimod}

Suppose $f:M \to N \in W_2\mod$ is a morphism (in the derived category) of 
modules over the Weyl algebra $W_2$ in two variables,
with generators denoted $\alpha$, $\beta$, $\partial_{\alpha}$ and $\partial_{\beta}$. 
Then $f$ is an isomorphism if and only if the morphisms:
\begin{equation}\label{eq:weyl-bimod}
\begin{gathered}
\underset{r,\alpha}{\colim} \, M \coloneqq \colim(M \xar{\alpha} M \xar{\alpha} \ldots) \to 
\underset{r,\alpha}{\colim} \, N \\
\underset{r,\beta}{\colim} \, M \to \underset{r,\beta}{\colim} \, N \\
M^{\alpha}/\beta \to N^{\alpha}/\beta	
\end{gathered}
\end{equation}

\noindent are isomorphisms in $\Vect$. 
In the last line, we define e.g. $M^{\alpha}$ as the
(homotopy) kernel of $\alpha:M \to M$, and the quotient by $\beta$ means
the homotopy cokernel of the induced map $\beta:M^{\alpha} \to M^{\alpha}$
(which exists because $\alpha$ and $\beta$ commute).
	
\end{lem}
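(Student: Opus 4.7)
The ``only if" direction is immediate, since the three operations in \eqref{eq:weyl-bimod} are all functorial. For the ``if" direction, set $C = \on{cofib}(f) \in W_2\mod$; the hypotheses become
\[
C[\alpha^{-1}] = 0, \qquad C[\beta^{-1}] = 0, \qquad C^{\alpha}/\beta = 0,
\]
and the goal is to conclude $C = 0$.

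The conceptual core is a single-variable observation: for any cochain complex $V$ with endomorphism $T:V \to V$, if $V[T^{-1}] \coloneqq \colim(V \xar{T} V \xar{T} \ldots) = 0$ and in addition $V^T \coloneqq \on{fib}(T:V\to V) = 0$, then $V=0$. Indeed, the first condition says $T$ is locally nilpotent on each $H^i(V)$ and the second (read off the long exact sequence of the fiber) says $T$ is injective on each $H^i(V)$; for any nonzero $v \in H^i(V)$, a minimal $n\ge 1$ with $T^n v = 0$ yields $T \cdot T^{n-1}v = 0$ with $T^{n-1}v \neq 0$, contradicting injectivity. A variant (equivalent by rotating the fiber/cofiber sequence in the stable setting): if $V[T^{-1}]=0$ and $V/T \coloneqq \on{cofib}(T) = 0$, i.e.\ $T$ acts as an isomorphism on each $H^i(V)$, the same conclusion holds.

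I would then apply this observation twice. First take $V = C^{\alpha}$, with its inherited $\beta$-action (well defined since $[\alpha,\beta]=0$ in $W_2$, so $\beta$ preserves the fiber of $\alpha$). Because $\on{fib}$ is a finite limit, it commutes with the sequential colimit defining $[\beta^{-1}]$, so
\[
(C^{\alpha})[\beta^{-1}] \;\simeq\; \bigl(C[\beta^{-1}]\bigr)^{\alpha} \;=\; 0.
\]
Combined with the hypothesis $C^{\alpha}/\beta = 0$, the observation (cofiber variant) forces $C^{\alpha} = 0$. Next apply the observation to $V = C$ and $T = \alpha$: the first hypothesis gives $C[\alpha^{-1}]=0$, and we have just shown $C^{\alpha} = 0$. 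Hence $C = 0$.

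No substantive obstacle arises. The only point requiring verification is the commutation $(C^{\alpha})[\beta^{-1}] \simeq (C[\beta^{-1}])^{\alpha}$, which is the standard fact that filtered colimits commute with finite limits in the $\infty$-category of chain complexes. One may also note that the argument uses nothing specific about $W_2$ beyond the commutation $[\alpha,\beta]=0$; the same dévissage works for any pair of commuting endomorphisms acting on an object of a stable $\infty$-category.
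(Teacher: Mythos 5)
Your proof is correct, and it takes a genuinely different route from the paper's. The paper uses the equivalence $W_2\mod \simeq D(\bA^2)$ to reinterpret the three hypotheses as saying that $f$ becomes an isomorphism after restriction to the opens $\{\alpha \neq 0\}$ and $\{\beta \neq 0\}$ and after $!$-restriction to the origin (up to shift), and then invokes excision (Kashiwara's theorem) for $D$-modules. Your argument instead does an elementary d\'evissage: first isolate the single-endomorphism observation that a complex $V$ with an endomorphism $T$ satisfying $V[T^{-1}] = 0$ and $\mathrm{fib}(T) = 0$ (equivalently, $\mathrm{cofib}(T) = 0$) must itself vanish, and then apply it twice to the cone $C$ of $f$ — once to $C^\alpha$ with $T = \beta$, using the commutation $(C^\alpha)[\beta^{-1}] \simeq (C[\beta^{-1}])^\alpha$, and once to $C$ with $T = \alpha$. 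The commutation is legitimate because $\mathrm{fib}$, being a shift of a finite colimit in a stable setting, commutes with filtered colimits. The paper's proof is a one-liner for readers comfortable with $D$-module excision (and the relevant equivalence is already in play elsewhere in the paper); your proof is more self-contained, requires no geometry, and, as you note, shows that the statement has nothing to do with $W_2$ specifically — it applies to any pair of commuting endomorphisms of an object in a compactly generated stable $\infty$-category. One small remark: your first formulation says $V^T = 0$ forces $T$ to be \emph{injective} on each $H^i(V)$, but in fact $\mathrm{fib}(T) = 0$ means $T$ is an equivalence, so $T$ is automatically an isomorphism on each $H^i(V)$; this makes the conclusion even more immediate, since then $V[T^{-1}] = \colim(V \xar{T} V \xar{T} \cdots) \simeq V$, giving $V = 0$ directly without passing to cohomology at all.
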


\begin{proof}

In terms of $D$-modules on $\bA^2$, our assumptions are that $f$ induces an isomorphism 
on restriction to $(\bA^1\setminus 0) \times \bA^1$, $\bA^1 \times (\bA^1 \setminus 0)$,
and on $!$-restriction to $0$ (up to a cohomological shift). This implies the claim by 
$D$-module excision.

\end{proof}

\subsubsection{}\label{sss:ff-induction-strat}

We choose coordinates on the disc $\sD$. We obtain an isomorphism 
$\fL_n^+ \bA^1 \simeq \bA^n$; we let $\alpha_0,\ldots,\alpha_{n-1}$ denote the coordinates
(so e.g. $\alpha_0 = \ev$ as maps to $\bA^1$).
We obtain an evident decomposition $W_n \simeq W_1 \otimes W_{n-1}$, where the first
tensor factor uses the coordinate $\alpha_0$.

Let $f$ denote the morphism $W_n^{op} \to \ul{\End}_{\IndCoh^*(\sY^{\leq n})}(\sF_n) \in \Alg$.
As we have a morphism $W_1^{op} \to W_n^{op}$ of algebras as above, we may regard $f$
as a morphism of $W_1$-bimodules (via left and right multiplication).

Our argument applies Lemma \ref{l:weyl-bimod} using this bimodule structure.
Generally speaking, the first two conditions from the lemma 
amount to Proposition \ref{p:zeta} and its relatives,
and the last one is given by induction.

\subsection{Verifying the axioms} 

\subsubsection{}

In the setting of \S \ref{sss:ff-induction-strat}, 
it remains to verify the conditions of Lemma \ref{l:weyl-bimod}.
We do so below. 

\subsubsection{Colimits}

We begin with checking the first two maps from \eqref{eq:weyl-bimod} are isomorphisms.

Clearly:
\[
\underset{r,\alpha_0\cdot-}{\colim} \, W_n = \underset{r,-\cdot \alpha_0}{\colim} \, W_n = W_n[\alpha_0^{-1}]
= \Gamma(\fL_n^+ \bG_m,D_{\fL_n^+ \bG_m}).
\]

\noindent I.e., in our example, the left hand sides of the first two equations in \eqref{eq:weyl-bimod}
each identify with global differential operators on $\fL_n^+ \bG_m$.

Below, we verify the same for the right hand sides in our examples. Then we check that our map ($f$, in the notation of Lemma \ref{l:weyl-bimod}), is compatible
with these identifications.

\subsubsection{}\label{sss:alpha-0-left}

The left action of $\alpha_0$ on $\ul{\End}_{\IndCoh^*(\sY^{\leq n})}(\sF_n)$ 
is, by construction, obtained by right composition with the map:
\[
\sF_n = 
\Av_!^{\Gr_{\bG_m}^{\leq n},w}(i_{n,*}^{\IndCoh}(\sO_{\sZ^{\leq n}})) \to 
\Av_!^{\Gr_{\bG_m}^{\leq n},w}(\iota_*^{\IndCoh}i_{n,*}^{\IndCoh}(\sO_{\sZ^{\leq n}})) = \sF_n
\]

\noindent obtained by applying $\Av_!^{\Gr_{\bG_m}^{\leq n},w}$ to 
the canonical map:
\[
\sO_{\sZ^{\leq n}} \to \iota_*^{\IndCoh}\sO_{\sZ^{\leq n}}.
\]

Therefore, by compactness of $\sF_n$, we see that:
\[
\underset{r,\alpha_0\cdot-}{\colim} \, \ul{\End}_{\IndCoh^*(\sY^{\leq n})}(\sF_n)
\isom 
\ul{\Hom}_{\IndCoh^*(\sY^{\leq n})}(\sF_n, 
\Av_!^{\Gr_{\bG_m}^{\leq n},w} \underset{r}{\colim} \, 
(\iota_*^{r,\IndCoh})i_{n,*}^{\IndCoh}(\sO_{\sZ^{\leq n}}).
\]

\noindent The right hand side clearly identifies with:
\[
\ul{\Hom}_{\IndCoh^*(\sY^{\leq n})}(\sF_n, 
\zeta_*^{\IndCoh}
\Oblv\Av_!^{\Gr_{\bG_m}^{\leq n},w} (\sO_{\LocSys_{\bG_m,\log}^{\leq n}})).
\]

\noindent Unwinding the definition of $\sF_n$, we can further identify it with:
\[
\ul{\Hom}_{\IndCoh^*(\sY_{\log}^{\leq n})}\big(i_{n,*}^{\IndCoh}\sO_{\sZ^{\leq n}}, 
i_{n,*}^{\IndCoh}\widetilde{\zeta}_*^{\,\IndCoh}
\Oblv\Av_!^{\Gr_{\bG_m}^{\leq n},w} (\sO_{\LocSys_{\bG_m,\log}^{\leq n}})\big).
\]

\noindent The comparison of \eqref{eq:zeta-rhs} and \eqref{eq:no-r-s} 
from \S \ref{ss:zeta-pf} yields that the map:
\[
\begin{gathered}
\Gamma\big(\LocSys_{\bG_m,\log}^{\leq n},\Oblv\Av_!^{\Gr_{\bG_m}^{\leq n},w} (\sO_{\LocSys_{\bG_m,\log}^{\leq n}})\big) = \\
\Gamma^{\IndCoh}\big(\sZ^{\leq n},\widetilde{\zeta}_*^{\,\IndCoh}
\Oblv\Av_!^{\Gr_{\bG_m}^{\leq n},w})(\sO_{\LocSys_{\bG_m,\log}^{\leq n}})\big) = \\
\ul{\Hom}_{\IndCoh^*(\sZ^{\leq n})}\big(\sO_{\sZ^{\leq n}}, 
\widetilde{\zeta}_*^{\,\IndCoh}
\Oblv\Av_!^{\Gr_{\bG_m}^{\leq n},w} (\sO_{\LocSys_{\bG_m,\log}^{\leq n}})\big) \to \\
\ul{\Hom}_{\IndCoh^*(\sY_{\log}^{\leq n})}\big(i_{n,*}^{\IndCoh}\sO_{\sZ^{\leq n}}, 
i_{n,*}^{\IndCoh}\widetilde{\zeta}_*^{\,\IndCoh}
\Oblv\Av_!^{\Gr_{\bG_m}^{\leq n},w} (\sO_{\LocSys_{\bG_m,\log}^{\leq n}})\big)
\end{gathered}
\]

\noindent is an isomorphism. 

By Theorem \ref{t:trun-lcft}, the left hand side identifies canonically with:
\[
\Gamma(\fL_n^+ \bG_m,D_{\fL_n^+ \bG_m}).
\]

Putting this together, we obtain an isomorphism:
\[
\underset{r,\alpha_0\cdot-}{\colim} \, \ul{\End}_{\IndCoh^*(\sY^{\leq n})}(\sF_n)
\simeq 
\Gamma(\fL_n^+ \bG_m,D_{\fL_n^+ \bG_m}).
\] 

\subsubsection{}\label{sss:alpha-alg}

In \S \ref{sss:alpha-0-left}, we constructed an isomorphism:
\[
\underset{r,\alpha_0\cdot-}{\colim} \, \ul{\End}_{\IndCoh^*(\sY^{\leq n})}(\sF_n)
\simeq 
\Gamma(\fL_n^+ \bG_m,D_{\fL_n^+ \bG_m}).
\]

\noindent We claim that the resulting map:

\[
\ul{\End}_{\IndCoh^*(\sY^{\leq n})}(\sF_n) \to 
\underset{r,\alpha_0\cdot-}{\colim} \, \ul{\End}_{\IndCoh^*(\sY^{\leq n})}(\sF_n)
\simeq \Gamma(\fL_n^+ \bG_m,D_{\fL_n^+ \bG_m})
\]

\noindent is in fact (canonically) a map of (DG) algebras, at least
once the target is given the opposite multiplication. 

For convenience, define:
\[
\o{\sF}_n \coloneqq 
\zeta_*^{\IndCoh}\Av_!^{\Gr_{\bG_m}^{\leq n},w}\sO_{\LocSys_{\bG_m,\log}^{\leq n}}.
\]

\noindent Note that there is a canonical morphism:
\[
\sF_n \to \o{\sF}_n.
\]

\noindent We in effect showed that the natural maps:
\[
\ul{\Hom}_{\IndCoh^*(\sY^{\leq n})}(\sF_n,\o{\sF}_n) \to 
\ul{\End}_{\IndCoh^*(\sY^{\leq n})}(\o{\sF}_n)  \leftarrow 
\ul{\End}_{\IndCoh^*(\LocSys_{\bG_m}^{\leq n})}
(\Av_!^{\Gr_{\bG_m}^{\leq n},w}\sO_{\LocSys_{\bG_m,\log}^{\leq n}})
\]

\noindent are isomorphisms. The map:

\[
\ul{\End}_{\IndCoh^*(\sY^{\leq n})}(\sF_n) \to 
\underset{r,\alpha_0\cdot-}{\colim} \, \ul{\End}_{\IndCoh^*(\sY^{\leq n})}(\sF_n)
\isom 
\ul{\Hom}_{\IndCoh^*(\sY^{\leq n})}(\sF_n,\o{\sF}_n)
\]

\noindent is clearly a map of right 
$\ul{\End}_{\IndCoh^*(\sY^{\leq n})}(\sF_n)$-modules.
But $\ul{\Hom}_{\IndCoh^*(\sY^{\leq n})}(\sF_n,\o{\sF}_n)$ carries
a commuting left 
$\ul{\End}_{\IndCoh^*(\sY^{\leq n})}(\o{\sF}_n)$-module structure,
for which it is a free module of rank $1$ by the above. 
As the class field theory isomorphism:
\begin{equation}\label{eq:lcft-diff}
\ul{\End}_{\IndCoh^*(\LocSys_{\bG_m}^{\leq n})}
(\Av_!^{\Gr_{\bG_m}^{\leq n},w}\sO_{\LocSys_{\bG_m,\log}^{\leq n}})
\simeq \Gamma(\fL_n^+ \bG_m,D_{\fL_n^+ \bG_m})
\end{equation}

\noindent is an isomorphism of algebras, we obtain the claim.

\subsubsection{}\label{sss:lcft-compat-left}

The above calculations verify that there are canonical isomorphisms:
\[
\underset{r,\alpha_0\cdot-}{\colim} \, W_n \simeq 
\Gamma(\fL_n^+ \bG_m,D_{\fL_n^+ \bG_m}) \simeq 
\underset{r,\alpha_0\cdot-}{\colim} \, \ul{\End}_{\IndCoh^*(\sY^{\leq n})}(\sF_n).
\]

We claim that this map is induced by our comparison map $f$ from 
\S \ref{sss:ff-induction-strat}.

By \S \ref{sss:alpha-alg}, it suffices to check that the composition:
\[
W_n^{op} \to \ul{\End}_{\IndCoh^*(\sY^{\leq n})}(\sF_n)
\to
\underset{r,\alpha_0\cdot-}{\colim} \, \ul{\End}_{\IndCoh^*(\sY^{\leq n})}(\sF_n)
\simeq \Gamma(\fL_n^+ \bG_m,D_{\fL_n^+ \bG_m})^{op}
\]

\noindent is the restriction map. We first check this on some particular elements.

For the linear functions $t^{-n}k[[t]]dt/k[[t]]dt \subset W_n^{op}$,
this follows by construction. Indeed, the geometric incarnation of the
action of these elements is the observation that 
$\sZ^{\leq n} \subset \sY_{\log}^{\leq n}$ is closed under
the action of $\Gr_{\bG_m}^{pos,\leq n}$. The same is true
for $\LocSys_{\bG_m,\log}^{\leq n} \subset \sY_{\log}^{\leq n}$ \textemdash{}
in fact, it is closed under $\Gr_{\bG_m}^{\leq n}$. Tracing the
constructions, these observations provide the claim.

Next, we claim that the vector fields on $\fL_n^+\bA^1$ defined by the
infinitesimal $\fL_n^+\bG_m$-action: 
\[
k[[t]]/t^n \simeq \Lie(\fL_n^+\bG_m) \subset W_n^{op}
\]

\noindent have the correct image. Indeed, this follows by 
construction of the class field theory isomorphism and Lemma \ref{l:hc-calc}.

By the above, we see that the algebra map:

\[
W_n^{op} \to \Gamma(\fL_n^+ \bG_m,D_{\fL_n^+ \bG_m})^{op}
\]

\noindent factors through the (non-commutative) localization of
$W_n^{op}$ at $\alpha_0$, i.e., through a map:

\[
W_n^{op} \to \Gamma(\fL_n^+ \bG_m,D_{\fL_n^+ \bG_m})^{op} \xar{\gamma}
\Gamma(\fL_n^+ \bG_m,D_{\fL_n^+ \bG_m})^{op}
\]

\noindent for some algebra map $\gamma$. As the subspaces
$t^{-n}k[[t]]dt/k[[t]]dt$ and $k[[t]]/t^n$ 
generate $\Gamma(\fL_n^+ \bG_m,D_{\fL_n^+ \bG_m})^{op}$ as an algebra, 
and as we have seen $\gamma$ is the identity on these elements, $\gamma$
itself must be the identity map, as desired.

\subsubsection{}

Next, we calculate:
\[
\underset{r,-\cdot \alpha_0}{\colim} \, \ul{\End}_{\IndCoh^*(\sY^{\leq n})}(\sF_n).
\]

For $r \in \bZ$, let $\presup{r}{\sZ^{\leq n}} \subset \sY_{\log}^{\leq n}$
denote the closed where the meromorphic section $s$ has a pole of order $\leq r$;
therefore, $\presup{0}{\sZ^{\leq n}} = \sZ^{\leq n}$,
$\iota(\sZ^{\leq n}) = \presup{-1}{\sZ^{\leq n}}$, and $\alpha_0$ can be
considered as obtained by restriction 
$\presup{r}{\sZ^{\leq n}} \to \presup{r-1}{\sZ^{\leq n}}$ for any $r$.

We let $i_{n,r}:\presup{r}{\sZ^{\leq n}} \to \sY_{\log}^{\leq n}$ denote
the embedding.

For any $\sG \in \IndCoh^*(\sY^{\leq n})$, it follows by definition
that:
\[
\underset{r,-\cdot \alpha_0}{\colim} \, \ul{\Hom}_{\IndCoh^*(\sY^{\leq n})}(\sF_n,\sG) =
\underset{r,-\cdot \alpha_0}{\colim} \, \ul{\Hom}_{\IndCoh^*(\sY^{\leq n})}
(i_{n,r,*}^{\IndCoh}(\sO_{\presup{r}\sZ^{\leq n}}),\Oblv\sG).
\]

\noindent We have a canonical isomorphism:
\[
\underset{r,-\cdot \alpha_0}{\colim} \, \ul{\Hom}_{\IndCoh^*(\sY^{\leq n})}
(i_{n,r,*}^{\IndCoh}(\sO_{\presup{r}\sZ^{\leq n}}),\Oblv\sG) \isom 
\Gamma^{\IndCoh}(\sY_{\log}^{\leq n},\Oblv \sG).
\]

Now taking $\sG = \sF_n$, we see that:
\[
\underset{r,-\cdot \alpha_0}{\colim} \, \ul{\End}_{\IndCoh^*(\sY^{\leq n})}(\sF_n)
\simeq \Gamma^{\IndCoh}(\sY_{\log}^{\leq n},\Oblv \Av_!^{\Gr_{\bG_m}^{\leq n},w}
i_{n,*}^{\IndCoh}\sO_{\sZ^{\leq n}}).
\]

By the analysis of \S \ref{sss:gr-hom},\footnote{Specifically, the 
observations that the ring $A_n$ from 
\emph{loc. cit}. is positively graded with degree $0$ component corresponding to
$\LocSys_{\bG_m,\log}^{\leq n} \subset \sZ^{\leq n} = \Spec(A_n)/\bG_m$.}, 
the map:
\[
\Gamma^{\IndCoh}(\sY_{\log}^{\leq n},\Oblv \Av_!^{\Gr_{\bG_m}^{\leq n},w}
i_{n,*}^{\IndCoh}\sO_{\sZ^{\leq n}}) \to 
\Gamma^{\IndCoh}(\sY_{\log}^{\leq n},\Oblv \Av_!^{\Gr_{\bG_m}^{\leq n},w}
i_{n,*}^{\IndCoh}\widetilde{\zeta}_*^{\,\IndCoh}\sO_{\LocSys_{\bG_m,\log}^{\leq n}})
\]

\noindent is an isomorphism. By functoriality, the target identifies with:
\[
\Gamma(\LocSys_{\bG_m,\log}^{\leq n},
\Oblv \Av_!^{\Gr_{\bG_m}^{\leq n},w}
\sO_{\LocSys_{\bG_m,\log}^{\leq n}}).
\] 

Now passing to the colimit over $\alpha_0$ acting on the left and
right simultaneously, the above analysis actually shows that the maps: 
\[
\underset{r,-\cdot \alpha_0}{\colim} \, \ul{\End}_{\IndCoh^*(\sY^{\leq n})}(\sF_n)
\to \underset{(r_1,r_2),-\cdot \alpha_0,-\cdot \alpha_0}{\colim} \, \ul{\End}_{\IndCoh^*(\sY^{\leq n})}(\sF_n)
\leftarrow 
\underset{r,\alpha_0 \cdot -}{\colim} \, \ul{\End}_{\IndCoh^*(\sY^{\leq n})}(\sF_n.
\]

\noindent are isomorphisms. It then follows from \S \ref{sss:lcft-compat-left} that
the comparison map:

\[
\underset{r,-\cdot \alpha_0}{\colim} \, W_n \to 
\underset{r,-\cdot \alpha_0}{\colim} \, \ul{\End}_{\IndCoh^*(\sY^{\leq n})}(\sF_n)
\]

\noindent is an isomorphism. 

\subsubsection{Hamiltonian reduction}

It remains to check that the third map in \eqref{eq:weyl-bimod} 
is an isomorphism. We will see that this amounts to our
inductive hypothesis.

\subsubsection{}\label{sss:qh-1}

We begin with some notation.

Suppose $A$ is a (DG) algebra, and that we are given a map:
\[
k[t] \to A.
\] 

\noindent We let $\alpha \in \Omega^{\infty} A$ denote the image of $t$.

In this case, we let:
\[
\on{QH}_{\alpha}(A) \coloneqq A^{\alpha\cdot-}/-\cdot \alpha 
\]

\noindent with notation on the right hand side understood as
in Lemma \ref{l:weyl-bimod}, i.e., we take the homotopy quotient of 
right multiplication by $\alpha$ and then the homotopy kernel of the
left action of $\alpha$ (and of course, the order of these two operations is
not important).

As is standard, we refer to $\on{QH}_{\alpha}(A)$ 
as the \emph{quantum Hamiltonian reduction} 
of $A$ by $\alpha$.

\subsubsection{}

First, we note that $\on{QH}_{\alpha_0}(W_1^{op}) = k$ (with $1 \in k$ mapping to the
unit in $W_1$), so similarly, 
$\on{QHH}_{\alpha_0}(W_n^{op}) = W_{n-1}^{op}$. This is the left hand side of
the third map in \eqref{eq:weyl-bimod} in our setting.

\subsubsection{}

Next, we calculate the right hand side of \eqref{eq:weyl-bimod}.

First, we have:
\[
\ul{\End}_{\IndCoh^*(\sY^{\leq n})}(\sF_n) \isom 
\ul{\End}_{\IndCoh^*(\sY)}(\lambda_{n,*}^{\IndCoh}(\sF_n))
\]

\noindent by Remark \ref{r:kash-y} (and the assumption that $n>0$).

By \eqref{eq:fund-ses}, we then have:
\[
\ul{\End}_{\IndCoh^*(\sY)}\big(\lambda_{n,*}^{\IndCoh}(\sF_n)\big)^{\alpha_0\cdot-} = 
\ul{\Hom}_{\IndCoh^*(\sY)}\big(\lambda_{n,*}^{\IndCoh}(\sF_n),
\lambda_{n-1,*}^{\IndCoh}(\sF_{n-1})\big). 
\]

\noindent Applying \eqref{eq:fund-ses} again, we have:
\begin{equation}\label{eq:qh-end}
\begin{gathered}
\on{QH}_{\alpha_0}\Big(\ul{\End}_{\IndCoh^*(\sY)}\big(\lambda_{n,*}^{\IndCoh}(\sF_n)\big)\Big) 
\coloneqq \\
\ul{\End}_{\IndCoh^*(\sY)}\big(\lambda_{n,*}^{\IndCoh}(\sF_n)\big)^{\alpha_0\cdot-}/-\cdot\alpha_0 =
\ul{\End}_{\IndCoh^*(\sY)}\big(\lambda_{n-1,*}^{\IndCoh}(\sF_{n-1})\big).
\end{gathered}
\end{equation}

By induction, we know that the right hand side identifies with 
$W_{n-1}^{op}$ as an algebra. Below, we will show that the
comparison map from Lemma \ref{l:weyl-bimod} coincides with this
identification.

\subsubsection{}

Let us return to the general setting of \S \ref{sss:qh-1}.

First, observe that $\on{QH}_{\alpha}(A)$ carries a canonical algebra structure.
Indeed, it obviously identifies with:
\[
\ul{\End}_{A\mod}(A/-\cdot\alpha) = 
\ul{\End}_{A\mod}(A \underset{k[t]}{\otimes} k).
\]

\noindent (Here $k[t]$ acts on $k$ with $t$ acting by $0$.)
We equip $A^{\alpha\cdot-}/-\cdot \alpha$ with the multiplication 
\emph{opposite} to this one.\footnote{This convention is natural 
because $\ul{\End}_{A\mod}(A) = A^{op}$.}

Suppose now that $A$ and $B$ are (DG) algebras, and that we are given a map:
\[
B[t] \coloneqq B \otimes k[t] \to A.
\] 

We can further identify:
\[
A \underset{k[t]}{\otimes} k \simeq A \underset{B[t]}{\otimes} B.
\]

\noindent I.e., $A \underset{k[t]}{\otimes} k$ is canonically 
an $(A,B)$-bimodule. Therefore, by construction, we obtain
a canonical morphism: 
\[
B \to \on{QH}_{\alpha}(A).
\]

\begin{rem}

We note that the diagram:
\[
\xymatrix{
B \ar[rr] \ar[d] && \on{QH}_{\alpha}(A)\ar[d] \\
A \ar[rr] && A/-\cdot\alpha
}
\]

\noindent commutes by construction. 

\end{rem}

\subsubsection{}

We apply the above with:
\[
A = \ul{\End}_{\IndCoh^*(\sY^{\leq n})}(\sF_n),
\alpha = \alpha_0, B = W_{n-1}^{op}.
\]

We first see that:
\[
\on{QH}_{\alpha_0}\Big(\ul{\End}_{\IndCoh^*(\sY)}\big(\lambda_{n,*}^{\IndCoh}(\sF_n)\big)\Big) 
\]

\noindent carries a canonical algebra structure; by construction, 
\eqref{eq:qh-end} is an isomorphism of algebras, where the right hand side
is given the usual algebra structure on endomorphisms. 

Moreover, we deduce that the map:
\[
W_{n-1}^{op} \to \on{QH}_{\alpha_0}\Big(\ul{\End}_{\IndCoh^*(\sY)}\big(\lambda_{n,*}^{\IndCoh}(\sF_n)\big)\Big).
\]

\noindent (coming from Lemma \ref{l:weyl-bimod}) is an algebra morphism.

\subsubsection{}

Finally, we claim that the resulting map:
\[
W_{n-1}^{op} \to \on{QH}_{\alpha_0}\Big(\ul{\End}_{\IndCoh^*(\sY)}\big(\lambda_{n,*}^{\IndCoh}(\sF_n)\big)\Big) = 
\ul{\End}_{\IndCoh^*(\sY)}\big(\lambda_{n-1,*}^{\IndCoh}(\sF_{n-1})\big)
\]

\noindent coincides with the construction of \S \ref{s:weyl} (for $n-1$ instead of $n$).

Indeed, as this is an algebra map, we can check this on generators of
$W_{n-1}$. There it is essentially tautological from the constructions.

In particular, we see by induction that this map is an isomorphism.
Therefore, the conditions of Lemma \ref{l:weyl-bimod} are satisfied,
so we have proved Proposition \ref{p:ff}.

\section{Proof of the main theorem}\label{s:main}

\subsection{Overview}

In this section we prove Theorem \ref{t:main}. The idea is simple given the
work we have done so far: bootstrap from the functors $\Delta_n$ constructed
earlier.

\subsection{Arcs}\label{ss:delta-arcs}

We begin by constructing a strongly $\fL^+\bG_m$-equivariant functor:
\[
\Delta_{\infty}:D^!(\fL^+\bA^1) \to \IndCoh^*(\sY).
\]

\subsubsection{Notation}

In what follows, we let:
\[
p_n:\fL^+ \bA^1 \to \fL_n^+\bA^1
\]

\noindent denote the structural map.
For $m \geq n$, we similarly let: 
\[
p_{n,m}:\fL_m^+\bA^1 \to \fL_n^+\bA^1 
\]

\noindent denote the structural map.

\subsubsection{Overview}

First, let us unwind what it means to construct such a functor
$\Delta_{\infty}$.

Recall from \cite{dmod} that by definition we have: 
\[
D^!(\fL^+\bA^1) \coloneqq \colim_{n \geq 0} \, D(\fL_n^+\bA^1)  
\]

\noindent where the structural functors are the standard 
$!$-pullback functors of $D$-module theory. We let: 
\[
p_n^!: D(\fL_n^+\bA^1) \to 
D^!(\fL^+\bA^1)
\]

\noindent denote the structural functor.

The above colimit is a colimit of $D^*(\fL^+\bG_m)$-module categories.
Moreover, as it is indexed by $\bZ^{\geq 0}$, it suffices to 
construct functors:
\begin{equation}\label{eq:term-wise}
D(\fL_n^+\bA^1) \to \IndCoh^*(\sY) \in 
D^*(\fL^+\bG_m)\mod 
\end{equation}

\noindent and commutative diagrams:
\begin{equation}\label{eq:delta-infty-comm}
\vcenter{\xymatrix{
D(\fL_n^+\bA^1) \ar[drr]\ar[d]_{p_{n,n+1}^!} && \\ 
D(\fL_{n+1}^+\bA^1) \ar[rr] && \IndCoh^*(\sY) 
}}
\end{equation}

\noindent of $D^*(\fL^+\bG_m)$-module categories for each $n$.
(Here $D^*(\fL^+\bG_m)\mod $ acts on $\IndCoh^*(\sY)$ via
geometric class field theory, as in \S \ref{s:cft}.)

\subsubsection{}

For us, by definition, the functor \eqref{eq:term-wise}
is $\Delta_n$. It remains to construct the commutative 
diagrams \eqref{eq:delta-infty-comm} as above. 

In \S \ref{sss:diff-func}-\ref{sss:diff-compat}, we will formulate 
a compatibility for these commutative diagrams that 
characterizes them uniquely. We then turn to proving their existence.

\subsubsection{Preliminary constructions with differential operators}\label{sss:diff-func}

We digress to some general constructions with $D$-modules, continuing
the discussion from \S \ref{ss:diff-1}. 

Suppose $\pi:X \to Y$ is a morphism between smooth varieties. In this
case, one has a canonical natural isomorphism:
\[
\Oblv^{\ell} \pi^! \simeq \pi^*\Oblv^{\ell}
\]

\noindent of functors:
\[
D(Y) \to \QCoh(X).
\]

\noindent Here $\pi^*$ is the quasi-coherent pullback functor. We refer
to \cite{crystals} for the construction.

The canonical map:
\[
\sO_Y \to \Oblv^{\ell}\ind^{\ell}(\sO_Y) = \Oblv^{\ell}(D_Y)
\]

\noindent then gives rise to a map:
\[
\sO_X \to \Oblv^{\ell}(D_Y) \simeq 
\Oblv^{\ell}\pi^!(D_Y)
\]

\noindent so by adjunction a map:
\begin{equation}\label{eq:can-x-y}
\on{can}:D_X \to \pi^!(D_Y).
\end{equation}

In the special case where $\pi$ is smooth, the map $\on{can}$
is between objects concentrated in cohomological degree $-\dim X$, and
is an epimorphism in that abelian category.

Finally, we note that if an algebraic group $G$ acts on $X$ and $Y$, and the map
$\pi$ is $G$-equivariant, the map $\on{can}$ upgrades canonically to a map 
of $G$-weakly equivariant $D$-modules (by functoriality of the above constructions).  

\subsubsection{A compatibility}\label{sss:diff-compat}

Let us now return to our setting. For each $n$, the above
provides canonical morphisms:
\[
\on{can}_n:
D_{\fL_{n+1}^+ \bA^1} \to p_{n,n+1}^! D_{\fL_n^+ \bA^1} \in 
D(\fL_{n+1}^+ \bA^1)^{\fL_{n+1}^+ \bG_m,w}.
\]

Suppose we are given a commutative diagram \eqref{eq:delta-infty-comm}.
We then obtain a morphism:
\begin{equation}\label{eq:delta-can-n}
\begin{gathered}
i_{n+1,*}^{\IndCoh}(\sO_{\sZ^{\leq n+1}}) \eqqcolon 
\Delta_{n+1}(D_{\fL_{n+1}^+ \bA^1}) \xar{\on{\Delta_{n+1}(\on{can}_n)}}
\Delta_{n+1}(p_{n,n+1}^! D_{\fL_n^+ \bA^1}) 
\overset{\eqref{eq:delta-infty-comm}}{\simeq} \\
\Delta_n(D_{\fL_n^+ \bA^1}) 
\coloneqq
i_{n,*}^{\IndCoh}(\sO_{\sZ^{\leq n}}) \in \IndCoh^*(\sY)^{\fL^+\bG_m,w} \simeq 
\IndCoh^*(\sY_{\log}).
\end{gathered}
\end{equation}

\noindent (We have omitted terms $\lambda_{-,*}^{\IndCoh}$ to simplify 
the notation. We are also abusing notation in letting
e.g. $\Delta_n$ denote the induced functor on 
weakly equivariant categories.)

On the other hand, there is another such map coming from the embedding
$\delta_{n+1}:\sZ^{\leq n} \into \sZ^{\leq n+1}$ and the adjunction map:
\begin{equation}\label{eq:delta-delta}
\sO_{\sZ^{\leq n+1}} \to \delta_{n+1,*}(\sO_{\sZ^{\leq n}}).
\end{equation}

We will show:

\begin{prop}\label{p:delta-infty-comm}

There exist unique commutative diagrams \eqref{eq:delta-infty-comm}
so that the resulting map \eqref{eq:delta-can-n} is \eqref{eq:delta-delta}.

\end{prop}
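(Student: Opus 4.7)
The plan is to prove existence and uniqueness separately, via a generator argument reducing to a cofiber identification.

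For uniqueness, a commutative diagram \eqref{eq:delta-infty-comm} of $D^*(\fL^+\bG_m)$-equivariant functors amounts to a natural isomorphism $\Delta_{n+1}\circ p_{n,n+1}^! \simeq \Delta_n$. Since $D(\fL_n^+\bA^1)$ is compactly generated as a $D^*(\fL^+\bG_m)$-module category by $D_{\fL_n^+\bA^1}$, with endomorphism algebra $W_n$, such a natural iso is determined by a single iso $\Delta_{n+1}(p_{n,n+1}^!D_{\fL_n^+\bA^1}) \simeq \lambda_{n,*}^{\IndCoh}\sF_n$ compatible with the $W_n$-action. The condition that the induced morphism \eqref{eq:delta-can-n} from $\lambda_{n+1,*}^{\IndCoh}\sF_{n+1}$ equal \eqref{eq:delta-delta} pins this iso down by the fully faithfulness of Proposition \ref{p:ff}.

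For existence, I would use the coordinate presentation of \S\ref{sss:ff-induction-strat}, in which $p := p_{n,n+1}$ is the smooth affine projection of relative dimension one with fiber coordinate $\alpha_n$. Thus there is a standard cofiber sequence
\[
D_{\fL_{n+1}^+\bA^1} \xrightarrow{\,\cdot\,\partial_{\alpha_n}\,} D_{\fL_{n+1}^+\bA^1} \xrightarrow{\,\on{can}_n\,} p^!D_{\fL_n^+\bA^1}
\]
of left $D_{\fL_{n+1}^+\bA^1}$-modules, with first arrow right multiplication. Applying $\Delta_{n+1}$ and matching the commutation relation $[\partial_{\alpha_n},\alpha_n]_{W_{n+1}^{op}} = -1 = -\Res(t^n\cdot t^{-n-1}dt)$ to identify the image of $\partial_{\alpha_n}$ under $W_{n+1}^{op}\to \End(\sF_{n+1})$ with the operator $\xi_{t^n}$ of \S\ref{s:weyl}, yields a cofiber sequence
\[
\lambda_{n+1,*}^{\IndCoh}\sF_{n+1} \xrightarrow{\,\xi_{t^n}\,} \lambda_{n+1,*}^{\IndCoh}\sF_{n+1} \xrightarrow{\,\Delta_{n+1}(\on{can}_n)\,} \Delta_{n+1}(p^!D_{\fL_n^+\bA^1}).
\]
On the other hand, the fundamental exact sequence \eqref{eq:fund-ses-2} (raised to level $n+1$) provides a cofiber sequence $\lambda_{n+1,*}^{\IndCoh}\sF_{n+1} \xrightarrow{b_{-n-1}\cdot\,} \lambda_{n+1,*}^{\IndCoh}\sF_{n+1} \to \lambda_{n,*}^{\IndCoh}\sF_n$ whose projection, after averaging to $\IndCoh^*(\sY)$, is precisely \eqref{eq:delta-delta}. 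Producing a canonical comparison between the cofibers of $\xi_{t^n}$ and of $b_{-n-1}\cdot\,$ thus gives the desired iso on the generator, which then extends uniquely to the required commutative diagram \eqref{eq:delta-infty-comm}; its $D^*(\fL^+\bG_m)$-equivariance follows from Proposition \ref{p:eq} applied to $\Delta_n$ and $\Delta_{n+1}$.

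The main obstacle is this cofiber identification. The Weyl relation prevents $\xi_{t^n}$ from literally acting as $b_{-n-1}\cdot\,$ on $\lambda_{n+1,*}^{\IndCoh}\sF_{n+1}$, so the iso of cofibers must arise from a more refined geometric coincidence. My approach would be to unwind the construction of $\xi_f$ in \S\ref{ss:can-constr} for $f = t^n$: the residue pairing against the universal polar form $\omega^{univ}$ extracts exactly $b_{-n-1}$, and the scheme-theoretic support of $\omega^{univ}$ on $\act_{neg}^{-1}(\sZ^{\leq n+1})$ (Proposition \ref{p:z-flat-axes-var}) should bridge the two cofiber presentations. Formalizing this, along with verifying $W_n$-equivariance of the resulting iso (needed for the uniqueness statement), is the main technical burden; all remaining verifications are formal.
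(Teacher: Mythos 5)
Your overall plan — reduce to the compact generator, compare the cofiber sequence for $\on{can}_n$ with the fundamental exact sequence \eqref{eq:fund-ses-2} — is the same as the paper's. But there is a genuine gap in the existence step, and it is precisely the one you flag as the ``main obstacle'' without resolving.

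The difficulty you correctly identify (the Weyl relation prevents $\xi_{t^n}$ from acting as literal multiplication by $b_{-n-1}$ on $\lambda_{n+1,*}^{\IndCoh}\sF_{n+1}$) arises because you are trying to match the two cofiber sequences \emph{after} applying $\Av_!^{\Gr_{\bG_m},w}$, i.e.\ in $\IndCoh^*(\sY)$. There the two terms of the $\on{can}_n$ triangle are (twists of) the same object $\lambda_{n+1,*}^{\IndCoh}\sF_{n+1}$, the arrow is the non-commutative operator $\xi_{t^n}$, and it is genuinely unclear how to match it with the commutative multiplication $b_{-n-1}\cdot-$ from \eqref{eq:fund-ses-2}. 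The appeal to the scheme-theoretic support of $\omega^{univ}$ (Proposition \ref{p:z-flat-axes-var}) gestures at something relevant, but as written it does not produce the iso; this step is missing, not merely deferred.

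The paper's route sidesteps this entirely by invoking the logic of \S\ref{sss:hc-comm-diags} to reduce the existence of \eqref{eq:delta-infty-comm} to the construction of an iso \eqref{eq:comm-iso} \emph{in the weakly equivariant category} $\IndCoh^*(\sY_{\log}^{\leq n+1})$. There the cofiber sequence for $\on{can}_n$ and the SES \eqref{eq:fund-ses-2} live in the same abelian heart and have \emph{different} source and target objects: the $(-1)$-twist you omit is essential, since $\Delta_{n+1}(D_{\fL_{n+1}^+\bA^1}(-1))^{\,weak}\simeq i_{n+1,*}^{\IndCoh}\iota_*^{\IndCoh}\sO_{\sZ^{\leq n+1}}$, which is a different object from $i_{n+1,*}^{\IndCoh}\sO_{\sZ^{\leq n+1}}$. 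At that level, $\Delta_{n+1}(-\cdot\partial_{\alpha_n})$ really is the map induced by $b_{-n-1}\cdot-$, and \eqref{eq:fund-ses-2} supplies the cokernel identification with no residual geometric comparison needed. The ``quantum'' relation $[\vph,\xi]=\Res(f\omega)$ only emerges after the average $\Av_!^{\Gr_{\bG_m}^{\leq n+1},w}$, which is why working upstream in $\IndCoh^*(\sY_{\log}^{\leq n+1})$ is what makes the argument close.

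Your uniqueness argument (via full faithfulness of $\Delta_n$, Proposition \ref{p:ff}, plus the normalization) is correct and is a legitimate alternative; the paper instead notes that $i_{n+1,*}^{\IndCoh}\sO_{\sZ^{\leq n+1}}\to i_{n,*}^{\IndCoh}\sO_{\sZ^{\leq n}}$ is an epimorphism in the heart, which pins the iso down directly and does not require \ref{p:ff} at all.
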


In this remainder of this subsection, we prove Proposition \ref{p:delta-infty-comm}.

\subsubsection{}\label{sss:hc-comm-diags}

Let us make a preliminary observation in a general setting. Fix $G$
an affine algebraic group.

Suppose that $A$ is an associative algebra with a 
$G$-action and a Harish-Chandra datum as in \S \ref{ss:hc}, so 
$G$ acts strongly on $A\mod$.

Suppose we are given $\sC,\sD \in G\mod$ and functors:
\[
\begin{gathered}
F_1:A\mod \to \sC \\
F_2:\sC \to \sD \\
F_3:A \mod \to \sD.
\end{gathered}
\]

We let:
\[
\sF_1 \coloneqq F_1(A) \in \sC^{G,w}, \sF_3 \coloneqq F_3(A) \in \sD^{G,w}.
\]

Moreover, to remove discussion of higher homotopical considerations,
we assume:

\begin{itemize}

\item $A$ is classical.

\item $\ul{\End}_{\sC}(\sF_1)$ and $\ul{\End}_{\sD}(\sF_3)$ are classical.

\end{itemize}

Then, as in \S \ref{ss:hc}, the 
functor $F_1$ is completely encoded in the datum of compact
object $\sF_1 \in \sC^{G,w}$ with its action of $A^{op} \in \Alg(\Rep(G))$ by 
endomorphisms (the data should satisfy the property that the action of
$\fg$ through $i:\fg \to A$ coincides with the obstruction to strong equivariance).
The same applies for $F_3$.

Therefore, we see that (under the homotopically simplifying assumptions
above), the data of a commutative diagram:
\[
\xymatrix{
A\mod \ar[d]_{F_1} \ar[drr]^{F_3} && \\ 
\sC \ar[rr]^{F_2} && \sD 
}
\]

\noindent is equivalent to giving an isomorphism:
\[
F_2(\sF_1) \simeq \sF_3 \in \sD^{G,w}
\]

\noindent compatible with the actions of $A^{op} \in \Rep(G)$ on both sides.
(In particular, under these assumptions, it is equivalent to construct
a commutative diagram of strongly or weakly $G$-equivariant categories.)

\subsubsection{}

We apply this in our setting as follows. 

First, the actions on the source categories factor through the localization 
$D^*(\fL^+\bG_m) \to D(\fL_{n+1}^+\bG_m)$, and the functors
factor through the subcategory $\IndCoh^*(\sY^{\leq n+1}) \subset \IndCoh^*(\sY)$,
through which $D(\fL_n^+\bG_m)$ acts. So the question only concerns the
action of an affine algebraic group, not an affine group scheme. 

We then see
it suffices to show that there is a unique isomorphism:
\begin{equation}\label{eq:comm-iso}
\Delta_{n+1}(p_{n,n+1}^! D_{\fL_n^+ \bA^1}) 
\simeq 
\Delta_n(D_{\fL_n^+ \bA^1}) 
\coloneqq
i_{n,*}^{\IndCoh}(\sO_{\sZ^{\leq n}})
\end{equation}

\noindent such that: 

\begin{itemize}

\item The two resulting maps from $i_{n+1,*}^{\IndCoh}(\sO_{\sZ^{\leq n+1}})$
(one being \eqref{eq:delta-can-n}, the other being the canonical map)
coincide.

\item The two resulting actions of $W_n^{op}$ on 
$\Av_!^{\Gr_{\bG_m}^{\leq n}}i_{n,*}^{\IndCoh}(\sO_{\sZ^{\leq n}})$ (coming from 
the tautological action of 
$W_n^{op}$ on $D_{\fL_n^+\bA^1} \in D(\fL_n^+\bA^1)$, and functoriality of
$\Delta_{n+1}p_{n,n+1}^!$ and $\Delta_n$ respectively) coincide
under the isomorphism.

\end{itemize}

As the canonical map:
\[
i_{n+1,*}^{\IndCoh}(\sO_{\sZ^{\leq n+1}}) \to 
i_{n,*}^{\IndCoh}(\sO_{\sZ^{\leq n}}) \in \Coh(\sY_{\log})^{\heart}
\]

\noindent is an epimorphism, we see that the first point above 
implies that such an isomorphism is unique if it exists. 

To see existence, it is convenient to fix a coordinate $t$ on the
disc. We use the
notation $\alpha$ notation of \S \ref{sss:ff-induction-strat}. 

We have an evident short exact sequence:
\[
0 \to D_{\fL_{n+1}^+ \bA^1}(-1) \xar{-\cdot \partial\alpha_n}
D_{\fL_{n+1}^+ \bA^1} \xar{\on{can}_n} 
p_{n,n+1}^! D_{\fL_n^+ \bA^1} \to 0 
\]

\noindent in the abelian category 
$D(\fL_{n+1}^+\bA^1)^{\fL_{n+1}^+ \bG_m,w,\heart}[n+1]$.
Here in the first term, the twist $(-1)$ indicates that we modify the
weakly equivariant structure by tensoring with the representation 
$k(-1) \in \Rep(\fL^+\bG_m)$ obtained by restriction along $\ev$ from the
standard representation of $\bG_m$.

By construction of $\Delta_{n+1}$, on weakly equivariant
categories, we have:
\[
\begin{gathered}
\Delta_{n+1}(D_{\fL_{n+1}^+ \bA^1}) \simeq i_{n+1,*}^{\IndCoh}(\sO_{\sZ^{\leq n+1}}) \\
\Delta_{n+1}(D_{\fL_{n+1}^+ \bA^1}(-1)) 
\simeq i_{n+1,*}^{\IndCoh}\iota_*^{\IndCoh}(\sO_{\sZ^{\leq n+1}}).
\end{gathered}
\]

\noindent Moreover, it is easy to see that $\Delta_{n+1}(-\cdot \partial_{\alpha_n})$
goes to the map induced from:
\[
\iota_*^{\IndCoh}(\sO_{\sZ^{\leq n+1}}) \xar{b_{-n}\cdot-} 
\sO_{\sZ^{\leq n+1}},
\]

\noindent i.e., the left arrow in \eqref{eq:fund-ses-2}. From 
\eqref{eq:fund-ses-2}, it follows that there is a unique
isomorphism \eqref{eq:comm-iso} compatible with the projection from 
$i_{n+1,*}^{\IndCoh}(\sO_{\sZ^{\leq n+1}})$. It is straightforward
to check that this isomorphism is compatible with the action of
$W_n^{op}$, concluding the argument.

\subsection{Compatibility with translation}

We now establish an additional equivariance property 
for the functor $\Delta_{\infty}$.

Specifically, recall the positive loop space $\fL^{pos} \bG_m \subset \fL \bG_m$ 
for $\bG_m$, as defined in \S \ref{ss:pos-gr}. 
This space is an ind-closed submonoid of $\fL \bG_m$ containing
$\fL^+\bG_m$, so we have 
a fully-faithful monoidal functor:
\[
D^*(\fL^{pos} \bG_m) \subset D^*(\fL \bG_m).
\]

By construction, $\fL^{pos} \bG_m$ acts on $\fL^+\bA^1$, so 
$D^*(\fL^{pos} \bG_m)$ acts on $D^!(\fL^+\bA^1)$.

On the other hand, we have the local class field theory isomorphism:
\[
D^*(\fL \bG_m) \simeq \QCoh(\LocSys_{\bG_m})
\]

\noindent defining a (fully faithful, symmetric) monoidal functor:
\[
D^*(\fL^{pos} \bG_m) \to \QCoh(\LocSys_{\bG_m})
\]

\noindent and so an action of $D^*(\fL^{pos} \bG_m)$ on 
$\IndCoh^*(\sY)$.

Our goal is to canonically upgrade the strong 
$\fL^+\bG_m$-equivariance of $\Delta_{\infty}$ to make $\Delta_{\infty}$
into a morphism of $D^*(\fL^{pos} \bG_m)$-module categories.

\subsubsection{}

Before proceeding, we wish to pin down this equivariance property uniquely.

Suppose $\sC$ is a $D^*(\fL^{pos} \bG_m)$-module category. 
We obtain a canonical functor $T:\sC^{\fL^+\bG_m} \to \sC^{\fL^+\bG_m}$
given by the action of (the $\delta$ $D$-module at) 
some (equivalently any) uniformizer $t \in \fL^{pos} \bG_m$.

Our uniqueness property will involve the corresponding functors $T$ on both
sides. Below, we collect preliminary observations regarding how the
functor $T$ interacts with the source and target of our functor. 

\subsubsection{} 

Suppose $\sC = D^!(\fL^+\bA^1)$. A choice of uniformizer $t$ defines
a closed embedding $\mu_t:\fL^+\bA^1 \into \fL^+ \bA^1$ given by multiplication
by $t$. The corresponding functor $T$ above is given by the left adjoint
to the pullback functor:
\[
\mu_t^!:D^!(\fL^+\bA^1) \to D^!(\fL^+ \bA^1).
\]

\noindent I.e., in the notation of \cite{dmod}, we have $T = \mu_{t,*,!-dR}$.

By adjunction, we observe that there is a canonical map:
\begin{equation}\label{eq:can-unit-geom}
\on{can}:T(\omega_{\fL^+\bA^1}) = \mu_{t,*,!-dR}(\omega_{\fL^+\bA^1}) = 
\mu_{t,*,!-dR}\mu_t^!(\omega_{\fL^+\bA^1}) \to 
\omega_{\fL^+\bA^1}.
\end{equation}

\subsubsection{}

Now suppose $\sC = \IndCoh^*(\sY)$. Then we have:
\[
\begin{gathered}
\IndCoh^*(\sY)^{\fL^+\bG_m} \simeq \IndCoh^*(\sY)_{\fL^+\bG_m} \simeq \\
\QCoh(\LocSys_{\bG_m}^{\leq 0}) \underset{\QCoh(\LocSys_{\bG_m})}{\otimes}
\IndCoh^*(\sY) \subset 
\IndCoh^*(\sY \underset{\LocSys_{\bG_m}}{\times} \LocSys_{\bG_m}^{\leq 0}) = 
\IndCoh^*(\sY_{\log}^{\leq 0}).
\end{gathered}
\]

Under the above
identifications, the functor $T$ is given by tensoring with 
the \emph{dual}\footnote{Because the Contou-Carr\`ere pairing
is skew symmetric, one has to fix a convention in setting up geometric
class field theory. The sign compatibility was implicitly fixed in 
\S \ref{s:aj}. It is a straightforward check to trace the constructions
to see that our sign conventions force the appearance of the dual to the
standard representation; essentially, we are on the ``dual" side of the
Contou-Carr\`ere duality.}
to the standard representation of $\bB \bG_m$, i.e., tensoring with the
bundle $\sO_{\sY^{\leq 0}}(1)$ considered earlier.

Now observe that there is a canonical map:
\begin{equation}\label{eq:can-unit-spec}
\on{can}:T(\sO_{\sZ^{\leq 0}}) = 
\sO_{\sZ^{\leq 0}}(1) \to \sO_{\sZ^{\leq 0}}
\end{equation}

\noindent from \eqref{eq:o(1)}.

\subsubsection{}

We can now formulate our results precisely. 

Observe that by construction, we have
an isomorphism:
\[
\Delta_{\infty}(\omega_{\fL^+\bA^1}) = \Delta_0(\omega_{\fL_0^+\bA^1}) \simeq
i_{0,*}^{\IndCoh}(\sO_{\sZ^{\leq 0}}) \in \IndCoh^*(\sY_{\log}^{\leq 0}).
\]

\noindent (Here we consider $\omega_{\fL^+\bA^1}$ as a $\fL^+\bG_m$-equivariant
object.)

\begin{prop}

There is a unique structure of 
$D^*(\fL^{pos}\bG_m)$-equivariance on the functor $\Delta_{\infty}$
such that:

\begin{itemize}

\item The underlying $D^*(\fL^+\bG_m)$-equivariance is the one
constructed in \S \ref{ss:delta-arcs}.

\item The map:
\[
T(\sO_{\sZ^{\leq 0}}) \to \sO_{\sZ^{\leq 0}} \in \IndCoh^*(\sY_{\log}^{\leq 0})
\]

\noindent obtained by functoriality from:
\[
\Delta_{\infty}(\on{can}:T(\omega_{\fL^+\bA^1}) \xar{\eqref{eq:can-unit-geom}}  
\omega_{\fL^+\bA^1})
\]

\noindent coincides with \eqref{eq:can-unit-spec}.

\end{itemize}

\end{prop}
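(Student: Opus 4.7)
I would begin by observing that $\fL^{pos}\bG_m$ is generated monoidally by $\fL^+\bG_m$ and a uniformizer $t$, with quotient monoid $\Gr_{\bG_m}^{pos} \simeq \bZ^{\geq 0}$ discrete and free on one generator. Consequently, upgrading the given $D^*(\fL^+\bG_m)$-equivariance on $\Delta_{\infty}$ to a $D^*(\fL^{pos}\bG_m)$-equivariance is equivalent to supplying a strongly $\fL^+\bG_m$-equivariant natural isomorphism $\sigma \colon T \circ \Delta_{\infty}^{\fL^+\bG_m} \isom \Delta_{\infty}^{\fL^+\bG_m} \circ T$ between the two incarnations of the $t$-action functor $T$, subject to the coherence making $\sigma$ monoidal with respect to the $\bZ^{\geq 0}$-action. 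Because the acting monoid is discrete and free, this monoidality reduces to a single compatibility and admits no higher coherences.

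\textbf{Uniqueness.} For uniqueness I would invoke the mechanism of \S \ref{sss:hc-comm-diags}. The object $\omega_{\fL^+\bA^1} \in D^!(\fL^+\bA^1)^{\fL^+\bG_m}$ is a compact generator over the appropriate extended Weyl-type algebra, and by the $n \to \infty$ limit of Proposition \ref{p:ff}, $\Delta_{\infty}$ induces an isomorphism of this algebra with the endomorphism algebra of its image. A strongly $\fL^+\bG_m$-equivariant natural isomorphism $\sigma$ is therefore determined by the single map $\sigma_0 \coloneqq \sigma(\omega_{\fL^+\bA^1}) \colon T(\sO_{\sZ^{\leq 0}}) \to \sO_{\sZ^{\leq 0}}$ interchanging the two algebra actions; the second condition of the proposition pins $\sigma_0$ down as \eqref{eq:can-unit-spec}, yielding uniqueness.

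\textbf{Existence.} For existence, I would construct $\sigma$ level by level from the truncated functors $\Delta_n$ and then assemble. Under the local class field theory equivalence of Theorem \ref{t:trun-lcft}, the $t$-translation action on $\sY_{\log}^{\leq n}$ corresponds to the line-bundle twist $\sO(1)$ of \S \ref{sss:z-pic}; geometrically this is the gauge translation $(g,\omega) \mapsto (tg, \omega + dt/t)$, which is just the shift $\iota \colon \sY_{\log}^{\leq n} \to \sY_{\log}^{\leq n+1}$ raising the irregularity by one. On the $D$-module side, multiplication by $t$ is $\mu_{t,*,!\mathendash dR}$, which also raises the truncation level by $1$. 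Under $\Delta_n$, these two shifts are intertwined by the canonical map \eqref{eq:o(1)} together with the fundamental short exact sequences \eqref{eq:fund-ses} and \eqref{eq:fund-ses-2}, giving the level-wise isomorphism $\sigma_n$. Compatibility with the truncation functors $p_{n,n+1}^!$ is inherited from the verification already performed in the proof of Proposition \ref{p:delta-infty-comm}.

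\textbf{Main obstacle.} The hardest step will be showing that the level-wise data $\{\sigma_n\}$ assemble into a \emph{strongly} $D^*(\fL^{pos}\bG_m)$-equivariant structure, rather than merely a compatible family of functorial intertwiners. Two checks are required: first, monoidality over $\bZ^{\geq 0}$, which reduces to functoriality of the shift $\sO(r)$ in $r$ together with an iterated application of \eqref{eq:fund-ses-2}; and second, compatibility with the full $D^*$-action (not just the $\delta$-pushforward along $t$), which by Proposition \ref{p:eq} reduces to the bilinear extension of the Contou--Carr\`ere pairing of \S \ref{s:aj} from $\fL^+\bG_m$ to $\fL^{pos}\bG_m$. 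The sign conventions fixed in \S \ref{s:aj} were chosen precisely to make this extension match the direction of the twist $\sO(1)$, so the normalization in the second hypothesis of the proposition lines up on the nose.
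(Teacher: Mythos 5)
Your proposal follows the paper's strategy closely: split $D^*(\fL^{pos}\bG_m) \simeq D^*(\fL^+\bG_m) \otimes D(\bZ^{\geq 0})$ via a choice of uniformizer, reduce the equivariance structure to a single intertwiner between the $t$-translation functors on source and target, construct it level-by-level via the functors $\Delta_n$, and prove uniqueness using fully faithfulness of $\Delta_{\infty}$ plus the normalization in the second bullet. The general architecture is right. However, there are a few genuine lacunae and one geometric confusion that you should be aware of.

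First, the geometric description of the existence step is garbled. The map $\iota$ in the paper's notation is the Grassmannian \emph{self}-shift $(\sL,\nabla,s) \mapsto (\sL(1),\nabla,s)$ on $\sZ^{\leq n}$ (or $\sY_{\log}^{\leq n}$), and it \emph{preserves} the truncation level $n$ (the connection $\nabla$ is unchanged, so its irregularity does not grow). You describe it as ``$\iota \colon \sY_{\log}^{\leq n} \to \sY_{\log}^{\leq n+1}$ raising the irregularity by one,'' which conflates $\iota$ with the inclusion $\delta_{n+1}$. This is not just cosmetic: the level-wise isomorphism you need is between $\Delta_{n+1}(\mu_{t,*,dR}D_{\fL_n^+\bA^1})$ and $\delta_{n+1,*}^{\IndCoh}\sO_{\sZ^{\leq n}}(1)$, which lives over $\sZ^{\leq n} \subset \sZ^{\leq n+1}$, and the argument proceeds by relating $\iota$ and $\delta_{n+1}$ via \eqref{eq:fund-ses}, not by identifying them.

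Second, and more seriously, the key computational step is missing. You invoke \eqref{eq:fund-ses} and \eqref{eq:fund-ses-2} on the coherent side but never supply their $D$-module counterpart, which is what actually produces the level-wise isomorphism. What is needed is the short exact sequence
\[
0 \to D_{\fL_{n+1}^+\bA^1} \xrightarrow{-\cdot\alpha_0} D_{\fL_{n+1}^+\bA^1}(1) \to \mu_{t,*,dR}D_{\fL_n^+\bA^1}[1] \to 0
\]
in $D(\fL_{n+1}^+\bA^1)^{\fL_{n+1}^+\bG_m,w,\heart}[n+1]$, under which $-\cdot\alpha_0$ maps via $\Delta_{n+1}$ to the canonical map $\sO_{\sZ^{\leq n+1}} \to \iota_*^{\IndCoh}\sO_{\sZ^{\leq n+1}}$, whence $\Delta_{n+1}(\mu_{t,*,dR}D_{\fL_n^+\bA^1})$ is identified with $\Ker(\sO_{\sZ^{\leq n+1}} \to \iota_*^{\IndCoh}\sO_{\sZ^{\leq n+1}}) \simeq \delta_{n+1,*}^{\IndCoh}\sO_{\sZ^{\leq n}}(1)$ by \eqref{eq:fund-ses}. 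Without this SES explicitly written down, your ``these two shifts are intertwined by \eqref{eq:o(1)}'' is not a proof.

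Third, your diagnosis of the ``main obstacle'' is misplaced. The monoidality over $\bZ^{\geq 0}$ and compatibility with the full $D^*$-action are dispatched by the mechanism of \S\ref{sss:hc-comm-diags} (which you correctly invoke for uniqueness): under the stated homotopical hypotheses (the endomorphism algebras are classical), constructing commutative diagrams of strongly equivariant module categories reduces to checking compatibility of classical algebra actions on a single object, which is the ``easily seen'' step. The actual content is the SES above. Relatedly, your uniqueness argument via the generator $\omega_{\fL^+\bA^1}$ is a valid alternative to the paper's observation that $\End_{\TwoEnd_{\fL^+\bG_m\mod}(D^!(\fL^+\bA^1))}(\mu_{t,!,dR}) = k$ (by a kernel computation on $\fL^+\bA^1 \times \fL^+\bA^1$); both pin down the isomorphism up to a scalar that the second bullet then normalizes.
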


\begin{proof}

We begin with the existence. For this, it is convenient to fix a 
uniformizer $t$ once and for all. Note that this choice
induces a morphism $\LocSys_{\bG_m} \to \bB \bG_m$ using 
Proposition \ref{p:locsys}. We let $\sO_{\LocSys_{\bG_m}}(-1)$ and
$\sY(-1)$ denote
the corresponding line bundles, and generally use notation as in
\S \ref{sss:z-pic}. 

We obtain a splitting:
\[
D^*(\fL^{pos} \bG_m) \simeq D^*(\fL^+ \bG_m) \otimes D(\bZ^{\geq 0})
\]

\noindent of (symmetric) monoidal DG categories. Therefore, we find
that $D^*(\fL^{pos} \bG_m)$-module categories are equivalent to
$D^*(\fL^+\bG_m)$-module categories with an endofunctor.
Therefore, the existence amounts to showing that $\Delta_{\infty}$
intertwines $\mu_{t,*,!-dR}$ with tensoring with 
$\sO_{\sY}(1)$.

Unwinding the constructions, this amounts to constructing commutative diagrams
of strong $\fL^+\bG_m$-module categories:
\[
\xymatrix{
D(\fL_n^+\bA^1) \ar[rr]^{\mu_{t,*,dR}}\ar[d]^{\Delta_n} && D(\fL_{n+1}^+\bA^1) \ar[d]^{\Delta_{n+1}} \\
\IndCoh^*(\sY) \ar[rr]^{\sO_{\sY}(1) \otimes -} &&  \IndCoh^*(\sY)
}
\] 

\noindent compatibly in $n$ (using the $p_{-}^!$ functors and 
the commutativity data of Proposition \ref{p:delta-infty-comm});
here we have also let $\mu_t$ denote the induced map
$\fL_n^+\bA^1 \into \fL_{n+1}^+\bA^1$ given by multiplication with $t$.

Using the logic of \eqref{sss:hc-comm-diags} (as in 
the proof of Proposition \ref{p:delta-infty-comm}),
we see that this amounts to constructing
isomorphisms:
\[
\sO_{\sZ^{\leq n}}(1) \simeq \Delta_{n+1}(\mu_{t,*,dR} D_{\fL_n^+\bA^1}) \in 
\IndCoh^*(\sY_{\log}^{\leq n+1})
\]

\noindent satisfying some compatibilities (that do not involve higher
homotopy theory).

For this, we observe that there is a standard short exact sequence:
\[
0 \to D_{\fL_{n+1}^+\bA^1} \xar{-\cdot \alpha_0} D_{\fL_{n+1}^+\bA^1}(1) \xar{}
\mu_{t,*,dR} D_{\fL_n^+\bA^1}[1] \to 0 
\]

\noindent in the abelian category $D(\fL_{n+1}^+\bA^1)^{\fL_{n+1}\bG_m,w,\heart}[n+1]$
(with notation as before). 
The map $-\cdot \alpha_0$ maps under $\Delta_{n+1}$ to the canonical
map:
\[
\sO_{\sZ^{\leq n+1}} \to \iota_*^{\IndCoh}(\sO_{\sZ^{\leq n+1}}).
\]

Therefore, we obtain an isomorphism:
\[
\Delta_{n+1}(\mu_{t,*,dR} D_{\fL_n^+\bA^1}) \simeq 
\Coker(\sO_{\sZ^{\leq n+1}}[-1] \to \iota_*^{\IndCoh}(\sO_{\sZ^{\leq n+1}}[-1])) 
\simeq 
\Ker(\sO_{\sZ^{\leq n+1}} \to \iota_*^{\IndCoh}(\sO_{\sZ^{\leq n+1}})).
\]

\noindent The latter is canonically isomorphism to 
$\delta_{n+1,*}^{\IndCoh}\sO_{\sZ^{\leq n}}(1)$ by \eqref{eq:fund-ses}, 
as desired.

This isomorphism is easily seen to be compatible with the actions of $W_n^{op}$ 
and with varying $n$, proving existence.

For uniqueness, we observe that $\Delta_{\infty}$ is fully faithful
(cf. Lemma \ref{l:colim-ff} below), so the observation follows from the 
observation that:
\[
k \isom \End_{\TwoEnd_{\fL^+\bG_m\mod}(D^!(\fL^+\bA^1))}(\mu_{t,!,dR}) \in \Gpd
\]

\noindent (which e.g. follows by replacing functors with kernels
on the square $\fL^+\bA^1 \times \fL^+\bA^1$), 
meaning that any isomorphism we construct is unique up to scalars;
clearly the second compatibility in the proposition pins down this
scalar multiple uniquely.

\end{proof}

\subsection{Conclusion of the argument}

We now have the following precise form of our main theorem.

\begin{thm}\label{t:main-precise}

There is a unique equivalence:
\[
\Delta:D^!(\fL\bA^1) \isom  \IndCoh^*(\sY) 
\]

\noindent of $D^*(\fL\bG_m)$-module categories such that the induced morphism:
\[
D^!(\fL^+\bA^1) \subset D^!(\fL\bA^1) \to \IndCoh^*(\sY) 
\]

\noindent of $D^*(\fL^{pos}\bG_m)$-module categories coincides with the functor
$\Delta_{\infty}$ from above.

\end{thm}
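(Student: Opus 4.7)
The plan is to extend $\Delta_\infty$ to all of $D^!(\fL\bA^1)$ by inverting the action of a uniformizer, and then to verify both fully faithfulness and generation using the structural results assembled in earlier sections. More precisely, write $\fL\bA^1 = \colim_n t^{-n}\fL^+\bA^1$ with transition maps given by multiplication by $t$; the definition of $D$-modules on indschemes from \cite{dmod} then yields
\[
D^!(\fL\bA^1) \simeq \underset{n}{\colim} \, \big(D^!(\fL^+\bA^1) \xar{\mu_{t,*,!-dR}} D^!(\fL^+\bA^1) \xar{\mu_{t,*,!-dR}} \ldots\big),
\]
exhibiting $D^!(\fL\bA^1)$ as the localization of $D^!(\fL^+\bA^1)$ at $t \in \fL^{pos}\bG_m$ in the category of $D^*(\fL^{pos}\bG_m)$-modules. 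The compatibility of $\Delta_\infty$ with $\mu_{t,*,!-dR}$ established in the previous subsection says exactly that the corresponding endofunctor on the target side (tensoring with $\sO_{\sY}(1)$) is already invertible on $\IndCoh^*(\sY)$, so $\Delta_\infty$ extends canonically to a $D^*(\fL\bG_m)$-equivariant functor $\Delta$ out of this colimit. Here we use that $\IndCoh^*(\sY)$ is automatically a $D^*(\fL\bG_m)$-module category via Theorem \ref{t:full-lcft} and its $\LocSys_{\bG_m}$-structure.

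For fully faithfulness, it is enough to check on compact generators. The category $D^!(\fL^+\bA^1)$ is compactly generated by objects of the form $p_n^!\sF$ for $\sF \in D(\fL_n^+\bA^1)$ compact, and $\Delta_\infty(p_n^!\sF) \simeq \lambda_{n,*}^{\IndCoh}\ol{\Delta}_n(\sF)$ by Proposition \ref{p:delta-infty-comm}. For two such objects $p_{n_1}^!\sF_1$, $p_{n_2}^!\sF_2$ with $n_2 \geq n_1$, the mapping complex in $D^!(\fL^+\bA^1)$ identifies with $\ul{\Hom}_{D(\fL_{n_2}^+\bA^1)}(p_{n_1,n_2}^!\sF_1,\sF_2)$, while the target mapping complex identifies (using the same commutative diagrams) with the image of this under $\ol{\Delta}_{n_2}$ followed by $\lambda_{n_2,*}^{\IndCoh}$; these are isomorphic by Proposition \ref{p:ff} together with Lemma \ref{l:lambda-ff} for $n_2 \geq 1$ (the $n_2 = 0$ case being handled directly, or absorbed by passing to $n_2 \geq 1$). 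Passing to the colimit over $n$ and tensoring along $D^*(\fL^{pos}\bG_m) \to D^*(\fL\bG_m)$ preserves fully faithfulness, since the colimit is filtered and the relevant transition functors on the target are equivalences.

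For essential surjectivity, it suffices to show the image of $\Delta$ generates $\IndCoh^*(\sY)$ under colimits. Since $\lambda_{n,*}^{\IndCoh}$ is fully faithful for $n \geq 1$ and $\IndCoh^*(\sY) = \colim_n \IndCoh^*(\sY^{\leq n})$ along these fully faithful embeddings, it is enough to generate each $\IndCoh^*(\sY^{\leq n})$ for $n \geq 1$. By Corollary \ref{c:z-gen}, this category is generated by $\{\iota_*^{r,\IndCoh}(\sO_{\sZ^{\leq n}})(m)\}_{r \geq 0, m \in \bZ}$. The object $\sO_{\sZ^{\leq n}}$ (up to the averaging which is built into $\sF_n$) appears as $\ol{\Delta}_n(D_{\fL_n^+\bA^1})$; applying the translation by $\iota$ corresponds on the geometric side to multiplication by $t$, which is now invertible on the source $D^!(\fL\bA^1)$, and the twist $(m)$ corresponds to acting by the $m$-th character through the Contou-Carr\`ere component of class field theory. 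Thus all required generators lie in the image of $\Delta$, and essential surjectivity follows.

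Uniqueness is formal: a $D^*(\fL\bG_m)$-equivariant functor out of the localization $D^!(\fL\bA^1)$ is determined by its restriction to $D^!(\fL^+\bA^1)$ as a $D^*(\fL^{pos}\bG_m)$-equivariant functor. The main obstacle in this program is the bookkeeping in the first paragraph: carefully verifying that the colimit presentation of $D^!(\fL\bA^1)$ as a localization is compatible, in a structured $(\infty,1)$-categorical sense, with both the $D^*(\fL\bG_m)$-module structure and the equivariance data of Proposition \ref{p:delta-infty-comm}, so that the extension from $\Delta_\infty$ to $\Delta$ is canonical and the matching compatibilities on the target are verified without sign errors in the Contou-Carr\`ere pairing.
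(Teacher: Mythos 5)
Your proposal follows the same route as the paper: write $D^!(\fL\bA^1)$ as the relative tensor $D^*(\fL\bG_m) \otimes_{D^*(\fL^{pos}\bG_m)} D^!(\fL^+\bA^1)$, deduce existence and uniqueness of $\Delta$ by the universal property, check full faithfulness via Proposition \ref{p:ff} (the paper bundles the colimit bookkeeping into Lemma \ref{l:colim-ff}, which you instead unwind by hand on compact generators), and check essential surjectivity via Corollary \ref{c:z-gen}. This is essentially the paper's argument.

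One small inaccuracy in the last step is worth flagging. You write that ``applying the translation by $\iota$ corresponds on the geometric side to multiplication by $t$, which is now invertible on the source $D^!(\fL\bA^1)$.'' This conflates two different operations: $\iota_*^{\IndCoh}$ corresponds on the $D$-module side to multiplication by $\alpha_0 \in W_n$ (the evaluation coordinate, cf.\ \S\ref{sss:alpha-0-left}), which is not invertible and does not need to be — the generators of Corollary \ref{c:z-gen} only require $\iota_*^r$ for $r \geq 0$. What \emph{does} need to be inverted to reach all integral twists $(m)$ is the translation by a uniformizer $t \in \fL^{pos}\bG_m$, which acts by the line bundle $\sO_{\sY}(1)$ under class field theory; this is precisely what passing from $\fL^+\bA^1$ to $\fL\bA^1$ buys. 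The argument survives the slip because you never actually needed $\iota_*$ invertible, but the attribution should be corrected. Also note Corollary \ref{c:z-gen} gives generators of $\IndCoh^*(\sZ^{\leq n})$, from which one still has to pass through the colimit over $\iota_*$ and the averaging $\Av_!^{\Gr_{\bG_m},w}$ to get generators of $\IndCoh^*(\sY^{\leq n})$ and then $\IndCoh^*(\sY)$; the paper absorbs this into ``follows immediately from the definitions,'' but it is a genuine (if easy) step.
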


\begin{proof}

There is a natural functor:
\[
D^*(\fL\bG_m) \underset{D^*(\fL^{pos}\bG_m)}{\otimes} D^!(\fL^+\bA^1) \to 
D^!(\fL\bA^1)
\]

\noindent that we observe is an equivalence. Indeed, this follows
by choosing a coordinate $t$ on the disc as before, splitting
$\fL\bG_m$ as $\fL^+\bG_m \times \bZ$, which then implies: 
\[
D^*(\fL\bG_m) \underset{D^*(\fL^{pos}\bG_m)}{\otimes} D^!(\fL^+\bA^1) \simeq 
\colim \, \big(D^!(\fL^+\bA^1) \xar{\mu_{t,*,!-dR}} D^!(\fL^+\bA^1) 
\xar{\mu_{t,*,!-dR}} \ldots \big).
\]

\noindent Using $t^{-n}$ to identify the $n$th term here with 
$t^{-n} \cdot \fL^+ \bA^1 \subset \fL \bA^1$ and applying the
definition from \cite{dmod}, we see that this 
colimit canonically identifies with $D^!(\fL\bA^1)$ compatibly with the
natural functor considered above.

Therefore, there is a unique functor:
\[
\Delta:D^!(\fL\bA^1) \isom  \IndCoh^*(\sY) 
\]

\noindent of $D^*(\fL\bG_m)$-module categories restricting to 
$\Delta_{\infty}$ (compatibly with the $D^*(\fL^{pos}\bG_m)$-equivariance).
It remains to show that $\Delta$ is an equivalence.

First, we note that $\Delta_{\infty}$ is fully faithful by 
Proposition \ref{p:ff} and Lemma \ref{l:colim-ff} below. Next,
we deduce that $\Delta$ itself is fully faithful by another
application of Lemma \ref{l:colim-ff}. 

Therefore, it remains to show that $\Delta$ is essentially surjective.
Clearly its essential image contains each object:
\[
\pi_{n,*}^{\IndCoh}(\sO_{\sZ^{\leq n}}(m))
\]

\noindent for all $n \geq 0$, $m \in \bZ$. Therefore, it suffices
to show that $\IndCoh^*(\sY)$ is generated under colimits by these objects.
But this follows immediately from the definitions and Corollary \ref{c:z-gen}.

\end{proof}

Above, we used the following simple categorical lemma.

\begin{lem}\label{l:colim-ff}

Suppose we are given a filtered diagram $i \mapsto \sC_i$ of compactly 
generated DG categories, and a functor:
\[
F:\sC \coloneqq \underset{i}{\colim} \, \sC_i \to \sD \in \DGCat_{cont}
\]

\noindent preserving compact objects. Suppose each structural functor:
\[
\sC_i \to \sC_j
\]

\noindent and each composition:
\[
\sC_i \to \sC \to \sD
\]

\noindent is fully faithful. Then $F$ is fully faithful.

\end{lem}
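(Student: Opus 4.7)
The plan is to reduce to showing that the unit $\eta : \id_\sC \to F^R F$ of the adjunction $F \dashv F^R$ is an isomorphism. Since $F$ preserves compacts by hypothesis, $F^R$ is continuous, and therefore $F^R F$ is continuous. The full subcategory of $\sC$ on which $\eta$ is an isomorphism is then closed under colimits, and $\sC$ is generated under colimits by $\bigcup_i \iota_i(\sC_i)$ (this is a direct consequence of the universal property of the colimit in $\DGCat_{cont}$). So it suffices to verify that $\eta_{\iota_k(c)}$ is an isomorphism for every $k$ and every $c \in \sC_k$.

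For the second step, I use that each $\iota_j : \sC_j \to \sC$ is itself fully faithful, which is a standard consequence of fully faithfulness of the transitions in a filtered colimit in $\DGCat_{cont}$; in particular $\iota_j^R \iota_j \simeq \id_{\sC_j}$ via the unit. Fixing $k$ and $c \in \sC_k$, I will compute $\iota_j^R\eta_{\iota_k(c)}$ for an arbitrary $j$ by choosing $m \geq j,k$ using filteredness. The identities $\iota_j = \iota_m \iota_{jm}$ and $\iota_k = \iota_m \iota_{km}$ together with $\iota_m^R\iota_m \simeq \id$ yield $\iota_j^R \iota_k(c) \simeq \iota_{jm}^R \iota_{km}(c)$. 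Likewise, the composed adjunction $F\iota_m \dashv \iota_m^R F^R$ together with the assumed fully faithfulness of $F\iota_m$ gives $\iota_m^R F^R F \iota_m \simeq \id_{\sC_m}$, so $\iota_j^R F^R F \iota_k(c) \simeq \iota_{jm}^R \iota_{km}(c)$. A direct triangle-identity check shows that under these canonical identifications, $\iota_j^R \eta_{\iota_k(c)}$ becomes the identity map.

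Since $\iota_j^R$ is exact (being the right adjoint of the exact functor $\iota_j$), this gives $\iota_j^R \on{fib}(\eta_{\iota_k(c)}) = 0$ for every $j$, equivalently $\on{fib}(\eta_{\iota_k(c)}) \in \iota_j(\sC_j)^\perp$ for every $j$. Thus the fiber lies in $\bigl(\bigcup_j \iota_j(\sC_j)\bigr)^\perp$, which vanishes because $\bigcup_j \iota_j(\sC_j)$ generates $\sC$ under colimits. Hence $\eta_{\iota_k(c)}$ is an isomorphism, completing the argument.

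The main obstacle is the careful verification of the triangle identities in the second step: one must unpack the unit of the composed adjunction $F\iota_m \dashv \iota_m^R F^R$ (which is the composite of the unit of $\iota_m \dashv \iota_m^R$ and $\iota_m^R$-applied to the unit of $F \dashv F^R$) and match it against the restriction of $\eta$ to the image of $\iota_m$. This is pure formal diagram-chasing but constitutes the substantive content of the argument, as all the remaining ingredients (generation under colimits, closure under colimits, orthogonality) are standard.
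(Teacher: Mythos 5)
The paper states Lemma \ref{l:colim-ff} without proof, calling it a ``simple categorical lemma,'' so there is no argument in the paper to compare against. Your proof is correct. The reduction to showing the unit $\eta\colon \id_{\sC}\to F^RF$ is invertible is sound: $F^R$ is continuous because $F$ preserves compacts, the locus where $\eta$ is invertible is therefore closed under colimits, and the images $\iota_k(\sC_k)$ generate $\sC$ under colimits. Detecting the vanishing of $\on{fib}(\eta_{\iota_k(c)})$ by the family $\{\iota_j^R\}_j$ is also correct, since this family is jointly conservative precisely because the $\iota_j(\sC_j)$ generate.

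Two remarks, neither a gap. First, in the middle step you do not need to identify $\iota_j^R\eta_{\iota_k(c)}$ with the identity; it suffices to know it is invertible, and this drops out more directly than the diagram-chase you describe as the main obstacle: the unit of the composed adjunction $F\iota_m \dashv \iota_m^R F^R$ factors as $\eta^{\iota_m}_{c'}$ followed by $\iota_m^R\bigl(\eta^F_{\iota_m(c')}\bigr)$, and since the composed unit and $\eta^{\iota_m}_{c'}$ are both invertible (by fully faithfulness of $F\iota_m$ and $\iota_m$ respectively), so is $\iota_m^R\bigl(\eta^F_{\iota_m(c')}\bigr)$; applying $\iota_{jm}^R$ then gives that $\iota_j^R\eta_{\iota_k(c)}$ is invertible, with no further bookkeeping. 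Second, the ``standard fact'' you invoke---that each $\iota_j\colon\sC_j\to\sC$ is fully faithful when the transitions are---is exactly the $F=\id_{\sC}$ instance of the very lemma being proved, so one must be a little careful about latent circularity. It is indeed a known statement (it follows from realizing the filtered colimit in $\DGCat_{cont}$ as a cofiltered limit along the right adjoint transition functors), but you should cite it rather than merely assert it, both to keep the argument non-circular and because this is the one step that relies on structural input beyond formal adjunction manipulations.
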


\bibliography{bibtex.bib}{}
\bibliographystyle{alphanum}

\end{document}